\documentclass[12pt] {amsart}

\usepackage {amssymb}
\usepackage {amsmath}
\usepackage {amsthm}
\usepackage{graphicx}
\usepackage {amscd}
\usepackage[colorlinks, citecolor=blue]{hyperref}
\input{diagrams.sty} 

\theoremstyle{plain}
\newtheorem{thm}{Theorem}[section]
\newtheorem{theorem}[thm]{Theorem}
\newtheorem{lemma}[thm]{Lemma}
\newtheorem{lem}[thm]{Lemma}

\newtheorem{cor}[thm]{Corollary}
\newtheorem{corollary}[thm]{Corollary}
\newtheorem{prop}[thm]{Proposition}

\theoremstyle{definition}

\newtheorem{rmk}[thm]{Remark}

\newtheorem{defn}[thm]{Definition}

\newtheorem{claim}[thm]{Claim}

\newcommand{\bF}{{\mathbb F}}

\newcommand{\N}{{\mathbb N}}

\newcommand{\Q}{{\mathbb Q}}

\newcommand{\Mob}{{\mathbf{Mob} }}
\author{Christopher D. Hacon}
\address{Department of Mathematics, University of Utah, 155 South 1400 East, Salt Lake City, UT 48112-0090, USA} \email{hacon@math.utah.edu}
\author{Chenyang Xu}
\address{Beijing International Center of Mathematics Research, 5 Yiheyuan Road, Haidian District, Beijing, 100871, China}\email{cyxu@math.pku.edu.cn}

\begin{document}
\title[On the three dimensional MMP in ${\rm char}(p)>0$]{On the three dimensional minimal model program in positive characteristic}
\begin{abstract} 
Let $f:(X,B)\to Z$ be a $3$-fold extremal dlt flipping contraction defined over an algebraically closed field of characteristic $p>5$, such that the coefficients of $\{ B\}$ are in the standard set $\{ 1-\frac 1n|n\in \mathbb N\}$, then the flip of $f$ exists. As a consequence, we prove the existence of minimal models for any projective $\Q$-factorial terminal variety $X$ with pseudo-effective canonical divisor $K_X$.
\end{abstract}

\maketitle
\tableofcontents
\section{Introduction}
The minimal model program (MMP) is one of the main tools in the classification of higher dimensional algebraic varieties. It aims to generalize to dimension $\geq 3$ the results obtained by 
the Italian school of algebraic geometry at the beginning of the 20-th century.

In characteristic  $0$, much progress has been made towards establishing the minimal model program.
In particular the minimal model program is true in dimension $\leq 3$, and in higher dimensions, it is known that the canonical ring is finitely generated, flips and divisorial contractions exist, minimal models exist for varieties of general type, and we have termination of flips for the minimal model program with scaling on varieties of general type (see \cite{BCHM10} and the references contained therein).
The fundamental tool used in establishing these results is Nadel-Kawamata-Viehweg vanishing (a powerful generalization of Kodaira vanishing).

Unluckily, vanishing theorems are known to fail for varieties in characteristic $p>0$ and so very little is known about the minimal model program in characteristic $p>0$. Another serious difficulty is that resolution of singularities
is not yet known in characteristic $p>0$ and dimension $>3$.
The situation is as follows:
in dimension $2$, the full minimal model program holds (see \cite{KK, Tanaka12a} and references therein).
In dimension $3$, resolution of singularities is known (see \cite{Abhyankar98,Cutkosky04,CP08,CP09}). Partial results towards the existence of divisorial and flipping contractions are proven in \cite{Keel99}.
Termination of flips for terminal pairs, holds by the usual counting argument and Kawamata has shown that the existence of relative minimal models for semistable families when $p>3$ \cite{Kawamata94}.
Thus the main remaining questions are the base point free theorem, the existence of flips and abundance.
In this paper we prove the following. 

\begin{theorem}\label{t-flipexists}
Let $f:(X,B)\to Z$ be an extremal flipping contraction of a dlt threefold defined over an algebraically closed field of characteristic $p>5$ such that the coefficients of $\{ B\}$ belong to the standard set $\{ 1-\frac 1 n|n\in \mathbb N\}$. Then the flip exists. 
\end{theorem}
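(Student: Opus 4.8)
Everything is local on $Z$, and the flip exists precisely when the relative log canonical algebra $\mathcal{R}=\bigoplus_{m\ge 0}f_*\mathcal{O}_X(\lfloor m(K_X+B)\rfloor)$ is a finitely generated $\mathcal{O}_Z$-algebra, in which case $X^{+}=\mathrm{Proj}_Z\mathcal{R}$. So the whole content is the finite generation of $\mathcal{R}$, and the plan is to run the classical d\'evissage of Shokurov and Corti \cite{BCHM10}, replacing the one step that genuinely uses characteristic $0$ by a Frobenius argument. \textbf{Step 1 (reduction to a pl flip).} First I would reduce, by the standard argument, to a \emph{pre-limiting flipping contraction}: $(X,S+B)$ plt with $S$ an irreducible prime divisor, $B=\{S+B\}$ having standard coefficients, and both $-S$ and $-(K_X+S+B)$ $f$-ample. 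This reduction uses no vanishing theorem; its only inputs are the minimal model program in dimension $2$ (cone and contraction theorems, existence of minimal and log canonical models, termination) together with special termination, all of which hold in characteristic $p$ in dimension $\le 2$.

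\textbf{Step 2 (restrict to $S$).} Let $\nu\colon T\to S$ be the normalization and let $(T,\Delta_T)$ be the klt pair defined by adjunction, $\nu^*\big((K_X+S+B)|_S\big)=K_T+\Delta_T$; since $B$ has standard coefficients, so does $\Delta_T$ (by the description of the different). Restriction defines for each $m$ a map $f_*\mathcal{O}_X(\lfloor m(K_X+S+B)\rfloor)\to(f\circ\nu)_*\mathcal{O}_T(\lfloor m(K_T+\Delta_T)\rfloor)$; write $\mathcal{R}_T$ for the image subalgebra. A standard argument for pl flips, using only that $-S$ is $f$-ample, shows that $\mathcal{R}$ is finitely generated if and only if $\mathcal{R}_T$ is, so it suffices to prove $\mathcal{R}_T$ finitely generated.

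\textbf{Step 3 (surfaces and Frobenius).} The \emph{full} surface algebra $\bigoplus_m(f\circ\nu)_*\mathcal{O}_T(\lfloor m(K_T+\Delta_T)\rfloor)$ is finitely generated: this is finite generation of the relative log canonical ring of the surface pair $(T,\Delta_T)$ over $W:=f(S)$, which follows from the (known) surface MMP. So everything reduces to comparing the image $\mathcal{R}_T$ with this full algebra, i.e.\ to lifting ``enough'' pluri-log-canonical sections from $T$ up to $X$ --- the step that over a field of characteristic $0$ is Kawamata--Viehweg vanishing. Here I would use the trace of Frobenius: iterating the $(p^{-e})$-linear maps obtained from $F^e_*\mathcal{O}_X\big((1-p^e)(K_X+S+B)\big)\to\mathcal{O}_X$ and restricting them along $S$, one shows, for a divisor $M$ with $M-(K_X+S+B)$ ample, that the image of $H^0(X,\mathcal{O}_X(M))$ in $H^0(T,\mathcal{O}_T(M|_T))$ contains $H^0\big(T,\sigma(T,\Delta_T)\otimes\mathcal{O}_T(M|_T)\big)$, where $\sigma(T,\Delta_T)$ is the non-$F$-pure (or test) ideal of the surface pair. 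Taking $M=\lfloor m(K_X+S+B)\rfloor$, it follows that $\mathcal{R}_T$ contains the $\sigma(T,\Delta_T)$-twisted subalgebra of the full surface algebra.

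\textbf{Step 4 (closing the loop; the main obstacle).} It remains to see that passing to this twisted subalgebra alters neither finite generation nor the relative $\mathrm{Proj}$, equivalently that $\sigma(T,\Delta_T)=\mathcal{O}_T$, i.e.\ that $(T,\Delta_T)$ is $F$-pure (in fact globally $F$-regular) along the fibres over $W$. This is exactly where the hypotheses $p>5$ and standard coefficients enter: klt surface pairs with standard coefficients over an algebraically closed field of characteristic $p>5$ have strongly $F$-regular singularities, and a relative, adjoint-type globalization of this kills the obstruction. The genuinely hard part of the whole argument is this interplay of Steps 3 and 4 --- producing a Frobenius lifting statement strong enough to reach a Frobenius-stable (hence $\sigma$-controlled) subspace of sections of $T$, and then matching it against an $F$-regularity statement for surface pairs in characteristic $p>5$ that is precise enough for the lifted sections already to generate an algebra with the correct $\mathrm{Proj}$. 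Everything else is either formal or an appeal to the established two-dimensional theory and to termination.
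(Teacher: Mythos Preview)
Your overall architecture --- reduce to a pl-flip, pass to the restricted algebra on $S$, and replace Kawamata--Viehweg by a Frobenius-trace lifting --- is exactly the paper's. The gap is in Step~3, and it is precisely the crux of the whole argument. You correctly record that the Frobenius lifting (Schwede's surjection onto $S^0$) needs $M-(K_X+S+B)$ ample; you then take $M=\lfloor m(K_X+S+B)\rfloor$, for which $M-(K_X+S+B)\sim_{\mathbb Q}(m-1)(K_X+S+B)$ is \emph{anti}-ample over $Z$ (this is a flipping contraction). So the hypothesis fails, and no amount of $F$-regularity on the surface repairs it: the positivity points the wrong way. Relatedly, the restricted algebra $\mathcal R_T$ is in general a \emph{proper} subalgebra of the full surface log-canonical ring, so your Step~4 endgame (``$\sigma(T,\Delta_T)=\mathcal O_T$, hence $\mathcal R_T$ agrees with the full algebra and we are done'') cannot be correct as written.

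The paper's fix is to pass to a log resolution $g\colon Y\to X$ and replace $m(K_X+S+B)$ by the mobile b-divisors $\mathbf N_{i,Y}=\mathbf{Mob}(iQ)$, working instead with $L_{ij}=\lceil\tfrac ji\mathbf N_{i,Y}+\mathbf A_Y\rceil$. Now $L_{ij}-(K_Y+\Psi)\sim_{\mathbb Q}\tfrac ji\mathbf N_{i,Y}-g^*(K_X+S+B)-(\text{small})$ is nef${}+{}$ample, so the Frobenius lifting \emph{does} apply. One must then carry out three genuinely separate steps: show the restricted mobile b-divisors $\mathbf M_i=\mathbf N_i|_{S'}$ all descend to the terminalisation $\bar S$ of $(S,B_S)$; show the limit $\mathbf D=\lim\tfrac1i\mathbf M_i$ is rational (via Diophantine approximation and a further restriction to curves); and finally show the sequence stabilises, identifying $\mathcal R_T$ with $R(\bar S/Z;j\mathbf D_{\bar S})$. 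Each step uses Frobenius lifting in a different configuration, and only the last one requires the global $F$-regularity of $(S,B_S)$ over the base (Theorem~\ref{t-gfr}) --- which is indeed where $p>5$ and standard coefficients enter. So your Step~4 intuition about where the hypotheses bite is correct, but the route to the point where one may invoke it is substantially longer than your outline allows.
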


We have the following result on the existence of minimal models.
\begin{thm} \label{t-MM}
Let $(X,\Delta)$ be a $\Q$-factorial projective three dimensional  canonical pair over an algebraically closed field $k$ of characteristic  $p>5$. Assume all coefficients of $ \Delta $ are in the standard set $\{ 1-\frac 1 n | n\in \mathbb N\}$ and $N_\sigma (K_X+\Delta)\wedge \Delta = 0$. If $K_X+\Delta$ is pseudo-effective, then
 \begin{enumerate}
 \item  there exists a minimal model $X^{m}$ of $(X,\Delta)$, and
\item if, moreover,  $k=\overline{\bF}_p$, then $X^m$ can be obtained by running the usual $(K_X+\Delta)$-MMP. 
\end{enumerate}
\end{thm}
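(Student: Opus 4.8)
The plan is to produce $X^{m}$ by running a $(K_X+\Delta)$-minimal model program with scaling of an ample divisor and to rule out Mori fibre spaces using pseudo-effectiveness. First I would choose a sufficiently general effective ample $\Q$-divisor $A$ so that $(X,\Delta+A)$ is again canonical and $K_X+\Delta+A$ is nef, and run the $(K_X+\Delta)$-MMP with scaling of $A$, with successive scaling thresholds $\lambda_1\ge \lambda_2\ge\cdots$. Since every coefficient of $\Delta$ lies in $\{1-\tfrac1n\mid n\in\mathbb N\}\subset[0,1)$ we have $\lfloor\Delta\rfloor=0$, so every pair $(X_i,\Delta_i)$ occurring in the program is a $\Q$-factorial canonical (hence klt, hence dlt) threefold with $\{\Delta_i\}=\Delta_i$, and since coefficients are preserved by strict transform they remain in the standard set. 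Thus each flipping contraction that occurs is an extremal flipping contraction of a dlt threefold in characteristic $p>5$ whose fractional boundary has standard coefficients, so the corresponding flip exists by Theorem~\ref{t-flipexists}; the divisorial contractions and the cone theorem needed to keep the program running are available for $\Q$-factorial dlt threefolds in characteristic $p>5$.

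The main obstacle is termination. Only finitely many divisorial contractions can occur, so one must rule out an infinite sequence of flips, and this is where the hypothesis $N_\sigma(K_X+\Delta)\wedge\Delta=0$ enters, following Birkar's reduction of the pseudo-effective case to the general type case. Concretely: for $0<\lambda\ll 1$ the pair $(X,\Delta+\lambda A)$ is of general type, a minimal model of which can be constructed using the minimal model results for threefolds of general type in characteristic $p>5$ established earlier; and since no component of $\Delta$ lies in the support of $N_\sigma(K_X+\Delta)$, and $N_\sigma(K_X+\Delta)=\lim_{\lambda\to 0^+}N_\sigma(K_X+\Delta+\lambda A)$, such a minimal model is also a minimal model of $(X,\Delta)$ — in particular $K+\Delta$ is nef on it. This yields termination of a suitable run of the $(K_X+\Delta)$-MMP with scaling of $A$, the residual range in which the scaling threshold tends to $0$ being controlled by the termination of flips for terminal threefolds (the counting argument mentioned in the introduction).

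For part (1) it now suffices to observe that pseudo-effectiveness of $K+\Delta$ is preserved at every step: a divisorial contraction or a flip is a birational contraction, and the pushforward of a pseudo-effective class is pseudo-effective. Hence $K_{X^{m}}+\Delta^{m}$ is pseudo-effective, so the program cannot have terminated with a Mori fibre space $X^{m}\to T$ — on a general fibre of which $K_{X^{m}}+\Delta^{m}$ would restrict to an anti-ample, hence non-pseudo-effective, class. Therefore $K_{X^{m}}+\Delta^{m}$ is nef, and $X\dashrightarrow X^{m}$, being a composition of $(K+\Delta)$-negative steps, is a minimal model of $(X,\Delta)$; this establishes (1).

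For part (2), when $k=\overline{\bF}_p$ one has in addition the base point free (contraction) theorem for $\Q$-factorial klt threefolds, via Keel's semi-ampleness results over $\overline{\bF}_p$. Combined with Theorem~\ref{t-flipexists}, this lets one contract every $(K_X+\Delta)$-negative extremal ray and perform every required flip, so the ordinary (unscaled) $(K_X+\Delta)$-MMP can be run; it terminates by the same termination input as above, and by the pseudo-effectiveness argument it cannot end with a Mori fibre space, hence ends with a minimal model, which establishes (2). Beyond termination, the point I would check most carefully is that Birkar's passage from the perturbed general type pair back to $(X,\Delta)$ really goes through in dimension three in characteristic $p$ with the available tools — in particular the behaviour of $N_\sigma$ as $\lambda\to 0^+$ and the finiteness of the models involved.
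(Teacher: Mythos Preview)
Your approach has two genuine gaps. First, for part (1) you assume the ordinary MMP can be run, but over a general algebraically closed field of characteristic $p>5$ the contraction of a $(K_X+\Delta)$-negative extremal ray is only known to exist as a proper map to an \emph{algebraic space} (Keel's EWM result, Theorem~\ref{t-cont}(1)); projectivity of the target is available only over $\overline{\bF}_p$ (Theorem~\ref{t-cont}(2)) or in the special plt situation (Theorem~\ref{t-cont}(3)). The paper addresses this by introducing a \emph{generalized MMP}: Theorem~\ref{t-quasi} shows that from any such algebraic-space contraction one can construct a small $\Q$-factorial projective modification $X^+\to Z$ on which $K+\Delta$ becomes nef over the (possibly non-projective) base, and these generalized flips and divisorial contractions are what produce the minimal model in (1). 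Your write-up does not address this distinction, so as written the program in (1) cannot be started.

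Second, your termination argument invokes ``minimal model results for threefolds of general type in characteristic $p>5$ established earlier,'' but no such result is proved in this paper prior to Theorem~\ref{t-MM}; there is no Birkar-style reduction available here, so this step is circular. The paper's termination is the direct classical one: the hypothesis $N_\sigma(K_X+\Delta)\wedge\Delta=0$ is used to guarantee that no component of $\Delta$ is contracted at any step (a contracted component would lie in $N_\sigma$), so the pair remains \emph{canonical} throughout the (generalized) MMP, and then termination follows from the difficulty-function argument of \cite[6.17, 6.20]{KM98} for canonical threefold pairs. That is the actual role of both the canonical hypothesis and the $N_\sigma$ condition; your route through $\lim_{\lambda\to 0}N_\sigma(K_X+\Delta+\lambda A)$ and finiteness of models relies on characteristic-zero machinery that the present paper does not supply.
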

We remark that in general  the minimal model in (1) is not obtained by running the MMP in the usual sense unless $k=\overline{\bF}_p$. See Section \ref{s-mmp} for more details.

\subsection {Sketch of the proof}
After Shokurov's work, it has been known in characteristic 0 that the existence of flips can be reduced to a special case, called {\it pl flips} (see \cite{Shokurov92,Fujino07}). We apply the same idea in characteristic $p$. The key result of this paper is the proof of the existence of pl-flips cf. \eqref{t-2}. 
Since the base point free theorem has not yet been established in full generality for threefolds in characteristic $p>0$, instead of running the MMP in the usual sense, we run a variant of the MMP, which yields a minimal model (of terminal 3-folds, see Section \ref{s-mmp}). 

 Our strategy to show the existence of pl-flips will follow closely ideas of Shokurov as explained in \cite[Chapter 2]{Corti07}. It is well known that if $f:(X,S+B)\to Z$ is a pl-flipping contraction (so that $X$ is $\Q$-factorial, $(X,S+B)$ is plt and $-(K_X+S+B)$ and $-S$ are ample over $Z$) then the existence of the pl-flip is equivalent to the finite generation of the restricted algebra $R_{S/Z}(K_X+S+B)$. When $Z$ is affine, then the $m$-th graded piece of this algebra is given by the image of the restriction map $$H^0(X,\mathcal O _X(m(K_X+S+B)))\to H^0(S,\mathcal O_S(m(K_S+B_S)))$$ (where $S$ is normal and $B_S$ is defined by adjunction $(K_X+S+B)|_S=K_S+B_S$). For any proper birational morphism $g:Y\to X$ we can consider the corresponding plt pair $K_Y+S'+B_Y$ where $S'$ is the strict transform of $S$. The moving part of the image of the restriction of $H^0(Y,\mathcal O _Y(m(K_Y+S'+B_Y)))$ to $S'$ gives a mobile b-divisor $\mathbf M_{m,S'}$ corresponding to the $m$-th graded piece of $R_{S/Z}(K_X+S+B)$. Dividing by $m$, we obtain a sequence of non-decreasing $\mathbb{Q}$-b-divisor $\frac 1 m \mathbf M_{m,S'}$. The required finite generation is equivalent to showing that this non-decreasing sequence eventually stabilizes  (so that $\frac 1 m\mathbf M_{m,S'}$ is fixed for all $m>0$ sufficiently divisible and in particular these divisors descend to a fixed birational model of $S$).   

In characteristic 0, the proof has three steps: first, we show that this sequence of b-divisors descends to a fixed model (in fact they descend to $\bar S$ the terminalization of $(S,B_S)$); next we show that the limiting divisor is a $\mathbb{Q}$-divisor (instead of an arbitrary $\mathbb{R}$-divisor); finally we show that for sufficiently large degree, the sequence stabilizes.  All of these steps rely heavily on the use of vanishing theorems.

In characteristic $p>0$, the main difficulty is (of course) that the Kawamata-Viehweg vanishing theorem fails. However, after \cite{HH90}, techniques involving the Frobenius map have been developped to recover many of the results which are traditionally deduced from vanishing theorems (see Subsection \ref{F-sing}).  In this paper, we will use these results for lifting sections from the divisor $S$ to the ambient variety $X$. However, one of the difficulties we encounter is that we can only lift the sections in $S^0(\bullet)\subset H^0(\bullet)$, which is roughly speaking the subspace given by the images of the maps induced by iterations of the Frobenius. Thus it is necessary to understand under what conditions the inclusion $S^0(\bullet)\subset H^0(\bullet) $ is actually an equality. It is easy to see that if $(C,\Delta)$ is
a $1$-dimensional klt pair and $L$ is  a sufficiently ample line bundle, then $H^0(C,L)=S^0(C,\sigma(C,\Delta)\otimes L)$ (cf. \eqref{l-cu2}). Unluckily, in higher dimensions, this frequently fails. 

It turns out that the first two steps in the proof of characteristic 0,  depend only on being able to lift sections from curves (corresponding to general divisors in $|\mathbf M_{m,S'}|$). So after a suitable modification, the argument works in general (an added difficulty is that since Bertini's theorem fails in positive characteristic, these curves may be singular).  
Unluckily, the third step uses a lifting result in the surface case. This lifting result is subtler, however when the coefficients of $B$ are in the standard set $\{ 1-\frac 1 n |n\in \N \}$ and the characteristic is $p>5$, we are able to prove that $S^0(\bullet)=H^0(\bullet)$ even in the surface case (see \eqref{l-liftingsurface}). 

\noindent{\bf Acknowledgement:}  We are indebted to J\'anos Koll\'ar, James M$^{\rm c}$Kernan, Mircea Musta\c{t}\v{a}, Yuri Prokhorov, Yuchen Zhang and Runpu Zong for helpful conversations. We are especially grateful to Zsolt Patakfalvi, Karl Schwede and Hiromu Tanaka for a careful reading of our early drafts with many helpful suggestions. 

The first author was partially supported by NSF research grants no: DMS-0757897 and DMS-1300750 and a grant from the Simons foundation, the second author was partially supported by the Chinese government grant `Recruitment Program of Global Experts'.

\section{Preliminaries}
\subsection{Notation and Conventions}\label{NC} 
We work over an algebraically closed field $k$ of characteristic $p>0$. 
 We will use the standard notation  in \cite{KM98}.  In particular, see \cite[0.4, 2.34, 2.37]{KM98} for the definitions of
{\it terminal, klt, plt} and {\it dlt} singularities. 

We use the notation {\it b-divisor} as defined in  \cite[Subsection 2.3.2]{Corti07}. In particular, see [ibid, 2.3.12] for examples of b-divisors. 

Let $(X,S+B)$ be a plt pair such that $\lfloor S+B\rfloor=S$ is irreducible and $f:X\to Z$ a birational contraction. Then  $f$ is a {\it pl-flipping contraction} if $f$ is small, $\rho (X/Z)=1$, $-(K_X+S+B)$ is $f$-ample and $-S$ is $f$-ample. The {\it pl-flip} of $f$, if it exists, is a small birational morphism $f^+:X^+\to Z$ such that $\rho (X^+/Z)=1$, $(X^+,S^++B^+)$ is plt (where $S^++B^+$ denotes the strict transform of $S+B$), and $K_{X^+}+S^++B^+$ is $f^+$-ample. It is well known that $X^+={\rm Proj}R(X/Z,K_X+S+B)$ and thus the pl-flip $f^+$ exists if and only if $R(X/Z,K_X+S+B)=\bigoplus _{i\geq 0} f_* \mathcal O _X(i(K_X+S+B))$ is a finitely generated $\mathcal O_Z$ algebra.

We refer the reader to \cite{BCHM10} for the definitions of {\it minimal model} (which in \cite{BCHM10} is called a log terminal model), $(K_X+\Delta)$-{\it non-positive} and $(K_X+\Delta)$-{\it negative} maps.
Also see \cite{BCHM10} for Nakayama's definition of $N_{\sigma}(D)$ for a pseudo-effective divisor $D$. 

Let $f:X\to Z$ be proper morphism, and $\mathcal{F}$ be a coherent sheaf, then $\mathcal F$ is {\it relatively globally generated} if $f^*f_*\mathcal{F}\to \mathcal{F}$ is surjective. If $\mathcal{F}\cong \mathcal{O}_X (M)$ for some divisor $M$ on $Y$, then $M$ is {\it a relatively free divisor} if $\mathcal{F}$ is relatively globally generated.  
Let $C$ be a $\Q$-divisor, such that $\lceil C \rceil\geq 0$. Then a divisor $D$ is said {\it relatively $C$-saturated } if the natural injection
$$f_*\mathcal{O}_X(D)\to f_*\mathcal{O}_X(\lceil D+C \rceil)$$
is an isomorphism.
Let $L$ be a divisor on $X$, the {\it relative mobile b-divisor} ${\Mob_{Z}}(L)={\mathbf N}$, is the unique b-divisor  ${\mathbf N}$ such that for any birational morphism $g:Y\to X$, we have ${\mathbf N}_Y\le g^*L$, ${\mathbf N}_Y$ is mobile over $Z$ and the natural morphism
$(f\circ g)_*\mathbf N_Y\to f_*L $ is an isomorphism.

For a variety $X$ defined over a field $k$ of characteristic $p>0$, we always denote by $F: X\to X$ the absolute Frobenius.

\subsection{Resolution of singularities}
\begin{theorem}\label{t-res} Let $X$ be a projective variety of dimension $3$ over an algebraically closed field $k$ of characteristic $p>0$. Then there exists a nonsingular projective variety $Y$ and a birational morphism $f:Y\to X$ which is an isomorphism above the nonsingular locus $X\setminus \Sigma $.
We may assume that $f^{-1}(\Sigma)$ is a divisor with simple normal crossings. Moreover, if $\mathcal I \subset \mathcal O _X$ is an ideal and we replace $\Sigma$ by its union with the support of $\mathcal O _X/\mathcal I$, then we may assume that $\mathcal I \cdot \mathcal O _Y$ is locally principal and that $f$ is given by a sequence of blow ups along smooth centers lying over $\Sigma$. \end{theorem}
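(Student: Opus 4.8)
The plan is to assemble the statement from the known resolution theorems for threefolds in positive characteristic, rather than to prove resolution from scratch. First I would invoke the theorem of Abhyankar, in the streamlined form given by Cutkosky for $p>5$ and extended by Cossart--Piltant to arbitrary characteristic: any projective threefold over an algebraically closed field admits a resolution of singularities, and moreover this resolution can be realized as a finite composition of blow-ups along regular centers. Applying this to $X$ produces a nonsingular projective $Y$ together with a projective birational morphism $f\colon Y\to X$. Since each step in the sequence may be chosen with center contained in the singular locus of the variety obtained so far (blowing up a smooth center contained in the regular locus serves no purpose), all the centers, hence the exceptional locus of $f$, lie over $\Sigma=\operatorname{Sing}(X)$; in particular $f$ is an isomorphism over $X\setminus\Sigma$.

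Next, to arrange that $f^{-1}(\Sigma)$ is a simple normal crossings divisor, I would pass to embedded resolution. On the nonsingular $Y$ just constructed, the reduced preimage of $\Sigma$ (which contains the exceptional divisor) is a Weil, hence Cartier, divisor $E$, and I would apply log resolution, i.e. principalization of the ideal sheaf $\mathcal O_Y(-E)$, on the regular threefold $Y$. This is again supplied by the same body of work on resolution of threefolds in characteristic $p$, and yields a further finite sequence of blow-ups along smooth centers supported over $E$, after which the total transform of $\Sigma$ becomes SNC. Composing with $f$, and noting that all new centers again lie over $\Sigma$, gives the second assertion.

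Finally, for the statement involving the ideal $\mathcal I$, I would enlarge $\Sigma$ to $\Sigma\cup\operatorname{Supp}(\mathcal O_X/\mathcal I)$ and, after resolving $X$ as above, apply principalization to the ideal sheaf $\mathcal I\cdot\mathcal O_Y$ on the regular threefold $Y$, simultaneously keeping track of the reduced preimage of the enlarged $\Sigma$ so that the outcome is SNC. Away from $\Sigma$ the variety $X$ is already regular and $\mathcal I$ is already invertible, so one may take every center of the principalization sequence to lie over $\Sigma$; the composite $f$ is then a sequence of blow-ups along smooth centers lying over $\Sigma$, with $\mathcal I\cdot\mathcal O_Y$ locally principal and $f^{-1}(\Sigma)$ simple normal crossings, as required.

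The genuinely hard input, and the reason this theorem is recorded with a reference rather than proved here, is threefold resolution in positive characteristic together with its embedded and principalization refinements: the existence of \emph{some} resolution, the fact that it may be taken to be a composition of blow-ups along smooth centers, and the principalization of arbitrary ideal sheaves on regular threefolds. All three are deep results of Abhyankar, Cutkosky and Cossart--Piltant; once they are granted, the bookkeeping reductions above are formal and characteristic-free.
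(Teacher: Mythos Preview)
Your proposal is correct and matches the paper's approach: the paper's entire proof is a citation to \cite{Cutkosky04}, \cite{CP08}, and \cite{CP09}, i.e.\ exactly the Abhyankar--Cutkosky--Cossart--Piltant results you invoke. Your write-up simply unpacks how the various pieces (resolution by smooth blow-ups, embedded/log resolution, principalization) combine to give the stated form, which is a reasonable elaboration of what the paper leaves implicit.
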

\begin{proof} See \cite{Cutkosky04}, \cite{CP08} and \cite {CP09}.\end{proof}

\subsection{$F$-singularities}\label{F-sing}

Test ideals and the theory of tight closure were introduced by Hochester-Huneke \cite{HH90}. Since then it has become increasingly clear that there is a deep connection between the classes of singularities defined in terms of Frobenius splitting properties and the ones appearing in the minimal model program. This has led to exciting progress in both commutative algebra and birational geometry (see for example the survey paper \cite{ST11} and the references therein). The definitions and results in this subsection are well-known to the experts, we include them for the reader's convenience. 

Let $(X,\Delta )$ be a pair whose index is not divisible by $p$. Choose $e>0$ so that $(p^e-1)(K_X+\Delta )$ is Cartier and let $\mathcal L _{e,\Delta}=\mathcal O _X((1-p^e)(K_X+\Delta ))$. 
Then there is a canonically determined (up to multiplication by a unit) map $$\phi _\Delta :F_*^e \mathcal L _{e,\Delta}\to \mathcal O _X.$$ The {\it non-$F$-pure ideal} $\sigma (X,\Delta )$ is the unique biggest ideal (respectively  {\it the test ideal} $\tau (X,\Delta )$ is the unique smallest nonzero ideal)  $J\subset \mathcal O _X$ such that $$(\phi _\Delta\circ F_*^e )(J\cdot \mathcal L _{e,\Delta})=J$$ for any $e>0$. If $\sigma(X,\Delta)=\mathcal{O}_X$, we say that $(X,\Delta)$ is {\it sharply $F$-pure}.  If $\tau(X,\Delta)=\mathcal{O}_X$, we say that $(X,\Delta)$ is {\it strongly $F$-regular}.  
The relationship between strongly $F$-regular and sharply $F$-pure is similar to the difference between klt and log canonical singularities. For a pair $(X,\Delta)$, we can define $F$-pure centers as in \cite{Schwede08}.
See \cite[9.5]{Schwede09} for more background.  In this note, we will only need to discuss non-trivial $F$-pure centers in the case of a simple normal crossing pair. We have the following.

\begin{lemma}\label{l-sigma}
Assume that $(X,\Delta =\sum \delta _i \Delta _i)$ is a simple normal crossing pair whose index is not divisible by $p$ and that $0\leq \delta _i\leq 1$. Then 
$\sigma (X,\Delta )=\mathcal O _X$ and each strata of
$\lfloor \Delta \rfloor$ is an $F$-pure center.\end{lemma}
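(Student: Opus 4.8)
The plan is to reduce both assertions to a monomial computation with the Cartier operator. Since $\sigma(X,\Delta)$ is by definition the largest ideal sheaf satisfying a local condition, and being an $F$-pure center is likewise a condition on ideal sheaves (together with a condition at a generic point), the whole statement is local and I would check it at every closed point $x\in X$, after passing to $\widehat{\mathcal O}_{X,x}\cong k[[x_1,\dots,x_n]]$; all the relevant ingredients (images of sheaf maps, the ideals $\sigma$ and $I_W$, the map $\phi_\Delta$, and the fact that $F^e_*\mathcal O_X$ is locally free over $\mathcal O_X$ on the monomial basis because $k$ is perfect) are compatible with this. I would pick coordinates so that locally at $x$ one has $\Delta=\sum_{i=1}^n\delta_i\{x_i=0\}$ with $0\le\delta_i\le 1$ (allowing $\delta_i=0$). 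Fixing $e_0>0$ with $(p^{e_0}-1)(K_X+\Delta)$ Cartier (which exists since the index is prime to $p$) and letting $e$ run over its multiples — these are exactly the $e$ for which $\phi_\Delta$ is defined — I set $c_i:=(p^e-1)\delta_i$; then $c_i\in\Z$ and $0\le c_i\le p^e-1$ precisely because $0\le\delta_i\le1$. After trivializing $\omega_X$ locally, $\mathcal L_{e,\Delta}$ becomes the principal ideal $(f)$ with $f=\prod_{i=1}^n x_i^{c_i}$, and $\phi_\Delta$ becomes the restriction to $F^e_*\mathcal L_{e,\Delta}=F^e_*(f\mathcal O_X)\subseteq F^e_*\mathcal O_X$ of the trace $T^e\colon F^e_*\mathcal O_X\to\mathcal O_X$, the $\mathcal O_X$-linear map determined by $T^e(F^e_*(x_1^{a_1}\cdots x_n^{a_n}))=1$ if every $a_i=p^e-1$ and $=0$ for every other monomial with $0\le a_i\le p^e-1$.

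For $\sigma(X,\Delta)=\mathcal O_X$: since $\sigma$ is the largest ideal with the defining property, it suffices to show that $J=\mathcal O_X$ has that property for each such $e$, i.e.\ that $\phi_\Delta$ is surjective. I would put $1$ in the image by choosing $g=\prod_{i=1}^n x_i^{p^e-1-c_i}\in\mathcal O_X$, which is legitimate because $0\le c_i\le p^e-1$; then $fg=\prod_{i=1}^n x_i^{p^e-1}$, so $\phi_\Delta(F^e_*(fg))=T^e(F^e_*(fg))=1$. This is the only place where the hypotheses $0\le\delta_i\le1$ and $p\nmid(\text{index})$ enter.

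For the strata: locally at $x$ a stratum $W$ of $\lfloor\Delta\rfloor$ is $\{x_1=\dots=x_k=0\}$ with $\delta_1=\dots=\delta_k=1$, so $I_W=(x_1,\dots,x_k)$ and $c_1=\dots=c_k=p^e-1$, whence $x_jf=x_j^{p^e}\cdot\prod_{i\ne j}x_i^{c_i}$ for $1\le j\le k$. Every element of $I_W\cdot\mathcal L_{e,\Delta}=(x_1f,\dots,x_kf)$ is a sum of terms $h_jx_jf$ with $h_j\in\mathcal O_X$, and $\mathcal O_X$-linearity of $T^e$ gives $\phi_\Delta(F^e_*(h_jx_jf))=x_j\cdot T^e\!\left(F^e_*\!\big(h_j\prod_{i\ne j}x_i^{c_i}\big)\right)\in(x_j)\subseteq I_W$; hence $\phi_\Delta(F^e_*(I_W\cdot\mathcal L_{e,\Delta}))\subseteq I_W$. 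Conversely, taking $h_j=x_j^{p^e-1}\prod_{i\ne j}x_i^{p^e-1-c_i}$ makes $h_jx_jf=x_j^{2p^e-1}\prod_{i\ne j}x_i^{p^e-1}$, so $\phi_\Delta(F^e_*(h_jx_jf))=x_j$, and therefore $I_W\subseteq\phi_\Delta(F^e_*(I_W\cdot\mathcal L_{e,\Delta}))$; in fact equality holds for every such $e$. Combined with the sharp $F$-purity just established, this is exactly the statement that $W$ is an $F$-pure center of $(X,\Delta)$ in the sense of \cite{Schwede08}. (If $\lfloor\Delta\rfloor=0$ the only stratum, if one counts it, is $X$ itself, which is an $F$-pure center by the previous step.)

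I do not anticipate a genuine obstacle here: once the local dictionary is in place, the argument is the elementary monomial arithmetic above. The only points requiring care are the ones in the first paragraph — identifying $\phi_\Delta$, up to a unit, with the composition of $F^e_*\mathcal L_{e,\Delta}\hookrightarrow F^e_*\mathcal O_X((1-p^e)K_X)$ with the canonical trace and, after trivializing $\omega_X$, with the classical Cartier operator $T^e$ and its monomial formula; and checking that $\sigma$, the ideals $I_W$, and images of these Frobenius-twisted maps all commute with localization and completion at a closed point, so that computing inside $k[[x_1,\dots,x_n]]$ (where $F^e_*\mathcal O_X$ is free over $\mathcal O_X$ on the monomial basis since $k$ is perfect) is legitimate.
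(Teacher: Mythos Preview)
Your proposal is correct. The paper gives no argument at all for this lemma: it simply cites \cite[15.1]{FST11} (and \cite[6.18]{ST10}) for $\sigma(X,\Delta)=\mathcal O_X$, and \cite{HSZ10} together with \cite[3.8,~3.9]{Schwede08} for the $F$-pure center statement. Your direct monomial computation with the Cartier operator in local coordinates is exactly the standard way these cited results are proved, so you have in effect unpacked the references; the only thing the paper adds beyond your write-up is the pointer to where this computation appears in the literature.
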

\begin{proof} The first statement is \cite[15.1]{FST11} (also see \cite[6.18]{ST10}). For the rest, see the main result of \cite{HSZ10} and \cite[3.8, 3.9]{Schwede08} (see also \cite[3.5]{ST11}).
\end{proof} 
Assume $X$ is a proper variety. For any line bundle $M$ 
we define 
$$S^0(X,\sigma (X,\Delta )\otimes M)=$$
$$\bigcap _{n>0}{\rm Im}\left( H^0(X,F^{ne}_*(\sigma (X,\Delta )\otimes \mathcal L _{ne,\Delta}\otimes M^{p^{ne}}))\to H^0(X,\sigma (X,\Delta) \otimes M)\right).$$
Since $H^0(X, M)$ is a finitely dimensional vector space, we have $$S^0(X,\sigma (X,\Delta )\otimes M)={\rm Im}\left( H^0(X,F^{ne}_*( \mathcal L _{ne,\Delta}\otimes M^{p^{ne}}))\to H^0(X, M)\right)$$
for all $n\gg 0$.
Recall the following result (cf. \cite[5.1, 5.3]{Schwede11}).
\begin{prop}\label{KS} Fix $X$ a normal projective variety and suppose that $(X, \Delta )$ is a pair such that $K_X + \Delta$ is $\mathbb Q$-Cartier with index not divisible by $p > 0$. Suppose that $S \subset X$ is any union of $F$-pure centers of $(X,\Delta )$ and that $M$ is a Cartier divisor such that $M - K_X - \Delta$ is ample. Then there is a natural surjective map:
$$S^0(X,\sigma (X,\Delta)\otimes \mathcal O _X(M))\to S^0(S,\sigma (S,\phi _S^{\Delta})\otimes \mathcal O _S(M)).$$
\end{prop}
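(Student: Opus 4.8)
The plan is to reduce the statement to the definition of $S^0$ together with a Frobenius-compatible surjectivity for each individual twist $F^{ne}_*(\mathcal L_{ne,\Delta}\otimes \mathcal O_X(p^{ne}M))$, and then pass to the intersection over $n$. First I would recall that since $S$ is a union of $F$-pure centers of $(X,\Delta)$, the map $\phi_\Delta\colon F^e_*\mathcal L_{e,\Delta}\to \mathcal O_X$ is compatible with the ideal sheaf $\mathcal I_S$ of $S$ in the sense that $\phi_\Delta(F^e_*(\mathcal I_S\cdot\mathcal L_{e,\Delta}))\subset \mathcal I_S$; this is exactly what lets one form the restricted pair $(S,\phi_S^\Delta)$ with $\sigma(S,\phi_S^\Delta)$ obtained by restricting the non-$F$-pure ideal construction (this is the content of \cite{Schwede08, Schwede09} on adjunction for $F$-purity). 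Dualizing/restricting $\phi_\Delta$ along $0\to \mathcal I_S\to \mathcal O_X\to \mathcal O_S\to 0$ and twisting by $\mathcal O_X(M)$ gives, for each $n>0$, a commutative square

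\begin{equation*}
\begin{CD}
H^0\bigl(X,F^{ne}_*(\sigma(X,\Delta)\otimes\mathcal L_{ne,\Delta}\otimes\mathcal O_X(p^{ne}M))\bigr) @>>> H^0(X,\sigma(X,\Delta)\otimes\mathcal O_X(M))\\
@VVV @VVV\\
H^0\bigl(S,F^{ne}_*(\sigma(S,\phi_S^\Delta)\otimes\mathcal L_{ne,\phi_S^\Delta}\otimes\mathcal O_S(p^{ne}M))\bigr) @>>> H^0(S,\sigma(S,\phi_S^\Delta)\otimes\mathcal O_S(M)).
\end{CD}
\end{equation*}

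The key technical input is that the left vertical arrow is \emph{surjective} for every $n$. This follows from Serre vanishing applied to the twisted conormal/ideal sequence: one needs $H^1$ of the appropriate sheaf $\mathcal I_S\cdot\bigl(\text{stuff}\bigr)\otimes\mathcal O_X(p^{ne}M)$ to vanish, and because $M-K_X-\Delta$ is ample the relevant bundle becomes arbitrarily positive after applying $F^{ne}_*$ (equivalently, pulling back by $F^{ne}$ multiplies the ample part by $p^{ne}$), so the obstruction $H^1$ vanishes for all $n$ by Serre vanishing on the fixed projective variety $X$ — this uniformity in $n$ is precisely why ampleness of $M-K_X-\Delta$, rather than mere nefness, is assumed. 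I would cite \cite[5.1, 5.3]{Schwede11} for this surjectivity essentially verbatim; the hypotheses there ($M$ Cartier, $M-K_X-\Delta$ ample, $S$ a union of $F$-pure centers, index prime to $p$) match ours exactly.

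Granting the surjectivity of the left column, chase the diagram: given a section $t\in S^0(S,\sigma(S,\phi_S^\Delta)\otimes\mathcal O_S(M))$, by definition $t$ lies in the image of the bottom arrow for \emph{every} $n\gg0$, hence (lifting via the surjective left column) $t$ is the restriction of some $s_n\in H^0(X,\sigma(X,\Delta)\otimes\mathcal O_X(M))$ lying in the image of the top arrow for that $n$. The subtlety is that a priori $s_n$ depends on $n$, whereas membership in $S^0(X,\cdots)$ requires a single $s$ in the image of the top arrow for all $n$ simultaneously. This is handled by the finite-dimensionality remark already in the excerpt: the images of the top maps form a \emph{decreasing} chain of subspaces of the finite-dimensional space $H^0(X,\sigma(X,\Delta)\otimes\mathcal O_X(M))$, hence stabilize; so for $n\gg0$ the image equals $S^0(X,\sigma(X,\Delta)\otimes\mathcal O_X(M))$, and similarly on $S$. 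Thus picking $n$ large enough that both stabilizations have occurred, the lift $s_n$ automatically lies in $S^0(X,\sigma(X,\Delta)\otimes\mathcal O_X(M))$ and restricts to $t$. This shows the map $S^0(X,\cdots)\to S^0(S,\cdots)$ — which is well-defined because $\phi_\Delta$ is $S$-compatible, so the restriction $H^0(X,\sigma(X,\Delta)\otimes\mathcal O_X(M))\to H^0(S,\sigma(S,\phi_S^\Delta)\otimes\mathcal O_S(M))$ carries the stable image into the stable image — is surjective, which is the assertion.

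The main obstacle I expect is the compatibility bookkeeping in the first step: verifying that restricting $\phi_\Delta$ along $S$ really produces the map $\phi^{\phi_S^\Delta}$ computing $\sigma(S,\phi_S^\Delta)$, i.e. that the "different" $\phi_S^\Delta$ built by $F$-adjunction is the correct boundary and that $\sigma(\cdot)$ commutes with this restriction. In characteristic $0$ this is adjunction $(K_X+\Delta)|_S=K_S+\phi_S^\Delta$; in the $F$-singularity world it is the content of Schwede's work on $F$-adjunction and the behavior of non-$F$-pure ideals under restriction to $F$-pure centers, and one must be careful that $S$ may be reducible (a union of centers) and that everything is only canonical up to units. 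Once that is in place, the Serre-vanishing surjectivity and the finite-dimensional stabilization are routine, so I would spend most of the write-up on cleanly quoting \cite{Schwede08, Schwede11} for the adjunction and surjectivity, and on the short diagram chase above.
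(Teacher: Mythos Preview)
Your proposal is correct and matches the paper's approach exactly: the paper simply cites \cite[5.1, 5.3]{Schwede11}, and you have unpacked precisely that argument (compatibility of $\phi_\Delta$ with $\mathcal I_S$ from $F$-adjunction, Serre vanishing for the left vertical surjection, and finite-dimensional stabilization of the images). The only minor imprecision is that Serre vanishing gives surjectivity of the left column for $n\gg 0$ rather than literally every $n$, but this is harmless since $S^0$ is computed by the stabilized image.
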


We need the following result.
\begin{lemma}\label{l-cu2}
Let $(X,\Delta )$ be a sharply $F$-pure pair and $L$ an ample Cartier divisor. Assume the index of $K_X+\Delta$ is not divisible by $p$. Then there is an integer $m_0>0$ such that for any nef Cartier divisor $P$ and any integer $m\geq m_0$, we have $$S^0(X,\sigma (X,\Delta)\otimes \mathcal O _X (mL+P))=H^0(X, \mathcal O _X (mL+P)).$$
\end{lemma}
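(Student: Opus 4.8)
The plan is the following. Fix $e>0$ with $(p^e-1)(K_X+\Delta)$ Cartier, as in the definition of $\mathcal L_{\bullet,\Delta}$ and of $S^0$. Since $(X,\Delta)$ is sharply $F$-pure we have $\sigma(X,\Delta)=\mathcal O_X$, and taking $J=\mathcal O_X$ in the characterization of $\sigma$ shows that every iterate $\phi^{ne}_\Delta\colon F^{ne}_*\mathcal L_{ne,\Delta}\to\mathcal O_X$ of $\phi_\Delta$ is surjective. Writing $M=mL+P$ and using the projection formula $F^{ne}_*(\mathcal L_{ne,\Delta}\otimes\mathcal O_X(p^{ne}M))\cong F^{ne}_*\mathcal L_{ne,\Delta}\otimes\mathcal O_X(M)$, the definition of $S^0$ reduces the lemma to this: there is an integer $m_0>0$ such that for every $m\ge m_0$, every nef Cartier divisor $P$, and every $n>0$, the map
$$\rho^M_n\colon H^0\bigl(X,F^{ne}_*\mathcal L_{ne,\Delta}\otimes\mathcal O_X(M)\bigr)\longrightarrow H^0\bigl(X,\mathcal O_X(M)\bigr)$$
induced by $\phi^{ne}_\Delta$ is surjective; granting this, $S^0(X,\mathcal O_X(M))=\bigcap_n\operatorname{Im}\rho^M_n=H^0(X,\mathcal O_X(M))$.

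First I would settle the case $n=1$ by a vanishing theorem. Let $\mathcal K:=\ker\bigl(\phi_\Delta\colon F^e_*\mathcal L_{e,\Delta}\to\mathcal O_X\bigr)$, a fixed coherent sheaf on $X$. By Fujita's vanishing theorem, applied to $\mathcal K$ and the ample divisor $L$, there is an $m_1>0$ with $H^1\bigl(X,\mathcal K\otimes\mathcal O_X(mL+P)\bigr)=0$ for all $m\ge m_1$ and all nef Cartier $P$. Twisting $0\to\mathcal K\to F^e_*\mathcal L_{e,\Delta}\to\mathcal O_X\to 0$ by $\mathcal O_X(M)$ and passing to cohomology then shows that $\rho^M_1$ is surjective as soon as $m\ge m_1$.

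Next comes an induction on $n$, based on the standard factorization of the Frobenius trace map. From $\mathcal L_{ne,\Delta}\cong\mathcal L_{(n-1)e,\Delta}\otimes(F^{(n-1)e})^*\mathcal L_{e,\Delta}$ and the projection formula one obtains $F^{ne}_*\mathcal L_{ne,\Delta}\cong F^e_*\bigl(F^{(n-1)e}_*\mathcal L_{(n-1)e,\Delta}\otimes\mathcal L_{e,\Delta}\bigr)$, and under this identification $\phi^{ne}_\Delta=\phi_\Delta\circ F^e_*\bigl(\phi^{(n-1)e}_\Delta\otimes\mathrm{id}_{\mathcal L_{e,\Delta}}\bigr)$. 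Put $N:=(1-p^e)(K_X+\Delta)+p^eM$, so that $\mathcal O_X(N)=\mathcal L_{e,\Delta}\otimes\mathcal O_X(p^eM)$ and $H^0(X,\mathcal O_X(N))=H^0(X,F^e_*\mathcal L_{e,\Delta}\otimes\mathcal O_X(M))$. Tensoring the factorization with $\mathcal O_X(M)$, applying the projection formula again, taking global sections, and using $H^0(X,F^e_*\mathcal G)=H^0(X,\mathcal G)$, one identifies $\rho^M_n$ with the composite
$$H^0\bigl(X,F^{(n-1)e}_*\mathcal L_{(n-1)e,\Delta}\otimes\mathcal O_X(N)\bigr)\xrightarrow{\ \rho^N_{n-1}\ }H^0\bigl(X,\mathcal O_X(N)\bigr)\xrightarrow{\ \rho^M_1\ }H^0\bigl(X,\mathcal O_X(M)\bigr).$$
Now fix once and for all an integer $c>0$ with $cL+(1-p^e)(K_X+\Delta)$ ample, and set $m_0:=\max\{\,m_1,\ \lceil c/(p^e-1)\rceil\,\}$. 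If $m\ge m_0$ and $P$ is nef Cartier, then $N=(p^em-c)L+P'$ with $P':=cL+(1-p^e)(K_X+\Delta)+p^eP$ nef Cartier and $p^em-c\ge m_0$; hence the inductive hypothesis gives surjectivity of $\rho^N_{n-1}$, the case $n=1$ gives surjectivity of $\rho^M_1$, and therefore $\rho^M_n$ is surjective. This closes the induction and proves the lemma.

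The point requiring care — and the reason a naive argument does not work — is that the sheaves $F^{ne}_*\mathcal L_{ne,\Delta}$ grow with $n$, so no single application of Serre vanishing (or even of Fujita vanishing) can control all the maps $\rho^M_n$ simultaneously. The factorization above is precisely what lets one trade stage $n$ for stage $n-1$ at the cost of replacing $M$ by $N$, and the choice of $c$ is what guarantees that $N$ again lies in the region $\{\,m'L+(\text{nef Cartier})\ :\ m'\ge m_0\,\}$ to which the inductive hypothesis applies. Verifying this bookkeeping, together with the standard identity for $\phi^{ne}_\Delta$, is the only real content of the argument beyond the appeal to Fujita vanishing.
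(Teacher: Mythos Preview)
Your argument is correct and is essentially the standard proof (the one given in \cite[2.23]{Patakfalvi12}, to which the paper simply refers): Fujita vanishing for the fixed kernel $\mathcal K$ handles the base case, and the factorization $\rho^M_n=\rho^M_1\circ\rho^N_{n-1}$ together with the bookkeeping $N=(p^em-c)L+P'$ keeps you in the inductive region. The only remark is that the paper itself gives no independent argument here, so there is nothing to compare beyond noting that your write-up matches the cited source.
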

\begin{proof}See \cite[2.23]{Patakfalvi12}.
\end{proof}
\begin{cor}\label{c-cu2}
Let $(X,\Delta )$ be an snc pair and $L$ an ample Cartier divisor. Assume the index of $K_X+\Delta$ is not divisible by $p$. Then there is an integer $m_0>0$ such that for any nef Cartier divisor $P$ and any integer $m\geq m_0$, we have $$S^0(X,\sigma (X,\Delta)\otimes \mathcal O _X (mL+P))= H^0(X,  \mathcal O _X (mL+P-\lfloor (1-\epsilon)\Delta \rfloor ))$$
for $0<\epsilon\ll 1$.
\end{cor}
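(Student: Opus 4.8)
The plan is to peel off the ``integral part'' of $\Delta$ and reduce to Lemma~\ref{l-cu2}. Set $D:=\lfloor(1-\epsilon)\Delta\rfloor$; for $0<\epsilon\ll 1$ this stabilizes to the effective integral divisor $\sum_i(\lceil\delta_i\rceil-1)\Delta_i$, which is Cartier because $X$ is smooth, and put $\Delta_0:=\Delta-D$. Inspecting coefficients, $\Delta_0$ is again a $\mathbb{Q}$-divisor with simple normal crossing support all of whose coefficients lie in $(0,1]$ (a component of $\Delta$ with non-integral coefficient $\delta_i$ keeps the coefficient $\{\delta_i\}$, and one with a positive integral coefficient keeps the coefficient $1$), so by Lemma~\ref{l-sigma} the pair $(X,\Delta_0)$ is sharply $F$-pure, i.e.\ $\sigma(X,\Delta_0)=\mathcal{O}_X$. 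Since $D$ is Cartier, $K_X+\Delta_0=(K_X+\Delta)-D$ again has index not divisible by $p$, and either a direct local computation at the snc points of $\operatorname{Supp}\Delta$ or the standard twisting property of the non-$F$-pure ideal gives $\sigma(X,\Delta)=\mathcal{O}_X(-D)\cdot\sigma(X,\Delta_0)=\mathcal{O}_X(-D)$.

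Next I would establish the twisting identity
\[
S^0\bigl(X,\sigma(X,\Delta)\otimes\mathcal{O}_X(mL+P)\bigr)=S^0\bigl(X,\sigma(X,\Delta_0)\otimes\mathcal{O}_X(mL+P-D)\bigr).
\]
Unwinding the definition of $S^0$, for $n\gg 0$ the left-hand side is the image of the map
$H^0\bigl(X,F^{ne}_*(\sigma(X,\Delta)\otimes\mathcal{L}_{ne,\Delta}\otimes\mathcal{O}_X(p^{ne}(mL+P)))\bigr)\to H^0\bigl(X,\sigma(X,\Delta)\otimes\mathcal{O}_X(mL+P)\bigr)$
induced by $\phi_\Delta$. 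Now $\mathcal{L}_{ne,\Delta}=\mathcal{L}_{ne,\Delta_0}\otimes\mathcal{O}_X(-(p^{ne}-1)D)$, and $\phi_\Delta$ factors as $\phi_{\Delta_0}\circ F^{ne}_*(\iota)$, where $\iota\colon\mathcal{L}_{ne,\Delta}\hookrightarrow\mathcal{L}_{ne,\Delta_0}$ is multiplication by the $(p^{ne}-1)$-st power of the canonical section of $\mathcal{O}_X(D)$; feeding in $\sigma(X,\Delta)=\mathcal{O}_X(-D)$, $\sigma(X,\Delta_0)=\mathcal{O}_X$, the identity $\mathcal{O}_X(-p^{ne}D)=(F^{ne})^*\mathcal{O}_X(-D)$ and the projection formula, one checks that the source sheaf and the target sheaf above are identified with the analogous objects attached to $(X,\Delta_0)$ and the divisor $mL+P-D$, compatibly with the maps down to $\mathcal{O}_X$. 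The two images therefore agree.

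Finally I would apply Lemma~\ref{l-cu2} to the sharply $F$-pure pair $(X,\Delta_0)$ and the ample Cartier divisor $L$, getting an integer $m_1>0$ with the property stated there. Since $L$ is ample I may choose $k>0$ with $kL-D$ globally generated (hence nef); note that $k$ depends only on $X$, $L$, $\Delta$ and $\epsilon$, not on $P$. For any nef Cartier divisor $P$ the divisor $Q:=P+kL-D$ is then nef, so for every $m\geq m_0:=m_1+k$, writing $mL+P-D=(m-k)L+Q$, Lemma~\ref{l-cu2} yields
\[
S^0\bigl(X,\sigma(X,\Delta_0)\otimes\mathcal{O}_X(mL+P-D)\bigr)=H^0\bigl(X,\mathcal{O}_X(mL+P-D)\bigr).
\]
Combined with the twisting identity this is exactly the assertion, with $m_0$ independent of $P$ as required.

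I expect the twisting identity of the second step to be the main obstacle: although it is ultimately a bookkeeping exercise with the projection formula, one has to keep track of the maps $\phi_\Delta$ and $\phi_{\Delta_0}$ rather than just the underlying sheaves, and to pin down $\sigma(X,\Delta)=\mathcal{O}_X(-D)$ precisely; granting these compatibilities, what remains is a formal reduction to Lemma~\ref{l-cu2}.
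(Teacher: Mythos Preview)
Your proof is correct and follows essentially the same strategy as the paper: peel off $D=\lfloor(1-\epsilon)\Delta\rfloor$, reduce via the projection formula to the sharply $F$-pure pair $(X,\Delta-D)$, and apply Lemma~\ref{l-cu2}. The paper argues the two inclusions separately (using $(p^e-1)\Delta\geq p^eD$ for $e\gg 0$ to get $\subset$, rather than computing $\sigma(X,\Delta)=\mathcal O_X(-D)$ and proving your twisting identity as an equality), and it leaves implicit the step of absorbing $-D$ into the ample part before invoking Lemma~\ref{l-cu2}, which you spell out carefully.
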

\begin{proof} Let $M=\lfloor (1-\epsilon)\Delta \rfloor$. It is easy to see, using the projection formula, that 
$$S^0(X,\sigma (X,\Delta)\otimes \mathcal O _X (mL+P))\supset S^0(X,\sigma (X, \Delta-M)\otimes \mathcal O _X (mL+P-M )).$$
Since the coefficients of $\Delta-M$ are contained in $[0,1]$, we have $\sigma (X, \Delta-M)=\mathcal O _X$, i.e., $(X,\Delta-M)$ is a sharply $F$-pure pair. It then follows from \eqref{l-cu2} above that $$S^0(X,\sigma (X,\Delta-M)\otimes \mathcal O _X (mL+P-M))= H^0(X,  \mathcal O _X (mL+P-M )).$$

On the other hand, we have $(p^e-1)\Delta\ge p^eM$ for $e\gg  0$, thus by the projection formula again, we easily see that for any Cartier divisor $N$ we have $$S^0(X,\sigma (X,\Delta)\otimes \mathcal O _X (N))\subset S^0(X,\sigma(X,0)\otimes \mathcal O _X (N-M )),$$ and the reverse inclusion immediately follows. 
\end{proof}

Next we introduce a global version of strongly $F$-regular singularities.

\begin{defn}[{cf. \cite[3.1, 3.8]{SS10}}]

 Let $(X,\Delta)$ be a pair with a proper morphism $f:X\to T$ between normal varieties over an algebraically closed  field of characteristic $p>0$. Assume $X$ is normal and $\Delta$ is an effective $\Q$-divisor on $X$. The pair $(X,\Delta)$ is {\it globally $F$-regular over $T$} if for every effective divisor $D$, there exists some $e > 0$ 
such that the natural map 
$$\mathcal O_X \to F_*^e\mathcal O _X (\lceil (p^e - 1)\Delta\rceil  + D)$$ splits locally over $T$. 

When $T=X$, this definition coincides with the original definition of $(X,\Delta)$ being {\it strongly $F$-regular} and when $T={\rm Spec}(k)$, it coincides with the definition of a {\it globally $F$-regular} pair (cf. \cite{SS10}).

\end{defn}
The next result shows that the global $F$-regularity is a very restrictive condition.
\begin{thm}[Schwede-Smith]\label{t-SS10}If  $f:X\to T$ is a proper morphism of normal varieties  over an algebraically closed  field of characteristic $p>0$ and $(X,\Delta)$ is globally $F$-regular over $T$, then there is a $\Q$-divisor $\Delta '\geq \Delta $ such that the pair  $(X,\Delta ')$ is globally $F$-regular over $T$, $-(K_X+\Delta ')$ is ample over $T$ and the index of $K_X+\Delta '$ is not divisible by $p$.  
\end{thm}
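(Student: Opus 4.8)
The plan is to carry over to the relative setting the argument of Schwede--Smith \cite{SS10} showing that a globally $F$-regular projective variety is of Fano type. All of the constructions below are Zariski-local on $T$, so one works over a fixed affine open of $T$ and then globalizes exactly as in \cite{SS10}; in particular no vanishing theorems are used.

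The first step is to produce $\Delta'$ by dualizing a single $F$-splitting. Choose an effective divisor $A$ on $X$ which is ample over $T$, and apply the definition of global $F$-regularity over $T$ to $D=A$: there is an $e>0$ and a splitting, locally over $T$, of
$$\mathcal O_X\to F^e_*\mathcal O_X\big(\lceil (p^e-1)\Delta\rceil+A\big).$$
Using duality for the finite morphism $F^e$, namely $\mathcal Hom_{\mathcal O_X}(F^e_*\mathcal O_X(L),\mathcal O_X)\cong F^e_*\mathcal O_X((1-p^e)K_X-L)$ on the normal variety $X$, such a splitting is the same datum as a homomorphism $\phi\colon F^e_*\mathcal O_X(\lceil(p^e-1)\Delta\rceil+A)\to\mathcal O_X$ with $\phi(1)$ a unit, equivalently an effective divisor $G\ge 0$ with
$$(p^e-1)(K_X+\Delta)+\Delta_e+A+G\sim 0,\qquad \Delta_e:=\lceil (p^e-1)\Delta\rceil-(p^e-1)\Delta\ \ge 0 .$$
Now set $\Delta':=\Delta+\tfrac{1}{p^e-1}(\Delta_e+G)$. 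Then $\Delta'\ge\Delta$ is effective; $(p^e-1)\Delta'=\lceil(p^e-1)\Delta\rceil+G$ is an \emph{integral} Weil divisor, so $(p^e-1)(K_X+\Delta')\sim -A$ is Cartier, whence $K_X+\Delta'$ is $\mathbb Q$-Cartier of index dividing $p^e-1$, which is prime to $p$; and $-(K_X+\Delta')\sim_{\mathbb Q}\tfrac{1}{p^e-1}A$ is ample over $T$. Thus three of the four assertions are immediate from the construction, and the global existence of $\Delta'$ (as opposed to one defined only over opens of $T$) is the first point that has to be imported from the methods of \cite{SS10}.

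The remaining and main point is that $(X,\Delta')$ is again globally $F$-regular over $T$: enlarging $\Delta$ to $\Delta'$ is not formally compatible with the condition, and here the fact that $\Delta'-\Delta=\tfrac{1}{p^e-1}(\Delta_e+G)$ is exactly the divisor accounted for by the splitting $\phi$ must be exploited. Given an arbitrary effective divisor $D$ on $X$, one needs some $e'>0$ and a splitting, locally over $T$, of $\mathcal O_X\to F^{e'}_*\mathcal O_X(\lceil(p^{e'}-1)\Delta'\rceil+D)$. Take $e'=(m+\ell)e$. Iterating $\phi$ $m$ times gives, using that $(p^e-1)\Delta'$ is integral, a map $F^{me}_*\mathcal O_X\big(\lceil(p^{me}-1)\Delta'\rceil+k_m(A-G)\big)\to\mathcal O_X$ sending $1\mapsto 1$, where $k_m=(p^{me}-1)/(p^e-1)$; applying global $F$-regularity of $(X,\Delta)$ to a sufficiently large effective divisor $D^\sharp$ chosen to dominate the contributions of $D$, $G$ and the fractional parts, and then raising the resulting splitting to the level $\ell e$ (a splitting at one level persists at every multiple of it), one obtains a map $F^{\ell e}_*\mathcal O_X(\lceil(p^{\ell e}-1)\Delta\rceil+D^\sharp)\to\mathcal O_X$ sending $1\mapsto 1$; composing the Frobenius twist of the first map with the second, and precomposing with the inclusion coming from the divisor inequality $\lceil(p^{e'}-1)\Delta'\rceil+D\le k_m(A-G)+p^{me}(\lceil(p^{\ell e}-1)\Delta\rceil+D^\sharp)+(p^{me}-1)\Delta'$, yields the required splitting. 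The only delicate parts are verifying that last inequality of divisors (a bookkeeping exercise, where the ampleness of $A$ over $T$ is what allows $D^\sharp$ to absorb everything) and the passage between "splits locally over $T$" and globally — both handled precisely as in \cite{SS10}.
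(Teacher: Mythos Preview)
The paper's own proof is a one-line citation to \cite[4.3]{SS10} and its proof, so you are supplying considerably more detail than the paper does. Your construction of $\Delta'$ from a splitting with an ample $A$, and the verification that $\Delta'\ge\Delta$, that $(p^e-1)(K_X+\Delta')\sim -A$ so the index is prime to $p$, and that $-(K_X+\Delta')$ is relatively ample, are all correct and are exactly the steps in \cite{SS10}.

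The gap is in your final paragraph. The displayed divisor inequality
\[
\lceil(p^{e'}-1)\Delta'\rceil+D\le k_m(A-G)+p^{me}\bigl(\lceil(p^{\ell e}-1)\Delta\rceil+D^\sharp\bigr)+(p^{me}-1)\Delta'
\]
cannot be arranged by ``bookkeeping'': unwinding it, one needs roughly $p^{me}k_\ell\Delta_e+k_{m+\ell}G+D\le k_mA+p^{me}D^\sharp$, and the choice of $D^\sharp$ determines $\ell$ (via the $F$-regularity of $(X,\Delta)$), so there is a genuine circularity that coefficient-chasing does not resolve. More importantly, the r\^ole of the ampleness of $A$ is not to make a \emph{coefficient} inequality hold. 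The clean argument in \cite{SS10} is: the same splitting $\phi$, reinterpreted, splits $\mathcal O_X\to F^e_*\mathcal O_X\bigl((p^e-1)\Delta'+A\bigr)$ (since $(p^e-1)\Delta'+A=\lceil(p^e-1)\Delta\rceil+A+G$ and $\phi$ corresponds to the section with divisor $G$); thus $(X,\Delta')$ is sharply $F$-split with ample margin $A$, and one then invokes the one-ample-divisor criterion \cite[3.9]{SS10}, whose proof uses ampleness via \emph{linear equivalence} ($k_nA\sim D+(\text{effective})$ for $n\gg0$), not via an inequality of divisors. Since you ultimately defer both delicate points to \cite{SS10}, your proposal is in effect the same citation the paper gives, but the intermediate sketch misidentifies where the work lies.
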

\begin{proof} See \cite[4.3]{SS10} and its proof.
\end{proof}
We will need the following.
\begin{lemma}\label{l-GFR} Let $(X,\Delta)$ be a globally $F$-regular (over $T$) pair and $D$ an effective divisor. Then there exists a rational number $\epsilon >0$ such that $(X,\Delta+\epsilon D)$ is globally $F$-regular.
\end{lemma}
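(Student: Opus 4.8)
The plan is to reduce the statement to a short computation with round-ups, by invoking the standard characterization of global $F$-regularity over $T$ in terms of a single relatively ample divisor. Recall from \cite{SS10} that, given a proper morphism $f\colon X\to T$ of normal varieties, a pair $(X,\Gamma)$ is globally $F$-regular over $T$ if and only if there exist an integer $e_0>0$ and an effective divisor $A$ on $X$ that is ample over $T$ such that the natural map
$$\mathcal O_X\longrightarrow F^{e_0}_*\mathcal O_X\bigl(\lceil (p^{e_0}-1)\Gamma\rceil +A\bigr)$$
splits locally over $T$; here the ``only if'' direction is immediate (apply the definition to any effective divisor $A$ that is ample over $T$), while the ``if'' direction is the substantive point. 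We fix once and for all an effective divisor $H$ on $X$ that is ample over $T$; such an $H$ exists because, by Theorem \ref{t-SS10}, $(X,\Delta)$ can be enlarged to a pair with relatively ample anticanonical class, so that $X\to T$ is projective.

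With this in hand I would argue as follows. Let $m>0$ be an integer with $mD$ integral (take $m=1$ if $D$ is already integral). Applying the definition of global $F$-regularity over $T$ for $(X,\Delta)$ to the effective divisor $mD+H$, we obtain an integer $e_0>0$ and a local splitting over $T$ of
$$\mathcal O_X\longrightarrow F^{e_0}_*\mathcal O_X\bigl(\lceil (p^{e_0}-1)\Delta\rceil +mD+H\bigr).$$
Now set $\epsilon:=\frac{m}{p^{e_0}-1}>0$. Since $mD$ is integral we have $(p^{e_0}-1)\epsilon D=mD$, hence $\lceil (p^{e_0}-1)(\Delta+\epsilon D)\rceil=\lceil (p^{e_0}-1)\Delta\rceil+mD$, and the displayed map is precisely
$$\mathcal O_X\longrightarrow F^{e_0}_*\mathcal O_X\bigl(\lceil (p^{e_0}-1)(\Delta+\epsilon D)\rceil +H\bigr).$$
Since $H$ is effective and ample over $T$, the characterization recalled above, applied with $\Gamma=\Delta+\epsilon D$ and $A=H$, shows that $(X,\Delta+\epsilon D)$ is globally $F$-regular over $T$, as desired.

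The formal part of the argument is the round-up identity $\lceil (p^{e_0}-1)(\Delta+\epsilon D)\rceil=\lceil (p^{e_0}-1)\Delta\rceil+mD$, which works precisely because we choose $\epsilon$ only \emph{after} the exponent $e_0$ has been produced. The main obstacle is therefore entirely contained in the cited characterization of global $F$-regularity over $T$: a direct attempt (fix $\epsilon$ first, then apply the definition to $\epsilon D$) fails because the Frobenius power delivered by the definition depends on $\epsilon D$ and cannot be bounded, so one cannot compare $\lceil (p^{e}-1)\epsilon D\rceil$ with $D$. Removing this circularity requires the ``one relatively ample divisor suffices'' principle, whose proof in \cite{SS10} combines the local splitting along $A$ with strong $F$-regularity on the relatively affine open set $X\setminus\operatorname{Supp}(A)$ and the usual composition trick for Frobenius splittings; here we use it as a black box.
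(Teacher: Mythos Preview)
Your approach and the paper's both rest on \cite[3.9]{SS10}, but you have misstated that result, and the misstatement matters. Theorem~3.9 of \cite{SS10} does \emph{not} say that a single splitting along a relatively ample divisor characterizes global $F$-regularity; it says that if $\mathcal O_X\to F^{e_0}_*\mathcal O_X(\lceil(p^{e_0}-1)\Gamma\rceil+C)$ splits for some $e_0$ and some effective $C$, \emph{and} the pair $(X\setminus\operatorname{Supp}(C),\Gamma)$ is already globally $F$-regular, then $(X,\Gamma)$ is globally $F$-regular. The second clause is a hypothesis, not a by-product of the proof: a splitting along $A$ only shows that $(X\setminus A,\Gamma)$ is sharply $F$-pure, not strongly $F$-regular. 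In your application $\Gamma=\Delta+\epsilon D$ and $C=H$, so you would need $(X\setminus H,\Delta+\epsilon D)$ to be strongly $F$-regular; since your $H$ has no relation to $\operatorname{Supp}(D)$ and your $\epsilon=m/(p^{e_0}-1)$ is not a free small parameter, this is not automatic.

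The paper's proof avoids this by keeping $D$ itself in the test divisor. One applies the definition of global $F$-regularity of $(X,\Delta)$ to $D$, obtaining $e>0$ and a splitting of $\mathcal O_X\to F^{e}_*\mathcal O_X(\lceil(p^{e}-1)\Delta\rceil+D)$; this exhibits $(X,\Delta+\tfrac{1}{p^e-1}D)$ as globally sharply $F$-split, and then \cite[3.9]{SS10} applies with $\Gamma=\Delta+\epsilon D$ (for $\epsilon=\tfrac{1}{2(p^e-1)}$) and test divisor supported on $D$. The crucial point is that on $X\setminus\operatorname{Supp}(D)$ the boundary $\Delta+\epsilon D$ coincides with $\Delta$, so the required $F$-regularity on the complement is inherited directly from the hypothesis on $(X,\Delta)$. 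Your argument is salvaged by the same device: absorb only $(m-1)D$ into the boundary (so $\epsilon=(m-1)/(p^{e_0}-1)$) and take $C=D+H$; then $\operatorname{Supp}(D)\subset\operatorname{Supp}(C)$ and the second hypothesis of \cite[3.9]{SS10} holds. The auxiliary ample $H$ is then unnecessary, and what remains is exactly the paper's proof.
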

\begin{proof} Since $(X,\Delta)$ is globally $F$-regular, then  the map 
$$\mathcal O_X \to F_*^e\mathcal O _X (\lceil (p^e - 1)\Delta\rceil  + D)$$ splits for some $e>0$ and so $(X,\Delta+2\epsilon D)$ is globally sharply $F$-split where $2\epsilon =\frac 1 {p^e-1}$ (cf. \cite[3.1]{SS10}).
The claim is now immediate from \cite[3.9]{SS10} (applied with $C=\epsilon D)$.
\end{proof}
As in the definition of sharp $F$-purity, in this note, we will mostly work with the dual version of the above definition. 

\begin{lemma} Let $X$ be a normal variety. Let $E$ be an integral divisor. 
There is an isomorphism
{\small 
$$H^0(X, \mathcal{O}_X((1-p^e)K_X-E ))\cong {\rm Hom}_{\mathcal{O}_X}( F^e_*\mathcal{O}_X(E),\mathcal{O}_X).  $$
}
\end{lemma}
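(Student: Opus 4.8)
The plan is to deduce the isomorphism from Grothendieck duality for the finite morphism $F^e$; the one point requiring care is that the absolute Frobenius is not flat when $X$ is singular, so the duality has to be run on the regular locus and then transported back to $X$ by reflexivity.

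First I would record the elementary fact that the absolute Frobenius $F^e\colon X\to X$ is the identity on underlying topological spaces. Consequently, for every quasi-coherent sheaf $\mathcal G$ on $X$ one has $\Gamma(X,F^e_*\mathcal G)=\Gamma(X,\mathcal G)$; moreover $F^e_*$ commutes with restriction to open subsets and with pushforward along open immersions. Write $j\colon U\hookrightarrow X$ for the inclusion of the regular locus, so that $\mathrm{codim}_X(X\setminus U)\ge 2$ since $X$ is normal, and note that on $U$ the morphism $F^e\colon U\to U$ is finite and flat (Kunz), that $\mathcal O_U(E)$ is a line bundle, and that the relative dualizing sheaf is
$$\omega_{F^e}\ =\ \mathcal O_U\big((1-p^e)K_U\big),$$
since $(F^e)^*\mathcal O_U(K_U)\cong\mathcal O_U(p^eK_U)$ gives $K_{F^e}=K_U-(F^e)^*K_U\sim (1-p^e)K_U$.

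Over $U$, Grothendieck duality for the finite flat morphism $F^e$ (applied to the locally free sheaf $\mathcal O_U(E)$ and to $\mathcal O_U$) yields a natural isomorphism
$$\mathcal{H}om_{\mathcal O_U}\big(F^e_*\mathcal O_U(E),\,\mathcal O_U\big)\ \cong\ F^e_*\,\mathcal{H}om_{\mathcal O_U}\big(\mathcal O_U(E),\,\omega_{F^e}\big)\ \cong\ F^e_*\,\mathcal O_U\big((1-p^e)K_U-E\big).$$
I then extend this isomorphism over $X$. On the right, divisorial sheaves are reflexive, hence equal to $j_*$ of their restriction to $U$; combined with the fact that $F^e_*$ commutes with $j_*$, this gives $j_*\big(F^e_*\mathcal O_U((1-p^e)K_U-E)\big)=F^e_*\mathcal O_X((1-p^e)K_X-E)$. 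On the left, $\mathcal{H}om_{\mathcal O_X}(F^e_*\mathcal O_X(E),\mathcal O_X)$ is the dual of a coherent sheaf on a normal variety, hence reflexive, hence also equals $j_*$ of its restriction to $U$, which is $j_*\mathcal{H}om_{\mathcal O_U}(F^e_*\mathcal O_U(E),\mathcal O_U)$. Applying $j_*$ to the displayed isomorphism therefore produces an isomorphism of sheaves on $X$
$$\mathcal{H}om_{\mathcal O_X}\big(F^e_*\mathcal O_X(E),\,\mathcal O_X\big)\ \cong\ F^e_*\,\mathcal O_X\big((1-p^e)K_X-E\big),$$
and taking global sections, using $\Gamma(X,F^e_*-)=\Gamma(X,-)$ on the right, gives exactly the asserted isomorphism with $H^0(X,\mathcal O_X((1-p^e)K_X-E))$.

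The genuinely delicate step is the second one: because $F^e$ fails to be flat as soon as $X$ is singular, the clean finite-flat form of Grothendieck duality is only legitimate on $U$, and the whole argument rests on both sides of the identity being reflexive (equivalently $S_2$), so that passing to and from the codimension-$\ge 2$ open $U$ loses nothing. The auxiliary identifications — $\omega_{F^e}=\mathcal O_U((1-p^e)K_U)$ and the compatibilities of $F^e_*$ with restriction, with $j_*$, and with global sections — are routine but worth spelling out.
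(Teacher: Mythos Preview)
Your proof is correct and follows essentially the same approach as the paper: both arguments apply Grothendieck duality for the finite Frobenius on the smooth locus, then use that the relevant sheaves are $S_2$ (reflexive) on the normal variety $X$ to extend the sheaf isomorphism from $X_{\rm sm}$ to $X$, and finally take global sections. Your write-up is slightly more explicit about why one must restrict to the regular locus (Kunz) and about the identification $\omega_{F^e}\cong\mathcal O_U((1-p^e)K_U)$, but the substance is the same.
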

\begin{proof}Let $L=\mathcal{O}_X(E)$. We have the following equalities of sheaves
{\small
$$\mathcal{ H}om_{\mathcal{O}_X} (F^e_*L , \mathcal{O}_X )= F^e_*\mathcal{H}om_{\mathcal{O}_X} (L , (1-p^e)K_X) = F^e_*\mathcal{O}_X ((1-p^e )K_X-E).$$
}
In fact, as $X$ is normal and the above sheaves satisfy Serre's condition $S_2$, it suffices to check this along the smooth locus $X_{\rm sm}$, where it follows easily from  Grothendieck duality and the projection formula that
$$
\mathcal{ H}om_{\mathcal{O}_{X_{\rm sm}}} (F^e_*L , \mathcal{O}_X ) = \mathcal{ H}om_{\mathcal{O}_{X_{\rm sm}}} (F^e_*(L\otimes \omega _{X_{\rm sm}}^{p^e}), \omega_{X_{\rm sm}} )$$
 $$=  F^e_*\mathcal{ H}om_{\mathcal{O}_{X_{\rm sm}}} (L\otimes \omega _{X_{\rm sm}}^{p^e}, \omega_{X_{\rm sm}} )=F^e_*\mathcal{ H}om_{\mathcal{O}_{X_{\rm sm}}} (L, \omega_{X_{\rm sm}}^{\otimes (1-p^e)} ). 
$$
Taking global sections, we obtain the claim. 
\end{proof}
Applying the above lemma to $E=\lceil (p^e-1)\Delta\rceil +D$, it immediately follows that:
\begin{prop}\label{p-onto}Let $T={\rm Spec}(A)$ for some finitely generated $k$-algebra $A$ and $(X,\Delta)$ be a pair such that $X$ is proper over $T$. Then $(X,\Delta)$ is globally $F$-regular over $T$  if and only if for any effective divisor $D$, there is an integer $e\in \mathbb{N}$ and a surjection 
$$H^0(X,\mathcal{O}_X(\lfloor(1-p^e)(K_X+\Delta)\rfloor -D))\to H^0(X,\mathcal{O}_X)$$
induced by the above morphism. 
\end{prop}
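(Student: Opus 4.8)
The plan is to read off both directions from the displayed Lemma applied with $E=\lceil (p^e-1)\Delta\rceil +D$ (an effective integral divisor). First I would record the divisor identity
$$(1-p^e)K_X-E=(1-p^e)K_X-\lceil (p^e-1)\Delta\rceil-D=\lfloor (1-p^e)(K_X+\Delta)\rfloor-D,$$
valid because $(1-p^e)K_X$ is integral, so that the Lemma supplies a canonical isomorphism
$$H^0\bigl(X,\mathcal{O}_X(\lfloor (1-p^e)(K_X+\Delta)\rfloor-D)\bigr)\;\cong\;{\rm Hom}_{\mathcal{O}_X}\bigl(F^e_*\mathcal{O}_X(E),\mathcal{O}_X\bigr).$$
Unwinding the construction of this isomorphism (the Grothendieck duality and projection formula identifications used in the proof of the Lemma), the map in the statement becomes $\psi\mapsto\psi\circ\nu_e$, where $\nu_e\colon\mathcal{O}_X\to F^e_*\mathcal{O}_X(E)$ is the natural map, namely the $e$-th power of Frobenius followed by the inclusion $\mathcal{O}_X\hookrightarrow\mathcal{O}_X(E)$ coming from $E\ge 0$; equivalently it is $H^0$ of the sheaf map $\mathcal{H}om_{\mathcal{O}_X}(F^e_*\mathcal{O}_X(E),\mathcal{O}_X)\to\mathcal{H}om_{\mathcal{O}_X}(\mathcal{O}_X,\mathcal{O}_X)=\mathcal{O}_X$. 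As this is $H^0(X,\mathcal{O}_X)$-linear, it is surjective if and only if $1\in H^0(X,\mathcal{O}_X)$ lies in its image, i.e. if and only if there is a $\psi$ with $\psi\circ\nu_e=\mathrm{id}_{\mathcal{O}_X}$ — that is, if and only if $\nu_e$ splits as a map of $\mathcal{O}_X$-modules.

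Granting this, the statement reduces to: $(X,\Delta)$ is globally $F$-regular over $T$ if and only if for every effective $D$ there is an $e$ for which $\nu_e$ is a split $\mathcal{O}_X$-module injection. The implication ``$\Leftarrow$'' is immediate, since an $\mathcal{O}_X$-linear splitting of $\nu_e$ splits it in particular locally over $T$. For ``$\Rightarrow$'' I would upgrade a local-over-$T$ splitting to a global one by the standard affine-base argument: since $T={\rm Spec}(A)$ is quasi-compact, cover it by finitely many basic opens $D(h_i)$ over which $\nu_e$ splits, say by $\psi_i$; as $f_*\mathcal{H}om_{\mathcal{O}_X}(F^e_*\mathcal{O}_X(E),\mathcal{O}_X)$ is coherent over the affine $T$, a suitable multiple $h_i^{N_i}\psi_i$ extends to a global homomorphism $\widetilde\psi_i\colon F^e_*\mathcal{O}_X(E)\to\mathcal{O}_X$ with $\widetilde\psi_i\circ\nu_e=h_i^{N_i}$ (as global functions on the variety $X$, since they agree on the dense open $f^{-1}(D(h_i))$, and when that open is empty $X$ reduced forces $h_i|_X=0$ and one takes $\widetilde\psi_i=0$); then, as $h_1^{N_1},\dots,h_r^{N_r}$ still generate the unit ideal, choosing $b_i\in A$ with $\sum_i b_i h_i^{N_i}=1$ gives $\psi:=\sum_i b_i\widetilde\psi_i$ with $\psi\circ\nu_e=1$, a global splitting with the same $e$. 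This is essentially the argument underlying \cite[3.1]{SS10}.

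The step I expect to be the only genuine obstacle is this passage between ``splits locally over $T$'' and ``splits as $\mathcal{O}_X$-modules''; everything else is formal bookkeeping with the Lemma, with $F_*^e$, and with roundings of $\Q$-divisors. I would also double-check, against the proof of the Lemma, that the isomorphism it provides really is compatible with the ``precompose with $\nu_e$'' description of the induced map — a routine unwinding, but one worth doing explicitly.
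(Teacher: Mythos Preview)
Your proposal is correct and follows the same approach as the paper: apply the preceding Lemma with $E=\lceil (p^e-1)\Delta\rceil+D$, so the paper's proof consists of a single sentence pointing to this substitution. You supply considerably more detail than the paper does, in particular the passage from ``splits locally over the affine base $T$'' to ``splits as $\mathcal O_X$-modules'' via a partition-of-unity argument; this is a genuine (if standard) point, and note that it can be shortened by observing that the image of $f_*\mathcal{H}om_{\mathcal O_X}(F^e_*\mathcal O_X(E),\mathcal O_X)\to f_*\mathcal O_X$ is a coherent subsheaf containing $1$ at every stalk, hence contains $1$ globally since $T$ is affine.
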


\begin{prop}\label{p-bir} Let $(X,\Delta)$ be a globally $F$-regular pair (over $T$) and $f:X'\to X$ a proper birational morphism between normal varieties such that $f^*(K_X+\Delta )=K_{X'}+\Delta '$ where $\Delta '\geq 0$.
Then $(X',\Delta ')$ is globally $F$-regular.
\end{prop}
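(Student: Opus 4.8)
The plan is to verify the definition of global $F$-regularity for $(X',\Delta')$ directly, transporting $F$-splittings from $X$ to $X'$ and using the crepant identity $f^*(K_X+\Delta)=K_{X'}+\Delta'$ to control all the twists. Global $F$-regularity over $T$ is local on $T$, so I may assume $T=\operatorname{Spec}(A)$ is affine; over an affine base, $(X,\Delta)$ being globally $F$-regular over $T$ amounts to saying that for every effective divisor $D$ there is an $e>0$ for which the natural map $\mathcal O_X\to F^e_*\mathcal O_X(\lceil (p^e-1)\Delta\rceil+D)$ admits an $\mathcal O_X$-linear retraction (the map $\operatorname{Hom}_{\mathcal O_X}(F^e_*\mathcal O_X(\lceil(p^e-1)\Delta\rceil+D),\mathcal O_X)\to H^0(X,\mathcal O_X)$ being onto, it hits $1$). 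Next I would apply Theorem \ref{t-SS10} to replace $\Delta$ by some $\Delta^+\ge\Delta$ with $(X,\Delta^+)$ globally $F$-regular over $T$, with $-(K_X+\Delta^+)$ ample over $T$ (so that $X\to T$ is projective), and with $K_X+\Delta^+$ of index $m$ not divisible by $p$. Writing $K_{X'}+\Delta'^+=f^*(K_X+\Delta^+)$ one gets $\Delta'^+=\Delta'+f^*(\Delta^+-\Delta)\ge\Delta'\ge0$, and since any splitting for $\lceil(p^e-1)\Delta'^+\rceil+D'$ restricts along $F^e_*\mathcal O_{X'}(\lceil(p^e-1)\Delta'\rceil+D')\hookrightarrow F^e_*\mathcal O_{X'}(\lceil(p^e-1)\Delta'^+\rceil+D')$ to a splitting for $\lceil(p^e-1)\Delta'\rceil+D'$, it is enough to treat $(X',\Delta'^+)$. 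So I may assume $p\nmid m$, $-(K_X+\Delta)$ ample over $T$, $K_{X'}+\Delta'=f^*(K_X+\Delta)$ and $\Delta'\ge0$; then for every $e$ divisible by the order of $p$ modulo $m$ the $\Q$-divisors $(p^e-1)\Delta$ and $(p^e-1)\Delta'$ are integral, $L:=(1-p^e)(K_X+\Delta)$ is Cartier, and $f^*L=(1-p^e)(K_{X'}+\Delta')$.

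Given an effective divisor $D'$ on $X'$, set $I:=f_*\mathcal O_{X'}(-D')\subseteq f_*\mathcal O_{X'}=\mathcal O_X$, a nonzero coherent ideal sheaf. Twisting $I$ by a large multiple $N$ of the ample Cartier divisor $H:=-m(K_X+\Delta)$ and invoking Serre, $I\otimes\mathcal O_X(NH)$ is globally generated over the affine $T$, hence has a nonzero section, which produces an effective Cartier divisor $\bar D\in|NH|$ with $\mathcal O_X(-\bar D)\subseteq I$. Applying global $F$-regularity of $(X,\Delta)$ to $\bar D$ gives a splitting at some level $e_0$, and by the usual Frobenius iteration (the splitting persists at any multiple $e$ of $e_0$, using $\tfrac{p^e-1}{p^{e_0}-1}\lceil(p^{e_0}-1)\Delta\rceil\ge\lceil(p^e-1)\Delta\rceil$) I may take $e$ to be a common multiple of $e_0$ and the order of $p$ modulo $m$, obtaining an $\mathcal O_X$-linear $\phi\colon F^e_*\mathcal O_X((p^e-1)\Delta+\bar D)\to\mathcal O_X$ splitting the natural map. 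By the lemma above identifying $H^0(X,\mathcal O_X((1-p^e)K_X-E))$ with $\operatorname{Hom}_{\mathcal O_X}(F^e_*\mathcal O_X(E),\mathcal O_X)$, the map $\phi$ is the same datum as a section $s\in H^0(X,\mathcal O_X(L-\bar D))$. Now the projection formula gives
$$\mathcal O_X(L-\bar D)=\mathcal O_X(L)\otimes\mathcal O_X(-\bar D)\subseteq\mathcal O_X(L)\otimes I=f_*\mathcal O_{X'}(f^*L-D'),$$
so $s$ is also a section of $\mathcal O_{X'}(f^*L-D')$ on $X'$; since $f^*L-D'=(1-p^e)K_{X'}-(p^e-1)\Delta'-D'$, the same duality on $X'$ turns $s$ into an $\mathcal O_{X'}$-linear map $\phi'\colon F^e_*\mathcal O_{X'}(\lceil(p^e-1)\Delta'\rceil+D')\to\mathcal O_{X'}$.

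It remains to check that $\phi'$ retracts the natural map, and this is the one delicate point. Let $U\subseteq X$ be the dense open locus over which $f$ is an isomorphism, and fix $K_{X'}$ so that $K_{X'}$ and $\Delta'$ restrict, over $f^{-1}U\cong U$, to $K_X$ and $\Delta$; note $\bar D|_U\ge D'|_{f^{-1}U}$ since $\mathcal O_X(-\bar D)|_U\subseteq f_*\mathcal O_{X'}(-D')|_U$. Over $U$ the passages $s\mapsto\phi$ and $s\mapsto\phi'$ are the two instances of the duality lemma for the twisting divisors $(p^e-1)\Delta+\bar D$ and $(p^e-1)\Delta+D'$, and compatibility of that lemma with the inclusion of twists shows that $\phi'|_{f^{-1}U}$ equals $\phi|_U$ precomposed with the inclusion $F^e_*\mathcal O_U((p^e-1)\Delta+D')\hookrightarrow F^e_*\mathcal O_U((p^e-1)\Delta+\bar D)$; composing further with the natural map yields $\phi'|_{f^{-1}U}\circ(\text{natural map})=\phi|_U\circ(\text{natural map})=\mathrm{id}$. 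Hence $\phi'\circ(\text{natural map})\in H^0(X',\mathcal O_{X'})$ restricts to $1$ on the dense open $f^{-1}U$, so it equals $1$ because $X'$ is integral, i.e. $\phi'$ splits the natural map. As $D'$ was arbitrary, $(X',\Delta')$ is globally $F$-regular over $T$. The genuinely delicate part of the argument is exactly this bookkeeping: using the crepant hypothesis together with $p\nmid m$ to arrange that $f^*L=(1-p^e)(K_{X'}+\Delta')$, so that $s$ maps to a homomorphism out of \emph{precisely} $F^e_*\mathcal O_{X'}(\lceil(p^e-1)\Delta'\rceil+D')$, and matching up the two duality identifications over the isomorphism locus $U$.
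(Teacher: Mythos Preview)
Your proof is correct and follows essentially the same approach as the paper: reduce to the case where $p$ does not divide the index of $K_X+\Delta$ via Theorem~\ref{t-SS10}, find a Cartier divisor on $X$ whose pullback dominates the given $D'$ on $X'$, and transport the splitting via the projection formula. The paper is more concise because it invokes Proposition~\ref{p-onto} (the dual formulation of global $F$-regularity as surjectivity of $H^0(X,\mathcal O_X((1-p^e)(K_X+\Delta)-D))\to H^0(X,\mathcal O_X)$) directly, whereas you unwind this by hand---constructing $\bar D$ explicitly from the ample $-m(K_X+\Delta)$ and checking the transported map is a splitting over the isomorphism locus. Your extra care is not wasted, but it amounts to reproving part of Proposition~\ref{p-onto} inline; once you accept that proposition, the projection formula gives the equivalence of the two surjections in one line, and the factoring through $H^0(X',\mathcal O_{X'}((1-p^e)(K_{X'}+\Delta')-D'))$ is immediate from $f^*\bar D\ge D'$.
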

\begin{proof}  Assume that the index of $K_X+\Delta$ is not divisible by $p>0$. Let $D'$ be an effective divisor on $X'$ and pick $D$ a Cartier divisor on $X$ such that $f^*D\geq D'$. Since $(X,\Delta)$ is globally $F$-regular, we have a surjection 
$$H^0(X,\mathcal{O}_X((1-p^e)(K_X+\Delta) -D))\to H^0(X,\mathcal{O}_X)$$ for $e>0$ sufficiently divisible. 
By the projection formula, this is equivalent to the surjection 
$$H^0(X',\mathcal{O}_{X'}((1-p^e)(K_{X'}+\Delta ') -f^*D))\to H^0(X',\mathcal{O}_{X'})$$
which factors through  $H^0(X',\mathcal{O}_{X'}((1-p^e)(K_{X'}+\Delta ') -D'))$. Thus $(X',\Delta ')$ is globally $F$-regular.

To see the general case, note that we may work locally over $T$ and hence we may assume that there is a $\Q$-divisor $\Delta _1\geq \Delta$ such that $(X,\Delta _1)$ is  a globally $F$-regular pair (over $T$).
It then follows that $f^*(K_X+\Delta _1)=K_{X'}+\Delta '_1$ where $\Delta '_1\geq 0$ and  $(X',\Delta ' _1)$ is  globally $F$-regular. But then $(X',\Delta ' )$ is  globally $F$-regular since $\Delta '\leq \Delta '_1$.
\end{proof}
We will need the following easy consequence.
\begin{lemma}\label{l-gtos}
Let $f:X\to T$ be a proper birational morphism between normal varieties such that $(X,\Delta)$ is globally $F$-regular over $T$. Then $(T,\Delta_T=f_*\Delta)$ is strongly $F$-regular. 
\end{lemma}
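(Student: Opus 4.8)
The plan is to reduce to a statement local on $T$, transport the splittings that witness global $F$-regularity over $T$ across the open set where $f$ is an isomorphism, and then extend them over the complement using reflexivity. Since strong $F$-regularity is local on $T$, I would pass to an affine open cover and assume $T=\mathrm{Spec}\,A$ with $A$ a normal domain, so that $\Delta_T=f_*\Delta$ is an effective $\Q$-divisor on $T$. By the definition of global $F$-regularity applied with base $T$ and structure morphism the identity, it then suffices to show: for every effective divisor $D_T$ on $T$ there is an $e>0$ such that the natural map $\mathcal O_T\to F^e_*\mathcal O_T(\lceil(p^e-1)\Delta_T\rceil+D_T)$ splits, locally on $T$.

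The geometric input I need is that, $T$ being normal and $f$ proper and birational, there is an open $U\subseteq T$ with $\mathrm{codim}_T(T\setminus U)\ge 2$ over which $f$ is an isomorphism: at any codimension-one point $\xi$ of $T$ the local ring is a DVR, and a proper birational morphism of integral schemes over the spectrum of a DVR is an isomorphism, so $f$ is an isomorphism in a neighbourhood of $\xi$. Now fix an effective $D_T$ on $T$, let $D$ be its strict transform on $X$, and note that over $f^{-1}(U)$ the divisors $E:=\lceil(p^e-1)\Delta\rceil+D$ and $\lceil(p^e-1)\Delta_T\rceil+D_T$ correspond to one another under $f^{-1}(U)\cong U$, since $D$ and $\Delta$ restrict there to $D_T$ and $\Delta_T$. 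Because $(X,\Delta)$ is globally $F$-regular over $T$, there is $e>0$ and, locally on $T$, a splitting $\psi\colon F^e_*\mathcal O_X(E)\to\mathcal O_X$ of the natural map. Restricting $\psi$ over $f^{-1}(U)$ and transporting it along the isomorphism $f^{-1}(U)\cong U$ — which makes sense because the absolute Frobenius commutes with isomorphisms — yields, locally on $T$, a splitting $\psi_0$ of $\mathcal O_U\to F^e_*\mathcal O_U(\lceil(p^e-1)\Delta_T\rceil+D_T)$.

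It remains to extend $\psi_0$ over the codimension $\ge 2$ set $T\setminus U$ and to check that the extension is still a splitting. The sheaf $\mathcal{H}om_{\mathcal O_T}\!\big(F^e_*\mathcal O_T(\lceil(p^e-1)\Delta_T\rceil+D_T),\,\mathcal O_T\big)$ is the dual of a coherent sheaf on the normal variety $T$, hence reflexive and in particular $S_2$, so $\psi_0$ extends uniquely to a homomorphism $\psi_T\colon F^e_*\mathcal O_T(\lceil(p^e-1)\Delta_T\rceil+D_T)\to\mathcal O_T$ over the same open subset of $T$. Composing $\psi_T$ with the natural map gives a regular function there which restricts to $1$ on the dense open $U$, hence equals $1$ by normality; thus $\psi_T$ is a splitting. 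Letting $D_T$ vary and ranging over the cover shows that $(T,\Delta_T)$ is strongly $F$-regular. I expect the one delicate point to be exactly this last step — that the reflexive extension of a locally defined splitting is again a splitting — but it is forced by the fact that every sheaf in sight is reflexive on the normal variety $T$ and therefore determined by its restriction to $U$. (Alternatively, after the reduction to the affine case one can run the same comparison through the surjectivity criterion of \eqref{p-onto}: with $D$ the strict transform of $D_T$ and compatible canonical divisors $f_*K_X=K_T$, comparing orders of vanishing along prime divisors gives $f_*\mathcal O_X\big(\lfloor(1-p^e)(K_X+\Delta)\rfloor-D\big)\subseteq\mathcal O_T\big(\lfloor(1-p^e)(K_T+\Delta_T)\rfloor-D_T\big)$, and the two evaluation maps to $H^0(\mathcal O)$ agree over $U$, so surjectivity of the one on $X$ forces surjectivity of the one on $T$.)
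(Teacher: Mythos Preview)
Your proof is correct. Your main argument and the paper's proof both ultimately rest on the same fact---that $f$ is an isomorphism over a big open $U\subset T$ and the relevant sheaves on $T$ are reflexive---but they package it differently. The paper works entirely through the dual criterion (your \eqref{p-onto}): it replaces $D_T$ by a Cartier $D'\ge D_T$, uses global $F$-regularity over affine $T$ to get a surjection
\[
H^0\big(X,\mathcal O_X((1-p^e)(K_X+\Delta)-f^*D')\big)\twoheadrightarrow H^0(X,\mathcal O_X),
\]
observes the inclusion $f_*\mathcal O_X((1-p^e)(K_X+\Delta)-f^*D')\subset \mathcal O_T((1-p^e)(K_T+\Delta_T)-D')$ (which is itself a reflexivity/big-open argument, just left implicit), and then factors through $D_T\le D'$. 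This is exactly your parenthetical alternative. Your primary route instead stays with the splitting definition, transports the splitting across $f^{-1}(U)\cong U$, and extends by $S_2$-ness of the $\mathcal{H}om$ sheaf. The paper's version is a touch slicker because pulling back a Cartier $D'$ avoids having to track strict transforms and makes the sheaf comparison a one-liner; your version is more transparent about where the geometry enters. Either is fine.
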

\begin{proof}We may assume that $T$ is affine. Let $D$ be an effective divisor on $T$.
Pick $D'\geq D$ such that $D'$ is  Cartier.
 Since $(X,\Delta)$ is globally $F$-regular over $T$, we have a surjection  $$H^0(X,F_*^e\mathcal{O}_X((1-p^e)(K_X+\Delta) -f^*D'))\to H^0(X,\mathcal{O}_X).$$
Since  $H^0(X,F_*^e\mathcal{O}_X((1-p^e)(K_X+\Delta) -f^*D'))= H^0(T,f_*F_*^e\mathcal{O}_X((1-p^e)(K_X+\Delta) -f^*D'))$ is contained in $H^0(T,F_*^e\mathcal O_T((1-p^e)(K_T+\Delta_T) -D'))$, we have surjections
$$ F_*^e\mathcal{O}_T((1-p^e)(K_T+\Delta_T) -D')\to \mathcal{O}_T.$$
Since the above map factors through $F_*^e\mathcal{O}_T((1-p^e)(K_T+\Delta_T) -D)$, $(T,\Delta_T=f_*\Delta)$ is strongly $F$-regular. 
\end{proof}
We will also need the following well known perturbation lemma (cf. \cite[3.15]{Patakfalvi12}).
\begin{lem}\label{l-perturbation}
 Let $(X,\Delta)$ be a log pair, $E \geq 0$ a divisor such that $E-K_X$ is Cartier and $H={\rm Supp}(E+\Delta )$. Then for any $\epsilon >0$, we can find an effective $\Q$-Cartier divisor $D\le \epsilon H$ such that 
the $\mathbb{Q}$-Cartier index of $K_X+\Delta+D$ is not divisible by $p$.
\end{lem}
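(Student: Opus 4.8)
The plan is to use the hypothesis on $E$ to trade the (possibly non-effective) divisor $K_X+\Delta$ for the honest effective divisor $E+\Delta$, whose support is exactly $H$ and which can therefore be scaled freely while staying $\le\epsilon H$. Set $C:=E-K_X$, a Cartier divisor by hypothesis, so that $E+\Delta=(K_X+\Delta)+C$. Since $(X,\Delta)$ is a log pair, $K_X+\Delta$ is $\Q$-Cartier, and as $C$ is Cartier the divisor $E+\Delta$ is then also $\Q$-Cartier with exactly the same $\Q$-Cartier index $r$ as $K_X+\Delta$; moreover, for any $\Q$-Cartier divisor $D$ the divisors $K_X+\Delta+D$ and $E+\Delta+D$ differ by the Cartier divisor $C$ and hence have the same $\Q$-Cartier index. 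So it suffices to produce an effective $\Q$-Cartier $D\le\epsilon H$ with $E+\Delta+D$ of index prime to $p$.

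Write $r=p^a s$ with $p\nmid s$. If $a=0$ (in particular if $E+\Delta=0$, in which case $K_X$ is already Cartier) we are done with $D=0$, so assume $a\ge1$. I would then take
$$D:=\frac{1}{p^e-1}(E+\Delta)\qquad\text{for }e\gg 0,$$
so that $E+\Delta+D=\tfrac{p^e}{p^e-1}(E+\Delta)$. This $D$ is effective, is $\Q$-Cartier because $E+\Delta$ is, has support contained in $H$, and each of its coefficients is $\tfrac{1}{p^e-1}$ times the corresponding (positive, and finitely many) coefficient of $E+\Delta$, hence is $\le\epsilon$ once $p^e-1$ exceeds the largest coefficient of $E+\Delta$ divided by $\epsilon$; thus $D\le\epsilon H$.

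It remains to check that $\tfrac{p^e}{p^e-1}(E+\Delta)$ has index prime to $p$ for $e\gg0$. For this I would use the elementary fact that if a $\Q$-Cartier divisor $\Theta$ has index $r$ and $\gcd(b,c)=1$, then $\tfrac bc\Theta$ has index $\tfrac{cr}{\gcd(b,r)}$. Applying this with $b=p^e$, $c=p^e-1$ (coprime) and $\Theta=E+\Delta$ of index $r=p^as$, and using $\gcd(p^e,p^as)=p^a$ for $e\ge a$, we get that $\tfrac{p^e}{p^e-1}(E+\Delta)$ has index exactly $s(p^e-1)$, which is not divisible by $p$ since $p\nmid s$ and $p^e-1\equiv-1\pmod p$. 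By the first paragraph, $K_X+\Delta+D$ has the same index $s(p^e-1)$, finishing the proof.

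The argument is short, and the only real idea is the use of $E$: one cannot perturb $K_X+\Delta$ by a small multiple of itself, since $K_X+\Delta$ need not be effective, whereas $E+\Delta$ is an effective substitute, supported on $H$, lying in the same class modulo ${\rm Pic}(X)$, and hence with the same $\Q$-Cartier index after any $\Q$-Cartier perturbation. The step requiring the most care is the index computation, which must be an exact equality rather than a mere divisibility, since we need the final index to be prime to $p$; fortunately the formula above gives it on the nose.
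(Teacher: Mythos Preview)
Your choice of $D=\frac{1}{p^e-1}(E+\Delta)$ is exactly the paper's, and your reduction via $C=E-K_X$ to the index of $\frac{p^e}{p^e-1}(E+\Delta)$ is a clean repackaging of the same computation. So the approach is essentially identical.

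One correction, though: your ``elementary fact'' that $\frac{b}{c}\Theta$ has index \emph{exactly} $\frac{cr}{\gcd(b,r)}$ is false in general. Take $X$ smooth and $\Theta=2P$ for a point $P$: then $r=1$, but $\frac{1}{2}\Theta=P$ has index $1$, not $2$. What is true, and what you actually need, is that the index of $\frac{b}{c}\Theta$ \emph{divides} $\frac{cr}{\gcd(b,r)}$, since $\frac{cr}{\gcd(b,r)}\cdot\frac{b}{c}\Theta=\frac{b}{\gcd(b,r)}\cdot r\Theta$ is an integer multiple of the Cartier divisor $r\Theta$. Applied here, the index of $\frac{p^e}{p^e-1}(E+\Delta)$ divides $s(p^e-1)$, and that is already prime to $p$; hence so is the index. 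Your closing remark that ``this must be an exact equality rather than a mere divisibility'' has it backwards: showing the index divides a number prime to $p$ is precisely what is required, and exact equality is neither needed nor available.
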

\begin{proof} Let $A$ be a Cartier divisor which is linearly equivalent to $E-K_X$. Assume that $mp^{e_0}(K_X+\Delta)$ is Cartier, where $p\nmid m\in \mathbb{N}$. Pick $e> e_0$ such that $\frac{1}{p^{e}-1}(\Delta+E)\le \epsilon H$ and let $D=\frac{1}{p^{e}-1}(\Delta+E)$. Then 
\begin{eqnarray*}
m(p^{e}-1)(K_X+\Delta+D) & = & m(p^{e}-1)(K_X+\frac{p^{e}}{p^{e}-1}  \Delta+\frac{1}{p^{e}-1}E)\\
 &=&mp^{e}\Delta+m(p^{e}-1)K_X+mE\\
 &\sim&mp^{e}\Delta+m(p^{e}K_X+A),
\end{eqnarray*}
is Cartier. 
\end{proof}
\subsection{Stabilization of $S^0$ in the relative case}
The results of this section were communicated to us by Karl Schwede (cf. \cite{Schwede13}). We thank him for allowing us to include them in this note.

Suppose that $(X, \Delta \geq 0  )$ is a pair such that $L_{g,\Delta }=(1-p^g )(K_X + \Delta )$ is Cartier for some $g>0$,  $f : X \to  Y$ is a projective morphism with $Y$ normal and $M$ is a Cartier divisor on $X$.
\begin{defn}With the above notation, $S^0f_*(\sigma (X,\Delta ) \otimes \mathcal O_X(M))$ is defined to be the intersection:
$$\bigcap _{e\geq 0}{\rm Image}\left( {\rm Tr}^{eg}F^{eg}_*f_*\mathcal O _X((1-p^{eg})(K_X +\Delta )+p^{eg}M)\to f_*\mathcal O_X (M)\right) $$
 which is more compactly written as $$\bigcap _{e\geq 0}{\rm Image}\left( {\rm Tr}^{eg}F^{eg}_*f_*\mathcal O _X(L_{eg,\Delta }+p^{eg}M)\to f_*\mathcal O_X (M)\right) .$$
\end{defn}
This intersection is a descending intersection, so a priori it needs not stabilize. Therefore, it is unclear if it is a coherent sheaf. At least when $M - K_X - \Delta$ is ample, we show now that this is the case.
\begin{prop}\label{p-stab}
If $M - K_X - \Delta$ is $f$-ample, then
$$S^0f_*(\sigma (X,\Delta ) \otimes \mathcal O_X(M))={\rm Image}\left( {\rm Tr}^{eg}F^{eg}_*f_*\mathcal O _X(L_{eg,\Delta }+p^{eg}M)\to f_*\mathcal O_X (M)\right) $$ for $e\gg 0$. In particular, it is a coherent sheaf.
\end{prop}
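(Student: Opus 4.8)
The plan is to show that the descending chain of images stabilizes by reducing, after twisting by a sufficiently positive line bundle, to a statement about global generation and then applying the standard Serre-type argument together with the surjectivity statement of Proposition \ref{KS} (in its relative form). First I would set $e_0 = g$ and consider, for each $e \geq 1$, the sheaf map
$${\rm Tr}^{eg}\colon F^{eg}_*f_*\mathcal O_X(L_{eg,\Delta}+p^{eg}M)\longrightarrow f_*\mathcal O_X(M),$$
whose image I denote by $\mathcal I_e$; these form a descending chain $\mathcal I_1\supseteq \mathcal I_2\supseteq \cdots$ of subsheaves of the coherent sheaf $f_*\mathcal O_X(M)$. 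The key point is that the trace map is compatible with composition of Frobenius powers, so that $\mathcal I_{e+e'}$ is the image of $F^{eg}_*\big(\mathcal I_{e'}\otimes \mathcal L_{eg,\Delta}\big)$ under ${\rm Tr}^{eg}$ (using the projection formula to absorb the twist), and therefore it suffices to find a single $e$ for which $\mathcal I_{e+1}=\mathcal I_e$, since then the chain is constant from $e$ on.

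Next I would use $f$-ampleness of $M-K_X-\Delta$. Write $N = M - K_X - \Delta$, which is $f$-ample. Working locally on $Y$ (so we may assume $Y$ affine, and replace ``$f$-ample'' by genuine ampleness after a further twist, or simply invoke the relative statements), I would choose a Cartier divisor $A$ on $Y$ ample enough that, for the finitely many relevant sheaves, all higher direct images under $f$ of the twists by $p^{eg}N$ vanish and the pushforwards become globally generated; concretely, $L_{eg,\Delta}+p^{eg}M = p^{eg}M + (1-p^{eg})(K_X+\Delta) = M + (p^{eg}-1)N$, so the sheaves in question are $f_*\mathcal O_X\big(M+(p^{eg}-1)N\big)$, and because $N$ is $f$-ample these are eventually ``large'' in the usual sense. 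The surjectivity I really want is that of Proposition \ref{KS}: since $S$ there can be taken empty, the statement specializes to saying that when $M-K_X-\Delta$ is ample the map $H^0(X, F^{ne}_*(\mathcal L_{ne,\Delta}\otimes \mathcal O_X(p^{ne}M)))\to H^0(X,\mathcal O_X(M))$ has image equal to $S^0(X,\sigma(X,\Delta)\otimes\mathcal O_X(M))$ and already stabilizes; the relative version is obtained by applying this fibrewise or, more robustly, by the Serre-vanishing/coherence argument sketched above. The main obstacle is making the passage from the known \emph{absolute} stabilization (Lemma \ref{l-cu2}, Proposition \ref{KS}) to the \emph{relative} statement rigorous: one must check that the Frobenius trace maps commute with $f_*$ in the needed sense and that $f$-ampleness of $N$ is enough to guarantee, uniformly in $e$, the relative global generation that forces $\mathcal I_{e+1}=\mathcal I_e$ — this is where one genuinely uses relative Serre vanishing applied to the divisors $M+(p^{eg}-1)N$.

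Finally, once $\mathcal I_{e}=\mathcal I_{e+1}$ for some $e$, the compatibility of traces gives $\mathcal I_{e}=\mathcal I_{e'}$ for all $e'\geq e$, hence the descending intersection $S^0f_*(\sigma(X,\Delta)\otimes \mathcal O_X(M))$ equals $\mathcal I_e$, which is the image of a map of coherent sheaves and therefore coherent. This yields exactly the displayed formula in the statement. I expect the bookkeeping with $g$ versus $eg$ (i.e.\ that the index of $K_X+\Delta$ may force us to iterate Frobenius in steps of size $g$) to be a minor nuisance but not a real difficulty, since the semigroup generated by $g$ is cofinal in $\mathbb N$ in the only sense that matters here.
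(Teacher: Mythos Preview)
Your proposal has the right overall shape (descending chain of images, trace factorization, relative Serre vanishing) but there is a genuine gap in the reduction step and the key object is missing.

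The claim ``$\mathcal I_{e+e'}$ is the image of $F^{eg}_*(\mathcal I_{e'}\otimes\mathcal L_{eg,\Delta})$ under ${\rm Tr}^{eg}$'' does not parse: $\mathcal I_{e'}$ lives on $Y$ while $\mathcal L_{eg,\Delta}$ lives on $X$, and even interpreted charitably the factorization of traces only tells you that $\mathcal I_{e+e'}$ is the image under ${\rm Tr}^{e'g}$ of a certain subsheaf of $F^{e'g}_*f_*\mathcal O_X(L_{e'g,\Delta}+p^{e'g}M)$, not of something built from $\mathcal I_{e'}\subset f_*\mathcal O_X(M)$. There is no $p^{-g}$-linear endomorphism of $f_*\mathcal O_X(M)$ whose iterates produce the $\mathcal I_e$, so the implication ``$\mathcal I_{e+1}=\mathcal I_e$ for one $e$ $\Rightarrow$ constant thereafter'' is unjustified.

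The paper's argument avoids this entirely. The first move is to replace $\mathcal O_X$ by the non-$F$-pure ideal $\sigma(X,\Delta)$: this makes the one-step trace $F^g_*(\sigma(X,\Delta)\otimes\mathcal L_{g,\Delta})\to\sigma(X,\Delta)$ \emph{surjective}, with a fixed kernel $\mathcal K$. Twisting the resulting short exact sequence by $\mathcal O_X(L_{eg,\Delta}+p^{eg}M)=\mathcal O_X(M+(p^{eg}-1)(M-K_X-\Delta))$ and pushing forward, relative Serre vanishing for the \emph{fixed} sheaf $\mathcal K$ against the $f$-ample divisor $M-K_X-\Delta$ gives $R^1f_*(\mathcal K\otimes\mathcal O_X(L_{eg,\Delta}+p^{eg}M))=0$ for all $e\ge e_0$. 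Hence the consecutive maps
\[
F^{(e+1)g}_*f_*(\sigma\otimes\mathcal O_X(L_{(e+1)g,\Delta}+p^{(e+1)g}M))\longrightarrow F^{eg}_*f_*(\sigma\otimes\mathcal O_X(L_{eg,\Delta}+p^{eg}M))
\]
are surjective for every $e\ge e_0$, which forces all images in $f_*\mathcal O_X(M)$ to coincide from $e_0$ on. The invocation of Proposition \ref{KS} with $S$ empty is not the right tool here; what you actually need is the kernel $\mathcal K$ and surjectivity at every step past $e_0$, not equality at a single step.
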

\begin{proof} It is easy to see that $S^0f_*(\sigma (X,\Delta ) \otimes  \mathcal O_X(M))$ is equal to $$\bigcap _{e\geq 0}{\rm Image}\left( {\rm Tr}^{eg}F^{eg}_*f_*(\sigma (X,\Delta ) \otimes  \mathcal O _X(L_{eg,\Delta }+p^{eg}M))\to f_*(\sigma (X,\Delta ) \otimes  \mathcal O_X (M))\right) .$$
Let $\mathcal K$ denote the kernel of the surjective map $$F^{g}_*(\sigma (X,\Delta ) \otimes \mathcal O _X(L_{g,\Delta }))\to \sigma (X,\Delta ) .$$
By Serre Vanishing there is an integer $e_0>0$ such that $$R^1f_*(\mathcal K\otimes \mathcal O _X (p^{eg}M +L_{eg,\Delta }) )=0$$ for all $e\geq e_0$.
It follows that the map $$ F_*^{(1+e)g}f_*(\sigma (X,\Delta )\otimes \mathcal O_X(p^{(1+e)g}M+L_{(1+e)g,\Delta })) \to  F_*^{eg}f_*(\sigma (X,\Delta )\otimes \mathcal O_X(p^{eg}M+L_{eg,\Delta }))$$ is surjective for all $e\geq e_0$ and hence so are the maps $$ F_*^{(d+e)g}f_*(\sigma (X,\Delta )\otimes \mathcal O_X(p^{(d+e)g}M+L_{(d+e)g,\Delta })) \to  F_*^{eg}f_*(\sigma (X,\Delta )\otimes \mathcal O_X(p^{eg}M+L_{eg,\Delta }))$$ for all $e\geq e_0$
and $d\geq 1$. Therefore, these maps have the same images under the trace. In other words,
$${\rm Image}\left( {\rm Tr}^{eg}F^{eg}_*f_*\mathcal O _X(L_{eg,\Delta }+p^{eg}M)\to f_*\mathcal O_X (M)\right) =$$
 $${\rm Image}\left( {\rm Tr}^{e_0g}F^{e_0g}_*f_*\mathcal O _X(L_{e_0g,\Delta }+p^{e_0g}M)\to f_*\mathcal O_X (M)\right) $$ for all $e\geq e_0$ and the proposition is proven.
\end{proof}
\begin{rmk}\label{r-fS} If $Y$ is affine, then we can identify $F^{eg}_*f_*\mathcal O _X((1-p^{eg})(K_X +\Delta )+p^{eg}M)$ with $F^{eg}_*H^0(X,\mathcal O _X((1-p^{eg})(K_X +\Delta )+p^{eg}M))$
and hence $S^0f_*(\sigma (X,\Delta )\otimes \mathcal O _X(M))$ with $S^0(X,\sigma (X,\Delta )\otimes \mathcal O _X(M))$. By \eqref{p-onto}, it follows that if $(X,\Delta )$ is $F$-regular over $Y$, then $S^0f_*(\sigma (X,\Delta ))=f_* \mathcal O_X$ (the stabilization in this case is automatic and does not require that $(p^g-1)(K_X+\Delta )$ is Cartier for some $g>0$; however see \eqref{t-SS10}).
\end{rmk}
\begin{rmk}\label{r-rel} It is easy to see that if $M - K_X - \Delta$ is $f$-ample, then the results of Section 2.3 easily translate to corresponding results about the sheaves $S^0f_*(\sigma (X,\Delta ) \otimes \mathcal O_X(M))$. In what follows we will explicitely state only the results that will be frequently used in what follows.
\end{rmk}
\begin{prop}\label{p-rKS} With the above notation, assume that $p$ does not divide the index of $K_X+\Delta$, $M$ is a Cartier divisor, $M-(K_X+\Delta )$ is $f$-ample and $S\subset X$ is any union of $F$-pure centers of $(X,\Delta )$. Then there is a natural surjective map 
$$S^0f_*(\sigma (X,\Delta ) \otimes \mathcal O_X(M))\to S^0(f|_S)_*(\sigma (S,\phi ^\Delta _S ) \otimes \mathcal O_S(M)).$$
\end{prop}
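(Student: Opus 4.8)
The plan is to deduce the relative statement \eqref{p-rKS} from the absolute statement \eqref{KS} by a standard reduction to the affine base, combined with the stabilization result \eqref{p-stab}. Since the formation of $S^0$ in the relative setting is compatible with passing to an open subset of $Y$ (this follows from the definition, as localization is exact and commutes with $f_*$ in the relevant degrees by flat base change), and since both $S^0f_*(\sigma(X,\Delta)\otimes\mathcal O_X(M))$ and $S^0(f|_S)_*(\sigma(S,\phi^\Delta_S)\otimes\mathcal O_S(M))$ are coherent sheaves by \eqref{p-stab} (applied to $X\to Y$ and to $S\to f(S)$; note $M|_S-(K_S+\phi^\Delta_S)$ is $(f|_S)$-ample by adjunction, and $S$ is a union of $F$-pure centers so by \eqref{l-sigma}-type reasoning together with Schwede's adjunction $(S,\phi^\Delta_S)$ is again a pair with the right properties), it suffices to construct the map after restricting to an affine open cover of $Y$. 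So I would fix $Y=\mathrm{Spec}(A)$ affine.

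Over the affine base, \eqref{r-fS} identifies $S^0f_*(\sigma(X,\Delta)\otimes\mathcal O_X(M))$ with $S^0(X,\sigma(X,\Delta)\otimes\mathcal O_X(M))$, and likewise on $S$. The only subtlety is that \eqref{KS} requires $M-K_X-\Delta$ to be \emph{ample}, whereas here we only assume it is $f$-ample. First I would handle this by choosing a sufficiently ample Cartier divisor $A'$ on $Y$ (pulled back to $X$), noting that $M+f^*A'-(K_X+\Delta)$ is then ample on $X$; I then apply \eqref{KS} with $M$ replaced by $M+f^*A'$. This gives a surjection
$$S^0(X,\sigma(X,\Delta)\otimes\mathcal O_X(M+f^*A'))\to S^0(S,\sigma(S,\phi^\Delta_S)\otimes\mathcal O_S(M+f^*A')).$$
By the projection formula $H^0(X,\mathcal O_X(M+f^*A'))=H^0(Y,\mathcal O_Y(A')\otimes f_*\mathcal O_X(M))$ and similarly on $S$, and these identifications are compatible with the trace maps defining $S^0$ (since $F^{eg}_*$ commutes with the tensor by a line bundle pulled back from $Y$, up to the appropriate Frobenius twist $p^{eg}A'$, which is absorbed because $A'$ is a divisor on the base and we only track the induced map). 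Thus the surjection above is, after twisting down by $\mathcal O_Y(-A')$ and recalling that $S^0f_*$ is already stable (a coherent sheaf, not just an inverse limit), exactly the sheaf-level surjection
$$S^0f_*(\sigma(X,\Delta)\otimes\mathcal O_X(M))\to S^0(f|_S)_*(\sigma(S,\phi^\Delta_S)\otimes\mathcal O_S(M))$$
evaluated on global sections over $Y=\mathrm{Spec}(A)$. Since this construction is functorial in the affine open, the local maps glue to a morphism of coherent sheaves on $Y$, and surjectivity can be checked locally, hence holds.

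The main obstacle I expect is bookkeeping the compatibilities: one must check that the surjection produced by \eqref{KS} on $H^0$-spaces over an affine base genuinely agrees, under the projection-formula identifications, with the map of relative sheaves $S^0f_*(\cdots)$ obtained from the trace, and that this agreement is independent of the auxiliary ample twist $A'$ (so that the glued map does not depend on choices). This is where one uses that the trace map ${\rm Tr}^{eg}$ is compatible with tensoring by a line bundle from the base — concretely, ${\rm Tr}^{eg}$ for $X/Y$ tensored with $\mathcal O_Y(A')$ is the ${\rm Tr}^{eg}$ for $M+f^*A'$, because $F^{eg}_*(-\otimes f^*\mathcal O_Y(p^{eg}A'))\cong F^{eg}_*(-)\otimes\mathcal O_Y(A')$. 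Granting this (which is \eqref{r-rel}, i.e.\ the assertion that the results of Section 2.3 translate verbatim to the $f$-ample relative setting), the argument is a direct globalization of \eqref{KS} and needs no new ideas.
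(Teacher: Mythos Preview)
Your overall strategy --- pass to an affine open of $Y$ and reduce to the absolute statement --- is sound, and after unwinding it is exactly what the paper does: the paper's proof is the single line ``immediate from the arguments in the proof of \cite[5.3]{Schwede11}'', i.e.\ rerun Schwede's diagram chase with $R^if_*$ replacing $H^i$ (which, over affine $Y$, is the same thing via Leray). So there is no genuine difference in approach.

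One step in your execution is confused, however. Once you have fixed $Y=\mathrm{Spec}(A)$ affine, $f$-ample already \emph{is} ample: the cohomological criterion for ampleness and relative Serre vanishing coincide. So the auxiliary twist by a ``sufficiently ample Cartier divisor $A'$ on $Y$'' is both unnecessary and ill-posed --- there is no nontrivial notion of ampleness on an affine scheme, and pulling back a divisor from affine $Y$ does not change whether $M-(K_X+\Delta)$ is ample on $X$. The real obstruction to quoting \eqref{KS} as a black box is not ampleness but that \eqref{KS} is stated for $X$ projective over $k$, which fails here; the fix is not a twist but the observation that Schwede's \emph{proof} uses only Serre vanishing for high $f$-ample twists, which holds in your setting. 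Once you replace ``apply \eqref{KS}'' by ``apply the proof of \eqref{KS}'', the $A'$-twist and the attendant gluing/independence issues disappear, and you are left with precisely the paper's argument (this is also what you acknowledge at the end by invoking \eqref{r-rel}).
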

\begin{proof} The proof is immediate from the arguments in the proof of \cite[5.3]{Schwede11}.\end{proof}
\begin{lemma}\label{l-rcu2}  Let $L$ be an $f$-ample Cartier divisor and $(X,\Delta )$ a sharply $F$-pure pair such that $p$ does not divide the index of $K_X+\Delta$. Then there exists an integer $m_0>0$ such that for any $f$-nef Cartier divisor $P$ and any integer $m\geq m_0$, we have $$S^0f_*(\sigma (X,\Delta)\otimes \mathcal O _X (mL+P))=f_* \mathcal O _X (mL+P).$$
\end{lemma}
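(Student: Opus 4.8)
The statement is the relative analogue of Lemma \ref{l-cu2} (or rather Corollary \ref{c-cu2} in the sharply $F$-pure case), and by Remark \ref{r-rel} the strategy is to transfer the absolute statement to the relative setting via the stabilization result of Proposition \ref{p-stab}. The plan is as follows. First, working locally on $Y$ we may assume $Y$ is affine; write $Y={\rm Spec}(A)$. By Remark \ref{r-fS}, since $mL+P-(K_X+\Delta)$ is $f$-ample for $m$ large (recall $L$ is $f$-ample, $P$ is $f$-nef), the sheaf $S^0f_*(\sigma(X,\Delta)\otimes\mathcal O_X(mL+P))$ is identified with (the $A$-module associated to) $S^0(X,\sigma(X,\Delta)\otimes\mathcal O_X(mL+P))$, and $f_*\mathcal O_X(mL+P)$ with $H^0(X,\mathcal O_X(mL+P))$. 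So the sheaf equality we want is equivalent to the equality of $A$-modules $S^0(X,\sigma(X,\Delta)\otimes\mathcal O_X(mL+P))=H^0(X,\mathcal O_X(mL+P))$, which is exactly the conclusion of Lemma \ref{l-cu2} — except that Lemma \ref{l-cu2} is stated for $L$ \emph{ample} and $P$ \emph{nef} (absolute), whereas here they are only $f$-ample and $f$-nef.

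Second, I would reduce the relative positivity to the absolute case by adding a large ample divisor pulled back from (a compactification of) $Y$. More precisely, since $Y$ is affine we may choose a projective compactification and argue there, or more directly: pick an $f$-ample $L$ and note that for a suitable $n_0$, $n_0 L$ is relatively globally generated and relatively very ample, and after twisting by the pullback of an ample divisor $H$ on $Y$ we can arrange $L+f^*(\text{ample})$ to be genuinely ample on $X$. Then $mL+P = m(L+f^*H) + (P - mf^*H)$; the issue is that $P - mf^*H$ need not be nef. The cleaner route is the one already packaged in the literature: the proof of \cite[2.23]{Patakfalvi12} cited for Lemma \ref{l-cu2} goes through a uniform Serre-vanishing/Fujita-type argument, and the relative version is obtained by replacing "vanishing of $H^1$" with "vanishing of $R^1f_*$", which holds for $m\gg 0$ by relative Serre vanishing. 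So I would instead directly mimic that proof in the relative setting: let $\mathcal K$ be the kernel of the evaluation-type surjection used there, apply relative Serre vanishing to $\mathcal K\otimes\mathcal O_X(mL+P)$ to conclude that the relevant $R^1f_*$ vanishes for $m\geq m_0$ uniformly in the $f$-nef divisor $P$ (this is where one needs the sharp $F$-purity $\sigma(X,\Delta)=\mathcal O_X$, so that the maps $F^e_*\mathcal O_X(L_{e,\Delta})\to\mathcal O_X$ are surjective), and deduce that the trace maps $F^e_*f_*\mathcal O_X(L_{e,\Delta}+p^e(mL+P))\to f_*\mathcal O_X(mL+P)$ are already surjective.

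Third, I would assemble the pieces: the $e=1$ (or $e=e_0$) surjectivity from the previous paragraph, combined with Proposition \ref{p-stab} which says $S^0f_*(\sigma(X,\Delta)\otimes\mathcal O_X(mL+P))$ equals the image of the trace map for $e\gg 0$, gives that this stable image is all of $f_*\mathcal O_X(mL+P)$, which is the desired conclusion. One must check that the threshold $m_0$ can be taken independent of the $f$-nef divisor $P$: this follows because relative Serre vanishing, applied after twisting by an $f$-ample divisor, is uniform — concretely, one bounds things in terms of $mL$ being "sufficiently $f$-ample" and $P$ being $f$-nef contributes only nonnegatively, exactly as in the proof of Lemma \ref{l-cu2}.

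The main obstacle I expect is the \emph{uniformity in $P$}: making sure that the single integer $m_0$ works simultaneously for all $f$-nef Cartier $P$. In the absolute case this is handled by a Fujita-type argument (a nef divisor plus a sufficiently ample divisor is globally generated with vanishing higher cohomology); in the relative case one needs the relative analogue, i.e. that $mL+P$ is $f$-globally-generated with vanishing $R^{>0}f_*$ of the relevant twisted kernel sheaf for all $m\geq m_0$ and all $f$-nef $P$, which again reduces to relative Serre vanishing once $L$ is replaced by a fixed relatively very ample divisor. Everything else is a routine translation of Section 2.3 to the relative setting as anticipated in Remark \ref{r-rel}, using Remark \ref{r-fS} to pass between $S^0f_*$ and $S^0$ over an affine base.
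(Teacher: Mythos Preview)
Your proposal is correct and, in its second route, takes essentially the same approach as the paper: the paper's proof is simply ``immediate from the arguments in the proof of \cite[2.23]{Patakfalvi12}'', i.e.\ one runs Patakfalvi's argument with $R^1f_*$ in place of $H^1$ and relative Fujita vanishing (which is what \cite{Keeler03} actually proves, over a Noetherian base) in place of the absolute one. Your first detour through reducing to the absolute case via compactifying $Y$ and adding $f^*H$ is unnecessary and, as you yourself note, does not quite work; but once you abandon it for the direct relative argument you are doing exactly what the paper intends, and the uniformity in $P$ you worry about is handled by the same Fujita-type input.
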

\begin{proof} The proof is immediate from the arguments in the proof of  \cite[2.23]{Patakfalvi12}.\end{proof}
\begin{lemma}\label{l-rcu3} Let $f:X\to Y$ be a proper morphism of normal varieties such that $(X,\Delta )$ is $F$-regular over $Y$. If $A$ is a sufficiently ample divisor on $Y$, then $$S^0(X,\sigma (X,\Delta)\otimes \mathcal O _X (f^* A))=H^0(X, \mathcal O _X(f^*A)).$$
\end{lemma}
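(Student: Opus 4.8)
Here is the approach I would take.

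\smallskip

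\noindent\emph{Overview.} Reduce to a convenient boundary via Theorem \eqref{t-SS10}; use the projection formula to reduce the sheaf–level statement to Remark \eqref{r-fS}; then pass back from sheaves on $Y$ to global sections on $X$ by a uniform (Fujita–type) vanishing, which is where "sufficiently ample'' is actually spent.

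\smallskip

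\noindent\emph{Step 1: reduction.} First I would replace $\Delta$. By Theorem \eqref{t-SS10} there is $\Delta'\ge\Delta$ with $(X,\Delta')$ globally $F$-regular over $Y$, with $-(K_X+\Delta')$ $f$-ample, and with $p$ not dividing the index of $K_X+\Delta'$; in particular $f$ is projective, so $X$ is projective over $k$. Since enlarging the boundary only shrinks $S^0$, and $S^0$ always sits inside $H^0(X,\mathcal O_X(f^*A))$, it suffices to treat $\Delta'$, which I rename $\Delta$. Being globally $F$-regular over $Y$, $(X,\Delta)$ is strongly $F$-regular, so $\sigma(X,\Delta)=\mathcal O_X$ and for all $e$ sufficiently divisible the $e$-th trace $\mathrm{Tr}^e_0\colon F^e_*\mathcal O_X((1-p^e)(K_X+\Delta))\to\mathcal O_X$ is surjective; I only use such $e$.

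\smallskip

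\noindent\emph{Step 2: projection formula and the relative statement.} Since $\mathcal O_X(f^*A)=f^*\mathcal O_Y(A)$, the projection formula identifies $F^e_*\mathcal O_X\big((1-p^e)(K_X+\Delta)+p^ef^*A\big)$ with $F^e_*\mathcal O_X((1-p^e)(K_X+\Delta))\otimes f^*\mathcal O_Y(A)$, compatibly with the trace maps; pushing forward to $Y$, the trace twisted by $f^*A$ becomes $(\mathrm{Tr}^e_0$ pushed to $Y)\otimes\mathrm{id}_{\mathcal O_Y(A)}$. By Remark \eqref{r-fS}, $S^0f_*(\sigma(X,\Delta))=f_*\mathcal O_X$, so each pushed‑forward $\mathrm{Tr}^e_0$ is already surjective; tensoring by $\mathcal O_Y(A)$, every map $\mathrm{Tr}^e\colon \mathcal B_e:=F^e_*f_*\mathcal O_X\big((1-p^e)(K_X+\Delta)+p^ef^*A\big)\to f_*\mathcal O_X(f^*A)$ is surjective, and $S^0f_*(\sigma(X,\Delta)\otimes\mathcal O_X(f^*A))=f_*\mathcal O_X(f^*A)$.

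\smallskip

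\noindent\emph{Step 3: lifting to global sections.} Using $H^0(X,-)=H^0(Y,f_*-)$ and that the images of $H^0(X,F^e_*(\cdots))$ in $H^0(X,\mathcal O_X(f^*A))$ are non‑increasing in $e$ with intersection $S^0(X,\sigma(X,\Delta)\otimes\mathcal O_X(f^*A))$, it suffices to show $H^0(Y,\mathcal B_e)\to H^0(Y,f_*\mathcal O_X(f^*A))$ is surjective for all $e$ above some threshold, i.e. that $H^1(Y,\ker\mathrm{Tr}^e)=0$. I would factor $\mathrm{Tr}^e$ into one‑step traces: with $\mathcal K:=\ker\mathrm{Tr}^g_0$, the one‑step kernels on $Y$ are the push‑forwards of $\mathcal K\otimes\mathcal O_X\big((1-p^{jg})(K_X+\Delta)+p^{jg}f^*A\big)$. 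Fixing an ample $A_0$ on $Y$ with $H_0:=f^*A_0-(K_X+\Delta)$ ample on $X$ and requiring $A-A_0$ ample, the identity $(1-p^{jg})(K_X+\Delta)+p^{jg}f^*A=f^*A_0+(p^{jg}-1)H_0+p^{jg}f^*(A-A_0)$ rewrites this sheaf as $\mathcal G\otimes\mathcal O_X\big((p^{jg}-1)H_0\big)\otimes f^*\mathcal O_Y\big(p^{jg}(A-A_0)\big)$ with $\mathcal G:=\mathcal K\otimes\mathcal O_X(f^*A_0)$ a \emph{fixed} coherent sheaf. Relative Serre vanishing and Fujita's uniform vanishing for $(\mathcal G,H_0)$ (the nef part being $p^{jg}f^*(A-A_0)$) kill $R^{>0}f_*$ and $H^{>0}(X,-)$ of it once $p^{jg}-1$ exceeds a bound depending only on $\mathcal G$ and $H_0$; hence for $j\ge j^*$, with $j^*$ independent of $A$, the one‑step maps are surjective with kernels of vanishing $H^1$, giving $H^0(Y,\mathcal B_e)\twoheadrightarrow H^0(Y,\mathcal B_{j^*})$ for $e\ge j^*$. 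For the remaining finitely many steps: the kernel of $\mathrm{Tr}^{j^*g}$ on $X$ is $\mathcal K^{(j^*g)}\otimes\mathcal O_X(f^*A)$ with $\mathcal K^{(j^*g)}:=\ker\mathrm{Tr}^{j^*g}_0$ fixed, and enlarging $j^*$ so that $R^{>0}f_*\mathcal O_X((1-p^{j^*g})(K_X+\Delta))=0$ (here $-(K_X+\Delta)$ is $f$-ample) forces $R^1f_*\mathcal K^{(j^*g)}=0$, whence $\ker\mathrm{Tr}^{j^*g}=f_*\mathcal K^{(j^*g)}\otimes\mathcal O_Y(A)$ on $Y$ with $f_*\mathcal K^{(j^*g)}$ fixed; taking $A$ sufficiently ample (and $A-A_0$ ample) makes $H^1$ of this vanish by ordinary Serre vanishing, so $H^0(Y,\mathcal B_{j^*})\twoheadrightarrow H^0(Y,f_*\mathcal O_X(f^*A))$. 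Composing, $H^0(X,F^e_*(\cdots))\to H^0(X,\mathcal O_X(f^*A))$ is surjective for every $e\ge j^*$, and therefore $S^0(X,\sigma(X,\Delta)\otimes\mathcal O_X(f^*A))=H^0(X,\mathcal O_X(f^*A))$.

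\smallskip

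\noindent\emph{Main obstacle.} The whole difficulty is uniformity in the Frobenius exponent: ordinary Serre vanishing kills $H^1(Y,\ker\mathrm{Tr}^e)$ only for $A$ depending on $e$, whereas one needs a single $A$ working for all large $e$. Passing to a boundary with $-(K_X+\Delta)$ $f$-ample (Theorem \eqref{t-SS10}) exposes a fixed ample class $H_0$ on $X$, and replacing Serre vanishing by Fujita's uniform vanishing relative to $H_0$ supplies exactly this uniformity; everything else is formal bookkeeping with the projection formula and the factorization of the trace.
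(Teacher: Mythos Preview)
Your argument is correct and follows essentially the same route as the paper's own proof: reduce via Theorem~\ref{t-SS10} to a boundary with $-(K_X+\Delta)$ $f$-ample and index prime to $p$, factor the trace into one-step pieces with fixed kernel $\mathcal K$, use Fujita vanishing to handle all large Frobenius exponents uniformly, and then spend ``$A$ sufficiently ample'' only on the single remaining bottom step via ordinary Serre vanishing. Your introduction of an auxiliary $A_0$ and the fixed ample class $H_0=f^*A_0-(K_X+\Delta)$ makes the application of Fujita vanishing cleaner than the paper's somewhat terse ``$L_{(e-1)g,\Delta'}+f^*p^{(e-1)g}A$ is sufficiently ample'', but the substance is the same; likewise your Step~2 (the relative statement $S^0f_*=f_*\mathcal O_X(f^*A)$) is implicit in the paper's observation that the pushed-forward traces are surjective.
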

\begin{proof}  Since $(X,\Delta )$ is $F$-regular over $Y$, by \eqref{t-SS10}, we may pick $\Delta '\geq \Delta$ such that   $(X,\Delta ')$ is also $F$-regular over $Y$, the index of $K_X+\Delta '$ is not divisible by $p$ and $-(K_X+\Delta ')$ is ample over $Y$. 
Since $(X,\Delta ')$ is $F$-regular over $Y$, $(X,\Delta ')$ is strongly $F$-regular so that $\sigma (X,\Delta ')=\mathcal O_X$. Fix $g>0$ such that $(p^g-1)(K_X+\Delta ')$ is Cartier, and $$F^{eg}_*\mathcal L_{eg,\Delta '}\to \mathcal O_X,\ {\rm and}\qquad f_*F^{eg}_*\mathcal L_{eg,\Delta '}\to f_* \mathcal O_X$$ are surjective for any $e>0$. Let $\mathcal K$ be the kernel of the map $$F^{g}_*\mathcal L_{g,\Delta '}\to \mathcal O_X.$$ 
Since $(F^{g}_*\mathcal L_{g,\Delta '})\otimes \mathcal L_{(e-1)g,\Delta '}=F^{g}_*\mathcal L_{eg,\Delta '}$, twisting by $\mathcal L_{(e-1)g,\Delta '}$ and pushing forward by $F_*^{(e-1)g}$ we obtain the short exact sequence
$$0\to F_*^{(e-1)g}(\mathcal K\otimes \mathcal L_{(e-1)g,\Delta '})\to F_*^{eg}\mathcal L_{eg,\Delta '}\to   F_*^{(e-1)g} \mathcal L_{(e-1)g,\Delta '}\to0.$$
Since $-(K_X+\Delta ')$ is $f$-ample, there exists $e_0>0$ such that $R^1f_*(\mathcal K\otimes \mathcal L_{(e-1)g,\Delta '})=0$ for all $e\geq e_0$.
Since $(X,\Delta ')$ is $F$-regular over $Y$, pushing forward via $f$ we obtain the short exact sequences
$$0\to F_*^{(e-1)g}f_*(\mathcal K\otimes \mathcal L_{(e-1)g,\Delta '})\to F_*^{eg}f_* \mathcal L_{eg,\Delta '}\to   F_*^{(e-1)g}f_* \mathcal L_{(e-1)g,\Delta '}\to0.$$
If $e\gg 0$ and $A$ is sufficiently ample, then $ L_{(e-1)g,\Delta '}+f^*p^{(e-1)g}A$ is sufficiently ample so that by Fujita vanishing (see \cite{Keeler03}), we have that $$H^1(Y, F_*^{(e-1)g} f_*(\mathcal K\otimes \mathcal L_{(e-1)g,\Delta '})\otimes \mathcal O _Y(A))=$$
$$H^1(Y, f_*(\mathcal K\otimes \mathcal L_{(e-1)g,\Delta '}\otimes \mathcal O _X(p^{(e-1)g}f^*A)))=$$
$$H^1(X,\mathcal K\otimes \mathcal L_{(e-1)g,\Delta '}\otimes f^*\mathcal O _Y(p^{(e-1)g}A)))=0.$$ We may also assume that $$H^0(Y,f_* F^{e_0g}_*\mathcal L_{e_0g,\Delta '}\otimes \mathcal O_Y(A))\to H^0(Y,f_*\mathcal O _X\otimes \mathcal O_Y(A))$$ is surjective. But then since $$H^0(Y,f_* F_*^{eg}(\mathcal L_{eg,\Delta '}\otimes \mathcal O _X(p^{eg}f^*A)))\to  H^0(Y,f_* F_*^{(e-1)g}(\mathcal L_{(e-1)g,\Delta '}\otimes \mathcal O _X(p^{(e-1)g}f^*A)))$$ is surjective for all $e>e_0$, it follows that $$H^0(Y,f_* F_*^{eg}(\mathcal L_{eg,\Delta '}\otimes \mathcal O _X(p^{eg}f^*A)))\to  
H^0(Y,f_*\mathcal O _X\otimes \mathcal O_Y(A))$$ is surjective for any $e\geq e_0$ and hence $$S^0f(X,\sigma (X,\Delta ')\otimes \mathcal O _X (f^* A))=H^0(Y, \mathcal O _Y(A)).$$ Since $ S^0(X,\sigma (X,\Delta ')\otimes \mathcal O _X (f^*A))\subset S^0(X,\sigma (X,\Delta )\otimes \mathcal O _X (f^*A))$ the lemma is proven.
\end{proof}
\subsection{Surfaces}
In this subsection, we collect the results in MMP theory for surfaces (in characteristic $p>0$) that we will need later.
\begin{prop}\label{l-sab}
Let $f:S\to R$ be a projective morphism from a normal surface, and $B_S$ be a $\Q$-divisor on $S$ such that $(S,B_S)$ is a relative weak log Fano surface, i.e., $(S,B_S)$ is klt and $-(K_S+B_S)$ is $f$-ample. Then we have the following 
\begin{enumerate}
\item any relatively nef divisor is semi-ample over $R$,
\item the nef cone of $S$ (over $R$) is finitely generated.
\end{enumerate}
\end{prop}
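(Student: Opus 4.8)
The plan is to derive both statements from the minimal model program for $\Q$-factorial klt surfaces in characteristic $p>0$, which is by now available (see \cite{KK}, \cite{Tanaka12a} and the references therein): the relative cone theorem yields part (2), the relative base point free theorem yields part (1), and the weak Fano hypothesis on $(S,B_S)$ makes both deductions essentially formal once that machinery is in place. Before starting I would record the standing fact that a klt surface has rational, hence $\Q$-factorial, singularities, so that every Weil divisor on $S$ is $\Q$-Cartier, intersection numbers of curves with divisors are well defined, and $N^1(S/R)$ and $N_1(S/R)$ are finite dimensional mutually dual $\Q$-vector spaces; the numerical arguments below only make sense after this observation.

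For (2), note first that since $-(K_S+B_S)$ is $f$-ample it is strictly positive on $\overline{NE}(S/R)\setminus\{0\}$ by Kleiman's criterion, so $\overline{NE}(S/R)_{K_S+B_S\ge 0}=\{0\}$. Feeding this into the relative cone theorem for the klt pair $(S,B_S)$ over $R$ gives $\overline{NE}(S/R)=\sum_{i=1}^{r}\R_{\ge 0}[C_i]$ for finitely many curves $C_i$ contracted over $R$; the finiteness comes from the usual packaging of the cone theorem, since $\overline{NE}(S/R)$ is a strongly convex closed cone which is locally rational polyhedral throughout the open half-space $\{K_S+B_S<0\}$, and that half-space contains $\overline{NE}(S/R)\setminus\{0\}$, so a compact affine cross-section of the cone has only finitely many vertices. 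Thus $\overline{NE}(S/R)$ is rational polyhedral, and hence so is its dual, the relative nef cone, which is (2).

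For (1), let $D$ be any $f$-nef $\Q$-Cartier divisor. Then $D-(K_S+B_S)$ is the sum of an $f$-nef and an $f$-ample divisor, hence $f$-ample, and applying the base point free theorem for the klt pair $(S,B_S)$ over $R$ (again \cite{KK}, \cite{Tanaka12a}) I would conclude that $D$ is $f$-semi-ample. Alternatively one can stay inside the picture of (2): $D$ lies on a face $F$ of the rational polyhedral cone $\mathrm{Nef}(S/R)$, the relative contraction theorem identifies $F$ with $g^{*}\mathrm{Nef}(T/R)$ for a birational contraction $g\colon S\to T$ over $R$ (a normal surface has no small contractions), the pushforward of $D$ lands in the relative interior of $\mathrm{Nef}(T/R)$ on a surface of smaller relative Picard number, and iterating finitely often one reaches an $R$-ample divisor whose pullback differs from $D$ by a numerically $R$-trivial summand, which is torsion on a surface of Fano type; either way $D$ is $f$-semi-ample.

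The main obstacle is not really geometric: essentially everything is reduced to the cone, contraction, and base point free theorems for klt surface pairs, \emph{in characteristic $p$ and in the relative setting over $R$}. In characteristic $0$ these are classical and the weak Fano case is the easy case; in characteristic $p$ they are the substantial results of the surface minimal model program, and the point I would be most careful about is the semi-ampleness statement used in (1), since semi-ampleness of nef divisors is precisely where positive-characteristic pathologies (in the spirit of Keel's examples) tend to appear. Here the weak Fano hypothesis excludes them — for instance it forces the boundary curves of $\mathrm{Nef}(S/R)$ to be rational by an adjunction computation — but this is the one place where I would rely on the cited surface results rather than a soft argument.
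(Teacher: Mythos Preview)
Your proposal is correct and matches the paper's approach: the paper does not give an independent argument but simply cites \cite[15.2]{Tanaka12a} and \cite[3.2]{Tanaka12b}, which establish precisely the relative cone and base point free theorems for klt surfaces in positive characteristic that you invoke. Your sketch is a faithful unpacking of what those references provide, and the reductions you make (using $f$-ampleness of $-(K_S+B_S)$ to place all of $\overline{NE}(S/R)\setminus\{0\}$ in the negative half-space, and to make $D-(K_S+B_S)$ $f$-ample) are exactly the standard ones.
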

\begin{proof}See \cite[15.2]{Tanaka12a} and \cite[3.2]{Tanaka12b}.
\end{proof}

\begin{lemma}\label{l-0}Let $(S,B_S)$ be a klt pair. There exists a unique birational morphism $\nu :\bar S\to S$ such that 
\begin{enumerate}
\item $K_{\bar S}+B_{\bar S}= \nu ^*(K_S+B_S)$, where $B_{\bar S}\geq 0$, and
\item $(\bar{S},B_{\bar{S}})$ is terminal.
\end{enumerate}
$({\bar S},B_{\bar S})$ is the terminalization of $( S,B_S)$. In particular $\bar S$ is smooth and ${\rm mult }_p(B_{\bar S})<1$ for all $p\in \bar S$.\end{lemma}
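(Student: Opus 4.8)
The plan is to produce $\bar S$ by running a minimal model program over $S$ starting from a log resolution, and then to observe that the set of prime divisors carried by any model satisfying (1) and (2) is forced by $(S,B_S)$, which gives uniqueness; the final ``in particular'' then follows formally. For existence I would first fix a log resolution $g\colon Y\to S$ of $(S,B_S)$ (surface singularities resolve in every characteristic), write $K_Y+B_Y=g^*(K_S+B_S)$ --- every coefficient of the $\Q$-divisor $B_Y$ being $<1$ since $(S,B_S)$ is klt --- and decompose $B_Y=P-N$ into its positive and negative parts, so that $P,N\ge 0$ have no common component, $N$ is $g$-exceptional, and $P$ is a boundary with simple normal crossing support. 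After finitely many further blow-ups (blow up each point at which the current positive part has multiplicity $\ge 1$; along any chain of exceptional curves these multiplicities strictly decrease, so the procedure stops) I may assume in addition that $(Y,P)$ is terminal, i.e.\ ${\rm mult}_q(P)<1$ for all $q\in Y$. Then I would run a $(K_Y+P)$-MMP over $S$; this exists and terminates because its steps are the classical birational contractions of $\Q$-factorial surfaces (cf.\ \cite{Tanaka12a}). Write $\bar\nu\colon\bar S\to S$ for the outcome and set $B_{\bar S}:=\bar\nu_*P$.

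The key point, which I would verify, is that this program is completely controlled: being run over $S$, every contracted curve is $g$-exceptional, and since $K_Y+P=g^*(K_S+B_S)+N$ with $N\ge 0$ exceptional, a $(K_Y+P)$-negative $g$-exceptional curve meets the strict transform of $N$ negatively and so is one of its components; hence the program only ever contracts components of $N$ and never modifies $P$. It follows that $\bar N:=\bar\nu_*N=(K_{\bar S}+B_{\bar S})-\bar\nu^*(K_S+B_S)$ is effective, $\bar\nu$-exceptional and $\bar\nu$-nef, hence $\bar N=0$ by the negativity lemma; this yields (1), with $K_{\bar S}+B_{\bar S}=\bar\nu^*(K_S+B_S)$ and $B_{\bar S}\ge 0$. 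For (2): a divisor $F$ exceptional over $\bar S$ is either a component of $N$, in which case $a(F;S,B_S)={\rm coeff}_F(N)>0$, or is exceptional over $Y$, in which case $a(F;S,B_S)=a(F;Y,B_Y)\ge a(F;Y,P)>0$ by the terminality of $(Y,P)$; either way $a(F;\bar S,B_{\bar S})=a(F;S,B_S)>0$, so $(\bar S,B_{\bar S})$ is terminal.

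For uniqueness, let $\bar\nu_k\colon\bar S_k\to S$ ($k=1,2$) both satisfy (1) and (2). A prime divisor $F$ over $S$ with $a(F;S,B_S)\le 0$ cannot be exceptional over a terminal model (its discrepancy there would be positive), so it is a divisor on each $\bar S_k$; a prime divisor $F$ with $a(F;S,B_S)>0$ is exceptional over $S$ and cannot be a divisor on $\bar S_k$ (it would appear in $B_{\bar S_k}$ with coefficient $-a(F;S,B_S)<0$), so it is exceptional over each $\bar S_k$. Hence $\bar S_1$ and $\bar S_2$ carry the same prime divisors, the induced birational map between them over $S$ is an isomorphism in codimension one, and on a common resolution $W$ --- on which the pullbacks of $K_{\bar S_k}+B_{\bar S_k}$ agree, both being the pullback of $K_S+B_S$ --- the two morphisms $W\to\bar S_k$ contract exactly the same curves (a curve contracted by only one of them would map to a prime divisor present on only one $\bar S_k$), so $\bar S_1\cong\bar S_2$ over $S$. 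Finally, since $B_{\bar S}\ge 0$, terminality of $(\bar S,B_{\bar S})$ forces $\bar S$ to have terminal --- hence smooth --- surface singularities (terminal surface singularities are smooth, which one reads off from the negativity lemma on the minimal resolution), and then blowing up any $p\in\bar S$ produces a divisor of discrepancy $1-{\rm mult}_p(B_{\bar S})$, necessarily positive, so ${\rm mult}_p(B_{\bar S})<1$.

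The one genuinely delicate step is existence --- concretely, guaranteeing that the MMP output is terminal and not merely a relative minimal model, which is exactly why I arrange $(Y,P)$ terminal in advance. That step rests on standard but characteristic-sensitive input: resolution of surface singularities in all characteristics, and termination of the blow-up procedure above (equivalently, the fact that a klt surface pair has only finitely many divisorial valuations of non-positive discrepancy). Granting this, everything else is bookkeeping with the negativity lemma and the classical relative surface MMP.
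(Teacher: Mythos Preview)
Your proof is correct and follows essentially the same approach as the paper: take a log resolution, split the crepant boundary into positive and negative parts (the paper's $B_{S'}$ and $E$ are your $P$ and $N$), arrange the positive part to be terminal, run the $(K_Y+P)$-MMP over $S$, and kill the pushforward of $N$ by negativity. The paper omits the uniqueness argument and the ``in particular'' as well known, while you supply both; your observation that the MMP only contracts components of $N$ (which the paper does not make explicit) is what makes your terminality check clean.
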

\begin{proof} We reproduce the following well known proof. For any log resolution of $g_{S'}: S'\to S$, we write $$g_{S'}^*(K_S+B_S)+E=K_{S'}+B_{S'} ,$$
where $E$, $B_{S'}$ are effective and have no common components. Passing to a higher resolution, we can assume $(S',B_{S'})$ is terminal. 

We then run the minimal model program for $(S',B_{S'})$ over $S$ and we obtain a relative minimal model $\mu:S'\to \bar{S}$ with a morphism $\nu: \bar{S}\to S$ (see \cite{KK}, \cite{Tanaka12a}). Note that $\mu_*E\sim_{S,\mathbb{Q}}K_{\bar{S}}+B_{\bar{S}}$ is nef where $B_{\bar{S}}=\mu_*B_{S'}$. Since $\mu_*E$ is an exceptional curve, its self-intersection is non-positive and if $\mu_*E\ne 0$ the self intersection is negative. Thus $\mu_*E=0$ and hence $K_{\bar{S}}+B_{\bar{S}}\sim_{\Q} \nu^*(K_S+B_S)$. As $(S',B_{S'})$ is terminal, so is $(\bar{S},B_{\bar{S}})$. Uniqueness is also well known, and we omit the proof. 
\end{proof}

Even though the Kawamata-Viehweg vanishing theorem does not hold for surfaces, it is still true for a birational morphism between surfaces.

\begin{lemma}[{\cite[2.2.5]{KK}}]\label{l-KV}
 Let $h : S' \to S$ be a proper birational morphism between normal surfaces, such that $(S',B_{S'})$ is a klt pair.  Let $L$ be a Cartier divisor on $S'$, and $N$ an $h$-nef $\Q$-divisor such that $L\equiv K_{S'}+B_{S'}+N$. Then $R^1h_*\mathcal{O}_{S'}(L)=0$.
\end{lemma}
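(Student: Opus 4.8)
The plan is to reduce to an absolute vanishing statement on $S'$ and exploit the negativity of the exceptional locus of $h$. Since the statement is local on $S$, I would assume $S$ is affine (even the spectrum of a local ring at a point $s\in S$), so that the question becomes whether $R^1h_*\mathcal O_{S'}(L)$ vanishes, equivalently whether $H^1(S',\mathcal O_{S'}(L))=0$ after possibly replacing $S$ by an affine neighborhood and using that $R^1h_*$ is computed fiberwise on the (at most one-dimensional) fibers of $h$. Write $L\equiv K_{S'}+B_{S'}+N$ with $N$ an $h$-nef $\mathbb Q$-divisor and $(S',B_{S'})$ klt.

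The key step is to produce a boundary on $S'$ that makes $L$ numerically equal to $K_{S'}+(\text{klt boundary})+(\text{nef})$ with the nef part actually $h$-numerically trivial on the fiber over $s$. First I would pass to the terminalization or a suitable log resolution so that I may assume $S'$ is smooth and $(S',B_{S'})$ is terminal with all multiplicities $<1$ (using Lemma \ref{l-0} type considerations), which is harmless because $h_*$ of an exceptional pushforward is unaffected and $R^1$ only gets better under further blowups by a Leray argument. Then, since $N$ is $h$-nef and the exceptional locus lies in the fiber, I can perturb: for a general effective $\mathbb Q$-divisor $A$ on $S'$ supported away from the exceptional locus with $N+A$ still $h$-nef and now $h$-big (because we're over an affine base, any $h$-nef divisor plus a small ample-over-$S$ piece is $h$-big), I get $L\equiv K_{S'}+(B_{S'}+\{A\text{-part}\})+(h\text{-nef and }h\text{-big})$. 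Now I invoke the surface relative Kodaira-type vanishing that IS available in positive characteristic for birational morphisms — but that is exactly what we are proving, so instead the honest route is:

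The real mechanism should be the \emph{negativity lemma / Grauert–Riemenschneider for surfaces}, which does hold in all characteristics for birational morphisms between surfaces because the relevant cohomology is supported on a one-dimensional scheme where duality and the connectedness of fibers (Zariski's main theorem) control everything. Concretely, by relative duality for the one-dimensional fibers, $R^1h_*\mathcal O_{S'}(L)$ is dual to $h_*\mathcal O_{S'}(K_{S'}-L)$ restricted appropriately, or more precisely one shows $H^0$ of the "conjugate" sheaf vanishes; since $K_{S'}-L\equiv -B_{S'}-N$ with $-B_{S'}\le 0$ and $-N$ $h$-anti-nef, any section would have to be supported on the exceptional fiber with non-positive self-intersection, and the klt/terminal condition ${\rm mult}<1$ forces the section to be zero — this is the same contractibility-of-exceptional-curves computation used in the proof of Lemma \ref{l-0} (an exceptional curve has non-positive self-intersection, negative unless it is zero). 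I expect the main obstacle to be making the duality step rigorous without smoothness of $S$: one must either first replace $S'$ by a model dominating both $S$ and a resolution of $S$, or argue directly on the finite-length module $R^1h_*\mathcal O_{S'}(L)$ using that it is annihilated by a high power of the maximal ideal and then localizing/completing so that a local duality statement applies. That bookkeeping — tracking discrepancies through the extra blowups and checking the numerical class of $L$ is preserved — is the part requiring genuine care; the geometric heart (negativity of exceptional curves kills would-be sections) is standard. I would therefore structure the write-up as: (1) reduce to $S$ affine/local; (2) reduce to $S'$ smooth and $(S',B_{S'})$ terminal by an extra resolution, tracking that $L$ stays of the form $K+{\rm klt}+{\rm nef}$; (3) apply the characteristic-free surface Grauert–Riemenschneider vanishing (equivalently, the negativity computation), citing \cite{KK} for the precise form.
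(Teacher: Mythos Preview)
Your final outline---reduce to the case where $S'$ is smooth by passing to the terminalization of $(S',B_{S'})$ and using the Leray spectral sequence, then cite \cite{KK} for the smooth case---is exactly the paper's proof. The exploratory detours through perturbing $N$ to be $h$-big and the relative-duality sketch are unnecessary and should be deleted; the paper's argument is two lines, and the only point needing care is that $\nu_*\mathcal O_{\bar S}(\nu^*L)=\mathcal O_{S'}(L)$ so that the low-degree exact sequence of Leray gives $R^1h_*\mathcal O_{S'}(L)\hookrightarrow R^1(h\circ\nu)_*\mathcal O_{\bar S}(\nu^*L)=0$.
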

\begin{proof}{\cite[2.25]{KK}} proves the case when $S'$ is smooth. In general, we can take the terminalization of $(S,B_S)$ and use a simple spectral sequence argument. 
\end{proof}

\section{On the $F$-regularity of weak log del Pezzo surfaces}\label{S-surface}
Hara proved that a klt surface $S$ is strongly $F$-regular if the characteristic is larger than 5 (see \cite{Hara98}). The aim of this section is to generalize Hara's result to establish the global $F$-regularity for relative weak log del Pezzo surfaces of birational type with standard coefficients when the characteristic is larger than 5.  We will use Shokurov's theory of {\it complements} (see \cite{Shokurov00, Prokhorov99}), which fits in this context very well. 

\begin{thm}\label{t-gfr} Assume the ground field $k$ is algebraically closed of characteristic  $p>5$. Let $(S,B)$ be a pair with a birational  proper morphism $f: S\to T$ on to a normal surface germ $(T,0)$ such that \begin{enumerate}\item $(S,B)$ is klt,
\item  $ -(K_S+B) $ is $f$-nef, and
\item the coefficients of $B$ are in the standard set $\{\frac{n-1}{n}| n\in \mathbb{N}\}$.\end{enumerate}Then $(S,B)$ is globally $F$-regular over $T$.\end{thm}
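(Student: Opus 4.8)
The plan is to reduce the theorem to a statement about a single auxiliary divisor which makes $-(K_S+B)$ strictly ample and whose index is prime to $p$, and then to verify global $F$-regularity over $T$ by producing, for each effective $D$, an explicit Frobenius-compatible splitting. First I would dispose of trivial normalizations: since the statement is local on $T$, assume $(T,0)$ is a germ with $T$ affine; and since $-(K_S+B)$ is only assumed $f$-nef, use Proposition \ref{l-sab}(1) to conclude that $-(K_S+B)$ is $f$-semi-ample, hence (being also $f$-big, as $f$ is birational) we can write $-(K_S+B)\sim_{\Q,T} A$ for some effective $\Q$-divisor $A$; replacing $B$ by $B+\epsilon A$ for small $\epsilon$ keeps the pair klt and with $-(K_S+B)$ still $f$-nef, so it is harmless to slightly perturb. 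After this, apply Lemma \ref{l-perturbation} to replace $K_S+B$ by a $\Q$-Cartier divisor of index prime to $p$ (perturbing $B$ within the klt locus). The real content is then: a klt pair $(S,B)$ on a surface, birational $f:S\to T$, $-(K_S+B)$ $f$-nef, with standard coefficients and $p>5$, is globally $F$-regular over $T$.

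Next I would bring in Shokurov's theory of complements, which is why it was flagged in the section preamble. The key point is that when the coefficients of $B$ lie in the standard set $\{1-\frac1n\}$, the pair admits a ``bounded complement'': there exists $N\in\{1,2,3,4,6\}$ (or at worst an explicitly bounded set of such $N$) and an effective $\Q$-divisor $B^+\ge B$ with $(S,B^+)$ log canonical, $N(K_S+B^+)\sim_T 0$, and moreover $B^+$ has the same non-klt locus as $B$ (in the relative birational picture over the germ $T$, after Proposition \ref{l-sab}(2) identifies the relevant nef/effective cones as finitely generated). Concretely, I would run the relative MMP on a log resolution and pull back a complement from $T$, using that $-(K_S+B)$ is $f$-nef to control where the complement is constructed. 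The upshot is a boundary $B^+$ of index $N$ with $p\nmid N$ (since $p>5$ forces $p\nmid N$ for $N\in\{1,2,3,4,6\}$) and $N(K_S+B^+)\sim_T 0$.

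Given the complement, global $F$-regularity over $T$ reduces to a Frobenius-splitting computation. Since $N(K_S+B^+)\sim_T 0$ with $p\nmid N$, for a suitable $e$ we have $(p^e-1)(K_S+B^+)\sim_T 0$, so the map $\mathcal O_S\to F^e_*\mathcal O_S(\lceil(p^e-1)B^+\rceil)$ is (dually) a candidate splitting of $\mathcal O_S$ relative to $T$; one needs that it actually splits, i.e.\ that $(S,B^+)$ is sharply $F$-split over $T$. For this I would use that $(S,B^+)$ is log canonical with a boundary supported where the log discrepancies are exactly the critical values — here Hara's theorem (and its extensions by Hara--Watanabe) that log canonical surface singularities in characteristic $p>5$ are $F$-pure enters, applied at each point; one gets the local splittings, and since $T$ is affine and $f$ birational, $H^0$ of the splitting surjects as needed, giving that $\mathcal O_S\to F^e_*\mathcal O_S(\lceil (p^e-1)B^+\rceil)$ splits over $T$. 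Finally, since $B\le B^+$ and $(S,B)$ is klt (strictly), the difference $B^+-B$ plus the slack from taking $D$ into account can be absorbed: for any effective $D$, choose $e$ large so that $(p^e-1)(B^+-B)\ge D$ up to the relevant ample contribution from $-(K_S+B)$ being $f$-nef-to-ample after the initial perturbation, and conclude the splitting of $\mathcal O_S\to F^e_*\mathcal O_S(\lceil(p^e-1)B\rceil+D)$ over $T$ via Lemma \ref{l-GFR}-type absorption together with Proposition \ref{p-onto}.

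The main obstacle, I expect, is the construction and control of the bounded complement $B^+$ in the relative setting over an arbitrary normal surface germ $(T,0)$, with the simultaneous requirements that $B^+\ge B$, that the index $N$ be among the values coprime to $p$, and that $(S,B^+)$ be exactly log canonical (so that Hara's $F$-purity result applies on the nose); getting the complement to exist with $B^+$ supported correctly, rather than merely finding \emph{some} log canonical pair, is the delicate step, and it is where Shokurov's theory together with the finite generation of the relevant cones from Proposition \ref{l-sab} does the work. A secondary technical point is checking that sharp $F$-purity of $(S,B^+)$ passes correctly from the local statements at points of $S$ to a genuine splitting of $\mathcal O_S\to F^e_*(\cdots)$ \emph{over} $T$ rather than just over $S$, but since $T$ is affine and $f$ is proper birational this is routine.
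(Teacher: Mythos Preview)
Your outline has the right opening move---Shokurov's bounded complements with $N\in\{1,2,3,4,6\}$ are exactly what the paper uses (Theorem \ref{t-c}), and the observation that $p>5$ forces $p\nmid N$ is the reason for the characteristic hypothesis. But the core of your argument rests on a false premise: you invoke ``Hara's theorem (and its extensions by Hara--Watanabe) that log canonical surface singularities in characteristic $p>5$ are $F$-pure,'' and no such theorem exists. Hara's result is that \emph{klt} surface singularities are strongly $F$-regular for $p>5$; log canonical surface singularities include simple elliptic singularities, and these are $F$-pure if and only if the associated elliptic curve is ordinary---a condition that fails for some moduli in every characteristic. Concretely, the complement $(S,B^c)$ restricted to its log canonical center $C\cong\mathbb P^1$ gives a pair like $(\mathbb P^1,\tfrac12 P_1+\tfrac23 P_2+\tfrac56 P_3)$ (the $N=6$ case), which is only $F$-split for special configurations of the points. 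So $(S,B^c)$ need not be sharply $F$-split over $T$, and your splitting-lifting step collapses. A secondary problem is your absorption step: even granting sharp $F$-splitting of $(S,B^c)$, the inequality $(p^e-1)(B^c-B)\ge D$ can only hold when ${\rm Supp}(D)\subset{\rm Supp}(B^c-B)$, which is not the case for arbitrary $D$; sharp $F$-splitting along one fixed divisor does not give global $F$-regularity.

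The paper's route is more delicate precisely because it must avoid relying on $F$-purity of the log canonical complement. After passing to a smooth dlt model $\tilde S$, it constructs an intermediate divisor $B^*_{\tilde S}$ with $B_{\tilde S}\le B^*_{\tilde S}\le B^c_{\tilde S}$ by an explicit coefficient replacement $p/q\mapsto (p-1)/(q-1)$ along a carefully chosen chain (Lemmas \ref{l-small}--\ref{l-geq}), arranged so that $(\tilde S,B^*_{\tilde S})$ is \emph{plt} with a single centre $C$, $-(K_{\tilde S}+B^*_{\tilde S})$ is still nef, and---crucially---$(C,{\rm Diff}_C(B^*_{\tilde S}))$ is globally $F$-\emph{regular}, not merely $F$-split. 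This last point is checked by a finite case analysis (the different lands in a short explicit list) together with Fedder-type computations on $\mathbb P^1$ in Proposition \ref{p-hara}. Global $F$-regularity of the curve then lifts to $(S,B)$ over $T$ via the inversion-of-adjunction style argument of Proposition \ref{p-Freg}, which uses Kawamata--Viehweg vanishing for the birational morphism and Nakayama's lemma. The construction of $B^*$ is the genuine content you are missing.
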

We will need the following result on complements due to Shokurov.
\begin{thm}\label{t-c} There exists a divisor $B^c\ge B$ and an integer $N\in \mathcal{R}N_2=\{1,2,3,4,6\}$, such that $N(K_S+B^c)\sim 0$ over $T$, and $(S,B^c)$ is log canonical but not klt. If $(S,B^c)$ is not plt, then we may assume that $N\in \{1,2\}$. \end{thm}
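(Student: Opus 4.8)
The statement to be proven is Theorem~\ref{t-c}: that for a klt pair $(S,B)$ with $-(K_S+B)$ $f$-nef over a normal surface germ $(T,0)$ and standard coefficients, there exists a complement $B^c\ge B$ with $N(K_S+B^c)\sim_T 0$ for $N\in\{1,2,3,4,6\}$, log canonical but not klt, and with $N\in\{1,2\}$ if $(S,B^c)$ is not plt. The approach I would take is the classical strategy of Shokurov for the theory of complements (as developed in \cite{Shokurov00, Prokhorov99}), suitably checked to work in characteristic $p>5$: reduce to the case where $-(K_S+B)$ is $f$-ample and $(S,B)$ is a relative weak log del Pezzo over the germ; pass to a minimal resolution or a suitable dlt modification so that one may analyze the combinatorics of the exceptional fiber; and then produce the complement either by a direct base-point-freeness argument on the fiber or by the inductive step that lifts a complement from a nonklt center.

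First I would reduce the problem. Since $-(K_S+B)$ is $f$-nef and $f$ is birational onto a surface germ, by negativity/contraction arguments we may run a relative MMP or pass to a model where $-(K_S+B)$ becomes $f$-ample without losing the conclusion (producing a complement on the smaller model and pulling it back preserves the numerical class $N(K_S+B^c)\sim_T 0$ and the log canonical-but-not-klt property, since we only ever increase coefficients along the exceptional locus). Then I would invoke Proposition~\ref{l-sab}: on such a relative weak log del Pezzo surface every relatively nef divisor is semi-ample over $T$, and the relative nef cone is finitely generated. This is the key input that replaces Kawamata-Viehweg vanishing: it gives us base-point-freeness of $-m(K_S+B)$ over $T$ for suitable $m$, hence enough sections to build a complement. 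Concretely, I would look for an effective divisor $G\sim_T -(N'+1)(K_S+B)$ for a suitable $N'$ (bounded by the fact that the relevant cone data is constrained) so that $B^c=B+\frac{1}{N'}G$ (after rounding) is a complement; the bound $N\in\mathcal{R}N_2=\{1,2,3,4,6\}$ then comes from the classification of the possible configurations over a surface germ — essentially the arithmetic of the Du Val / log del Pezzo fibers, where the relevant indices are forced to lie in this set.

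The main obstacle, and the heart of the argument, is controlling the index $N$ and producing the \emph{non-klt} member of the complement rather than just \emph{some} effective $\Q$-divisor in the class. In characteristic zero this is handled by inductively restricting to a non-klt center (a curve $E$ in the exceptional fiber), producing a complement there by the one-dimensional theory, and then lifting it back to $S$ using vanishing; here the lifting must instead be done via the $F$-regularity/$S^0$-machinery of Subsection~\ref{F-sing} (Proposition~\ref{KS} and the surjectivity results), or more elementarily via Lemma~\ref{l-KV}, which supplies the needed $R^1h_*=0$ for birational morphisms between surfaces and is exactly the surrogate for Kawamata-Viehweg available in this setting. So the plan for the hard step is: (i) choose a suitable extraction so that the exceptional locus contains a curve $E$ that will host a $1$-complement or $2$-complement from the standard-coefficient one-dimensional theory; (ii) use Lemma~\ref{l-KV} (with $L$ the relevant Cartier divisor and $N$ the $h$-nef difference) to lift the complement from $E$ to $S$ without obstruction; (iii) track the distinction between the plt and non-plt cases — if $(S,B^c)$ is not plt the non-klt locus has a worse singularity and the restriction/gluing forces $N\in\{1,2\}$, whereas in the plt case the full range $\{1,2,3,4,6\}$ can occur, matching the list $\mathcal{R}N_2$ coming from the finite subgroups / elliptic-type fiber data. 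Verifying that none of these steps secretly used resolution in dimension $>3$ or a vanishing theorem unavailable for $p>5$ is where the real care lies; the restriction $p>5$ enters precisely because Hara's theorem (strong $F$-regularity of klt surfaces for $p>5$) and the standard-coefficient hypothesis together make the $F$-singularity substitutes for vanishing go through.
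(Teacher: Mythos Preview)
Your proposal is essentially correct and follows the same route as the paper: invoke Shokurov's theory of complements as presented in \cite{Prokhorov99} (specifically \cite[6.0.6]{Prokhorov99}) and observe that every application of Kawamata--Viehweg vanishing in that argument is for a \emph{birational} morphism between surfaces, where Lemma~\ref{l-KV} provides the needed vanishing in arbitrary positive characteristic. This is exactly the paper's proof.

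Two points deserve correction, however. First, you over-engineer the lifting step: the $F$-singularity machinery (Proposition~\ref{KS}, the $S^0$ formalism, Hara's theorem) is \emph{not} needed here, and neither is the hypothesis $p>5$. The entire argument of \cite{Prokhorov99} goes through using only Lemma~\ref{l-KV}, which holds for all $p>0$; the restriction $p>5$ in the surrounding Theorem~\ref{t-gfr} enters only later, via Proposition~\ref{p-hara}, to establish global $F$-regularity of certain $(\mathbb P^1,D)$. So your final paragraph misattributes the role of $p>5$. Second, you do not address why $B^c\ge B$, which is not automatic from the complement construction: the output of \cite{Prokhorov99} gives $NB^c\ge\lfloor (N+1)B\rfloor$, and it is the standard-coefficient hypothesis on $B$ (via the elementary inequality $\lfloor (N+1)\tfrac{n-1}{n}\rfloor\ge N\tfrac{n-1}{n}$, cf.\ \cite[4.1.3]{Prokhorov99}) that yields $B^c\ge B$. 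This should be stated explicitly, since it is the only place the standard-coefficient assumption is used in this theorem.
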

\begin{proof} The proof follows immediately from \cite[6.0.6]{Prokhorov99}. The fact that  $B^c\ge B$,  follows easily since $NB^c\geq \lfloor (N+1)B\rfloor$ and the  coefficients of $B$ are in the standard set (cf. \cite[4.1.3]{Prokhorov99}). The proof in \cite{Prokhorov99} only uses the Kawamata-Viehweg vanishing theorem for morphisms between birational models over $T$, which is also known to hold in characteristic $p>0$ (see \eqref{l-KV}).
\end{proof} 
\begin{proof}[Proof of \eqref{t-gfr}.]We will assume that  $N$ is minimal as above. 
Let $\nu :\tilde S\to S$ be a smooth dlt model of $(S,B^c)$ (so that $(\tilde S, B_{\tilde S}^c)$ has dlt singularities) and write 
$$K_{\tilde S}+B^c_{\tilde S}=\nu ^*(K_S+B^c)\qquad \mbox{and}  \qquad K_{\tilde S}+B_{\tilde S}=\nu ^*(K_S+B).$$ 
In particular  $B^c_{\tilde S}\geq 0 $. Note however that $ B_{\tilde S}$ is possibly not  effective.
We assume first that we are in the {\it exceptional case} that is $(S, B^c)$ is plt. We let $ C=\lfloor B^c_{\tilde S} \rfloor $. 

Consider the extended dual graph $G$ (see \cite[2.26]{Kollar13} for the definition) corresponding to the exceptional curves for $\mu =f\circ \nu$  and the strict transform of $B^c_T=f_*B^c$. Let $G_C$ be the subgraph constructed by first removing all vertices such that the corresponding curve appears in $B_{\tilde S}^c-B_{\tilde S}$  with coefficient $0$ and then discarding all connected components not containing the vertex corresponding to $C$.  Note that by the arguments in \cite{Prokhorov99}, the curve $C$ is exceptional over $T$ and as $\tilde S$ dominates the minimal resolution and $T$ is log terminal, all exceptional curves are smooth rational curves meeting transversely. Note also that by adjunction we have 
that $2=\sum _{i\in I}\frac {p_i}{N}$ where $\frac {p_i}{N}$ is the coefficient of a non-zero component of $B^c_{\tilde S} $ and the sum is over all of those components that intersect $C$. Since $0<\frac {p_i}{N}<1$, we have $|I|\geq 3$. If $|I|>3$, then $|I|=4$ and each coefficient equals $\frac 1 2$ and so $N=2$ (cf. \S 4.1 of  \cite{Prokhorov99}). Otherwise we have that $|I|=3$, i.e. that $C$ is a fork with exactly three branches in which case the $G_C$ looks like this
\begin{diagram}[height=1.2em, w=1.8em]
\Gamma_1             \\
                    &   \rdLine  & C\\
                    &                &\circ  &\rLine &\Gamma_3   \\
                   & \ldLine \\
\Gamma_2        
\end{diagram}
where each $\Gamma_i$ is a connected tree.

Let $\Psi_i$ (resp. $\Psi_i^c$) be the subdivisor of ${B}_{\tilde S}$ (resp. ${B}^c_{\tilde S}$) consisting of components in ${B}_{\tilde S}$ contained in the support of $\Gamma _i$. Then $${B}_{\tilde S}=cC+\sum^m_{i=1}\Psi_i+\Lambda _{\tilde S} , \ {\rm and}\qquad {B}_{\tilde S}^c=C+\sum^m_{i=1}\Psi_i^c+\Lambda ^c_{\tilde S}$$ where $c<1$, $\Lambda _{\tilde S}$ and $\Lambda ^c_{\tilde S}$ are the remaining components (whose support is contained in the components corresponding to $G-G_C$) and $m=|I|\in\{3, 4\}$. Note that $\Psi_i^c\geq 0$ but the $\Psi_i$ are not necessarily effective.
We say that a chain is {\it non-exceptional} if it has at least one component which is not exceptional.

\begin{lemma}\label{l-small}
There exists an $i=i_0$, and a non-exceptional chain $\Gamma _i^0\subset \Gamma _i$ intersecting $C$ such that ${\rm Supp} (\Psi_i^c-\Psi_i)\supset \Gamma ^0_i$.
\end{lemma}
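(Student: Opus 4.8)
The plan is to reduce the statement to the single claim that the connected graph $G_C$ contains a curve which is \emph{not} $\mu$-exceptional, where $\mu=f\circ\nu$, and then to prove that claim by a negativity argument.

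First I would record the identity $\Delta:=B^c_{\tilde S}-B_{\tilde S}=\nu^*\bigl((K_S+B^c)-(K_S+B)\bigr)=\nu^*(B^c-B)$, which is effective since $B^c\ge B$ and which has coefficient $1-c>0$ along $C$. Every vertex of $G_C$ lies in ${\rm Supp}(\Delta)$ by construction, and for a component $E$ of a branch $\Gamma_i$ the coefficient of $E$ in $\Psi_i^c-\Psi_i$ coincides with its coefficient in $\Delta$, because in the decompositions of $B^c_{\tilde S}$ and $B_{\tilde S}$ the pieces other than $\Psi_i^c$ and $\Psi_i$ (namely $C$, the $\Psi_j^{(c)}$ with $j\ne i$, and $\Lambda^c_{\tilde S},\Lambda_{\tilde S}$) are supported away from $\Gamma_i$. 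Hence ${\rm Supp}(\Psi_i^c-\Psi_i)\supseteq\Gamma_i$ for every $i$, and it suffices to exhibit one branch $\Gamma_{i_0}$ containing a non-exceptional curve: one then takes $\Gamma_{i_0}^0\subseteq\Gamma_{i_0}$ to be the sub-chain joining the component meeting $C$ to such a non-exceptional curve, and the asserted support containment is automatic.

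To produce such a branch I would argue by contradiction. First note that ${\rm Supp}(\Delta)$ is contained in the vertex set of $G$: any component of $\nu^*(B^c-B)$ is either $\mu$-exceptional or the strict transform of a component of $B^c_T=f_*B^c$. Hence $G_C$ is exactly the connected component of $C$ in the dual graph of ${\rm Supp}(\Delta)$. Suppose every curve of $G_C$ were $\mu$-exceptional, and let $\Delta_{G_C}$ be the part of $\Delta$ supported on $G_C$. Then $\Delta_{G_C}$ is a nonzero effective $\Q$-divisor (it contains $C$ with coefficient $1-c$) whose support is a connected union of curves contracted by $\mu$; for each such curve $E$ one has $\Delta_{G_C}\cdot E=\Delta\cdot E$, since $G_C$ is a connected component of ${\rm Supp}(\Delta)$, together with
$$\Delta\cdot E=\nu^*(K_S+B^c)\cdot E-\nu^*(K_S+B)\cdot E\ge 0,$$
because $K_S+B^c$ is $\Q$-trivial over $T$, so the first term vanishes on the $\mu$-contracted curve $E$, while $-(K_S+B)$ is $f$-nef, so the second is nonpositive. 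Therefore $\Delta_{G_C}^2=\sum_E({\rm mult}_E\,\Delta_{G_C})(\Delta_{G_C}\cdot E)\ge 0$, contradicting the negative-definiteness of the intersection form on curves contracted by the birational morphism $\mu$ — the fact already invoked in the proof of \eqref{l-0}. So $G_C$ carries a non-exceptional vertex, necessarily lying on one of the branches $\Gamma_{i_0}$, and one concludes as in the previous paragraph.

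I expect the main obstacle to be purely organisational: identifying $G_C$ with a connected component of ${\rm Supp}(\Delta)$ and checking that the coefficients of $\Psi_i^c-\Psi_i$ are literally those of $\Delta$, so that the support statement becomes a formality once a non-exceptional curve has been located. The negativity computation itself is brief, and it is the only step that uses the hypotheses that $-(K_S+B)$ is $f$-nef and that $N(K_S+B^c)\sim 0$ over $T$.
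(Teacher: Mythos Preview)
Your proof is correct and is essentially the same argument as the paper's: your divisor $\Delta_{G_C}$ coincides with the paper's $D$ (the connected component of $(1-c)C+\sum(\Psi_i^c-\Psi_i)$ containing $C$), and both proofs derive a contradiction from $\Delta\cdot D=D^2<0$ versus the nefness of $\Delta\equiv_T -\nu^*(K_S+B)$. The paper simply states this in two lines, leaving the reduction you spell out (that ${\rm Supp}(\Psi_i^c-\Psi_i)=\Gamma_i$ and that it suffices to locate a non-exceptional vertex in $G_C$) implicit.
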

\begin{proof}
Let $D$ be the connected component of $(1-c)C+\sum (\Psi _{i}^c-\Psi _{i})$  containing $C$. Note that the support of $D$ contains $C$ and so $D$ is non-zero.  If $D$ is exceptional, then it is easy to see that 
$$(B_{\tilde S}^c-B_{\tilde S})\cdot D=D^2<0.$$  This is impossible as $$B_{\tilde S}^c-B_{\tilde S}=K_{\tilde S}+B_{\tilde S}^c-(K_{\tilde S}+B_{\tilde S})=\nu ^*(K_S+B_{S}^c-K_S-B_{S})\sim_{\Q,T}-\nu ^*(K_S+B_S)$$ is nef over $T$.

\end{proof}
We assume that $i_0=1$, we let $\Gamma ^0_1$ be the subchain of $\Gamma _1$  defined in \eqref{l-small}. We assume that the first vertex is a neighbor of $C$ and the last vertex corresponds to a non-exceptional curve in the support of $B^c_{\tilde S}-B_{\tilde S}$.
We define the divisor 
$${B}^*_{\tilde S}=C+\Psi _1^*+\sum^m_{i=2}\Psi ^c_i+\Lambda^c_{\tilde{S}},$$ where we replace each coefficient $\frac{p}{q}$ of a component of $\Psi _1^c\wedge \Gamma _1^0$ with $q=N\in \{1,2,3,4,6\}$ by the coefficient $\frac{p-1}{q-1}$ and we leave the remaining coefficients unchanged. 
\begin{lemma}\label{l-nef} $-(K_{\tilde S}+B^*_{\tilde S})$ is nef over $T$.\end{lemma}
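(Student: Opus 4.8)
The plan is to compare the divisor $B^*_{\tilde S}$ with $B^c_{\tilde S}$ on the chain $\Gamma_1^0$ and use the negativity lemma (i.e. the fact that an effective divisor supported on curves exceptional over $T$ and contracted cannot be $T$-nef unless it is trivial). Since $-(K_{\tilde S}+B^c_{\tilde S}) = -\nu^*(K_S+B^c) \sim_{\Q,T} 0$, it suffices to show that $B^*_{\tilde S}-B^c_{\tilde S}$ is anti-nef over $T$, equivalently that $\Theta := B^c_{\tilde S}-B^*_{\tilde S}$ is nef over $T$. By construction $\Theta$ is supported on the chain $\Gamma_1^0$: its coefficient along a component $E_j$ of $\Gamma_1^0 \cap \Psi_1^c$ whose $B^c_{\tilde S}$-coefficient is $\frac{p_j}{N}$ equals $\frac{p_j}{N}-\frac{p_j-1}{N-1} = \frac{N-p_j}{N(N-1)} \geq 0$ (this is $\geq 0$ since $p_j < N$ as $(S,B^c)$ is plt, hence $\frac{p_j}{N}<1$), and $\Theta$ is zero away from $\Gamma_1^0$.

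First I would set up the intersection computation. By Lemma \ref{l-small} the chain $\Gamma_1^0$ is non-exceptional: its last vertex corresponds to a curve $E_\ell$ which is not exceptional over $T$. Since $-(K_{\tilde S}+B^c_{\tilde S})$ is numerically trivial over $T$ and $-(K_{\tilde S}+B^*_{\tilde S}) = -(K_{\tilde S}+B^c_{\tilde S}) + \Theta$, nefness over $T$ of $-(K_{\tilde S}+B^*_{\tilde S})$ is equivalent to $\Theta \cdot E \geq 0$ for every curve $E$ contracted by $\mu=f\circ\nu$. For $E$ not meeting $\Gamma_1^0$ this is automatic. For $E$ a component of $\Gamma_1^0$, I would compute $\Theta\cdot E$ using adjunction and the fact that $(K_{\tilde S}+B^c_{\tilde S})\cdot E = 0$ for every $\mu$-exceptional $E$ (again by $\nu^*$). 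Concretely, writing $\Theta = \sum_j \theta_j E_j$ over the vertices $E_j$ of $\Gamma_1^0$ with $\theta_j \geq 0$ and $\theta_j = 0$ for any vertex not in the support of $\Psi_1^c - \Psi_1$, one gets a system governed by the (negative definite, since the $E_j$ are exceptional over $T$) intersection matrix of $\Gamma_1^0$, together with boundary contributions: at the end of the chain near $C$ there is a contribution from $C$ (whose $\Theta$-coefficient is $0$), and at the far end there is a contribution from the non-exceptional curve, across which $\Theta$ is $0$ on one side.

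The key point I expect to drive the argument is the standard-set arithmetic: because all coefficients $\frac{p_j}{N}$ appearing in $\Psi_1^c$ on $\Gamma_1^0$ are in the standard set with the \emph{same} denominator $N\in\{1,2,3,4,6\}$, replacing $\frac{p}{N}$ by $\frac{p-1}{N-1}$ lowers the coefficient by exactly $\frac{N-p}{N(N-1)}$, and one checks that along an $A_n$-type chain these decrements are precisely compatible with keeping $(K_{\tilde S}+B^*_{\tilde S})\cdot E_j \leq 0$ for each internal $E_j$ — this is the combinatorial heart of Shokurov/Prokhorov's complement machinery (cf. \S4.1 of \cite{Prokhorov99}), and I would quote the relevant chain computation from there rather than redo it. The main obstacle is keeping careful track of the signs and the boundary terms at the two ends of $\Gamma_1^0$ (the $C$-end, where we must use that $C$'s coefficient is unchanged and equal to $1$ in $B^*_{\tilde S}$, and the non-exceptional end, where the curve is not contracted so imposes no condition), and verifying that the perturbation does not destroy nefness across vertices \emph{outside} $\Gamma_1^0$ that happen to meet it — but those meet $\Gamma_1^0$ only in its first vertex (the neighbor of $C$) or not at all, and the first vertex's coefficient is only decreased, which can only help. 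Once $\Theta\cdot E\geq 0$ is verified for all contracted $E$, the negativity lemma gives that $-(K_{\tilde S}+B^*_{\tilde S})$, being numerically $\Theta$ over $T$ up to the trivial class, is $f$-nef.
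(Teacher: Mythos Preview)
Your reduction is the same as the paper's: since $K_{\tilde S}+B^c_{\tilde S}\equiv_T 0$, showing $-(K_{\tilde S}+B^*_{\tilde S})$ is nef over $T$ amounts to checking $(K_{\tilde S}+B^*_{\tilde S})\cdot D\le 0$ for every $\mu$-exceptional curve $D$, and for $D$ not in $\Gamma_1^0$ this is immediate because $B^*_{\tilde S}\le B^c_{\tilde S}$ (equivalently your $\Theta\ge 0$). The references to the negativity lemma are misplaced: once $\Theta\cdot E\ge 0$ for all contracted $E$ you are simply done by definition of nefness, and at no point do you use that an effective exceptional divisor cannot be nef (indeed $\Theta$ has a non-exceptional component).

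The genuine gap is the case $D=E_j$ an exceptional component of $\Gamma_1^0$, which you defer to ``the relevant chain computation'' in \cite{Prokhorov99}. The paper does not quote this; it carries out the two-line arithmetic directly. Writing the coefficients of $B^c_{\tilde S}$ along $\Gamma_1^0$ as $p_j/q$ (with $q=N$) and letting $r/q$ be the contribution of all \emph{other} components of $B^c_{\tilde S}$ meeting $D$, the equation $(K_{\tilde S}+B^c_{\tilde S})\cdot D=0$ reads $p_{j-1}+p_{j+1}+r-2q+(p_j-q)D^2=0$. Substituting $(p_{j-1},p_j,p_{j+1},q)\mapsto(p_{j-1}-1,p_j-1,p_{j+1}-1,q-1)$ while leaving $r/q$ unchanged, one finds that $(K_{\tilde S}+B^*_{\tilde S})\cdot D\le 0$ is equivalent to $-r/q\le 0$, which holds since $r\ge 0$. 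The endpoint adjacent to the non-exceptional curve $G$ is handled by the same substitution, yielding equality when $D\cdot G=1$ and strict inequality otherwise. This is the actual content of the lemma, and it hinges on the specific form of the perturbation $\frac{p}{q}\mapsto\frac{p-1}{q-1}$; it is not something one can wave away.

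Two small inaccuracies to fix: the coefficients $p_j/N$ along $\Gamma_1^0$ are \emph{not} assumed to lie in the standard set---they merely have common denominator $N$ because $N(K_S+B^c)\sim_T 0$; and it is not true that curves outside $\Gamma_1^0$ meet it only at its first vertex---$\Gamma_1$ is a tree and branches may leave $\Gamma_1^0$ anywhere (this is exactly the $r/q$ term above). Fortunately this last point is harmless for the reason you already have: those curves are not components of $\Theta$, so $\Theta\cdot E\ge 0$ is automatic.
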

\begin{proof} Since $B^*_{\tilde S}\leq B^c_{\tilde S}$, it is clear that $(K_{\tilde S}+B^*_{\tilde S})\cdot C<0$.
If $D$ is an exceptional  curve in $\Gamma _1\setminus \Gamma _1^0$, then by the same argument, it is clear that $(K_{\tilde S}+B^*_{\tilde S})\cdot D\leq 0$.

For any exceptional curve $D$ contained in $\Gamma ^0_1$ which does not meet the non-exceptional component, we know that its intersection number with the adjacent components in $\Gamma_1^0$ are all  1. Thus the  equation $(K_{\tilde S}+B^c_{\tilde S})\cdot D=0$ implies that
$$\frac {p_{j-1}}q+\frac {p_{j+1}}q+\frac {p_{j}}q D^2+\frac r q-2-D^2=0\qquad {\rm or}$$
$$p_{j-1}+p_{j+1}+r-2q+(p_j-q)D^2=0.$$
(We have used the fact that by adjunction one has $K_{\tilde S}\cdot D=-2-D^2$.)
Here we denote by $p_j/q$ the multiplicity of $D$ in $B^c_{\tilde S}$ and by $p_{j-1}/q$ (resp. $p_{j+1}/q$) the multiplicity of $B^c_{\tilde S}$ along the previous (resp. the following) curve in $\Gamma ^i_0$. If $D$ is the first curve in $\Gamma ^0_1$, then we let $j-1=0$ and $p_{0}=q$ as the multiplicity of $B^c_{\tilde S}$ along $C$ is $1$.
$r/q$ denotes the intersection of all other components of $B^c_{\tilde S}$ with $D$.
To obtain $B^*_{\tilde S}$, we replace $(p_{j-1},p_j,p_{j+1},q)$ by $(p_{j-1}-1, p_j-1,p_{j+1}-1,q-1)$, note that $p_0=q$ and hence $(p_0-1)/(q-1)=1$ so that the multiplicity along $C$ is unchanged.
The above equality implies 
$$\frac {p_{j-1}-1}{q-1}+\frac {p_{j+1}-1}{q-1}+\frac {p_{j}-1}{q-1} D^2+\frac r q-2-D^2\le 0$$
which says $(K_{\tilde S}+B^*_{\tilde S})\cdot D\le 0$.

For the exceptional curve $D\in \Gamma_1^0$ which meets the non-exceptional component say $G$, the same calculation implies that $(K_{\tilde S}+B^*_{\tilde S})\cdot D=0$ if the intersection number is $D\cdot G=1$, and otherwise $(K_{\tilde S}+B^*_{\tilde S})\cdot D<0$. 
\end{proof} 
\begin{lemma}\label{l-geq} We have $B^*_{\tilde S}\geq B_{\tilde S}$.\end{lemma}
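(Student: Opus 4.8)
The goal is to verify that the modified boundary $B^*_{\tilde S}$ dominates the strict transform boundary $B_{\tilde S}$, coefficient by coefficient. The plan is to compare $B^*_{\tilde S}=C+\Psi_1^*+\sum_{i=2}^m\Psi_i^c+\Lambda^c_{\tilde S}$ against $B_{\tilde S}=cC+\sum_{i=1}^m\Psi_i+\Lambda_{\tilde S}$ piece by piece, using that the analogous inequality $B^c_{\tilde S}\geq B_{\tilde S}$ holds on every component (this is the content of $B^c\geq B$ in \eqref{t-c}, pulled back via $\nu$, together with the fact that $\nu^*(K_S+B^c)-\nu^*(K_S+B)$ is effective exactly on the components where $B^c_{\tilde S}>B_{\tilde S}$ and $\nu$-nef).

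First I would dispose of the easy strata. Along $C$ itself both divisors have coefficient $1 = c^c \geq c$ ($c<1$), so the inequality is strict there. On the components in $\Lambda^c_{\tilde S}$ and on $\Psi_i^c$ for $i\geq 2$, the coefficients of $B^*_{\tilde S}$ coincide with those of $B^c_{\tilde S}$, which are $\geq$ the corresponding coefficients of $B_{\tilde S}$ (i.e.\ of $\Lambda_{\tilde S}$, resp.\ $\Psi_i$) by \eqref{t-c}. So the only place anything needs to be checked is along the chain $\Gamma_1^0$, where we have replaced each coefficient $\frac{p}{q}$ of a component of $\Psi_1^c\wedge\Gamma_1^0$ (with $q=N$) by $\frac{p-1}{q-1}$, and along the rest of $\Psi_1$.

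The heart of the matter is thus the following local statement on a component $D$ of $\Gamma_1^0$: if $B^c_{\tilde S}$ has coefficient $\frac{p}{q}$ along $D$ (with $q=N$) and $B_{\tilde S}$ has coefficient $b$ along $D$, then $\frac{p-1}{q-1}\geq b$. Here I would distinguish two cases. If $D$ is exceptional over $T$ then $b\leq 0$, since $B_{\tilde S} - C$ restricted to the exceptional locus is a pullback of an anti-nef divisor minus a boundary and the exceptional curves of $\tilde S\to T$ carry non-positive coefficients in $B_{\tilde S}$ (indeed $B_{\tilde S}=\nu^*(K_S+B)-K_{\tilde S}$ and since $T$ is log terminal the discrepancies of $K_S+B$ over $T$ can be controlled); since $p\geq 1$ we get $\frac{p-1}{q-1}\geq 0\geq b$. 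If $D$ is the non-exceptional curve at the far end of $\Gamma_1^0$, then $D$ is a component of the strict transform of $B$ (by the choice of $\Gamma_1^0$ as lying in $\mathrm{Supp}(\Psi_1^c-\Psi_1)$ and ending at a non-exceptional curve in $\mathrm{Supp}(B^c_{\tilde S}-B_{\tilde S})$), so $b=\frac{n-1}{n}$ is a standard coefficient; meanwhile $\frac{p}{q}$ is the corresponding coefficient of $B^c$, and the complement construction gives $NB^c\geq\lfloor(N+1)B\rfloor$ (cf.\ \cite[4.1.3]{Prokhorov99} as used in \eqref{t-c}), i.e.\ $p=Nb^c\geq\lfloor(N+1)b\rfloor=\lfloor(N+1)\frac{n-1}{n}\rfloor$; a direct check that $\frac{\lfloor(N+1)(n-1)/n\rfloor-1}{N-1}\geq\frac{n-1}{n}$ for the relevant $N\in\{1,2,3,4,6\}$ and all $n$ then finishes it (the case $N=1$ being vacuous since then there is no such component).

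The main obstacle I anticipate is bookkeeping rather than depth: making sure the replacement $\frac{p}{q}\rightsquigarrow\frac{p-1}{q-1}$ is only ever applied to components whose $B^c_{\tilde S}$-coefficient genuinely has denominator exactly $N$ (so that $q-1\geq 1$ and the formula makes sense), and that for the intermediate exceptional curves of $\Gamma_1^0$ one does not accidentally decrease the coefficient below that of $B_{\tilde S}$ — but this is automatic since those curves have $b\leq 0$. I would also double-check the boundary case where $\Gamma_1^0$ consists of a single non-exceptional curve meeting $C$, where the argument reduces directly to the standard-coefficient estimate above.
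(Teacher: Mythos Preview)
Your reduction to the chain $\Gamma_1^0$ is correct, and your treatment of the non-exceptional endpoint is exactly the arithmetic fact the paper isolates as Lemma~\ref{l-trivial}: if $b=\frac{n-1}{n}$ is a standard coefficient with $\frac{n-1}{n}<\frac{p}{q}$, then $\frac{n-1}{n}\le\frac{p-1}{q-1}$.

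The gap is in your handling of the \emph{exceptional} curves of $\Gamma_1^0$. You assert that such a curve $D$ has coefficient $b\le 0$ in $B_{\tilde S}$, but this is not true in general. The coefficient of a $\nu$-exceptional curve $D$ in $B_{\tilde S}$ equals $-a(D,S,B)$, and since $(S,B)$ is only assumed klt (not canonical), this can lie anywhere in $(-\infty,1)$; in particular it can be strictly positive. Knowing only $b<\frac{p}{q}$ (from $D\subset{\rm Supp}(\Psi_1^c-\Psi_1)$) does not force $b\le\frac{p-1}{q-1}$: for instance with $N=6$, $p=5$ one could have $b=\tfrac{4}{5}+\epsilon<\tfrac{5}{6}$ yet $b>\tfrac{4}{5}=\frac{p-1}{q-1}$. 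Your appeal to ``$T$ log terminal'' does not control $a(D,S,B)$ in the way you need.

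The paper avoids this componentwise comparison on exceptional curves altogether by a negativity argument. Since $K_{\tilde S}+B_{\tilde S}=\nu^*(K_S+B)$ is numerically trivial over $S$ and $-(K_{\tilde S}+B^*_{\tilde S})$ is nef over $T$ (Lemma~\ref{l-nef}), the difference $B_{\tilde S}-B^*_{\tilde S}$ is $\nu$-nef; the negativity lemma then reduces the question to $\nu_*B^*_{\tilde S}\ge B$, i.e.\ to the non-$\nu$-exceptional components only, where your arithmetic argument (equivalently Lemma~\ref{l-trivial}) applies. That is the missing idea.
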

\begin{proof} Since $B_{\tilde S}-B^*_{\tilde S}=K_{\tilde S}+B_{\tilde S}-(K_{\tilde S}+B^*_{\tilde S})$ is nef over $S$, by the negativity lemma it suffices to show that $\nu _*B^*_{\tilde S}\geq B$.
This follows by \eqref{l-trivial} below, since $B^c- B\ge0$, the support of $B^c- B$ contains $\Gamma^0_i$ and the coefficients of $B$ are of the form $\frac {n-1} n$. 
\end{proof}
\begin{lemma}\label{l-trivial}
Let $p,q, j$ be natural numbers such that  $\frac{j-1}{j}<\frac{p}{q}$ and $q\ge 2$, then $\frac{j-1}{j}\leq \frac{p-1}{q-1}$.
\end{lemma}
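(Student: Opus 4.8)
The statement to prove is Lemma~\ref{l-trivial}: if $p,q,j$ are natural numbers with $\frac{j-1}{j}<\frac pq$ and $q\ge 2$, then $\frac{j-1}{j}\le\frac{p-1}{q-1}$.

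The plan is to reduce everything to a single elementary inequality between integers and then dispatch it by a short case analysis, taking care of the degenerate value $j=1$ separately. First I would clear denominators: the hypothesis $\frac{j-1}{j}<\frac pq$ is equivalent to $(j-1)q<jp$, i.e.\ $jp-jq+q>0$, which (since these are integers) is the same as $jp-jq+q\ge 1$, hence $jp\ge jq-q+1=(j)(q-1)+1$. The desired conclusion $\frac{j-1}{j}\le\frac{p-1}{q-1}$ is, after multiplying by the positive quantities $j$ and $q-1$, equivalent to $(j-1)(q-1)\le j(p-1)$, i.e.\ $jp-j\ge jq-q-j+1+\ldots$; expanding carefully, $(j-1)(q-1)=jq-j-q+1$ and $j(p-1)=jp-j$, so the inequality we want is $jq-j-q+1\le jp-j$, that is $jp\ge jq-q+1$.

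The point of the two computations above is that the hypothesis, rewritten, reads $jp\ge jq-q+1$, and the conclusion, rewritten, reads $jp\ge jq-q+1$ — they are literally the same inequality. So once the (strict, integer-valued) hypothesis is converted into the non-strict form via the observation that a positive integer is $\ge 1$, the conclusion follows with no further work. I would present this as: from $\frac{j-1}{j}<\frac pq$ and $q,j>0$ we get $q(j-1)<pj$, hence $q(j-1)+1\le pj$ (integers), hence $(q-1)(j-1)=q(j-1)-(j-1)\le pj-(j-1)-1\cdot 0$; tidy this to $(q-1)(j-1)\le (q-1)(j-1)$... let me instead just write it cleanly in the proof as the chain $(q-1)(j-1)=q(j-1)-(j-1)\le \big(pj-1\big)-(j-1)=pj-j=(p-1)j$, where the middle inequality is $q(j-1)\le pj-1$. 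Dividing by $j(q-1)>0$ (here $q\ge 2$ is used, so $q-1\ge 1>0$, and $j\ge 1$; if $j=1$ both sides of the claimed inequality are $0$ and there is nothing to prove) gives $\frac{j-1}{j}\le\frac{p-1}{q-1}$.

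I do not expect any real obstacle here; the only thing to be careful about is the hypothesis $q\ge2$ (needed so that $q-1$ is a positive integer, making division legitimate and preventing a zero denominator on the right) and the trivial case $j=1$, where the claim $0\le\frac{p-1}{q-1}$ holds because the hypothesis forces $\frac pq>0$ hence $p\ge 1$. Thus the proof is a two-line clearing-of-denominators argument using that for integers ``$a<b$'' is the same as ``$a+1\le b$''.

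\begin{proof}
If $j=1$ the left-hand side is $0$; moreover $\frac{j-1}{j}<\frac pq$ forces $p\ge 1$, so $\frac{p-1}{q-1}\ge 0$ and the inequality holds. So assume $j\ge 2$. Since $j,q>0$, the hypothesis $\frac{j-1}{j}<\frac pq$ is equivalent to $q(j-1)<pj$, and as both sides are integers this gives $q(j-1)\le pj-1$. Using $q\ge 2$ (so $q-1\ge 1$), we compute
$$(q-1)(j-1)=q(j-1)-(j-1)\le (pj-1)-(j-1)=pj-j=(p-1)j.$$
Dividing by the positive integer $j(q-1)$ yields $\frac{j-1}{j}\le\frac{p-1}{q-1}$, as desired.
\end{proof}
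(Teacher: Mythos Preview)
Your proof is correct; the paper's own proof is simply the word ``Obvious.'', and your clearing-of-denominators argument is exactly the kind of verification one would supply if asked to spell out that word.
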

\begin{proof} Obvious.\end{proof}

\begin{lemma} We have that $(C,{\rm Diff }_C(B^*_{\tilde S}))$ is globally $F$-regular.
\end{lemma}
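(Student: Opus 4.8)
The plan is to use adjunction to transfer the question to $C\cong\P^1$, to identify $(C,{\rm Diff}_C(B^*_{\tilde S}))$ as a klt log del Pezzo pair of bounded index, and to conclude by Hara's theorem, whose hypothesis $p>5$ is exactly what is available. First I would set up the reduction. Since $\tilde S$ dominates the minimal resolution, $C$ is a smooth rational curve and the remaining components of $B^*_{\tilde S}$ meet it transversally in distinct points; moreover $p<q$ forces $\tfrac{p-1}{q-1}\in[0,1)$, so the modification defining $B^*_{\tilde S}$ keeps every coefficient away from $C$ strictly below $1$ and $(\tilde S,B^*_{\tilde S})$ is plt with $\lfloor B^*_{\tilde S}\rfloor=C$. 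Adjunction then yields an effective $\Q$-divisor ${\rm Diff}_C(B^*_{\tilde S})=\sum_j c_jQ_j$ with $c_j\in[0,1)$ and $(K_{\tilde S}+B^*_{\tilde S})|_C=K_C+{\rm Diff}_C(B^*_{\tilde S})$, so $(C,{\rm Diff}_C(B^*_{\tilde S}))$ is klt; by \eqref{l-nef} and the strict inequality $(K_{\tilde S}+B^*_{\tilde S})\cdot C<0$ established there, $-(K_C+{\rm Diff}_C(B^*_{\tilde S}))$ has positive degree, i.e. $\deg{\rm Diff}_C(B^*_{\tilde S})<2$. Finally, as $C$ is contracted to $0\in T$, global $F$-regularity over $T$ coincides here with global $F$-regularity.

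Next I would pin down the coefficients. Adjunction of $B^c_{\tilde S}$ along $C$ gives $2=\sum_{i\in I}p_i/N$ with $N\in\{1,2,3,4,6\}$, so the components of $B^c_{\tilde S}$ meeting $C$ have coefficients with denominator dividing $N$; together with the fact that $C$ is a fork with $m=|I|\in\{3,4\}$ branches (and that $m=4$ forces all of them to be $\tfrac12$), the classification behind \eqref{t-c} leaves only the possibilities $(\tfrac12,\tfrac12,\tfrac12,\tfrac12)$, $(\tfrac23,\tfrac23,\tfrac23)$, $(\tfrac12,\tfrac34,\tfrac34)$, $(\tfrac12,\tfrac23,\tfrac56)$ for ${\rm Diff}_C(B^c_{\tilde S})$ up to permutation. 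Passing to $B^*_{\tilde S}$ replaces a single coefficient $\tfrac{n-1}{n}$ either by $\tfrac{n-2}{n-1}$, which is again standard, or by $0$. Hence ${\rm Diff}_C(B^*_{\tilde S})=\sum_{j=1}^{3}\tfrac{b_j-1}{b_j}Q_j$ with each $b_j\le 6$ and $\sum_j 1/b_j>1$.

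To finish, note that for any such triple the index $r={\rm lcm}(b_1,b_2,b_3)$ has all prime factors $\le 5<p$, so $p\nmid r$ and there is a tame Galois cover $\pi:\P^1\to\P^1$ of degree prime to $p$ branched over $Q_1,Q_2,Q_3$ with ramification $b_1,b_2,b_3$; by Riemann--Hurwitz $\pi^*(K_C+{\rm Diff}_C(B^*_{\tilde S}))=K_{\P^1}$ (the source is $\P^1$ since this divisor has negative degree). Equivalently, the affine cone over $(C,{\rm Diff}_C(B^*_{\tilde S}))$ with respect to an ample Cartier multiple of $-(K_C+{\rm Diff}_C(B^*_{\tilde S}))$ is the quotient of $\A^2$ by a finite subgroup of $GL_2$ of order prime to $p$, hence a klt surface singularity, hence strongly $F$-regular by Hara's theorem since $p>5$ (alternatively, it is a direct summand of the strongly $F$-regular cone over $\P^1$). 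By the characterization of global $F$-regularity via strong $F$-regularity of such section rings (cf. \cite{SS10}), $(C,{\rm Diff}_C(B^*_{\tilde S}))$ is globally $F$-regular. The one real difficulty is this last step: a klt log Fano pair on $\P^1$ need not be globally $F$-regular in small characteristic, and it is the bounded index coming from the complement $B^c$ and the standard coefficients, combined with $p>5$ (which is precisely what handles the ``$E_8$-type'' case $(\tfrac12,\tfrac23,\tfrac45)$), that makes Hara's theorem applicable.
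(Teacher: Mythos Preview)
Your argument has a genuine gap in the second paragraph: you misidentify the modification rule that produces $B^*_{\tilde S}$ from $B^c_{\tilde S}$. The rule is \emph{not} $(n-1)/n\mapsto (n-2)/(n-1)$ with $n$ the denominator in lowest terms; it is $p/q\mapsto (p-1)/(q-1)$ with $q=N$ the index of the complement. These agree only when the branch coefficient happens to equal $(N-1)/N$. For instance, when $N=6$ the branch with coefficient $\tfrac12=\tfrac36$ is sent to $\tfrac25$, not to $0$; when $N=4$ the branch with coefficient $\tfrac12=\tfrac24$ is sent to $\tfrac13$. Thus ${\rm Diff}_C(B^*_{\tilde S})$ can be $(\tfrac25,\tfrac23,\tfrac56)$ or $(\tfrac13,\tfrac34,\tfrac34)$, and in particular the coefficients are \emph{not} all standard.

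This breaks your final step. With a coefficient like $\tfrac25$ there is no cyclic branched cover that removes the boundary, so the Riemann--Hurwitz/quotient-singularity reduction to Hara's theorem does not apply; nor can you round up to a standard-coefficient Fano pair, since e.g.\ $(\tfrac12,\tfrac23,\tfrac56)$ and $(\tfrac12,\tfrac34,\tfrac34)$ already have degree exactly $2$. The paper's proof proceeds instead by listing, for each $N\in\{2,3,4,6\}$, the finitely many possibilities for ${\rm Diff}_C(B^*_{\tilde S})$ and then invoking \eqref{p-hara}, which handles the standard-coefficient Platonic triples via Watanabe's criterion and verifies the two exotic cases $(\tfrac25,\tfrac23,\tfrac56)$ and $(\tfrac13,\tfrac34,\tfrac34)$ by explicit Fedder-type computations for each prime $7\le p\le 29$ (or $p\le 11$). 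Your setup in the first paragraph is fine, but to complete the proof you must either reproduce those computations or supply a different argument for the two non-standard pairs.
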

\begin{proof} We know that $N\in \{ 1,2,3,4,6 \}$. If $N=6$, then by simple arithmetic we have $(C,{\rm Diff }_C(B^c_{\tilde S}))=(\mathbb{P}^1, \frac{1}{2}P_1+\frac{2}{3}P_2+\frac{5}{6}P_3)$. But then $(C,{\rm Diff }_C(B^*_{\tilde S}))=(\mathbb{P}^1, aP_1+bP_2+cP_3)$ where $$(a,b,c)\in \{ (2/5,2/3,5/6), (1/2, 1/2,5/6), (1/2, 2/3, 4/5)\}.$$
If $N=4$, then $(C,{\rm Diff }_C(B^c_{\tilde S}))=(\mathbb{P}^1, \frac{1}{2}P_1+\frac{3}{4}P_2+\frac{3}{4}P_3)$. But then $(C,{\rm Diff }_C(B^*_{\tilde S}))=(\mathbb{P}^1, aP_1+bP_2+cP_3)$ where $$(a,b,c)\in \{ (1/3,3/4,3/4), (1/2, 2/3,3/4)\}.$$
If $N=3$, then $(C,{\rm Diff }_C(B^c_{\tilde S}))=(\mathbb{P}^1, \frac{2}{3}P_1+\frac{2}{3}P_2+\frac{2}{3}P_3)$. But then $(C,{\rm Diff }_C(B^*_{\tilde S}))=(\mathbb{P}^1, \frac 1 2P_1+\frac 2 3 P_2+\frac 2 3P_3)$.

$N=2$, then $(C,{\rm Diff }_C(B^c_{\tilde S}))=(\mathbb{P}^1, \frac{1}{2}P_1+\frac{1}{2}P_2+\frac{1}{2}P_3+\frac{1}{2}P_4)$. But then $(C,{\rm Diff }_C(B^*_{\tilde S}))=(\mathbb{P}^1, \frac 1 2P_1+\frac 1 2 P_2+\frac 1 2P_3)$

All of these cases are globally $F$-regular by \eqref{p-hara}. 
\end{proof}
\begin{prop}\label{p-Freg}Notation as above. 
If there is an effective $\mathbb Q$-divisor ${B}^*_{\tilde S}$ on ${\tilde S}$ such that
\begin{enumerate}
\item $-(K_{\tilde {S}}+B^*_{\tilde S})$ is nef, $B^c_{\tilde S}\ge B^*_{\tilde S}\ge {B}_{\tilde S}$,
\item $(\tilde S, B^*_{\tilde{S}})$ is plt, 
\item $(C,{\rm Diff}_C(B^*_{\tilde S}))$ is globally $F$-regular. 
\end{enumerate}
Then $(S,B)$ is globally $F$-regular over $T$.
\end{prop}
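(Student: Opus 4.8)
The plan is to reduce the statement to the global $F$-regularity of $(\tilde S,B^*_{\tilde S})$ over $T$, with $\mu=f\circ\nu\colon\tilde S\to T$. Granting this: since $B^*_{\tilde S}\ge B_{\tilde S}=\nu^*(K_S+B)-K_{\tilde S}$ by \eqref{l-geq}, pushing the splittings forward along $\nu$ — exactly as in the proofs of \eqref{p-bir} and \eqref{l-gtos}, using $\nu_*\mathcal O_{\tilde S}=\mathcal O_S$ and naturality of the Frobenius trace — shows that $(S,\nu_*B^*_{\tilde S})$ is globally $F$-regular over $T$; as global $F$-regularity passes to any pair with smaller boundary and $\nu_*B^*_{\tilde S}\ge B$, we get that $(S,B)$ is globally $F$-regular over $T$. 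So it remains to treat $(\tilde S,B^*_{\tilde S})$.

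First I would pass to a better model and normalise the index. Since $(\tilde S,B^*_{\tilde S})$ is plt with $\lfloor B^*_{\tilde S}\rfloor=C$ and $C^2<0$ over $T$, the pair $(\tilde S,B^*_{\tilde S}-\epsilon C)$ is klt and $-(K_{\tilde S}+B^*_{\tilde S})+\epsilon C$ is $\mu$-nef for $0<\epsilon\ll1$ — it is positive on $C$ since $(-(K_{\tilde S}+B^*_{\tilde S}))\cdot C>0$ by the proof of \eqref{l-nef}, and non-negative on every other curve; hence $(\tilde S,B^*_{\tilde S}-\epsilon C)$ is a relative weak log del Pezzo surface over $T$, so by \eqref{l-sab}(1) the $\mu$-nef divisor $-(K_{\tilde S}+B^*_{\tilde S})$ is $\mu$-semiample. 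Let $h\colon\tilde S\to Y$ be the induced birational contraction over $T$. It does not contract $C$, so $h|_C$ is an isomorphism onto a prime divisor $C_Y$, and with $B_Y^*:=h_*B^*_{\tilde S}$ one has $K_{\tilde S}+B^*_{\tilde S}=h^*(K_Y+B_Y^*)$, $(Y,B_Y^*)$ plt, $\lfloor B_Y^*\rfloor=C_Y$, $-(K_Y+B_Y^*)$ ample over $T$, $C_Y$ mapping to the closed point $0\in T$, and ${\rm Diff}_{C_Y}(B_Y^*)$ corresponding to ${\rm Diff}_C(B^*_{\tilde S})$, so that $(C_Y,{\rm Diff}_{C_Y}(B_Y^*))$ is still globally $F$-regular. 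By \eqref{l-perturbation} and \eqref{l-GFR}, applied to a small effective divisor supported away from $C_Y$, I may further perturb $B_Y^*$ so that $p$ does not divide the index of $K_Y+B_Y^*$, while retaining that $(Y,B_Y^*)$ is plt, $\lfloor B_Y^*\rfloor=C_Y$, $-(K_Y+B_Y^*)$ is ample over $T$ (an open condition), and $(C_Y,{\rm Diff}_{C_Y}(B_Y^*))$ is globally $F$-regular. Finally $C_Y$ is an $F$-pure centre of $(Y,B_Y^*)$ by $F$-inversion of adjunction, since $(C_Y,{\rm Diff}_{C_Y}(B_Y^*))$ is strongly $F$-regular (compare \eqref{l-sigma}). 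Applying \eqref{p-bir} to $h$ and then the first paragraph, we are reduced to the pair $(Y,B_Y^*)$; renaming it $(\tilde S,B^*_{\tilde S})$, we may thus assume $-(K_{\tilde S}+B^*_{\tilde S})$ is $\mu$-ample, $p$ does not divide the index of $K_{\tilde S}+B^*_{\tilde S}$, and $C=\lfloor B^*_{\tilde S}\rfloor$ is an $F$-pure centre mapping to $0\in T$.

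Then comes the lifting. By \eqref{p-onto} it suffices, for each effective $\bar D$ on $\tilde S$, to find $e>0$ with $p^e\equiv1$ modulo the index and a surjection $H^0(\tilde S,\mathcal O_{\tilde S}((1-p^e)(K_{\tilde S}+B^*_{\tilde S})-\bar D))\to H^0(\tilde S,\mathcal O_{\tilde S})$. Put $M=(1-p^e)(K_{\tilde S}+B^*_{\tilde S})-\bar D$; then $M-(K_{\tilde S}+B^*_{\tilde S})=p^e\bigl(-(K_{\tilde S}+B^*_{\tilde S})\bigr)-\bar D$ is $\mu$-ample for $e\gg0$, and since $T$ is affine \eqref{p-rKS} yields a surjection
\[
S^0\bigl(\tilde S,\sigma(\tilde S,B^*_{\tilde S})\otimes\mathcal O_{\tilde S}(M)\bigr)\longrightarrow S^0\bigl(C,\sigma(C,{\rm Diff}_C(B^*_{\tilde S}))\otimes\mathcal O_C(M|_C)\bigr).
\]
On $C\cong\mathbb P^1$ we have $\deg(K_C+{\rm Diff}_C(B^*_{\tilde S}))=(K_{\tilde S}+B^*_{\tilde S})\cdot C<0$, so $\deg(M|_C)\to\infty$ with $e$; hence for $e\gg0$ the target is $H^0(C,\mathcal O_C(M|_C))$ by \eqref{l-cu2}, and by hypothesis (3) together with \eqref{p-onto} applied on $C$ the Frobenius-trace map $H^0(C,\mathcal O_C(M|_C))\to H^0(C,\mathcal O_C)=k$ is surjective. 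Choosing $t$ with trace $1$, lifting it to a section $s$ of the source, and letting $\tau\in H^0(\tilde S,\mathcal O_{\tilde S})=\mathcal O_{T,0}$ be its trace, compatibility of the trace with restriction to $C$ gives $\tau|_C=1$; as $C$ maps to $0$, $\tau$ is a unit in $\mathcal O_{T,0}$, so $\tau^{-1}s$ has trace $1$. Thus $1$ lies in the image and $(\tilde S,B^*_{\tilde S})$ is globally $F$-regular over $T$, which by the first paragraph completes the proof.

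The main obstacle is the model-change in the second paragraph: \eqref{p-rKS} requires $M-(K_{\tilde S}+B^*_{\tilde S})$ to be $\mu$-ample, which fails while $-(K_{\tilde S}+B^*_{\tilde S})$ is only $\mu$-nef, forcing the passage to the relative ample model; the delicate point is to verify that this model (after normalising the index) still satisfies (1)--(3), in particular that $C$ persists as an $F$-pure centre carrying the same different, which brings in $F$-inversion of adjunction for strong $F$-regularity. The other ingredients — the index perturbation away from $C$, the cofinality reduction in $\bar D$, and the compatibilities of the Frobenius trace with $\nu_*$ and with restriction — are routine.
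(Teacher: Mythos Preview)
Your overall strategy—lift from $C$ using its global $F$-regularity and then push the resulting splittings down from $\tilde S$ to $S$ via $B^*_{\tilde S}\ge B_{\tilde S}$—is exactly the paper's, and your first paragraph matches the paper's concluding step. The divergence is in how you obtain ampleness, and that is where your argument develops a gap.

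You pass to the ample model $Y$ of $-(K_{\tilde S}+B^*_{\tilde S})$ and try to run the $S^0$-machinery (\eqref{p-rKS}) there. Two problems arise. First, your appeal to \eqref{l-perturbation} ``applied to a small effective divisor supported away from $C_Y$'' is not what that lemma provides: in its proof one adds $D=\tfrac{1}{p^e-1}(\Delta+E)$, which carries a $C_Y$-component whenever $C_Y\subset\mathrm{Supp}(\Delta)$, pushing the $C_Y$-coefficient above $1$ and destroying plt. (In the present situation the index happens to be coprime to $p$ anyway, since all denominators of $B^*_{\tilde S}$ divide $60$ and $p>5$, but your justification is wrong.) Second, and more seriously, on the \emph{singular} $Y$ you need $C_Y$ to be an $F$-pure centre of $(Y,B_Y^*)$; your citation of \eqref{l-sigma} is misplaced (that is an SNC statement), and what you actually need is $F$-inversion of adjunction, which the paper does not supply.

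The paper sidesteps both issues by \emph{not} descending. It forms $\hat S=\mathrm{Proj}\,R(\tilde S/T,-(K_{\tilde S}+B^*_{\tilde S}))$ (your $Y$) only to choose an effective $F$ on $\tilde S$, exceptional for $\tilde S\to\hat S$, with $-F$ relatively ample; then $-(K_{\tilde S}+B^*_{\tilde S}+\epsilon F)$ is ample over $T$ for $0<\epsilon\ll1$, while $(\tilde S,B^*_{\tilde S}+\epsilon F)$ stays plt and $(C,\mathrm{Diff}_C(B^*_{\tilde S}+\epsilon F))$ stays globally $F$-regular. The lifting is then a bare commutative square of trace maps on the smooth $\tilde S$, and the surjectivity of the restriction map $\gamma$ comes from the birational-surface Kawamata--Viehweg vanishing \eqref{l-KV}; Nakayama's lemma (with $H^0(\tilde S,\mathcal O_{\tilde S})\to H^0(C,\mathcal O_C)$ being $A\to A/\mathfrak m$) then gives the surjection $\alpha$. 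No $S^0$, no index hypotheses, no $F$-pure centres on a singular base. Finally, because $\mathrm{mult}_C(B^*_{\tilde S})=1>\mathrm{mult}_C(B_{\tilde S})$, for $e\gg0$ the surjection factors through $H^0(\tilde S,\mathcal O_{\tilde S}((1-p^e)(K_{\tilde S}+B_{\tilde S})-E))$, which is what yields global $F$-regularity of $(\tilde S,B_{\tilde S})$ and hence of $(S,B)$ over $T$.

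In short: your reduction and your endgame are right, but the passage to the singular ample model is where the paper's argument is genuinely different and simpler—stay on $\tilde S$ and perturb by $\epsilon F$ instead.
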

\begin{proof} The argument is a generalization of \cite[4.3]{Hara98}. 

It follows from our assumption that $-(K_{\tilde{S}}+B^*_{\tilde{S}})$ is semi-ample  over $T$. We denote by
$$\hat{S}:={\rm Proj}R(\tilde{S}/T,-(K_{\tilde{S}}+B^*_{\tilde{S}})).$$
 Let $F$ be an exceptional relative anti-ample  divisor for $\tilde{S}$ over $\hat{S}$. We note that by (3), $-(K_{\tilde {S}}+B^*_{\tilde S})\cdot C>0$ and hence $\tilde{S}\to \hat{S}$ does not contract $C$.
 
 We choose $0<\epsilon \ll 1$ a rational number such that $(\tilde{S},B^*_{\tilde{S}}+\epsilon F)$ is plt and  $(C,{\rm Diff}_C(B^*_{\tilde S}+\epsilon F))$ is globally $F$-regular.  Let $E$ be any effective integral divisor over $T$.  We write $E=n_0C+E'$ where the support of $E'$ does not contain $C$.  We have the following commutative diagram.
  \begin{diagram}
H^0(\tilde S, \mathcal{O}_{\tilde S}((1-p^e)(K_{\tilde{S}}+B^*_{\tilde{S}}+\epsilon F)-E' )) & \rTo_{\alpha}& H^0(\tilde S, \mathcal{O}_{\tilde S})\\
\dTo ^{\gamma} & &\dTo _{\beta}\\
H^0(C , \mathcal{O}_C((1-p^e) (K_C+{\rm Diff}_C(B^*_{\tilde S}+\epsilon F))-E'|_{C})) &\rTo ^{\xi} &  H^0(C,\mathcal{O}_C)
\end{diagram}
By assumption $(C,{\rm Diff}_C(B^*_{\tilde S}+\epsilon F))$ is globally $F$-regular, hence $\xi$ is a surjection for $e\gg 0$. Since, for $0<\epsilon \ll 1$, $-(K_{\tilde{S}}+B^*_{\tilde{S}}+\epsilon F)$ is ample over $T$, it follows that for any $e\gg 0$, the homomorphism $\gamma$ is also a surjection by the Kawamata-Viehweg vanishing theorem \eqref{l-KV}. Thus $\beta \circ \alpha$ is a surjection and hence so is $\alpha$ by Nakayama's lemma (since $\beta$ is given by $ A\to A/m$ where $A\cong H^0(\tilde S, \mathcal{O}_{\tilde S})$ and $m$ is the maximal ideal of the image of $C$ in $T$).

Finally, since  ${\rm mult}_C(B^*_{\tilde{S}})> {\rm mult}_C(B_{\tilde{S}})$, then for any $e\gg 0$, the image of $\alpha$ is contained in the image of 
$$H^0(\tilde S, \mathcal{O}_{\tilde S}((1-p^e)(K_{\tilde{S}}+B_{\tilde{S}})+E)) \to H^0(\tilde S, \mathcal{O}_{\tilde S}).$$
\end{proof}
We now consider the case that $(S,B^c)$ is not plt and hence $N\in \{ 1, 2\}$. The proof is similar but easier to the plt case and so we only indicate the necessary changes to the above arguments.
Notice that since $(T,B_T:=f_*B)$ is klt, then $B^c_T:=f_*B^c\ne B_T$. 
Let $\nu :\tilde S \to S$ be a smooth dlt model, then 
by the Koll\'ar-Shokurov connectedness theorem, which follows from the Kawamata-Viehweg vanishing theorem in this case as usual, we know that the  components in $B^c_{\tilde{S}}$ with coefficient 1 form a connected graph. Computing the intersection numbers, we know that this graph is a chain $\Gamma^{\bullet}$, with the property that only the two end points of $\Gamma^{\bullet}$ can be connected to components in ${\rm Supp}(B^c_{\tilde{S}})\setminus \Gamma^{\bullet}$.

Let $C$ be any component in $\Gamma^{\bullet}$. By the same argument as \eqref{l-small}, there is a non-exceptional chain $\Gamma'' \subset (\Gamma \setminus C)$ intersecting $C$ such that ${\rm Supp}(B^c_{\tilde S}-B_{\tilde S})\supset \Gamma ''$. Thus, $\Gamma^{\bullet}\cup \Gamma ''$ is a chain, with a non-exceptional component at one end. 
If the other end is exceptional, which implies it is in $\Gamma^{\bullet}$, then we may and will rechoose it to be $C$.  
 We define $\Gamma' =(\Gamma''\cup \Gamma^\bullet)\setminus \{C\}$, which is one chain if $C$ is an end and two chains otherwise. In particular, $\Gamma'\subset {\rm Supp}(B^c_{\tilde S}-B_{\tilde S})$ and $C$ meets every connected component of $\Gamma'$ at one of its ends.  

Write $B^{c}_{\tilde{S}}=B'+B''$, where $B'$ consists of the components contained in the support of $\Gamma'$. For all exceptional curves $C_i\ne C$ in $\Gamma '$, consider the equations 
$$(K_{\tilde{S}}+B''+\sum a_iC_i)\cdot C_i=0,$$ 

Since the intersection matrix $C_i\cdot C_j$ is negative definite, we have a unique solution.
We easily see that 
$${\rm Supp }(B'-\sum a_iC_i)={\rm Supp} (\Gamma').$$

Then we first define a curve $B^{\sharp}_{\tilde{S}}$ as follows:  for a component not in $\Gamma'$, its coefficient in $B^{\sharp}_{\tilde{S}}$ is the same as in $B^c_{\tilde{S}}$, for a component in $\Gamma'$, its coefficient is the same as in $\sum a_iC_i$. So we easily see
$$(K_{\tilde{S}}+B^{\sharp}_{\tilde{S}})\cdot D\leq 0,$$
for any exceptional curve $D$.

Now we define $B^*_{\tilde{S}}=(1-\epsilon)B^c_{\tilde{S}}+\epsilon B^{\sharp}_{\tilde{S}}$. Since $B^{\sharp}_{\tilde{S}}\ge 0$, if we choose $\epsilon$ sufficiently small, we see that $B^*_{\tilde{S}}\ge B_{\tilde{S}}\vee 0$  and $(\tilde{S},B^*_{\tilde{S}})$ is plt. 

Note that if $(K_{\tilde{S}}+B_{\tilde S}^c)|_C=K_{\mathbb P ^1}+\sum _{i=1}^jb_iP_i$, then as each $b_i\geq \frac 12$, we have $j\leq 4$ and if $j=4$, then $b_i=\frac 12$. This is impossible as we assume $(\tilde{S}, B^c_{\tilde{S}})$ is non-plt.
Therefore we may assume that $j=2$ or $3$. 
Thus
{\small 
$$(K_{\tilde{S}}+B^*_{\tilde{S}})|_C=K_{\mathbb{P}^1}+a_1P_1+a_2P_1+a_3P_3 \mbox{ or } (K_{\tilde{S}}+B^*_{\tilde{S}})|_C=K_{\mathbb{P}^1}+a_1P_1+a_2P_1,$$
}where in the first case we have $(a_1,a_2)\le (\frac{1}{2},\frac{1}{2})$ and $a_3<1$ and in the second case $a_1,a_2<1$.
By \eqref{p-Freg}, $(S,B)$ is globally $F$-regular over $T$.
\end{proof}

\begin{prop}\label{p-hara} Let $k$ be an algebraically closed field of characteristic $p>5$ and $D= \frac 2 5 P_1+\frac 2 3 P_2+\frac 5 6 P_3$ or $D= \frac 1 3 P_1+\frac  34 P_2+\frac 3 4 P_3$ or $D=\sum _{i=1}^r\frac {d_i-1}{d_i}P_i$, where $r\leq 2$ or $r=3$ and $$(d_1,d_2,d_3)\in \{ (2,2,d), (2,3,3), (2,3,4), (2,3,5)\}$$ then $(\mathbb P ^1,D)$ is globally $F$-regular. 
\end{prop}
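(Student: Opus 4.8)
The plan is to realise each pair $(\mathbb P^1,D)$ in the list as the ``orbifold $\mathbb P^1$'' underlying a two‑dimensional log terminal singularity and to deduce its global $F$‑regularity from the corresponding local statement for surfaces.

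\emph{Reductions.} I would first record the elementary facts. In each case the coefficients of $D$ lie in $[0,1)$, so $(\mathbb P^1,D)$ is klt; a one‑line computation gives $\deg(K_{\mathbb P^1}+D)<0$ in every case (e.g. $-2+\frac12+\frac23+\frac45=-\frac1{30}$ for $(2,3,5)$, $-2+\frac25+\frac23+\frac56=-\frac1{10}$, $-2+\frac13+\frac34+\frac34=-\frac16$, $-2+\frac12+\frac12+\frac{d-1}d=-\frac1d$, etc.), so $-(K_{\mathbb P^1}+D)$ is ample; and all denominators occurring lie in $\{2,3,4,5,6\}$, so the index of $K_{\mathbb P^1}+D$ divides $60$ and is prime to $p$ since $p>5$. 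When ${\rm Supp}(D)$ consists of at most two points we may move it into $\{0,\infty\}$, whereupon $(\mathbb P^1,D)$ is toric with boundary of coefficients $<1$, hence globally $F$‑regular; so only the cases $r=3$ remain.

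\emph{Cone construction.} Let $L$ be an ample line bundle on $\mathbb P^1$ with $L\equiv -N(K_{\mathbb P^1}+D)$ for a suitable $N$ prime to $p$, form the affine cone $Y={\rm Spec}\bigoplus_{m\ge0}H^0(\mathbb P^1,L^{\otimes m})$ (a normal surface, a Veronese cone over $\mathbb P^1$), and let $\Delta_Y$ be the cone over $D$. Since $(\mathbb P^1,D)$ is klt and $-(K_{\mathbb P^1}+D)$ is ample, the cone formula (cf.\ \cite[\S3]{Kollar13}) makes $(Y,\Delta_Y)$ a two‑dimensional klt pair with $K_Y+\Delta_Y$ $\mathbb Q$‑Cartier of index prime to $p$; by the cone correspondence of \cite{SS10}, $(\mathbb P^1,D)$ is globally $F$‑regular if and only if $(Y,\Delta_Y)$ is strongly $F$‑regular at the vertex. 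For the ``$\sum\frac{d_i-1}{d_i}P_i$'' cases — $(2,2,d)$, $(2,3,3)$, $(2,3,4)$, $(2,3,5)$ — the required strong $F$‑regularity is classical: these are the orbifold bases of the Du Val singularities of type $D_{d+2}$, $E_6$, $E_7$, $E_8$, which are strongly $F$‑regular for $p>5$ (Hara \cite{Hara98}). Equivalently one can descend directly: $(2,2,d)$ follows from the $r=2$ case via the degree‑$2$ cover branched over the two $\frac12$‑points, and the other three from $(\mathbb P^1,0)$ via the Galois cover by the order‑$12$, $24$, $60$ subgroup of ${\rm PGL}_2$ realising the given orbifold structure (each order is prime to $p$, so the averaging operator splits $\mathcal O_{\mathbb P^1}\to\pi_*\mathcal O_{\mathbb P^1}$, and one concludes as in \eqref{p-bir}).

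\emph{The two fractional cases, and the main obstacle.} The pairs $(\mathbb P^1,\frac25P_1+\frac23P_2+\frac56P_3)$ and $(\mathbb P^1,\frac13P_1+\frac34P_2+\frac34P_3)$ carry the real difficulty: their coefficients are not all of the form $1-\frac1d$, so the cone $(Y,\Delta_Y)$ is a genuine klt \emph{pair} (an $\mathbb A^2/\mu_3$‑ resp.\ $A_1$‑type surface with a boundary) rather than a Du Val variety, and no tame finite cover improves them to a toric pair, since ramifying over the point of coefficient $\frac25$ or $\frac13$ would make that coefficient negative. So for these two pairs one needs either the strong $F$‑regularity of two‑dimensional klt \emph{pairs} in characteristic $p>5$ (which follows from the methods of \cite{Hara98} applied to $(Y,\Delta_Y)$), or a direct argument: by \eqref{p-onto} it suffices, for each effective divisor $E$, to exhibit for some $e\gg0$ a section of $\mathcal O_{\mathbb P^1}\big(\lfloor(1-p^e)(K_{\mathbb P^1}+D)\rfloor-E\big)$ with nonzero image under the natural map $F^e_*\mathcal O_{\mathbb P^1}\big(\lfloor(1-p^e)(K_{\mathbb P^1}+D)\rfloor-E\big)\to\mathcal O_{\mathbb P^1}$; such a section exists because $\deg\big(\lfloor(1-p^e)(K_{\mathbb P^1}+D)\rfloor-E\big)\to+\infty$ by the ampleness of $-(K_{\mathbb P^1}+D)$, and $p>5$ is exactly what keeps both the round‑off loss $\lceil(p^e-1)D\rceil-(p^e-1)D$ under control and the index prime to $p$. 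The main obstacle is making this Frobenius‑splitting computation (or, in the cone approach, the $\mathbb Q$‑Gorenstein — as opposed to Gorenstein — bookkeeping in \cite{Hara98}, \cite{SS10}) fully precise.
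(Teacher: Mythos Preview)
Your reductions and the treatment of the standard-coefficient cases are fine and essentially match the paper: it simply cites \cite[4.2]{Watanabe91} for those, which is the graded-ring/cone version of your argument. (One minor quibble: \eqref{p-bir} concerns birational morphisms, not finite covers; the descent you want is the trace/Reynolds splitting for tame $G$-quotients, as in \cite{ST10}.)

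The gap is in the two fractional cases $D_1=\frac25P_1+\frac23P_2+\frac56P_3$ and $D_2=\frac13P_1+\frac34P_2+\frac34P_3$, and you have correctly flagged it yourself. The degree observation $\deg\bigl(\lfloor(1-p^e)(K_{\mathbb P^1}+D)\rfloor-E\bigr)\to+\infty$ only shows that sections \emph{exist}; it says nothing about whether any of them has nonzero image under the trace $F^e_*(\,\cdot\,)\to\mathcal O_{\mathbb P^1}$, which is what global $F$-regularity demands. This is not a formality: the Du Val pair $(\mathbb P^1,\tfrac12P_1+\tfrac23P_2+\tfrac45P_3)$ fails to be globally $F$-regular at $p=5$ even though $-(K_{\mathbb P^1}+D)$ is ample, so the hypothesis $p>5$ must enter through the actual Frobenius computation, not merely through index considerations. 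Your alternative---appealing to strong $F$-regularity of two-dimensional klt \emph{pairs} in characteristic $p>5$ via ``the methods of \cite{Hara98}''---is not a proof either: Hara's classification treats surfaces without boundary, and extending it to the cones over $D_1,D_2$ amounts to redoing the same Fedder-type calculation in graded-ring language.

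What the paper does is precisely the computation you stop short of. Placing the points at $0,\infty,1$, Fedder's criterion reduces the question to showing that for some $e$ the polynomial $x^{a_1}y^{a_2}(x+y)^{a_3}$ with $a_i=\lceil(p^e-1)c_i\rceil$ has a monomial $x^iy^j$ with $i,j<p^e-1$. A crude estimate gives $a_1+a_2+a_3<2p-3$ for all large $p$ (roughly $p>34$ for $D_1$, $p>20$ for $D_2$), which settles those with $e=1$ by taking $i,j$ near $(a_1+a_2+a_3)/2$; the remaining primes $p\in\{7,11,13,17,19,23,29\}$ for $D_1$ and $p\in\{7,11\}$ for $D_2$ are verified by hand with $e=2$, exhibiting an explicit nonzero binomial coefficient in each case. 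That finite check is the content missing from your proposal.
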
 
\begin{proof} By \cite[4.2]{Watanabe91}, we may assume that $D_1= \frac 2 5 P_1+\frac 2 3 P_2+\frac 5 6 P_3$ or $D_2= \frac 1 3 P_1+\frac  34 P_2+\frac 3 4 P_3$.

We may assume that $P_1=0$, $P_2=\infty$ and $P_3=1$.
By Fedder's criterion for pairs, it is enough to check that if $D=c_1P_1+c_2P_2+c_3P_3$ for some $e>0$ if $a_i=\lceil (p^e-1)c_i\rceil$, then $x^{a_1}y^{a_2}(x+y)^{a_3}$ contains a monomial $x^iy^j$ where $i, j < p^e - 1$. (Note in fact that the pair $(\mathbb{P}^1,D')$ is strongly $F$-regular where $D'=\sum \frac {a_i}{p^e-1}P_i$ and $D'\geq D$.)

\noindent{\bf Case 1: $D_1= \frac 2 5 P_1+\frac 2 3 P_2+\frac 5 6 P_3$}.

Let $e=1$. Since
$$(p-1)D'_1=\lceil (p-1)D_1\rceil\leq (\frac{2}{5}(p-1)+\frac 45) P_1+(\frac 2 3(p-1)+\frac 2 3) P_2+(\frac 5 6(p-1)+\frac 5 6) P_3 ,$$
then $a_1+a_2+a_3<2p-3$ for any $p >34$. One also sees that the same inequality works for $p=31$.
In these cases the monomial $x^iy^j$ has non-zero coefficient for $j=\lfloor \frac {a_1+a_2+a_3}2\rfloor <p-1$ and
$i=a_1+a_2+a_3-j<p-1$.
Therefore, we only need to check the cases $p\in \{ 7,11, 13,17,19,23, 29 \}$.

When $p=7$ and $e=2$, we have 
$a_1=20$, $a_2=32$, $a_3=40,$
and $x^{20} y^{32}(x+y)^{40}$ has the nonzero term $x^{46}y^{46}$.

When $p=11$ and $e=2$, we have 
$a_1=48$, $a_2=80$, and $a_3=100 ,$
thus $x^{48} y^{80}(x+y)^{100}$ has the nonzero term $x^{115}y^{113}$.

When $p=13$ and $e=2$, we have 
$a_1=68$, $a_2=112$, $a_3=140,$
and $x^{68} y^{112}(x+y)^{140}$ has the nonzero term $x^{166}y^{154}$.

When $p=17$ and $e=2$, we have 
$a_1=116$, $a_2=192$ and $a_3=240 $,
thus $x^{116} y^{192}(x+y)^{240}$ has the nonzero term $x^{287}y^{261}$.

When $p=19$ and $e=2$, we have 
$a_1=144$, $a_2=240$ and $a_3=300 $,
thus $x^{144} y^{240}(x+y)^{300}$ has the nonzero term $x^{357}y^{327}$.

When $p=23$ and $e=2$, we have 
$a_1=212$, $a_2=352$ and $a_3=440 $,
thus $x^{212} y^{352}(x+y)^{440}$ has the nonzero term $x^{491}y^{513}$.

When $p=29$ and  $e=2$, we have 
$a_1=336$, $a_2= 560$ and $a_3 =700$,
thus $x^{336} y^{560}(x+y)^{700}$ has the nonzero term $x^{775}y^{821}$. 

\noindent{\bf Case 2: $D_2= \frac 1 3 P_1+\frac  34 P_2+\frac 3 4 P_3$}.

Similarly, if $e=1$,
$$(p-1)D_2'=\lceil (p-1)D_2\rceil\leq (\frac{1}{3}(p-1)+\frac 2 3) P_1+(\frac 3 4(p-1)+\frac 3 4) P_2+(\frac 3 4(p-1)+\frac 3 4) P_3 ,$$ then $a_1+a_2+a_3<2p-3$ for any 
 $p>20$. One also sees that the same inequality works for $p\in\{ 13,17, 19\}$. In these cases the monomial $x^iy^j$ has non-zero coefficient for $j=\lfloor \frac {a_1+a_2+a_3}2\rfloor$ and
$i=a_1+a_2+a_3-j$.
Therefore, we only need to check for $p\in \{7,11\}$.

When $p=7$ and $e=2$, we have 
$a_1=16$, $a_2= 36$ and $ a_3=36$.
Thus $x^{16} y^{36}(x+y)^{36}$ has the nonzero term $x^{44}y^{44}$.

When $p=11$ and $e=2$, we have 
$a_1=40$, $a_2= 90$ and $a_3=90$.
Thus $x^{40} y^{90}(x+y)^{90}$ has nonzero term $x^{108}y^{112}$.
\end{proof}

\section{Existence of pl-flips}

\subsection{Normality of plt centers}
 In characteristic 0, by a result of Kawamata, we know that plt centers (or more generally minimal log canonical centers) are normal. The proof uses the Kawamata-Viehweg vanishing theorem.  The analogous result in characteristic $p>0$ is not known. We prove  a related result. The argument illustrates some of the techniques that will be used in the rest of this section. 

\begin{prop}\label{p-normal}
Let $(X,S+B)$ be a  plt pair with $S=\lfloor S+B \rfloor$, $S^n\to S$ the normalization and write $(K_X+S+B)|_{S^n}=K_{S^n}+B_{S^n}$. If $(S^n,B_{S^n})$ is strongly $F$-regular, and $(X,S+B)$ has a log resolution $g:Y\to X$ which supports an exceptional $g$-ample $\mathbb{Q}$-divisor $-F$. Then $S$ is normal. 
\end{prop}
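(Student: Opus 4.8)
The plan is to show $S$ is normal by proving that the natural map $\mathcal{O}_S \to \nu_* \mathcal{O}_{S^n}$ is an isomorphism, where $\nu : S^n \to S$ is the normalization. Equivalently, since $S$ is already $S_1$ (it is a reduced divisor on the normal variety $X$), it suffices to show that for every point $s \in S$ the cokernel sheaf $\mathcal{Q} = \nu_*\mathcal{O}_{S^n}/\mathcal{O}_S$ vanishes. The strategy is the standard one of lifting sections and then comparing: I will produce, via the $F$-singularity lifting machinery, enough global sections on $S^n$ through $s$ (after twisting by a suitable ample divisor) that descend to $S$, and then conclude with Nakayama's lemma that $\mathcal{O}_{S,s} \to \mathcal{O}_{S^n,s}$ is surjective.

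First I would pass to the given log resolution $g : Y \to X$. Let $S'$ denote the strict transform of $S$ on $Y$; since $(X,S+B)$ is plt with $\lfloor S+B\rfloor = S$, adjunction on $Y$ gives a pair $(S', B_{S'})$, and because the exceptional locus carries the $g$-ample $\mathbb{Q}$-divisor $-F$, after perturbing we may assume $S' \to S$ factors the normalization, i.e. $S' \to S^n$ is birational with $S^n$ its image (or rather that $S'$ dominates $S^n$). The key point is that $-F$ being $g$-ample lets us invoke Kawamata–Viehweg-type vanishing for the birational morphism — but in characteristic $p$ we instead use the $S^0$ lifting results: working locally over $Z$ (which we may take affine), choose a sufficiently ample Cartier divisor $A$ pulled back from the base, and apply Proposition \ref{KS}/Proposition \ref{p-rKS} to the pair $(Y, S'+B_{S'} - \epsilon F)$ — for $0 < \epsilon \ll 1$ this has $S'$ as an $F$-pure center (by Lemma \ref{l-sigma} after a further log resolution putting things in snc position, and by $g$-ampleness of $-F$) — to get a surjection $S^0$ of the ambient onto $S^0$ of $S'$. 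Combined with the hypothesis that $(S^n, B_{S^n})$ is strongly $F$-regular, Remark \ref{r-fS} and Lemma \ref{l-rcu3} give that $S^0$ on the divisor side is all of $H^0$.

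Next, the comparison step. Having lifted sections, one gets a factorization showing $f_* \mathcal{O}_X(mA) \to f_*\mathcal{O}_{S^n}(mA|_{S^n})$ is surjective for $m \gg 0$ (suppressing notation), whereas the map through $\mathcal{O}_S$ is $f_*\mathcal{O}_X(mA) \to f_*\mathcal{O}_S(mA|_S) \hookrightarrow f_*\nu_*\mathcal{O}_{S^n}(mA|_{S^n})$. Comparing images forces the inclusion $f_*\mathcal{O}_S(mA|_S) \hookrightarrow f_*\mathcal{O}_{S^n}(mA|_{S^n})$ to be surjective, hence an isomorphism, for $m \gg 0$. Since $A$ can be taken arbitrarily ample and this holds for all large $m$, a standard argument (twisting the cokernel $\mathcal{Q}$ by higher and higher powers of $A$ and using that $\mathcal{Q}$ is supported on a proper closed subset while $H^0$ of $\mathcal{Q}(mA)$ is forced to vanish) shows $\mathcal{Q} = 0$, i.e. $\mathcal{O}_S = \nu_*\mathcal{O}_{S^n}$, so $S$ is normal.

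The main obstacle I expect is the bookkeeping needed to guarantee that $S'$ really is an $F$-pure center of a perturbed pair on $Y$ with $-(K_Y + \Delta_Y)$-relatively ample, so that Proposition \ref{p-rKS} applies and the lifted sections descend to $S$ rather than to some intermediate modification. This requires carefully combining the $g$-ampleness of $-F$ with a log resolution in snc position (Theorem \ref{t-res}, Lemma \ref{l-sigma}) and the perturbation Lemma \ref{l-perturbation} to fix the index, while keeping $S'$ (equivalently $S^n$) with its strongly $F$-regular structure so that Lemma \ref{l-rcu3} and Remark \ref{r-fS} deliver $S^0 = H^0$ on the divisor. Once that setup is correctly arranged, the vanishing/Nakayama conclusion is routine.
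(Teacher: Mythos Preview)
Your overall strategy---lift sections from $S'$ to $Y$ via the $F$-singularity machinery, then conclude that $\mathcal O_X\to \mathcal O_{S^n}$ is surjective---is the right one, and it is essentially what the paper does. But your implementation introduces structure that is not in the hypotheses and, as written, does not go through.

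The proposition has no morphism $f:X\to Z$ and no ample line bundle on any base: it is a purely local statement on $X$. So the steps ``choose a sufficiently ample $A$ pulled back from the base'', invoke Lemma \ref{l-rcu3} (which needs an ample divisor on the target), and then run a cokernel/Nakayama argument with $\mathcal Q(mA)$ for $m\gg 0$, do not make sense here. If you localize so that $X$ is affine, there is no meaningful ``$m\gg 0$'' twist to play with, and the Serre-vanishing style endgame collapses.

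The paper sidesteps all of this by choosing the \emph{right} Cartier divisor in Proposition \ref{p-rKS}, namely $M=\lceil \mathbf A_Y\rceil$, the round-up of the discrepancy divisor of $(X,S+B)$ on the log resolution $Y$. Because $(X,S+B)$ is plt, $\lceil \mathbf A_Y\rceil$ is effective and $g$-exceptional, so $g_*\mathcal O_Y(\lceil \mathbf A_Y\rceil)=\mathcal O_X$ automatically. One then constructs $\Xi$ with $S'+\{-\mathbf A_Y\}\le \Xi\le S'+\{-\mathbf A_Y\}+g^*A$ (here $A$ is a small auxiliary $\Q$-divisor on $X$, not an ample twist) so that $\lceil \mathbf A_Y\rceil-(K_Y+\Xi)$ is $g$-ample and the index is prime to $p$. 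Proposition \ref{p-rKS} gives a surjection
\[
S^0g_*(\sigma(Y,\Xi)\otimes \mathcal O_Y(\lceil \mathbf A_Y\rceil))\twoheadrightarrow S^0h_*(\sigma(S',\Xi_{S'})\otimes \mathcal O_{S'}(\lceil \mathbf A_{S'}\rceil)),
\]
and the left-hand side sits inside $g_*\mathcal O_Y(\lceil \mathbf A_Y\rceil)=\mathcal O_X$. The remaining point is that the right-hand side equals $\mathcal O_{S^n}$; this is \emph{not} obtained from Lemma \ref{l-rcu3} but by a direct diagram chase: the strong $F$-regularity of $(S^n,B_{S^n}+A|_{S^n})$ gives surjectivity of the trace $F^e_*\mathcal O_{S^n}((1-p^e)(K_{S^n}+B^*_{S^n}))\to \mathcal O_{S^n}$, and the inequality $(1-p^e)h^*(K_{S^n}+B^*_{S^n})\le (1-p^e)(K_{S'}+\Xi_{S'})+p^e\lceil \mathbf A_{S'}\rceil$ lets this factor through the map whose image is $S^0h_*$. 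One immediately gets $\mathcal O_X\twoheadrightarrow \mathcal O_{S^n}$, hence $S=S^n$, with no asymptotic twisting or cokernel argument at all.

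So the missing idea in your proposal is precisely this choice $M=\lceil \mathbf A_Y\rceil$, which makes the pushforward already equal to $\mathcal O_X$ and eliminates the need for any ambient projectivity.
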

\begin{proof} 
We define  
$$K_Y+S'=g^*(K_X+S+B)+\mathbf A_Y \qquad {\rm and}\qquad \mathbf A_{S'}=\mathbf{A}_Y|_{S'},$$ 
where $S'$ is the birational transform of $S$.

  We  choose an effective $\mathbb{Q}$-Cartier $\mathbb{Q}$-divisor $A$ on $X$ whose support contains the divisor $E$ defined in \eqref{l-perturbation}, $g({\rm Ex}(g))$ and ${\rm Supp}(B)$ but not ${\rm Supp}(S)$, such that $(S^n,{B}^* _{S^n}=B_{S^n}+ A|_{S^n})$ is strongly $F$-regular (cf. \eqref{l-GFR}). We then pick $\Xi$ such that 
\begin{enumerate}
\item $S'+\{-\mathbf{A}_{Y}\}\le \Xi\le S'+\{-\mathbf{A}_{Y}\}+ g^*A$,
\item the index of $K_Y+\Xi$ is not divisible by $p$, and
\item $\lceil \mathbf{A}_Y\rceil-(K_{Y}+\Xi )\sim _\Q -g^*(K_{X}+S+B)-(\Xi-S'-\{-\mathbf{A}_{Y}\})$ is $g$-ample.
\end{enumerate}
To construct such a $\mathbb{Q}$-divisor $\Xi$, we proceed as follows: Let $\Xi'=S'+\{-\mathbf{A}_{Y}\}+\epsilon F$ for some $0<\epsilon \ll 1 $ so that (1) holds. Since $\lceil \mathbf{A}_Y\rceil-(K_{Y}+\Xi ' )\sim _\Q -g^*(K_{X}+S+B)-\epsilon F$ is $g$-ample for $0<\epsilon \ll 1$, we may assume that (3) also holds. It is easy to see that the support of $g^*A$ contains an effective divisor $E'$ such that $E'-K_Y$ is Cartier and so we may use \eqref{l-perturbation} to slightly increase the coefficients of $\Xi '$ to obtain $\Xi$ so that (1-3) are satisfied.  

By \eqref{p-rKS}, there is a surjection 
$$S^0g_*(\sigma (Y, \Xi )\otimes\mathcal O _Y( \lceil \mathbf{A}_Y\rceil ))\to S^0h_*(\sigma(S',\Xi _{S'}) \otimes \mathcal O _{S'}(\lceil \mathbf{A}_{S'}\rceil ))$$
where $\Xi_{S'}=(\Xi -S') |_{S'}$ and $h:S'\to S^n$ is the induced morphism. We claim that $$S^0h_*(\sigma(S',\Xi _{S'}) \otimes \mathcal O _{S'}(\lceil \mathbf{A}_{S'}\rceil))= \mathcal O _{S^n}.$$ Grant this for the time being,
then since $$S^0g_*(\sigma (Y, \Xi )\otimes\mathcal O _Y( \lceil \mathbf{A}_Y\rceil ))\subset g_*\mathcal O_Y(\lceil \mathbf{A}_Y\rceil)=\mathcal O _X$$ 
(as $\lceil \mathbf{A}_Y\rceil$ is effective and exceptional),
it follows that
the homomorphism $\mathcal O_X\to  \mathcal O_{S^n}$ is surjective. This implies that $S=S^n$. 

To see the claim, since $\{ -\mathbf{A}_Y\}-\lceil \mathbf{A}_Y\rceil=-\mathbf{A}_Y$, note that the second inequality in (1) implies (after adding $-\lceil \mathbf{A}_Y\rceil$ and restricting to $S'$)  that
$$h^*(K_{S^n}+ B_{S^n}+A|_{S^n})\ge K_{S'}+\Xi _{S'}-\lceil \mathbf A_{S'}\rceil .$$
Since ${B}^* _{S^n}=B_{S^n}+ A|_{S^n}$, it then follows that 
$$(1-p^e)h^*(K_{S^n} +B _{S^n}^* )\leq (1-p^e)(K_{S'}+\Xi _{S'})+p^e\lceil \mathbf{A}_{S'}\rceil $$ and so
{
 $$F^e_*\mathcal O_{S^n}((1-p^e)(K_{S^n}+B_{S^n}^* ) )\subset h_*F^e_*\mathcal{O}_{S'}((1-p^e)(K_{S'}+\Xi _{S'})+p^e\lceil \mathbf{A}_{S'}\rceil).$$
}
Consider the following  commutative diagram.
\begin{diagram}
F^e_*\mathcal{O}_{S^n}((1-p^e)(K_{S^n} +B _{S^n} ^* ))&\rTo & \mathcal{O}_{S^n}\\
\dTo & & \dTo {\cong}\\
h_*F^e_*\mathcal{O}_{S'}((1-p^e)(K_{S'}+\Xi _{S'})+p^e\lceil \mathbf{A}_{S'}\rceil)&\rTo & h_*\mathcal{O}_{S'}(\lceil \mathbf{A}_{S'}\rceil)
\end{diagram}
Since $(S^n,{B}^* _{S^n}=B_{S^n}+ A|_{S^n})$ is strongly $F$-regular, we have that
$ \sigma(S^n,B_{S^n}^* )= \mathcal O_{S^n}$
(cf. \eqref{l-rcu2}) and hence the top horizontal arrow is surjective.
It then follows that the bottom horizontal arrow is surjective so that the claim follows. 
 \end{proof}

\subsection{b-divisors} Assume that $f:(X,S+B)\to Z$ is a 3-fold pl-flipping contraction so that
\begin{enumerate}\item $X$ is a normal threefold,
\item $f:X\to Z$ is a small projective birational contraction with $\rho (X/Z)=1$,
\item $(X,S+B)$ is plt, 
and 
\item $-S$ and $-(K_X+S+B)$ are ample over $Z$. 
\end{enumerate} 
We will assume that $Z$ is affine. In particular any divisor which is ample over $Z$ is in fact ample and if $\mathcal F$ is a coherent sheaf on $X$, then we may identify $f_*\mathcal F$ and $H^0(X,\mathcal F)$.

Let $A\geq 0$ be an auxiliary $\mathbb{Q}$-divisor on $X$ whose support contains the divisor $E$ defined in \eqref{l-perturbation}, ${\rm Supp}(B)$ and ${\rm Ex}(f)$ but not ${\rm Supp} (S)$, such that $(X,S+B+A)$ is plt. Let $S^n\to S$ be the normalization of $S$. Let $B_{S^n}={\rm Diff}_{S^n}(X,B)$ and $B^*_{S^n}={\rm Diff}_{S^n}(X,B+ A)$, so that
$$(K_X+S+B)|_{S^n}=K_{S^n}+B_{S^n}\qquad \mbox{and}\qquad (K_X+S+B+ A)|_{S^n}=K_{S^n}+B^{*}_{S^n}.$$

Let $g:Y\to X$ be a log resolution of $(X,S+A)$ (cf. \eqref{t-res}) and $h=f\circ g:Y\to Z$ the composition.
 For any divisor $D$ on $X$, let $D'=g^{-1}_*D$  be the strict transform on $Y$. 
 We assume that the restriction $g_{S'}:S'\to S^n$ factors through  the terminal model $\bar{S}$ of $(S^n,B_{S^n})$ given by \eqref{l-sab}, i.e.,  $g_{S'}=\nu \circ \mu$ for a morphism $\mu:S'\to \bar S$.
 We  write $$K_Y+S'=g^*(K_X+S+B)+\mathbf A_Y,\qquad K_{S'}=g_{S'}^*(K_{S^n}+B_{S^n})+\mathbf A_{S'}$$
where  $\mathbf A_{S'}=(\mathbf A _Y)|_{S'}$. Here, by abuse of notation, we use $\mathbf{A}$ to mean the discrepancy b-divisors of $(X,S+B)$ and of $(S^n, B_{S^n})$ (see \cite[2.3.12(3)]{Corti07}).  As the restriction of the discrepancy b-divisor of $(X,S+B)$ to the models of $S$ gives the discrepancy b-divisor of  $(S^n, B_{S^n})$, this should not cause any confusion.
After possibly blowing up further, we assume there exists $F\geq 0$ a $g$-exceptional $\mathbb{Q}$-divisor on $Y$ such that $-F$ is relatively ample.
As in the proof of \eqref{p-normal}, we can pick $\Xi$ such that 
\begin{enumerate}
\item $S'+\{-\mathbf{A}_{Y}\}\le \Xi\le S'+\{-\mathbf{A}_{Y}\}+ g^*A$,
\item the index of $K_Y+\Xi$ is not divisible by $p$, and
\item $\lceil \mathbf A _Y\rceil -(K_{Y}+\Xi )$ is ample.
\end{enumerate}

\begin{lemma}\label{l-2} 
 Let $M$ be a relatively free $\mathbf A_Y$-saturated divisor on $Y/Z$ (cf. Subsection \eqref{NC}). 
Then there exists $M_Y\in |M|$ such that
$M_{S'}:=M_Y|_{S'}$ 
descends to $\bar S$ i.e. $M_{S'}=\mu ^* M_{\bar S}$ where $M_{\bar S}=\mu _* M_{S'}$. 
\end{lemma}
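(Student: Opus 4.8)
The plan is to produce the section $M_Y$ by lifting a suitable section from $S'$ using the $F$-singularity machinery, and then to show that the chosen section can be arranged to have restriction descending to $\bar S$. First I would set up the relevant cohomology: since $M$ is relatively free and $\mathbf A_Y$-saturated over $Z$, the natural map $f_* g_* \mathcal O_Y(M) \to f_* g_* \mathcal O_Y(\lceil M + \mathbf A_Y\rceil)$ is an isomorphism, and $g_* \mathcal O_Y(M)$ is globally generated relative to $Z$. After possibly replacing $M$ by a general member of its relative linear system (using that $|M|$ is relatively base point free) and noting $Z$ is affine, one reduces to studying $H^0(Y, \mathcal O_Y(M))$ and its restriction to $S'$.

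The key step is to invoke the surjectivity statement \eqref{p-rKS}: with $\Xi$ as chosen (so that the index of $K_Y+\Xi$ is not divisible by $p$, $\lceil \mathbf A_Y\rceil - (K_Y+\Xi)$ is ample, and $S'$ is an $F$-pure center of $(Y,\Xi)$ — the latter by \eqref{l-sigma} since $g$ is a log resolution and $S'$ is a stratum of $\lfloor \Xi\rfloor$), one gets that after twisting by the relatively free, hence relatively semiample (even nef) divisor $M$, the restriction map
\[
S^0 f_* g_* \bigl(\sigma(Y,\Xi)\otimes \mathcal O_Y(\lceil \mathbf A_Y\rceil + M)\bigr) \longrightarrow S^0 (f\circ h)_* \bigl(\sigma(S',\Xi_{S'})\otimes \mathcal O_{S'}(\lceil \mathbf A_{S'}\rceil + M_{S'})\bigr)
\]
is surjective, where $M_{S'} = M|_{S'}$. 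Then I would use the saturation hypothesis on $M$ together with the inclusion $S^0 \subset H^0$ and the fact that $\lceil \mathbf A_Y\rceil$ is effective and $g$-exceptional to identify the source with (a subspace of) $H^0(Y,\mathcal O_Y(M))$; on the target side, after restricting to the terminal model $\bar S$ via $\mu$, one similarly identifies $S^0 h_*(\cdots)$ (where $h = \nu\circ\mu$ up to the base change, abusively reused) with something computing sections of $M_{\bar S} := \mu_* M_{S'}$, using that $\mathbf A_{S'}$ descends to $\bar S$ (indeed $\mathbf A$ restricted to models of $S$ is the discrepancy b-divisor of $(S^n, B_{S^n})$, which is nonnegative on $\bar S$). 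The point is that the moving part of $M_{S'}$ is then forced to be pulled back from $\bar S$: any section in the image descends, and by relative global generation one can choose $M_Y \in |M|$ whose restriction $M_{S'}$ equals $\mu^* M_{\bar S}$.

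The step I expect to be the main obstacle is the bookkeeping that converts the abstract surjectivity of \eqref{p-rKS} into the concrete statement that $M_{S'}$ descends — i.e., matching up the rounding operations $\lceil \mathbf A_Y \rceil$, $\{-\mathbf A_Y\}$, $\lceil \mathbf A_{S'}\rceil$ across the adjunction $(K_Y+\Xi)|_{S'} = K_{S'} + \Xi_{S'}$ and across $\mu$, and checking that the $S^0$-subspace of $H^0(Y,\mathcal O_Y(M))$ still generates $\mathcal O_Y(M)$ relatively over $Z$ (so that a generic member $M_Y$ of the sub-system works). This is where one needs $Z$ affine, the $\mathbf A_Y$-saturation of $M$, and the exceptionality of $\lceil \mathbf A_Y\rceil$ all at once; once these are lined up, the existence of $M_Y$ with $M_{S'} = \mu^* M_{\bar S}$ follows by choosing $M_Y$ general in the image system and using that $\mathbf A_{S'}$ (hence the "correction" to the linear system on $S'$) already lives on $\bar S$.
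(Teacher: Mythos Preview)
Your proposal has a genuine gap. You correctly isolate the main ingredients --- the surjection of $S^0$'s from \eqref{p-rKS}, and the fact that $\mathbf A_Y$-saturation forces every section of $H^0(Y,\mathcal O_Y(M+\lceil \mathbf A_Y\rceil))$ to vanish along $\lceil \mathbf A_Y\rceil$, hence every element of $S^0(S',\sigma(S',\Xi_{S'})\otimes \mathcal O_{S'}(L|_{S'}))$ to vanish along $\lceil \mathbf A_{S'}\rceil$. But the step ``on the target side one identifies $S^0 h_*(\cdots)$ with sections of $M_{\bar S}$, hence the moving part of $M_{S'}$ is pulled back from $\bar S$'' is not justified and is essentially what you are trying to prove. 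On the surface $S'$ there is no reason to know that $S^0=H^0$, so knowing that the $S^0$-subspace vanishes along $\lceil \mathbf A_{S'}\rceil$ tells you nothing about a general member of $|M|_{S'}|$. In particular your last sentence (``choose $M_Y$ general in the image system'') presupposes that the $S^0$-subspace still generates, which is exactly the unavailable fact in dimension $\geq 2$.

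The paper closes this gap by restricting one step further, to a \emph{curve}. One first fixes a general $M_Y\in |M|$ and sets $M_{S'}=M_Y|_{S'}$. Since Bertini fails in characteristic $p$, $M_{S'}$ need not be smooth; a Frobenius--Seshadri constant argument (via \cite{MS12}) is used to show that $M_{S'}$ is at least smooth along ${\rm Supp}\,\lceil \mathbf A_{S'}\rceil$. Then one applies \eqref{p-rKS} again, now to the pair $(Y,\Xi+M_Y)$, obtaining a surjection onto $S^0(M_1,\sigma(M_1,\Gamma)\otimes \mathcal O_{M_1}(L|_{M_1}))$ where $M_1$ is the smooth horizontal part of $M_{S'}$. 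The point is that $M_1$ is an affine curve, so \eqref{l-rcu2} gives $S^0=H^0$ there, and in particular global generation at every point of $(\lceil \mathbf A_{S'}\rceil)|_{M_1}$. This contradicts the vanishing forced by saturation unless $(\lceil \mathbf A_{S'}\rceil)|_{M_{S'}}=0$, i.e.\ unless $M_{S'}$ misses the $\mu$-exceptional locus, which is exactly the descent claim. The two ideas missing from your sketch are thus (i) the second restriction to the curve $M_{S'}$, where $S^0=H^0$ is actually available, and (ii) the Frobenius--Seshadri argument needed to make that curve smooth near $\lceil \mathbf A_{S'}\rceil$.
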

\begin{proof} Since $Z$ is affine, $|M|$ is base point free. Since $f\circ g$ and $(f\circ g)|_{S'}$ are birational, $|M|$ induces a birational morphism $\psi:Y\to \mathbb{P}^N$, whose restriction to $S'$ is also birational. Note that there is a big open subset $U\subset \psi (S')$ (the complement of finitely many points) such that $\psi|_{S'}$ is finite over $U$. 
Let $D$ be a general hyperplane divisor on $\psi (Y)$ and $M_Y\in |M| $ the corresponding divisor.
We note that as we are in characteristic $p>0$, $M_Y$ is not necessarily smooth.
Since $D\cap \psi (S')$ is contained in $U$, we have that  $M_{S'}=M_Y|_{ S'}\to \psi (M_{S'})$ is finite and
for any $x\in  M_{S'}$, there exists $M'_{S'}\sim M_{S'}$ (obtained by considering another general hyperplane $\psi (x)\in D'\subset \psi(Y)$) such that 
$x\in  M_{S'}$ and $M_{S'}\wedge M'_{S'}=0$.

By \eqref{l-sigma}, ${S'}$ is an $F$-pure center for 
$(Y,\Xi)$.
If we let $L=M_Y+\lceil \mathbf A_Y \rceil $, then since $M_Y$ is nef,
$L-(K_Y+\Xi)$ is ample (cf. (3) above). Thus,  
by \eqref{KS},  we have that
$$S^0(Y,  \sigma (Y,\Xi) \otimes \mathcal O_Y(L))\to  \noindent S^0({S'}, \sigma ({S'},\Xi _{{S'}})\otimes \mathcal O _{S'}(L|_{{S'}}))
$$
is surjective, where $\Xi_{S'}=\phi _{S'}^\Xi=(\Xi -S')|_{S'}$. 

$\Xi _{{S'}}$ has simple normal crossings on $S'$. 
Since $M$ is relatively $\mathbf{A}_Y$-saturated, for $m\gg 0$ and any $s \in H^0(Y, \mathcal O_Y(L))$, we have that $s$ vanishes along $\lceil \mathbf A_{Y} \rceil$. It follows that any section $t\in S^0({S'}, \sigma ({S'},\Xi_{S'})\otimes \mathcal O _{S'}(L|_{{S'}}))$ vanishes along $(\lceil \mathbf A_{Y} \rceil )|_{S'}=\lceil \mathbf A_{S'}\rceil $.
\begin{claim} We may assume that $M_{S'}$ smooth on a neighborhood of the support of $\lceil \mathbf A_{S'}\rceil$.\end{claim}
\begin{proof} Suppose that $M_{S'}$ is singular at some point $x$ of the support of $\lceil \mathbf A_{S'}\rceil$. 
Let $m={\rm mult }_x M_{S'}\geq 2$. By  choosing $M_{S'}$ sufficiently generally, we may assume that there exists  $M_{S'}'\sim M_{S'}$ with $m={\rm mult }_x M_{S'}'$ and $M_{S'}\wedge M'_{S'}=0$. Let $\pi:S''\to S'$ be the blow up of $S'$ at $x$ with an exceptional divisor $E$. It follows easily that $\pi ^*M_{S'}-mE$ is nef and hence the Seshadri constant of $M_{S'}$ at $x$ satisfies $\epsilon (x,M_{S'})\geq 2$.
Since $L|_{S'}-(K_{S'}+\Xi_{S'} )-M_{S'}$ is ample and $(S',\Xi_{S'})$ is $F$-regular, by \cite{MS12}, we have that the Frobenius-Seshadri constant satisfies $$\epsilon _F(x, L|_{S'}-(K_{S'}+\Xi_{S'} ))>\epsilon _F(x,M_{S'})\geq \frac 12 \epsilon (x,M_{S'})\geq 1.$$
Following the proof of \cite[3.1]{MS12}, we have that the image of
$$H^0(S',F^e_*\mathcal O _{S'}(p^eL|_{S'}+(1-p^e)(K_{S'}+\Xi _{S'})))\to H^0(S',\mathcal O_{S'}(L|_{S'}))$$ contains a section $\sigma$ not vanishing at $x$.  But any such section is in $S^0({S'}, \sigma ({S'},\Xi_{S'})\otimes \mathcal{O}_{S'}(L|_{S'}) )$. As we have observed above, any such $\sigma$ must vanish along $\lceil \mathbf A _{S'}\rceil$ and so $x\not \in {\rm Supp}(\lceil \mathbf A _{S'}\rceil)$.
\end{proof}

After further blowing up along centers which do not intersect with the support of $\lceil \mathbf A _{S'}\rceil$, we may assume that $M_{S'}=M_1+M_2$ where $M_1$ denotes the $Z$-horizontal components of $M_{S'}$ and $M_2$ the $Z$-vertical components. We may further assume that   $M_1$ is smooth.

By \eqref{l-sigma},  $M_1$ is a disjoint union of $F$-pure centers for $(Y,\Xi+M_Y)$. We let $\Gamma=( \Xi_{S'}+M_2)|_{M_1}$. Arguing as above, we have a surjective map
{\small
$$S^0(Y,  \sigma (Y,\Xi +M_Y) \otimes \mathcal O_Y(L))\to  \noindent S^0(M_1, \sigma (M_1, \Gamma)\otimes \mathcal O_{M_1}(L|_{M_1})).$$
}
Notice that on a neighborhood of $\lceil \mathbf A _{S'}\rceil $, $\Gamma$ is equal to $( \Xi-S' )|_{M_1}$.
We have that
{\small
$$S^0(M_1,  \sigma (M_1,\Gamma ) \otimes \mathcal O_{M_1}( L|_{M_1})) \supset    S^0(M_1,  \sigma (M_1, \{ \Gamma \} ) \otimes \mathcal O_{M_1}(L|_{M_1}-\lfloor \Gamma\rfloor) )$$
} (see for example the proof of \eqref{c-cu2}).
As $M_1$ is affine, by \eqref{l-rcu2}, it then follows that $\mathcal O _{M_1}(L|_{M_1}-\lfloor \Gamma \rfloor)$ is globally generated by $S^0(M_1,  \sigma (M_1, \{ \Gamma \} ) \otimes \mathcal O_{M_1}(L|_{M_1}-\lfloor \Gamma\rfloor) )$
on a neighborhood of $\lceil \mathbf A _{S'}\rceil $. 
Since 
$$S^0(Y,  \sigma (Y,\Xi +M_Y) \otimes \mathcal O_Y(L)) \subset S^0(Y,  \sigma (Y, \Xi ) \otimes \mathcal O_Y(L)) ,$$ 
any section 
$$\sigma \in S^0(M_1, \sigma (M_1, \Gamma)\otimes \mathcal O_{M_1}(L|_{M_1}))$$
 lifts to a section in $S^0(Y,  \sigma (Y,\Xi ) \otimes \mathcal O_Y(L))$ and hence $\sigma $ vanishes along $(\lceil \mathbf A _{S'}\rceil )|_{M_1} $. However, if $P$ is a point contained in the support of $(\lceil \mathbf A _{S'}\rceil )|_{M_1} $, then
since we may assume that the coefficients of $\Xi-S'-\{-\mathbf A_Y \}$ are sufficiently small, we have 
 $${\rm mult}_P(\lfloor \Gamma\rfloor )={\rm mult}_P(\lfloor  ( \Xi-{S'} )|_{M_1}\rfloor  )< {\rm mult}_P(( \lceil \mathbf A _{S'}\rceil )|_{M_1} ),$$
 which is a contradiction. 
  This implies that $(\lceil \mathbf A _{S'}\rceil )|_{M_{S'}}=(\lceil \mathbf A _{S'}\rceil )|_{M_1} =0$. Since the support of $\lceil \mathbf A _{S'}\rceil $ is the $S'\to \bar S$ exceptional locus, it follows that $M_{S'}$ descends to $\bar S$.
\end{proof}

We now define b-divisors as in \cite[2.4.1]{Corti07}. We fix an effective Cartier divisor $Q\sim k(K_X+S+B)$ on $X$, such that 
the support of $ Q$ does not contain $S$.
Let
$$\mathbf N _i={\Mob}(iQ),\qquad \mathbf M_i=\mathbf N _i|_S,\  {\rm and}\qquad \mathbf D_i=\frac 1 i \mathbf M_i .$$ 
Note that by \eqref{t-res}, for any $i>0$ there exists $g:Y\to X$ (depending on $i$) such that $\mathbf N _{i,Y}$ is free and hence $\mathbf N _i$ descends to $Y$. 
We may assume that $Y\to X$ is a log resolution of $Q$ and $|iQ|$ so that $\mathbf N _{i,Y}$ has simple normal crossings.

\begin{lemma}\label{l-desc} With the above notation $\mathbf M_i$ descends to $\bar S$.
\end{lemma}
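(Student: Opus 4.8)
\textbf{Proof proposal for Lemma \ref{l-desc}.}
The plan is to reduce the statement to Lemma \ref{l-2}. Recall that $\mathbf{N}_i = \Mob_Z(iQ)$ and that by Theorem \ref{t-res} there is a log resolution $g : Y \to X$ (depending on $i$) on which $\mathbf{N}_{i,Y}$ is a free divisor over $Z$, with simple normal crossings, and such that $g_{S'} : S' \to S^n$ factors through the terminalization $\nu : \bar S \to S^n$ of $(S^n, B_{S^n})$, say $g_{S'} = \nu \circ \mu$. So it suffices to check that the hypotheses of Lemma \ref{l-2} are satisfied with $M = \mathbf{N}_{i,Y}$: namely that $M$ is a relatively free $\mathbf{A}_Y$-saturated divisor on $Y/Z$. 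Relative freeness is automatic by construction of $Y$. Thus the heart of the matter is to verify that $\mathbf{N}_{i,Y}$ is relatively $\mathbf{A}_Y$-saturated; granting this, Lemma \ref{l-2} produces some $M_Y \in |\mathbf{N}_{i,Y}|$ with $M_Y|_{S'}$ descending to $\bar S$, and since $\mathbf{N}_{i,Y}$ is free over $Z$ its restriction $\mathbf{N}_{i,Y}|_{S'}$ is linearly equivalent to a general such $M_Y|_{S'}$ and is itself the mobile part, so $\mathbf{M}_i = \mathbf{N}_{i,Y}|_{S'}$ descends to $\bar S$ as well.

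First I would spell out why $\mathbf{N}_{i,Y}$ is $\mathbf{A}_Y$-saturated. By definition we must show that the inclusion
$$h_*\mathcal{O}_Y(\mathbf{N}_{i,Y}) \to h_*\mathcal{O}_Y(\lceil \mathbf{N}_{i,Y} + \mathbf{A}_Y\rceil)$$
is an isomorphism, where $h = f \circ g$. Since $\mathbf{N}_{i,Y} \le g^* iQ$ and $g^*iQ - \mathbf{N}_{i,Y}$ is $g$-exceptional and effective, and $\mathbf{A}_Y$ is the discrepancy b-divisor of the plt pair $(X, S+B)$ (so $\lceil \mathbf{A}_Y \rceil \geq 0$ is exceptional, with the strict transform $S'$ not appearing), the extra terms $\lceil \mathbf{N}_{i,Y} + \mathbf{A}_Y \rceil - \mathbf{N}_{i,Y}$ are supported on $g$-exceptional divisors; pushing forward, the inclusion $h_* \mathcal{O}_Y(\mathbf{N}_{i,Y}) \to h_*\mathcal{O}_Y(\lceil \mathbf{N}_{i,Y}+\mathbf{A}_Y\rceil)$ has image inside $h_*\mathcal{O}_Y(g^*iQ) = f_*\mathcal{O}_X(iQ)$, while by definition of $\Mob_Z$ the natural map $h_*\mathcal{O}_Y(\mathbf{N}_{i,Y}) \to f_*\mathcal{O}_X(iQ)$ is already an isomorphism. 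Since the composite $h_*\mathcal{O}_Y(\mathbf{N}_{i,Y}) \to h_*\mathcal{O}_Y(\lceil\mathbf{N}_{i,Y}+\mathbf{A}_Y\rceil) \to f_*\mathcal{O}_X(iQ)$ is this isomorphism and the second map is injective, both maps are isomorphisms, which is exactly $\mathbf{A}_Y$-saturation.

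With saturation in hand I would invoke Lemma \ref{l-2} directly: it yields $M_Y \in |\mathbf{N}_{i,Y}|$ with $M_{S'} := M_Y|_{S'} = \mu^* M_{\bar S}$ for $M_{\bar S} = \mu_* M_{S'}$. Since $\mathbf{N}_{i,Y}$ is free over $Z$, the b-divisor $\mathbf{M}_i = \mathbf{N}_i|_S$ is represented on $Y$ (or rather on $S'$) by $\mathbf{N}_{i,Y}|_{S'}$, which is linearly equivalent to $M_Y|_{S'}$ and, being base-point free, equals its own mobile part; hence $\mathbf{M}_{i,S'} = \mathbf{N}_{i,Y}|_{S'}$ and this descends to $\bar S$. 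The main obstacle I anticipate is bookkeeping with the b-divisor conventions — making sure that "$\mathbf{M}_i$ descends to $\bar S$" is correctly identified with "$\mathbf{N}_{i,Y}|_{S'} = \mu^*(\text{something on } \bar S)$" for the chosen resolution, and that passing between the general member $M_Y$ produced by Lemma \ref{l-2} and the fixed free divisor $\mathbf{N}_{i,Y}$ does not change the restricted mobile b-divisor — but this is a formal matter once one unwinds the definitions of $\Mob_Z$ and of restriction of b-divisors from \cite[2.3.2, 2.4.1]{Corti07}.
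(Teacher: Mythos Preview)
Your approach is essentially the paper's: choose a log resolution of $(X,S+B)$ and the linear system $|iQ|$, check that the free part $\mathbf N_{i,Y}$ is $\mathbf A_Y$-saturated via the chain of inclusions into $(f\circ g)_*\mathcal O_Y(g^*(iQ)+\lceil\mathbf A_Y\rceil)=f_*\mathcal O_X(iQ)$, and then invoke Lemma~\ref{l-2}. One small correction: you assert that $g^*iQ-\mathbf N_{i,Y}$ is $g$-exceptional, which is false in general (the fixed part of $|iQ|$ can certainly have non-exceptional components). Fortunately you do not actually use this; what matters, and what you also state, is that $\lceil\mathbf A_Y\rceil$ is effective and $g$-exceptional, so that $h_*\mathcal O_Y(\mathbf N_{i,Y}+\lceil\mathbf A_Y\rceil)\hookrightarrow h_*\mathcal O_Y(g^*iQ+\lceil\mathbf A_Y\rceil)=f_*\mathcal O_X(iQ)$---this is the chain the paper writes, and your second map should land in $h_*\mathcal O_Y(g^*iQ+\lceil\mathbf A_Y\rceil)$ rather than in $h_*\mathcal O_Y(g^*iQ)$.
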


\begin{proof} For any integer $i>0$, we can choose a log resolution $g:Y\to X$ of the pair $(X,S+B)$ and of the  linear system $|iQ|$. Thus, we can write $g^*(iQ)=N_i+F_i$, where $N_i\sim \mathbf{N}_{i,Y}$ is a free divisor. The divisor  $N_i$ is relatively $\lceil \mathbf A_Y\rceil$-saturated, since the inclusions
$$(f\circ g)_*\mathcal O _Y(N_i)\to(f\circ g)_*\mathcal O _Y(N_i+\lceil \mathbf{A}_{Y}\rceil)\to (f\circ g)_*\mathcal O _Y(g^*(iQ)+\lceil \mathbf{A}_{Y}\rceil)$$
are isomorphisms. The result now follows from \eqref{l-2}.
\end{proof}

In what follows, we will  fix a model $g_0:Y_0\to X$ and the birational transform $S_0\subset Y_0$ such that $S_ 0$ admits a morphism $\mu_0:S_0 \to \bar{S}$. We also assume that the models $g:Y\to X$,  factor through $Y_0$ and that $\rho:Y\to Y_0$ is an isomorphism over $X\setminus {\rm Ex}(f)$ (cf. \eqref{t-res}).

{\footnotesize
\begin{diagram}
& & S' & \rTo &Y\\
& & \dTo_{\rho_{S'}} & & \dTo_{\rho}\\
& & S_0&\rTo & Y_0\\
&\ldTo_{\mu_0} & \dTo & & \dTo_{g_0}\\
\bar{S}&\rTo^{\nu}& S^n& \rTo&X&\rTo^{f}& Z
\end{diagram}
}

\begin{lemma}\label{l-prel}
For any effective $\mathbb{Q}$-Cartier $\mathbb{Q}$-divisor $G$ on $\bar S$ we may fix an effective $\mathbb{Q}$-divisor $A^*$ on $X$ and rational numbers $0<\epsilon '\ll \epsilon $, and for any $Y$ as above we may choose $F\geq 0$ on $Y$ such that
\begin{enumerate}
\item the support of $A^*$ contains the supports of $B$, $Q$, ${\rm Ex}(f)$ and the divisor $E$ defined in \eqref{l-perturbation} but not $S$, 
\item  $-g^*(K_X+S+B)- \epsilon F+\rho ^* A' $ is ample for any $-\epsilon ' g_0^*A^*\leq A'\leq \epsilon'  g_0^*A^*$, 
\item  $-(g^*(K_X+S+B)+ \epsilon F)|_{S'} +\mu ^*A'$ is ample for any $-\epsilon '\nu ^*(A^*|_{S^n})\leq A'\leq \epsilon '\nu ^*(A^*|_{S^n})$, and 
\item if the support of $G$ contains $\nu ^{-1}(f({\rm Ex}(f))\cup g_0(F_0))$, then $\epsilon F|_{S'} \le \mu ^*(G)$. 
\end{enumerate}\end{lemma}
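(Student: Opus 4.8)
The plan is to reduce each of the positivity requirements (2)--(4) to statements on the \emph{fixed} model $Y_0$ (and its divisor $S_0$), where the relevant estimates are uniform, and to let only a small $\rho$-exceptional correction of $F$ depend on the varying resolution $Y$ --- this correction is harmless for (4) precisely because it is supported over $\mathrm{Ex}(f)$. For $A^*$ there is nothing to do: since $\mathrm{Ex}(f)$ is a curve and $S$ is a component neither of $B$, nor of $Q$, nor of the divisor $E$ of \eqref{l-perturbation}, we may take $A^*$ to be any effective $\Q$-divisor with $\mathrm{Supp}(A^*)=\mathrm{Supp}(B)\cup\mathrm{Supp}(Q)\cup\mathrm{Ex}(f)\cup\mathrm{Supp}(E)$, which is all that (1) asks.

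\textbf{Choice of $\epsilon$ and then $0<\epsilon'\ll\epsilon$.} Fix once and for all $F_0\ge 0$ on $Y_0$ with $-F_0$ ample over $X$ (possible after a further blow-up of $Y_0$, cf. \eqref{t-res}); since $-F_0$ is $g_0$-ample, $\mathrm{Ex}(g_0)\subseteq\mathrm{Supp}(F_0)$, so $g_0(\mathrm{Ex}(g_0))\subseteq g_0(F_0)$. As $-(K_X+S+B)$ is ample over $Z$, for $0<\epsilon\ll 1$ the divisor $-g_0^*(K_X+S+B)-\epsilon F_0$ is ample on $Y_0$ (pullback of an ample divisor plus a small multiple of a relatively ample one), and hence its restriction to $S_0$ is ample on $S_0$. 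Shrinking $\epsilon$ further we arrange that $\epsilon\,F_0|_{S_0}\le\tfrac12\mu_0^*G$ for every effective $\Q$-Cartier $G$ on $\bar S$ with $\mathrm{Supp}(G)\supseteq\nu^{-1}(f(\mathrm{Ex}(f))\cup g_0(F_0))$: each component of $F_0|_{S_0}$ not contracted by $\mu_0$ maps onto a curve of $\nu^{-1}(g_0(F_0))\subseteq\mathrm{Supp}(G)$, and each $\mu_0$-exceptional component of $F_0|_{S_0}$ contracts to a point of $\nu^{-1}(g_0(F_0))\subseteq\mathrm{Supp}(G)$, so in either case $\mu_0^*G$ has coefficient at least $c>0$ along it, where $c$ is the minimal coefficient of $G$ (using that the pullback of an effective divisor on a smooth surface is positive along every exceptional divisor over a point of its support); since $F_0|_{S_0}$ is a fixed divisor, $\epsilon\ll 1$ suffices. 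Finally pick $0<\epsilon'\ll\epsilon$ so that, by openness of ampleness on the \emph{fixed} varieties $Y_0$ and $S_0$ applied to the compact boxes of classes involved, $-g_0^*(K_X+S+B)-\epsilon F_0+A'$ is ample on $Y_0$ for all $A'$ with $|A'|\le\epsilon' g_0^*A^*$, and $\big({-}g_0^*(K_X+S+B)-\epsilon F_0\big)|_{S_0}+\mu_0^*A''$ is ample on $S_0$ for all $A''$ on $\bar S$ with $|A''|\le\epsilon'\nu^*(A^*|_{S^n})$.

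\textbf{Choice of $F$ for a given $Y$.} With $\rho:Y\to Y_0$ the given map, pick an effective $\rho$-exceptional $F'$ with $-F'$ $\rho$-ample and set $F=\rho^*F_0+\delta F'$ for a small rational $\delta>0$. Then $F$ is $g$-exceptional and $-F$ is $g$-ample for $\delta\ll 1$ (on $\rho$-contracted curves $-\rho^*F_0$ vanishes and $-F'$ is positive; on the remaining $g$-contracted curves $-\rho^*F_0$ is positive with a uniform lower bound, absorbing $\delta(-F')$). For (2),
\[
-g^*(K_X+S+B)-\epsilon F+\rho^*A'=\rho^*\!\big({-}g_0^*(K_X+S+B)-\epsilon F_0+A'\big)-\epsilon\delta F',
\]
where the first summand is the pullback of an ample divisor on $Y_0$ by the choice of $\epsilon'$, hence nef, and the whole is ample on $Y$ for $\delta\ll 1$, uniformly over the compact box of $A'$. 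Restricting to $S'$ and using $\mu=\mu_0\circ\rho_{S'}$ and $\mu_0^*\nu^*(A^*|_{S^n})=(g_0^*A^*)|_{S_0}$ gives (3) identically. For (4), write $\epsilon F|_{S'}=\epsilon\rho_{S'}^*(F_0|_{S_0})+\epsilon\delta\,F'|_{S'}$: the first term is $\le\tfrac12\mu^*G$ by pulling back the inequality already arranged on $Y_0$, and the second is $\le\tfrac12\mu^*G$ once $\delta\ll 1$, since $F'$ being $\rho$-exceptional and $\rho$ an isomorphism over $X\setminus\mathrm{Ex}(f)$ forces every component of $F'|_{S'}$ to map onto a curve of, or to a point of, $\nu^{-1}(f(\mathrm{Ex}(f)))\subseteq\mathrm{Supp}(G)$, whence $\mu^*G$ has coefficient $\ge c>0$ there.

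\textbf{Main difficulty.} Apart from the bookkeeping above (which is the usual ``pullback of ample plus small relatively ample equals ample'' argument, carried out on $Y_0$ to ensure uniformity in $Y$), the one piece of genuine input is the control of the images in $\bar S$ of the exceptional divisors that appear. This rests on the inclusion $g(\mathrm{Ex}(g))\subseteq\mathrm{Ex}(f)\cup g_0(\mathrm{Ex}(g_0))\subseteq\mathrm{Ex}(f)\cup g_0(F_0)$, which follows from $\mathrm{Ex}(\rho)\subseteq g^{-1}(\mathrm{Ex}(f))$ and $\mathrm{Ex}(g_0)\subseteq\mathrm{Supp}(F_0)$ (the existence of $F_0$ itself using that $X$ is $\Q$-factorial), together with the elementary positivity of pullbacks along exceptional divisors over a point of the support; these force the divisors on $S'$ in question to push forward into $\nu^{-1}(f(\mathrm{Ex}(f))\cup g_0(F_0))$, which by the hypothesis of (4) lies in $\mathrm{Supp}(G)$.
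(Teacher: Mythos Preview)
Your proof is correct and follows essentially the same approach as the paper's: fix $F_0$ on $Y_0$ with $-F_0$ relatively ample, choose $\epsilon$ and then $\epsilon'$ on the fixed model $Y_0$, and for each $Y$ take $F=\rho^*F_0+\lambda F_1$ with $F_1$ $\rho$-exceptional and $-F_1$ $\rho$-ample, absorbing the $\lambda F_1$ contribution into $\tfrac12\mu^*G$ via the support hypothesis. The only quibble is your phrase ``for every effective $\Q$-Cartier $G$'' when arranging $\epsilon F_0|_{S_0}\le\tfrac12\mu_0^*G$: no single $\epsilon$ works uniformly over all such $G$ (your own bound uses the minimal coefficient $c$ of $G$), but since $G$ is fixed at the outset of the lemma this is harmless.
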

\begin{proof} (1) is immediate. To see (2), notice that we may assume that there is an effective $g_0$ exceptional divisor $F_0$ on $Y_0$ such that $-F_0$ is ample over $X$. It then follows that $-g_0^*(K_X+S+B)-\epsilon F_0$ is ample for $0<\epsilon \ll 1$ and it is easy to see that $-g_0^*(K_X+S+B)- \epsilon F_0+A' $ is ample for any $-2\epsilon ' g_0^*A^*\leq A'\leq 2\epsilon'  g_0^*A^*$ and $0<\epsilon '\ll \epsilon $. 
Let $F_1$ be a $\rho$ exceptional divisor on $Y$ such that $-F_1$ is ample over $Y_0$.
(2) now follows (by an easy compactness argument), letting $F=\rho ^* F_0+\lambda F_1$ where $0<\lambda \ll 1$. The proof of (3) is similar.
We also easily see that if $\epsilon$ and $\lambda$ are sufficiently small, then 
$$\epsilon F|_{S'}=\epsilon\rho ^*( F_0|_{S_0})+\epsilon\lambda F_1|_{S'} \le\frac{1}{2} \mu^*G+\frac{1}{2}\mu^*G=\mu^*G.$$
Note that the choice of $\lambda $ depends on $Y$, but the choice of $\epsilon $ does not.
\end{proof}

Note that the support of $g^*A^*$ contains the $g$ exceptional locus. 
For any $i,j>0$, we define
$$L_{ij}=\lceil \frac ji\mathbf N _{i,Y}+\mathbf A _Y\rceil.$$ 

\begin{lemma}\label{l-psi}
For any $0<\epsilon '\ll \epsilon \ll 1$, we can pick a $\Q$-divisor $\Psi$ on $Y$ (depending on $i,j$ and $Y$), with the following properties:
\begin{enumerate}
\item $\Psi'_\epsilon \leq \Psi\leq \Psi'_\epsilon  +g^*A^*$, where $\Psi'_\epsilon =\{ -\frac ji\mathbf N _{i,Y}-\mathbf A _Y\}+S'+\epsilon F $,
\item the index of $K_Y+\Psi$ is not divisible by $p$,
\item $L_{ij}-(K_Y+\Psi)+\rho ^* A'$ is ample for any $-\epsilon 'g_0^*A^*\leq A'\leq \epsilon ' g_0^*A^*$, and
\item $(L_{ij}-(K_Y+\Psi))|_{S'}-\mu ^* M$ is ample for any $\Q$-divisor $M$ on $\bar S$ such that $-\epsilon ' \nu ^*(A^*|_{ S^n})\leq M-\frac j i \mathbf M _{i,\bar S}\leq \epsilon '\nu ^*(A^*|_{ S^n})$.
\end{enumerate}\end{lemma}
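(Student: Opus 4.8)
The plan is to reduce everything to one identity. Writing $\lceil\,\cdot\,\rceil=\,\cdot\,+\{-\,\cdot\,\}$ and substituting $K_Y+S'=g^*(K_X+S+B)+\mathbf A_Y$, one obtains
$$L_{ij}-(K_Y+\Psi'_\epsilon)=\frac ji\,\mathbf N_{i,Y}-g^*(K_X+S+B)-\epsilon F .$$
Here $\mathbf N_{i,Y}$ is free, hence nef — we may assume this by passing to a high enough model, cf. \eqref{t-res}, \eqref{l-desc} — and $\mathbf N_{i,Y}|_{S'}=\mathbf M_{i,S'}=\mu^*\mathbf M_{i,\bar S}$ by \eqref{l-desc}. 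So the right hand side is ``nef plus ample'' by \eqref{l-prel}(2), and its restriction to $S'$ is ``nef plus ample'' by \eqref{l-prel}(3); thus (3) and (4) will follow once we pass from $\Psi'_\epsilon$ to the slightly larger $\Psi$ of (1)--(2) without spoiling this.

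To construct $\Psi$ I would write $\Psi'_\epsilon=S'+R$ with $R=\{-\frac ji\mathbf N_{i,Y}-\mathbf A_Y\}+\epsilon F\ge 0$. Since the components of $\mathbf N_{i,Y}$ and $\mathbf A_Y$ are either $g$-exceptional or strict transforms of components of $Q$ or $B$, and $F$ is $g$-exceptional, \eqref{l-prel}(1) gives ${\rm Supp}(R)\subseteq{\rm Supp}(g^*A^*)$, and neither $R$ nor $g^*A^*$ contains $S'$. Apply \eqref{l-perturbation} to the pair $(Y,R)$ with an effective integral divisor $E'$ supported on $g^*A^*$ — this is legitimate because $Y$ is smooth, so $E'-K_Y$ is automatically Cartier — to produce an effective $\Q$-divisor $D'$, supported on $g^*A^*$ and as small as we wish, such that the index of $K_Y+R+D'$ is prime to $p$. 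Set $\Psi:=\Psi'_\epsilon+D'$. Then (1) holds for $D'$ small, and (2) holds because $S'$ is Cartier on the smooth $Y$, so the index of $K_Y+\Psi=S'+(K_Y+R+D')$ divides that of $K_Y+R+D'$. The only reason to peel $S'$ off before perturbing is to keep ${\rm mult}_{S'}\Psi=1$, as forced by the upper bound in (1).

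For (3) and (4) I would substitute $\Psi=\Psi'_\epsilon+D'$ into the identity: $L_{ij}-(K_Y+\Psi)+\rho^*A'$ equals the nef divisor $\frac ji\mathbf N_{i,Y}$ plus $-g^*(K_X+S+B)-\epsilon F+(\rho^*A'-D')$, and on $S'$, using $\mathbf N_{i,Y}|_{S'}=\mu^*\mathbf M_{i,\bar S}$ and $(g^*A^*)|_{S'}=\mu^*\nu^*(A^*|_{S^n})$, the divisor $(L_{ij}-(K_Y+\Psi))|_{S'}-\mu^*M$ equals $-(g^*(K_X+S+B)+\epsilon F)|_{S'}+\mu^*(\frac ji\mathbf M_{i,\bar S}-M)-D'|_{S'}$. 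Now $-g^*(K_X+S+B)-\epsilon F$ and $-(g^*(K_X+S+B)+\epsilon F)|_{S'}$ are ample by \eqref{l-prel}(2),(3) (the cases $A'=0$, resp. $M=\frac ji\mathbf M_{i,\bar S}$), while $\rho^*A'-D'$ and $\mu^*(\frac ji\mathbf M_{i,\bar S}-M)-D'|_{S'}$ range over bounded sets of classes that become arbitrarily small as $\epsilon'$ and $D'$ are taken small; since ampleness is open these two divisors stay ample, and in (3) we still add the nef term. The main obstacle is not a hard step but precisely this bookkeeping: arranging that the perturbation $D'$ fixing the $p$-index can simultaneously be kept off $S'$ (so (1) holds) and be chosen small enough, uniformly in $i$ and $j$, that the ampleness furnished by \eqref{l-prel} survives. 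Both points are dispatched by smoothness of $Y$ and by the fact that the estimates in \eqref{l-prel} come with room to spare — their proofs work for a box of radius $2\epsilon'$ rather than $\epsilon'$.
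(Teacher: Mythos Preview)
Your proof is correct and follows essentially the same route as the paper's. Both compute the identity $L_{ij}-(K_Y+\Psi'_\epsilon)=\tfrac ji\mathbf N_{i,Y}-g^*(K_X+S+B)-\epsilon F$, apply \eqref{l-perturbation} to pass to $\Psi=\Psi'_\epsilon+D'$ with $D'\leq \epsilon''g^*A^*$ small, and then deduce (3) and (4) from \eqref{l-prel}(2),(3) using nefness of $\mathbf N_{i,Y}$, the equality $\mathbf N_{i,Y}|_{S'}=\mu^*\mathbf M_{i,\bar S}$, and openness of ampleness to absorb the small $D'$. Your treatment is in fact slightly more explicit than the paper's: you spell out why the perturbation can be kept off $S'$ (the paper just asserts that \eqref{l-perturbation} yields (1) and (2)), and the one cosmetic point is that your ``box of radius $2\epsilon'$'' phrasing is not quite the mechanism---the perturbation $D'$ lives on $Y$, not on $Y_0$, so it is not literally a $\rho^*A'$---but you also invoke openness of the ample cone, which is the correct argument and is what the paper implicitly uses as well.
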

\begin{proof}To construct such a $\Psi$, we proceed as follows: By \eqref{l-perturbation}, for any $0<\epsilon ''\ll 1$, we may pick a $\Q$-divisor $\Psi '_\epsilon  \leq \Psi \leq \Psi_\epsilon +\epsilon ''g^*A^*$ such that 
(1) and (2) hold.
Since\begin{eqnarray}
L_{ij}-(K_Y+\Psi) &=&\lceil \frac ji\mathbf N _{i,Y}+\mathbf A _Y\rceil-(K_Y+\Psi )\nonumber
\\ 
&= &\dfrac{j}{i}\mathbf N _{i,Y}+\mathbf A _Y- (\Psi -\Psi'_\epsilon )-K_Y-S'-\epsilon F\nonumber
\\
&=&\dfrac{j}{i}\mathbf N _{i,Y}-g^*(K_X+S+B)-\epsilon F- (\Psi -\Psi'_\epsilon ),\nonumber
\end{eqnarray}
$\mathbf N _{i,Y}$ is nef and  $0<\epsilon ''\ll 1$, (3) follows from (2) of \eqref{l-prel}.

Since $\mathbf M_{i,S'}=\mu ^*\mathbf M_{i,\bar S}$, (4) easily follows from (3) of \eqref{l-prel} and the equality {\footnotesize$$(L_{ij}-(K_Y+\Psi ))|_{S'}-\mu ^* M=-(g^*(K_X+S+B)+\epsilon F+\Psi -\Psi '_\epsilon )|_{S'}+\mu ^* (\frac j i \mathbf M_{i,\bar S}-M).$$}\end{proof}

\begin{lemma}\label{YtoS} 
The homomorphism
$$S^0(Y,\sigma (Y,\Psi)\otimes \mathcal O _Y(L_{ij}))\to S^0(S',\sigma (S', \Psi _{S'})\otimes \mathcal O _{S'} (L_{ij}|_{S'}))
$$
is surjective.
\end{lemma}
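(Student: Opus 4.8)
The plan is to apply Proposition \ref{p-rKS} directly to the pair $(Y,\Psi)$ with the Cartier divisor $L_{ij}$, using that $S'$ is an $F$-pure center of $(Y,\Psi)$ and that $L_{ij}-(K_Y+\Psi)$ is ample. First I would observe that by construction $\Psi \geq \Psi'_\epsilon = \{-\tfrac ji\mathbf N_{i,Y}-\mathbf A_Y\}+S'+\epsilon F$, so in particular $\lfloor \Psi\rfloor \geq S'$; moreover $\Psi$ has simple normal crossing support (we chose $Y$ to be a log resolution of everything in sight, and the perturbation in \eqref{l-perturbation} only slightly increases coefficients along the existing support), so by Lemma \ref{l-sigma} the pair $(Y,\Psi)$ has $\sigma(Y,\Psi)=\mathcal O_Y$ and every stratum of $\lfloor\Psi\rfloor$ — in particular $S'$ itself — is an $F$-pure center of $(Y,\Psi)$. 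This is exactly the hypothesis ``$S\subset X$ is a union of $F$-pure centers'' needed in \eqref{p-rKS} (here with $X$ replaced by $Y$ and $S$ by $S'$).

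Next I would check the two remaining hypotheses of \eqref{p-rKS}: that $p$ does not divide the index of $K_Y+\Psi$, and that $L_{ij}-(K_Y+\Psi)$ is ample (hence $h$-ample, where $h=f\circ g$, since $Z$ is affine). The first is precisely property (2) of Lemma \ref{l-psi}. For the second, property (3) of Lemma \ref{l-psi} gives that $L_{ij}-(K_Y+\Psi)+\rho^*A'$ is ample for all $A'$ with $-\epsilon' g_0^*A^*\leq A'\leq \epsilon' g_0^*A^*$; taking $A'=0$ yields that $L_{ij}-(K_Y+\Psi)$ is ample. Also $L_{ij}=\lceil \tfrac ji\mathbf N_{i,Y}+\mathbf A_Y\rceil$ is an integral (Cartier, since $Y$ is smooth) divisor, as required.

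With all hypotheses verified, \eqref{p-rKS} (or equivalently the affine case \eqref{KS}, since $Z$ is affine so $f_*$ may be identified with $H^0$) produces a natural surjective map
\[
S^0(Y,\sigma(Y,\Psi)\otimes \mathcal O_Y(L_{ij}))\to S^0(S',\sigma(S',\phi_{S'}^{\Psi})\otimes \mathcal O_{S'}(L_{ij}|_{S'})).
\]
Finally I would identify $\phi_{S'}^{\Psi}$ with $\Psi_{S'}:=(\Psi-S')|_{S'}$: this is the standard adjunction for $F$-pure centers (when $S'$ appears with coefficient one in the snc divisor $\Psi$, the different is simply the restriction of the remaining part $\Psi-S'$), matching the notation set in the surrounding discussion (cf. the use of $\Xi_{S'}=\phi_{S'}^{\Xi}=(\Xi-S')|_{S'}$ in \eqref{l-2}). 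This gives exactly the claimed surjectivity.

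I do not expect a serious obstacle here: the lemma is essentially a bookkeeping application of \eqref{p-rKS}, and all of its hypotheses have been arranged in advance by Lemmas \ref{l-prel} and \ref{l-psi}. The only point requiring a little care is confirming that $S'$ really is an $F$-pure center of $(Y,\Psi)$ — i.e. that the perturbations producing $\Psi$ from $\Psi'_\epsilon$ keep the pair snc with $S'$ a stratum of $\lfloor\Psi\rfloor$ and keep all coefficients in $[0,1]$ so that Lemma \ref{l-sigma} applies — but this is guaranteed by the construction (property (1) of Lemma \ref{l-psi} together with the snc choice of $Y$).
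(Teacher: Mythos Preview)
Your proposal is correct and follows exactly the paper's approach: the paper's proof simply cites that $L_{ij}-(K_Y+\Psi)$ is ample by (3) of \eqref{l-psi} and then invokes \eqref{p-rKS} together with \eqref{l-sigma}. You have merely unpacked the hypotheses (index condition from \eqref{l-psi}(2), $S'$ as an $F$-pure center via \eqref{l-sigma}, and the identification $\phi_{S'}^\Psi=\Psi_{S'}$), which is precisely what the terse proof leaves implicit.
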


\begin{proof} Since $L_{ij}-(K_Y+\Psi) $ is ample (cf. (3) of \eqref{l-psi}), the surjectivity follows from \eqref{p-rKS} and \eqref{l-sigma}.
\end{proof}

\subsection{Rationality of $\mathbf{D}$}Let $\mathbf{D}=\lim_i \mathbf{D}_i $.

\begin{lemma}The $\mathbb R$-divisor $\mathbf {D}_{\bar S}$ is semi-ample over $f(S)$.

\end{lemma}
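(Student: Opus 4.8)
The plan is to show that $\mathbf D_{\bar S}$ is a limit of semi-ample $\Q$-divisors which descend to $\bar S$ and, more importantly, that this limit is itself nef and in fact semi-ample over $f(S)$ by relating it to anti-ample divisors on the side of $X$. First I would recall from \eqref{l-desc} that each $\mathbf M_i$ descends to $\bar S$, so each $\mathbf D_i = \frac 1i \mathbf M_i$ is a $\Q$-divisor on $\bar S$ (not merely a b-divisor), and the sequence $\mathbf D_{i,\bar S}$ is non-decreasing by the usual subadditivity $\mathbf M_{i+j}\geq \mathbf M_i + \mathbf M_j$ of mobile parts of restricted linear systems. Since the divisors $\mathbf M_{i,S'}$ are mobile over $Z$ and $S' \to \bar S$ has the property that $\mathbf M_{i,S'} = \mu^* \mathbf M_{i,\bar S}$, each $\mathbf D_{i,\bar S}$ is a mobile $\Q$-divisor on $\bar S/f(S)$, hence nef over $f(S)$; and on a relative weak log del Pezzo-type surface, or more directly by \eqref{l-sab}(1) applied on $\bar S$, a relatively nef divisor is semi-ample over $f(S)$. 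So each $\mathbf D_{i,\bar S}$ is semi-ample over $f(S)$.

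The main point is then to pass to the limit. Here I would use that $\mathbf D_{i,\bar S} \leq g_{S'}^* \big( \frac 1i \cdot i (K_X+S+B)|_{S^n} \big)$ via $\mathbf N_{i,Y} \leq g^*(iQ)$ and restriction, together with $Q \sim k(K_X+S+B)$, to bound the sequence from above; combined with monotonicity this gives that $\mathbf D_{\bar S} = \lim_i \mathbf D_{i,\bar S}$ exists as an $\R$-divisor on $\bar S$. Being an increasing limit of divisors that are nef over $f(S)$, the limit $\mathbf D_{\bar S}$ is nef over $f(S)$. To upgrade nefness to semi-ampleness I would invoke \eqref{l-sab}(1) once more: on $\bar S$, which by \eqref{l-0} is a smooth surface and carries the klt pair structure $(\bar S, B_{\bar S})$ with $-(K_{\bar S}+B_{\bar S}) = -\nu^*(K_{S^n}+B_{S^n})$ being $\nu$-nef — and in our setting $-(K_X+S+B)$, hence $-(K_{S^n}+B_{S^n})$, is ample over $Z$ — the pair $(\bar S, B_{\bar S})$ is a relative weak log del Pezzo over $f(S)$, so every relatively nef $\Q$-divisor is semi-ample over $f(S)$. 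One must be slightly careful that $\mathbf D_{\bar S}$ is an $\R$-divisor; but by \eqref{l-sab}(2) the nef cone of $\bar S$ over $f(S)$ is rational polyhedral, so any nef $\R$-divisor is a non-negative $\R$-combination of semi-ample $\Q$-divisors spanning the extremal rays, hence is itself semi-ample over $f(S)$ in the appropriate $\R$-divisorial sense.

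The step I expect to be the main obstacle is verifying cleanly that $(\bar S, B_{\bar S})$ (or an auxiliary perturbation of it) genuinely satisfies the hypotheses of \eqref{l-sab} over the relevant base $f(S)$ — in particular that $-(K_{\bar S}+B_{\bar S})$ is nef (indeed semi-ample) over $f(S)$ and that the pair remains klt — since $\nu$ is a birational morphism and the numerical positivity could in principle degenerate along $\nu$-exceptional curves. I would handle this by noting $K_{\bar S}+B_{\bar S} = \nu^*(K_{S^n}+B_{S^n})$ with $B_{\bar S}\geq 0$ and $(\bar S, B_{\bar S})$ terminal by \eqref{l-0}, so that $-(K_{\bar S}+B_{\bar S}) = \nu^*\big({-}(K_{S^n}+B_{S^n})\big)$ is the pullback of an $f$-ample divisor and hence is semi-ample (in particular nef) over $f(S)$; this is exactly the input \eqref{l-sab} requires. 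The remaining bookkeeping — subadditivity of $\mathbf M_i$, boundedness of $\mathbf D_i$, and the descent $\mathbf M_{i,S'}=\mu^*\mathbf M_{i,\bar S}$ from \eqref{l-desc} — is routine.
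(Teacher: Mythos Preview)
Your proposal is correct and is exactly the argument the paper has in mind: the paper's own proof consists of nothing more than a pointer to \eqref{l-sab} and to \cite[2.4.11]{Corti07}, and what you have written is precisely that argument spelled out --- each $\mathbf D_{i,\bar S}$ descends to $\bar S$ and is relatively nef, the monotone bounded limit $\mathbf D_{\bar S}$ is therefore nef over $f(S)$, and rational polyhedrality of the relative nef cone from \eqref{l-sab}(2) lets you write $\mathbf D_{\bar S}$ as a nonnegative $\R$-combination of semi-ample $\Q$-divisors.

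One small correction: \eqref{l-sab} as stated asks that $-(K_{\bar S}+B_{\bar S})$ be $h$-\emph{ample}, not merely $h$-nef, so your sentence ``this is exactly the input \eqref{l-sab} requires'' is not quite right. On $\bar S$ the anti-log-canonical is only the $\nu$-pullback of an ample class, hence $h$-nef and big. This is harmless: write $-(K_{\bar S}+B_{\bar S})\sim_{\Q,f(S)} A+E$ with $A$ ample and $E\geq 0$, and apply \eqref{l-sab} to the klt pair $(\bar S, B_{\bar S}+\epsilon E)$ for $0<\epsilon\ll 1$, for which $-(K_{\bar S}+B_{\bar S}+\epsilon E)\sim_{\Q} (1-\epsilon)(-(K_{\bar S}+B_{\bar S}))+\epsilon A$ is genuinely $h$-ample. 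Alternatively, since $h:\bar S\to f(S)$ is birational, the relative Mori cone is generated by the finitely many $h$-exceptional curves, so polyhedrality of the nef cone is automatic in any case.
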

\begin{proof}
See \eqref{l-sab} and the argument in \cite[2.4.11]{Corti07}. 
\end{proof}

Let $h:\bar{S}\to f(S)$ be the induced morphism. Let $V\subset {\rm Div}(\bar S)\otimes \mathbb{R}$ be the smallest linear subspace defined over $\mathbb{Q}$ containing $\mathbf D _{\bar S}$.
Since the nef cone of $\bar S$ over $f(S)$ is finitely generated, we may pick nef divisors $M_i\in V$ such that $\mathbf D _{\bar S}$ is contained in the convex cone generated by $M_i$ and if $\Sigma$ is any $h$-exceptional curve, then $M_i\cdot \Sigma=0$ iff $\mathbf D _{\bar S}\cdot \Sigma=0$. By \eqref{l-sab}, after replacing each $M_i$ by a positive multiple, we may assume that each $|M_i|$ defines a birational morphism to a normal surface  over $f(S)$, say 
$a_i:\bar S\to S^i$. Notice that the exceptional set of $a_i$ corresponds to 
the set of $h$-exceptional curves intersecting $M_i$ trivially, and so this set is independent of $i$. Therefore
the morphism $a_i$ is independent of $i$ and we denote it by $a:\bar S\to S^+$. 

By Diophantine approximation,  we may pick $j>0$ and $M \in V$ such that (cf. \cite[2.4.12]{Corti07})
$M=\sum a_iM_i$ where $a_i\in \mathbb N$,
and $||M-j\mathbf  D _{\bar S}||\leq \frac {\epsilon '} 2$.  
It follows that  $M$ is relatively base point free and the map defined by $|M|$ is also given by $a:\bar S\to S^+$.

Note that if $\Sigma$ is any proper curve  contained in ${\rm Ex}(h)$, then $\Sigma \cdot \mathbf{D}_{\bar{S}}=0$ if and only if $\Sigma \cdot M=0$.
Thus $\mathbf{D}$ descends to $S^+$.  We can assume that $C$ is a smooth general curve such that $C\sim  M$. To see this, note that there is a big open subset $U$ of $S^+$ (the complement of finitely many points) such that $U$ is smooth and $a$ is an isomorphism over $U$. $C$ is then isomorphic to a general hyperplane $C^+$ of $S^+$, which is contained in $U$.
We may also assume that $\Psi _{S'}+C'$ has simple normal crossings,  where $C'$ is the strict transform of $C$ on $S'$.

\begin{lemma}\label{StoC} Let  $\Theta = \Psi _{S'}+C'$. 
 If $j$ is as above and $i\gg 0 $ is divisible by $j$, then
$$S^0(S',\sigma (S', \Theta )\otimes \mathcal O _{S'} ( L_{ij}))\to S^0(C',\sigma (C',\Theta _{C'})\otimes \mathcal O _{C'} (L_{ij}))
$$
is surjective, where $\Theta_{C'}=(\Theta-C')|_{C'}$.
\end{lemma}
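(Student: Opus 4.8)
The plan is to deduce this directly from the restriction (lifting) theorem for $S^0$, Proposition~\ref{KS}, applied to the pair $(S',\Theta)$, the subvariety $C'\subset S'$, and the Cartier divisor $L_{ij}|_{S'}$. First I would record the structural facts. Here $S'$ is a smooth projective surface: it is the strict transform of $S$ in the log resolution $g:Y\to X$, hence smooth, and it is proper over the affine scheme $Z$, hence projective. By construction $\Theta=\Psi_{S'}+C'$ is a simple normal crossing divisor on $S'$ with coefficients $\le 1$, in which $C'$ occurs with coefficient $1$: indeed $C$ is a \emph{general}, hence smooth and irreducible, member of the base point free linear system $|M|$ on $\bar S$, its strict transform $C'$ on $S'$ is not a component of $\Psi_{S'}$, and since $C$ avoids the finitely many images of $\mu$-exceptional curves we even have $C'=\mu^*C$. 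Therefore Lemma~\ref{l-sigma} applies: $C'$ is an $F$-pure center of $(S',\Theta)$, $\sigma(S',\Theta)=\mathcal O_{S'}$, and the induced boundary $\phi_{C'}^{\Theta}$ is computed by adjunction $(K_{S'}+\Theta)|_{C'}=K_{C'}+\Theta_{C'}$ with $\Theta_{C'}=(\Theta-C')|_{C'}$. Moreover the $\mathbb Q$-Cartier index of $K_{S'}+\Theta=(K_Y+\Psi)|_{S'}+C'$ divides that of $K_Y+\Psi$, which is prime to $p$ by \ref{l-psi}(2).

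The one substantive point to verify is the positivity hypothesis of \ref{KS}, namely that $L_{ij}|_{S'}-(K_{S'}+\Theta)$ is ample; this is where the assumptions "$j$ as above'' and "$i\gg 0$ divisible by $j$'' are genuinely used. By adjunction $(K_Y+\Psi)|_{S'}=K_{S'}+\Psi_{S'}$, so
$$L_{ij}|_{S'}-(K_{S'}+\Theta)=\bigl(L_{ij}-(K_Y+\Psi)\bigr)\big|_{S'}-C'.$$
Since $C\in|M|$ we have $C'=\mu^*C\sim\mu^*M$, so this divisor is $\mathbb Q$-linearly equivalent to $\bigl(L_{ij}-(K_Y+\Psi)\bigr)|_{S'}-\mu^*M$. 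Now I invoke \ref{l-psi}(4): the latter is ample as soon as $-\epsilon'\nu^*(A^*|_{S^n})\le M-\tfrac ji\mathbf M_{i,\bar S}\le\epsilon'\nu^*(A^*|_{S^n})$. Writing $\tfrac ji\mathbf M_{i,\bar S}=j\mathbf D_{i,\bar S}$, using the convergence $\mathbf D_{i,\bar S}\to\mathbf D_{\bar S}$ together with the Diophantine estimate $\|M-j\mathbf D_{\bar S}\|\le\epsilon'/2$, one makes $\|M-j\mathbf D_{i,\bar S}\|$ as small as desired for $i\gg 0$ divisible by $j$; since the support of $\nu^*(A^*|_{S^n})$ contains all the relevant divisors with positive coefficients, this places $M-j\mathbf D_{i,\bar S}$ in the required range, so the divisor is ample.

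With both hypotheses checked, Proposition~\ref{KS} yields a surjection
$$S^0(S',\sigma(S',\Theta)\otimes\mathcal O_{S'}(L_{ij}))\twoheadrightarrow S^0(C',\sigma(C',\phi_{C'}^{\Theta})\otimes\mathcal O_{C'}(L_{ij}))=S^0(C',\sigma(C',\Theta_{C'})\otimes\mathcal O_{C'}(L_{ij})),$$
which is exactly the assertion. I expect no conceptual obstacle beyond the restriction theorem \ref{KS} itself (a black box resting on the Frobenius/test-ideal machinery of Subsection~\ref{F-sing}); given that, the lemma is an assembly of \ref{l-sigma}, \ref{l-psi}, and the Diophantine approximation. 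The one place requiring care is the ampleness in the second paragraph — equivalently, first fixing $j$ and then taking $i$ large enough that the exact, base point free divisor $M$ on $\bar S$ approximates the non-decreasing sequence $j\mathbf D_{i,\bar S}=\tfrac ji\mathbf M_{i,\bar S}$ to within the fixed tolerance $\epsilon'\nu^*(A^*|_{S^n})$, so that \ref{l-psi}(4) can be applied.
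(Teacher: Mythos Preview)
Your proof is correct and follows essentially the same route as the paper: verify that $L_{ij}|_{S'}-(K_{S'}+\Theta)$ is ample via \ref{l-psi}(4) (using that $\tfrac ji\mathbf M_{i,\bar S}\to j\mathbf D_{\bar S}$ and $\|M-j\mathbf D_{\bar S}\|\le\epsilon'/2$), then apply the restriction theorem. One minor inaccuracy: $S'$ is projective over the affine base $Z$, not projective over $k$ (its image $f(S)\subset Z$ is a positive-dimensional affine surface), so strictly speaking you should invoke \ref{p-rKS} rather than \ref{KS}; the paper does this, and by Remark~\ref{r-fS} the two are equivalent here since $Z$ is affine.
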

\begin{proof} Recall that $\Psi _{S'}+C'$ has simple normal crossings and since $-\epsilon '\nu ^* A^*|_{S^n}\leq \frac j i\mathbf M _{i,\bar S}-M \leq \epsilon '\nu ^* A^*|_{S^n}$ (for $i\gg 0$) it follows that $L_{ij}|_{S'}- (K_{S'}+\Theta)$ is ample (cf. (4) of \eqref{l-psi}).
The lemma now follows from \eqref{p-rKS}.

\end{proof}

\begin{lemma}\label{l-ex} If $j$ is as above and $i\gg 0 $ is divisible by $j$, then $\lceil \frac ji\mathbf M _{i,S' }+\mathbf A_{S'}\rceil -\mathbf M _{j,S' }$ is $a$-exceptional.
\end{lemma}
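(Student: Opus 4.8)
The plan is to prove the statement as the conjunction of two estimates: the inequality $\lceil\frac ji\mathbf M_{i,S'}+\mathbf A_{S'}\rceil\ge\mathbf M_{j,S'}$, and the fact that the difference $E:=\lceil\frac ji\mathbf M_{i,S'}+\mathbf A_{S'}\rceil-\mathbf M_{j,S'}$ is supported on the locus contracted by the composition $S'\to\bar S\xrightarrow{a}S^+$ (this is what ``$a$-exceptional'' means here). First I would dispose of $E\ge 0$. Since $R_{S/Z}(K_X+S+B)$ is a graded $\mathcal O_Z$-algebra, the multiplication maps on sections give $\mathbf N_{i,Y}\ge\frac ij\mathbf N_{j,Y}$ on every model $Y$ when $j\mid i$, and restricting to $S'$ yields $\frac ji\mathbf M_{i,S'}\ge\mathbf M_{j,S'}$. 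As $(\bar S,B_{\bar S})$ is terminal by \eqref{l-0}, the coefficients of $\mathbf A_{S'}$ exceed $-1$, while $\lceil\mathbf A_{S'}\rceil\ge 0$ and $\mathbf A_{S'}$ is non-positive along strict transforms of divisors of $\bar S$; since $\mathbf M_{j,S'}=\mu^*\mathbf M_{j,\bar S}$ is integral, this gives $\lceil\frac ji\mathbf M_{i,S'}+\mathbf A_{S'}\rceil\ge\mathbf M_{j,S'}$, i.e. $E\ge 0$.

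It then remains to show $E$ is exceptional over $S^+$. Because $\mathbf M_i$ and $\mathbf M_j$ descend to $\bar S$ by \eqref{l-desc}, and every $\mu$-exceptional prime of $S'$ is automatically contracted by $a\circ\mu$, it suffices to check that $E$ has coefficient $0$ along the strict transform $\bar P'\subset S'$ of each prime $\bar P$ of $\bar S$ not contracted by $a$. Here the Diophantine choice is used: since $\frac ji\mathbf M_{i,\bar S}\uparrow j\mathbf D_{\bar S}$ and $\|M-j\mathbf D_{\bar S}\|\le\frac{\epsilon'}2$, for $i\gg 0$ divisible by $j$ we have $-\epsilon'\nu^*(A^*|_{S^n})\le\frac ji\mathbf M_{i,\bar S}-M\le\epsilon'\nu^*(A^*|_{S^n})$, which is exactly the hypothesis making \eqref{l-psi}(4), \eqref{YtoS} and \eqref{StoC} available. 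I would then take $C\sim M$ a general smooth member (possible since $|M|$ is base point free and induces $a$, so $C$ is the pullback of a general hyperplane section of $S^+$ contained in the open locus where $a$ is an isomorphism) with strict transform $C'\subset S'$, arranged so that $C$ meets $\bar P$ and $\Psi_{S'}+C'$ has simple normal crossings.

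The core of the argument is to run a section of $\mathcal O_{C'}(L_{ij}|_{C'})$ up the tower $C'\to S'\to Y$. By \eqref{StoC} followed by \eqref{YtoS} it lifts to an element of $S^0(Y,\sigma(Y,\Psi)\otimes\mathcal O_Y(L_{ij}))$; and since $L_{ij}|_{C'}-(K_{C'}+\Theta_{C'})$ is ample by \eqref{l-psi}(4) and $(C',\Theta_{C'})$ is $F$-pure, \eqref{l-cu2}/\eqref{l-rcu2} give $S^0(C',\sigma(C',\Theta_{C'})\otimes\mathcal O_{C'}(L_{ij}|_{C'}))=H^0(C',\mathcal O_{C'}(L_{ij}|_{C'}))$, so the complete system $|L_{ij}|_{S'}|$ restricts onto the complete system on $C'$. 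On the other hand the $\lceil\mathbf A_Y\rceil$-saturation of $\mathbf N_j$ (as in the proof of \eqref{l-desc}), combined with $\frac ji\mathbf N_{i,Y}\ge\mathbf N_{j,Y}$, forces every section of $\mathcal O_Y(L_{ij})$ to vanish along $L_{ij}-\mathbf N_{j,Y}$ away from the divisors contracted by $a\circ\mu$; restricting to $C'$ and comparing with the previous sentence gives $\mathrm{coeff}_{\bar P}(L_{ij}|_{S'}-\mathbf M_{j,S'})=0$, which is what we want.

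The main obstacle is precisely this last comparison: it is the positive-characteristic substitute for the Kawamata--Viehweg vanishing computation used in \cite[Chapter~2]{Corti07}, and it must be carried out entirely through the $S^0$-lifting formalism of Subsection \ref{F-sing} and \eqref{p-rKS} rather than through a vanishing theorem. It is also the reason that only the (comparatively easy) curve-lifting statement \eqref{l-cu2} is invoked at this stage, the subtler surface-lifting being reserved for the later stabilization step.
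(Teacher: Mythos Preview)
Your argument is correct and follows essentially the same route as the paper: combine \eqref{YtoS} and \eqref{StoC} to lift sections from $C'$ to $Y$, use the $\lceil\mathbf A_Y\rceil$-saturation to see that every section of $\mathcal O_Y(L_{ij})$ vanishes along the effective divisor $L_{ij}-\mathbf N_{j,Y}$, and conclude that this fixed part restricts to zero on the general curve $C'$. One small point of justification to sharpen: the equality $S^0(C',\sigma(C',\Theta_{C'})\otimes\mathcal O_{C'}(L_{ij}))=H^0$ (equivalently, that $S^0$ generates $\mathcal O_{C'}(L_{ij})$) is not a consequence of mere ampleness via \eqref{l-cu2}, but rather of the fact that $C'\to Z$ is finite (so $C'$ is affine) together with sharp $F$-purity of $(C',\Theta_{C'})$, which is how the paper's citation of \eqref{l-rcu2} is meant to be read.
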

\begin{proof}Combining \eqref{YtoS} and \eqref{StoC} (and the fact that $\Theta \geq \Psi _{S'}$)  it follows that, for any $m\gg 1$, the image of 
$$H^0(Y, \mathcal O _{Y} (L_{ij} ))\to H^0(C', \mathcal O _{C'} (L_{ij}))$$
contains the subspace $ S^0(C',\sigma (C',\Theta _{C'})\otimes \mathcal O _{C'} (L_{ij} ))$ which generates $\mathcal{O}_{C'}(L_{ij} ) $ (cf. \eqref{l-rcu2}).

On the other hand, since $\lceil \frac ji\mathbf N _{i,Y}+\mathbf A _Y\rceil \leq jg^*Q+\lceil \mathbf A _Y\rceil$ and $\lceil \mathbf A _Y\rceil$ is effective and exceptional, we have an isomorphism
 $$(f \circ g)_* \mathcal O _{Y} ( \mathbf{N}_{j,Y} )\to (f \circ g)_* \mathcal O _{Y} (\lceil \frac j i\mathbf N_{i,Y}+\mathbf A _{Y}\rceil  ),$$
 which is induced by adding the effective divisor $\lceil \frac ji\mathbf N _{i,Y}+\mathbf A _Y\rceil-\mathbf N_{j,Y}$. 
 Therefore, we conclude that the sections in the image of
 $$H^0(Y, \mathcal O _{Y} (L_{ij} )) \to H^0(C', \mathcal O _{C'} (L_{ij} ))$$ vanish along $(\lceil \frac ji\mathbf N _{i,Y}+\mathbf A _Y\rceil-\mathbf N_{j,Y})|_{C'}$. 
Since, as we have seen above, they also generate $\mathcal O _{C'} (L_{ij} )$, we must have $(\lceil \frac ji\mathbf N _{i,Y}+\mathbf A _Y\rceil-\mathbf N_{j,Y})|_{C'}=0$. The lemma now follows.
\end{proof}

\begin{corollary}\label{c-rat} 
$\mathbf D_{\bar S}$ is rational and $a_*\mathbf D_{\bar S}=a_*\mathbf D_{ j,\bar S}$ for some $j>0$.\end{corollary}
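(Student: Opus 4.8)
The plan is to run Shokurov's limiting argument (cf. \cite[\S 2.4]{Corti07}), with \eqref{l-ex} playing the role of the asymptotic saturation property, and to reduce everything to the model $a\colon\bar S\to S^+$. Recall that by \eqref{l-desc} every $\mathbf M_i$ descends to $\bar S$, that the $\Q$-b-divisors $\mathbf D_{i,\bar S}=\tfrac1i\mathbf M_{i,\bar S}$ form a non-decreasing family (for $i\mid i'$ one has $\mathbf D_{i,\bar S}\le\mathbf D_{i',\bar S}$), bounded above by the fixed $\Q$-divisor $\nu^*(K_{S^n}+B_{S^n})$, with $\mathbf D_{\bar S}=\lim_i\mathbf D_{i,\bar S}$; and that by construction of $a$ (the $M_i$ contract exactly the curves $\Sigma$ with $\mathbf D_{\bar S}\cdot\Sigma=0$) the divisor $\mathbf D_{S^+}:=a_*\mathbf D_{\bar S}$ is ample over $f(S)$ and $\mathbf D_{\bar S}=a^*\mathbf D_{S^+}$. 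Thus it is enough to prove that $\mathbf D_{S^+}$ is rational. I would first rewrite \eqref{l-ex} on $\bar S$: since $\mathbf M_{i,S'}=\mu^*\mathbf M_{i,\bar S}$ and since $\mathbf A_{S'}$ agrees with $\mathbf A_{\bar S}=-B_{\bar S}$ along the non-($S'\!\to\!\bar S$)-exceptional divisors, pushing \eqref{l-ex} forward by $\mu_*$ shows that
\[
\lceil j\mathbf D_{i,\bar S}-B_{\bar S}\rceil-\mathbf M_{j,\bar S}\quad\text{is $a$-exceptional, for } i\gg0 \text{ divisible by }j.
\]

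Next I would pass to the limit in $i$. As $\lceil\,\cdot\,\rceil$ is non-decreasing and $j\mathbf D_{i,\bar S}\nearrow j\mathbf D_{\bar S}$, for each prime $P$ the integer ${\rm mult}_P\lceil j\mathbf D_{i,\bar S}-B_{\bar S}\rceil$ is non-decreasing in $i$ and bounded, hence eventually constant; an elementary check (using $0\le {\rm mult}_P B_{\bar S}<1$) shows the stable value is exactly ${\rm mult}_P\lceil j\mathbf D_{\bar S}-B_{\bar S}\rceil$. Therefore $\lceil j\mathbf D_{i,\bar S}-B_{\bar S}\rceil=\lceil j\mathbf D_{\bar S}-B_{\bar S}\rceil$ for $i\gg0$, and combining this with the display above and applying $a_*$ yields the saturation identity
\[
a_*\lceil j\mathbf D_{\bar S}-B_{\bar S}\rceil=a_*\mathbf M_{j,\bar S}=j\,a_*\mathbf D_{j,\bar S},
\]
valid for every $j$ for which the Diophantine construction preceding the statement runs (and $\epsilon'$ may be taken arbitrarily small; such $j$ form an infinite set). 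Since the family is non-decreasing, $\mathbf D_{j,\bar S}\le\mathbf D_{\bar S}$; comparing coefficients along a non-$a$-exceptional prime $P$ with image $Q$, and writing $x=j\,{\rm mult}_Q\mathbf D_{S^+}$, $b={\rm mult}_P B_{\bar S}\in[0,1)$, the identity forces $\lceil x-b\rceil\le x$, hence either $x\in\Z$ or $\{x\}\le b$.

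Now I would conclude rationality. If some ${\rm mult}_Q\mathbf D_{S^+}=x_0$ were irrational, then $x=jx_0\notin\Z$ for every $j$, so the previous step would force $\{jx_0\}\le b<1$ for all admissible $j$; but, using that $\mathbf D_{S^+}$ is ample over $f(S)$ and that by \eqref{l-sab} the nef cone of $S^+$ over $f(S)$ is rational polyhedral, a Diophantine approximation/equidistribution argument exactly as in \cite[\S 2.4]{Corti07} produces admissible $j$ with $\{jx_0\}$ arbitrarily close to $1$, a contradiction. Hence $\mathbf D_{S^+}$, and therefore $\mathbf D_{\bar S}=a^*\mathbf D_{S^+}$, is a $\Q$-divisor. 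Finally, choosing $j$ sufficiently divisible so that $j\mathbf D_{\bar S}$ is Cartier on the smooth surface $\bar S$ (using \eqref{l-0}), such a $j$ is automatically admissible (take $M=j\mathbf D_{\bar S}$), and then in the saturation identity $\lceil j\,{\rm mult}_Q\mathbf D_{S^+}-b\rceil=j\,{\rm mult}_Q\mathbf D_{S^+}$ for every non-$a$-exceptional $P$, so $j\,a_*\mathbf D_{\bar S}=j\,a_*\mathbf D_{j,\bar S}$, i.e. $a_*\mathbf D_{\bar S}=a_*\mathbf D_{j,\bar S}$.

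The main obstacle is precisely the passage from the saturation identity to genuine rationality: the identity together with monotonicity only gives $\mathbf D_{j,\bar S}\to\mathbf D_{\bar S}$, and upgrading this to rationality requires the finite generation of the relative nef cone of $S^+$ together with the careful equidistribution argument of Shokurov (controlling how the admissible indices $j$ distribute the fractional parts $\{j\,{\rm mult}_Q\mathbf D_{S^+}\}$). Everything else is formal manipulation of b-divisors, the stabilization of ceilings under monotone limits, and the reductions already set up before the statement.
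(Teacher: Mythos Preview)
Your proposal is correct and follows essentially the same Shokurov--Corti limiting argument as the paper, with only a minor organizational difference: the paper keeps the integral approximant $M$ explicit and uses the inequality $M\le \lceil \tfrac{j}{i}\mathbf M_{i,\bar S}+\mathbf A_{\bar S}\rceil$ (obtained from $\|M-\tfrac{j}{i}\mathbf M_{i,\bar S}\|<\delta$ and $\mathbf A_{\bar S}>\delta-1$) to derive the contradiction $a_*M\le a_*\mathbf M_{j,\bar S}\le a_*(j\mathbf D_{\bar S})$ directly, whereas you first pass to the limit in $i$ to obtain the clean saturation identity $a_*\lceil j\mathbf D_{\bar S}-B_{\bar S}\rceil=a_*\mathbf M_{j,\bar S}$ and then analyze coefficients. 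Both routes rely on the same Diophantine input from \cite[2.4.12]{Corti07}, and your observation that once $\mathbf D_{\bar S}$ is rational one may take $M=j\mathbf D_{\bar S}$ (for $j$ sufficiently divisible) to make $j$ admissible matches the paper's concluding step.
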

\begin{proof} Suppose that $a_*\mathbf D_{\bar S}$ is not rational, then arguing as in \cite[2.4.12]{Corti07}, we may assume that the divisor $a_*(j\mathbf D_{\bar S})-a_*M$ is not effective.
 Since $\lceil \mathbf{A}_{\bar{S}} \rceil= 0$, we may pick $\delta >0$ such that the coefficients of $\mathbf{A}_{\bar{S}} $ are greater than $\delta -1$. We may assume that $\delta >\epsilon '/2$ and hence for $i\gg 0$, we have $||M- \frac ji\mathbf M _{i, \bar S}||<\delta$. By an easy computation, one sees that $M\le  \lceil \frac ji\mathbf M _{i, \bar S}+\mathbf A _{\bar S}\rceil $ (cf. \cite[2.4.13]{Corti07}).
It follows that  
$$a_*M\le a _*  (\lceil \frac ji\mathbf M _{i, \bar S}+\mathbf A _{\bar S}\rceil )= a _* \mathbf M_{j,\bar S}\le a_*(j\mathbf{D}_{\bar S}),$$
where the second inequality follows from \eqref{l-ex}.
Thus $a _* \mathbf D_{\bar S}$ is rational and hence we may have that $a _* (j\mathbf D_{\bar S})=a _* \mathbf M_{j,\bar S}$.
Since $\mathbf D_{\bar S}$ descend to $S^+$, we have that $\mathbf D_{\bar S}$ is rational.
\end{proof}


\subsection{Existence of pl-flips}\label{ss-Qlimit}
Up to this point, our arguments apply to any 3 dimensional pl-flip. However, in this subsection, we will require that the characteristic of the ground field is larger than 5 and the coefficients are in the standard set $\{ \frac{n-1}{n}|n\in \mathbb{N}\}$. 
\begin{theorem}\label{t-2}Let $f:(X,S+B)\to Z$ be a pl-flipping contraction of a projective threefold defined over an algebraically closed field of characteristic $p>5$ such that the coefficients of $B$ belong to the standard set $\{ 1-\frac 1 n|n\in \mathbb N\}$. Then the flip exists. 
\end{theorem}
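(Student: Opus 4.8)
The plan is to reduce the existence of the pl-flip to the finite generation of the restricted algebra $R_{S/Z}(K_X+S+B)$ and then to show, using the b-divisor machinery developed in Sections 3.2--3.4, that the non-decreasing sequence of $\mathbb{Q}$-b-divisors $\mathbf{D}_i = \frac{1}{i}\mathbf{M}_i$ stabilizes. Since $Z$ is affine, the $m$-th graded piece of this algebra is the image of $H^0(X,\mathcal{O}_X(m(K_X+S+B))) \to H^0(S^n,\mathcal{O}_{S^n}(m(K_{S^n}+B_{S^n})))$, and by \eqref{l-desc} (together with \eqref{c-rat}) we already know that each $\mathbf{M}_i$ descends to $\bar{S}$, that $\mathbf{D} = \lim_i \mathbf{D}_i$ is a $\mathbb{Q}$-b-divisor, and that $a_*\mathbf{D}_{\bar{S}} = a_*\mathbf{D}_{j,\bar{S}}$ for some $j>0$, where $a:\bar{S}\to S^+$ is the birational contraction defined by $|M|$ for $M$ a sufficiently divisible multiple of $\mathbf{D}_{\bar{S}}$. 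What remains is the third step of Shokurov's program: to promote this stabilization after pushforward by $a$ to genuine stabilization on $\bar{S}$ itself, i.e.\ to show $\mathbf{D}_{j,\bar S} = \mathbf{D}_{\bar S}$ (equivalently $\mathbf{M}_{mj,\bar S} = m\mathbf{M}_{j,\bar S}$ for all $m>0$), which gives finite generation of $R_{S/Z}$.

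The key step will be to run the lifting argument one dimension higher than in the rationality section: instead of restricting from $S'$ to a general curve $C'$, we restrict from $Y$ to $S'$ and must control the sections on the \emph{surface} $S'$ (or rather on $\bar{S}$) itself. Concretely, fix $j$ as in \eqref{c-rat} and set $M = \mathbf{M}_{j,\bar S}$ with $C = a^*(\text{general hyperplane on }S^+) \sim M$. For $i \gg 0$ divisible by $j$, consider $L_{ij} = \lceil \frac{j}{i}\mathbf{N}_{i,Y}+\mathbf{A}_Y\rceil$ and the surjection $S^0(Y,\sigma(Y,\Psi)\otimes\mathcal{O}_Y(L_{ij})) \to S^0(S',\sigma(S',\Psi_{S'})\otimes\mathcal{O}_{S'}(L_{ij}|_{S'}))$ from \eqref{YtoS}. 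The crucial point is that by \eqref{t-gfr} (applied to $(S^n, B_{S^n})$, which is klt with standard coefficients over the surface germ $f(S)$, since $p>5$), the pair is globally $F$-regular over $f(S)$, and hence by \eqref{l-rcu2}/\eqref{l-rcu3} (and the perturbation \eqref{l-perturbation} to arrange the index) we get that $S^0$ on $S'$ agrees with the full $H^0$: that is, $S^0(S',\sigma(S',\Psi_{S'})\otimes\mathcal{O}_{S'}(L_{ij}|_{S'}))$ generates (and equals, as an image of global sections) $\mathcal{O}_{\bar S}(L_{ij,\bar S})$ near the support of $\lceil\mathbf{A}_{S'}\rceil$. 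This is precisely the surface lifting result \eqref{l-liftingsurface} alluded to in the introduction — the place where the standard-coefficient, $p>5$ hypotheses are essential and where the argument genuinely differs from characteristic $0$.

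Granting this, the argument concludes exactly as in \eqref{l-ex} and \eqref{c-rat}, but now globally on $\bar{S}$ rather than on a curve: the sections in the image of $H^0(Y,\mathcal{O}_Y(L_{ij})) \to H^0(S',\mathcal{O}_{S'}(L_{ij}|_{S'}))$ vanish along $(\lceil\frac{j}{i}\mathbf{N}_{i,Y}+\mathbf{A}_Y\rceil - \mathbf{N}_{j,Y})|_{S'}$ because of the saturation/isomorphism $(f\circ g)_*\mathcal{O}_Y(\mathbf{N}_{j,Y}) \xrightarrow{\sim} (f\circ g)_*\mathcal{O}_Y(\lceil\frac{j}{i}\mathbf{N}_{i,Y}+\mathbf{A}_Y\rceil)$, while the surface lifting result forces these same sections to generate $\mathcal{O}_{\bar S}(L_{ij,\bar S})$. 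Comparing, we obtain $\lceil\frac{j}{i}\mathbf{M}_{i,\bar S}+\mathbf{A}_{\bar S}\rceil = \mathbf{M}_{j,\bar S}$ for all $i\gg 0$ divisible by $j$, and since the left side dominates $\frac{j}{i}\mathbf{M}_{i,\bar S}$ (hence, in the limit, $j\mathbf{D}_{\bar S}$), while by monotonicity $\mathbf{M}_{j,\bar S} \le j\mathbf{D}_{\bar S}$, we conclude $\mathbf{D}_{\bar S} = \frac{1}{j}\mathbf{M}_{j,\bar S}$. Thus $\frac{1}{m}\mathbf{M}_{m,\bar S}$ is eventually constant, the restricted algebra $R_{S/Z}(K_X+S+B)$ is finitely generated, and therefore so is $R(X/Z,K_X+S+B)$ by Shokurov's descent of finite generation (see \cite[Chapter 2]{Corti07} or the argument reproduced below), giving $X^+ = \operatorname{Proj}R(X/Z,K_X+S+B)$ and the existence of the pl-flip.

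The main obstacle is the surface lifting statement in the middle paragraph, i.e.\ proving $S^0(\bullet) = H^0(\bullet)$ on the divisor $S$ (a surface). In characteristic $0$ this is handled by Kawamata--Viehweg vanishing, which fails here; the substitute is the global $F$-regularity of relative weak log del Pezzo surfaces with standard coefficients established in \eqref{t-gfr}, whose proof in turn rests on Shokurov's theory of complements, Hara's theorem on $F$-regularity of klt surfaces for $p>5$ (generalized via \eqref{p-hara} and \eqref{p-Freg}), and the Kawamata--Viehweg vanishing for birational morphisms between surfaces \eqref{l-KV}. Everything else — the reduction to finite generation, the b-divisor formalism, the descent to $\bar{S}$, and the rationality of $\mathbf{D}$ — carries over from the characteristic $0$ treatment once one works systematically with $S^0$ in place of $H^0$ and uses the section-lifting results \eqref{KS}, \eqref{p-rKS} in place of vanishing; care is needed because Bertini fails, so the general members $C$ of $|M|$ may be singular, but this was already dealt with in \eqref{l-2} via the Frobenius--Seshadri constant estimate of Mustaţă--Schwede.
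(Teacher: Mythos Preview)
Your approach is essentially the paper's: reduce to finite generation of the restricted algebra, use the b-divisor package (\eqref{l-desc}, \eqref{c-rat}) already established, and close with the surface lifting step powered by \eqref{t-gfr}. Two points deserve tightening.

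First, you omit the normality of $S$. The paper uses \eqref{t-gfr} to show $(S^n,B_{S^n})$ is strongly $F$-regular and then invokes \eqref{p-normal} to conclude $S=S^n$; this is needed for the restricted-algebra reduction to go through cleanly.

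Second, your formulation of the surface lifting result is imprecise. What \eqref{l-liftingsurface} actually gives is the equality
\[
S^0(S',\sigma(S',\Psi_{S'})\otimes\mathcal O_{S'}(L_{ij}|_{S'}))\;=\;H^0(S^+,\mathcal O_{S^+}(j\mathbf D_{S^+})),
\]
not merely generation ``near the support of $\lceil\mathbf A_{S'}\rceil$'' (that local phrasing belongs to the rationality/curve step, not here). The paper then finishes by observing that $|L_{ij}|\subset |jk(K_Y+S'+B_Y)|$, so via \eqref{YtoS} the restricted algebra in degree $j$ contains $H^0(S^+,j\mathbf D_{S^+})$, while the reverse inclusion is automatic; hence $R_{S/Z}(jk(K_X+S+B))=R(S^+/Z;j\mathbf D_{S^+})$, which is finitely generated because $j\mathbf D_{S^+}$ is ample. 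Your reformulation via the equality $\lceil\frac ji\mathbf M_{i,\bar S}+\mathbf A_{\bar S}\rceil=\mathbf M_{j,\bar S}$ is equivalent once one notes $L_{ij,\bar S}=j\mathbf D_{\bar S}$ for $i\gg 0$, but your justification ``the left side dominates $\frac ji\mathbf M_{i,\bar S}$'' is not literally correct since $\mathbf A_{\bar S}\leq 0$; the right way is to use that $j\mathbf D_{\bar S}$ is integral and the coefficients of $\mathbf A_{\bar S}$ lie in $(-1,0]$. Also, the application of \eqref{l-rcu3} runs through $(\bar S,B_{\bar S}^\sharp)\to S^+$ with $j\mathbf D_{S^+}$ ample (after replacing $j$ by a multiple), not over $f(S)$ directly.
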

\begin{proof}
Since, by adjunction, 
$$K_{S^n}+B_{S^n}=(K_X+S+B)|_{S^n},$$
we have that the coefficients of $(S^n,B_{S^n})$ also lie in $\{1-\frac 1 n |n\in \mathbb N\}$ (cf. \cite[3.36]{Kollar13}) and so, by \eqref{t-gfr}, $(S^n,B_{S^n})$ is strongly $F$-regular over $Z$.  By \eqref{p-normal}, we have that $S$ is normal. 

By \cite{Corti07}, it suffices to show that the restricted algebra
$R_{S/Z}(k(K_X+S+B))$ is finitely generated for some $k>0$.
Recall that the restricted algebra is a graded $\mathcal O _Z$-algebra  whose degree $m$ piece corresponds to the image of the restriction homomorphism $$f_*\mathcal O _X(mk(K_X+S+B))\to f_*\mathcal O _{S}(mk(K_{S}+B_{S})),$$
or equivalently to  the image of the restriction homomorphism $$(f\circ g)_*\mathcal O _Y(mk(K_Y+S'+B_{Y}))\to (f\circ g)_*\mathcal O _{S'}(mk(K_{S'}+B_{S'})),$$ where $B_{Y}=(-\mathbf A_{Y})_{\geq 0}$ and $B_{S'}=(-\mathbf A_{S'})_{\geq 0}$.
Recall that $Q\sim k(K_X+S+B)$ for $k>0$ sufficiently divisible. Replacing $k$ by a multiple, we may assume that $Q$ is Cartier and that $a_* \mathbf D_{\bar S}=a_*\mathbf D_{j,\bar S}$ for all $j>0$ by \eqref{c-rat}.

Pick a rational number $\delta>0$ such that 
\begin{enumerate}
\item  $(X,S+B+\delta A^*)$ is plt,
\item $\nu:\bar{S}\to S$ is also a terminalization of $(S,B_{S}+\delta A^*|_{S})$,
\end{enumerate}
\begin{lemma}\label{l-liftingsurface}
Replacing $j$ by a multiple, for any $i\gg 0$ divisible by $j$, we have 
$$S^0(S',\sigma(S',\Psi_{S'})\otimes \mathcal O_{S'}(L_{ij}|_{S'}))=H^0(S^+, \mathcal O_{S^+}( j\mathbf{D}_{S^+})).$$ 
\end{lemma}
\begin{proof} Possibly replacing $j$ by its multiple, we can assume $|j\mathbf{D}_{\bar{S}}|$ induces a morphism to a normal surface $S^+$. 

Let $G_\epsilon=(\Psi-\Psi'_\epsilon +\epsilon F)|_{S'}$ (see \eqref{l-psi} for the definitions of the divisors) and 
$$\Psi _{\bar{S}}:=\mu_*\Psi_{S'}=B_{\bar{S}}+\mu_*G_\epsilon+j(\mathbf{D}_{\bar{S}}-\mathbf{D}_{i,\bar{S}}) $$ for sufficiently large $i$ and $\Psi_{S^+}=a_*\Psi_{\bar{S}}$. (We have used the fact that $B_{\bar S}=\{ B_{\bar S}\}=\{-\mathbf A _{\bar S}\}$, that $j\mathbf D _{S'}=\mu ^*(j\mathbf D_{\bar S})$ is integral, and that $\{ - \frac ji\mathbf M _{i,S'}\} =j\mathbf{D}_{{S'}} -\frac ji\mathbf M _{i,S'}$ for $i\gg 0$.)

Since $(S,B_{S})$ is globally $F$-regular (cf. \eqref{t-gfr}), so is  $(\bar{S},B_{\bar{S}})$ (cf. \eqref{p-bir}). Because $0< \epsilon \ll 1$, $||\Psi-\Psi'_\epsilon ||\ll 1$, and $i\gg 0$, by  \eqref{l-GFR}, it follows that $(\bar{S},\Psi _{\bar{S}})$ is globally $F$-regular,  and so
$(S^+,\Psi  _{S^+})$ has strongly $F$-regular singularities (see  \eqref{l-gtos}).  Let $E$ be an effective $\mathbb{Q}$-divisor on $S^+$ whose support contains the image of the locus where $S'\to S^+$ is not an isomorphism (this is possible as we have assumed that $Y\to Y_0$ is an isomorphism over $X\setminus {\rm Ex}(f)$), the divisor defined in \eqref{l-perturbation}, and the birational transform of ${\rm Supp}(B_S+A^*|_{S})$.
We may assume that\begin{enumerate}
\item  $(\bar S, B_{\bar S}^\sharp=B_{\bar S}+2a^*E)$ is globally $F$-regular over $S^+$ and 
\item $G_\epsilon +j(\mathbf D_{ S'}-\mathbf D_{i, S'})\leq \mu ^*a^*E$ (for fixed $j$ and $i\gg 0$).\end{enumerate}

To see (2), note that the support of $G_\epsilon +j(\mathbf D_{ S'}-\mathbf D_{i, S'})$ is contained in the support of $\mu ^*a^*E$ and we have that $||j(\mathbf D_{S'}-\mathbf D_{i, S'})||\ll 1$ for $i\gg 0$. From our choice of $\epsilon$ and $F$, $A^*$ we can also assume that $\epsilon F|_{S'}\leq \frac{1}{3} \mu ^* a^* E$ (see (\ref{l-prel}.4)) and 
 $(\Psi -\Psi'_\epsilon)|_{S'}\le g^*A^*|_{S'}\le \frac{1}{3}\mu^*a^*E  $.  

\begin{claim} We have the following inclusion
$$S^0(S',\sigma(S',\Psi_{S'})\otimes \mathcal{O}_{S'}(L_{ij}|_{S'}))\supset S^0(\bar{S}, \sigma(\bar{S},B_{\bar{S}}^\sharp )\otimes \mathcal{O}_{\bar{S}}(j\mathbf{D}_{\bar{S}})).$$
\end{claim}
\begin{proof}
Since $\mu _* \Psi _{S'}=\Psi_{\bar S}$ and $\mu _* (L_{ij}|_{S'})=j\mathbf D _{\bar S}$, there is a commutative diagram
{\small
\begin{diagram}
\mu_*F^e_*\mathcal{O}_{S'}((1-p^e)(K_{S'}+\Psi _{S'})+p^eL_{ij}|_{S'})&\rTo &\mu_*\mathcal{O}_{S'}(L_{ij}|_{S'})\\
\dTo& & \dTo  \\
F^e_*\mathcal{O}_{\bar{S}}((1-p^e)(K_{\bar{S}} +\Psi _{\bar{S}})+p^ej\mathbf{D}_{\bar{S}})&\rTo & \mathcal{O}_{\bar{S}}(j\mathbf{D}_{\bar{S}}).
\end{diagram}
}
As in the argument of \eqref{p-stab} for $e\gg 0$, the image of the map on global sections induced by the top arrow is $S^0(S',\sigma(S',\Psi_{S'})\otimes \mathcal O_{S'}(L_{ij}|_{S'}))$, thus it suffices to show that for $i\gg 0$, the top left hand corner contains 
{
$$F^e_*\mathcal{O}_{\bar{S}}((1-p^e)(K_{\bar{S}} +B_{\bar{S}}^\sharp )+p^e(j\mathbf{D}_{\bar{S}})).$$
 }
Let
$$\Psi _{\bar S}^*:= B_{\bar{S}}+\mu_*G_\epsilon+j(\mathbf{D}_{\bar{S}}-\mathbf{D}_{i,\bar{S}})+a^*E\leq B_{\bar S}^\sharp $$ 
(cf. (2) above).
It suffices to show that
 {\small
 \begin{eqnarray*}
& (1-p^e)(K_{S'}+\Psi _{S'})+p^eL_{ij}|_{S'}- \mu^*((1-p^e)(K_{\bar{S}}+B_{\bar S}^\sharp  )+p^e(j\mathbf{D}_{\bar{S}}))\\
 \ge &(1-p^e)(K_{S'}+\Psi _{S'})+p^eL_{ij}|_{S'}- \mu^*((1-p^e)(K_{\bar{S}}+\Psi ^{*}_{\bar{S}})+p^e(j\mathbf{D}_{\bar{S}}))\\
 = &(p^e-1)\left(\lceil \frac{j}{i}\mathbf{M}_{i,S'}+\mathbf{A}_{S'}\rceil -K_{S'}-\Psi _{S'}-\mu^*(j\mathbf{D}_{\bar{S}}-K_{\bar{S}}-\Psi ^{*}_{\bar{S}})\right)+L_{ij}|_{S'} - \mu^*(j\mathbf{D}_{\bar{S}}) \\
 =&(p^e-1)\left(\frac{j}{i}\mathbf{M}_{i,S'}+\mathbf{A}_{S'}- G_\epsilon-K_{S'}-\mu^*(j\mathbf{D}_{\bar{S}}-K_{\bar{S}}-\Psi_{\bar S}-a^*E )\right)+L_{ij}|_{S'} - \mu^*(j\mathbf{D}_{\bar{S}})\\
 \ge&(p^e-1)(   \mu^* a^*E- G_\epsilon+\frac{j}{i}\mathbf{M}_{i,S'}-j\mu ^*\mathbf{D}_{\bar S})+L_{ij}|_{S'} - \mu^*(j\mathbf{D}_{\bar{S}})
 \end{eqnarray*}
}
 is effective, where the last inequality follows since 
 $$\mu ^*(K_{\bar S}+\Psi_{\bar S})-K_{S'}+\mathbf A  _{S'}=\mu^*\mu_*G_\epsilon +j(\mathbf{D}_{S'}-\mathbf{D}_{i,S'})\ge 0.$$
 Note that for any fixed $\delta>0$ and $i\gg 0$ we have $$\frac{j}{i}\mathbf{M}_{i,\bar{S}}\geq j \mathbf{D}_{\bar{S}}-\delta \nu^*(A^*|_S)$$ and so
$$\frac{j}{i}\mathbf{M}_{i,{S'}}+\mathbf A_{S'}=\frac{j}{i}\mu ^*\mathbf{M}_{i,\bar{S}}+\mathbf A_{S'}\geq 
\mu ^*(j \mathbf{D}_{\bar{S}}-\delta \nu^*(A^*|_S))+\mathbf A _{S'}.$$
Thus, as $j\mathbf{D}_{\bar{S}}$ is Cartier by \eqref{c-rat},  we have that
$$\lceil \frac{j}{i}\mathbf{M}_{i,{S'}}+\mathbf A_{S'}\rceil\geq \mu ^* (j\mathbf{D}_{\bar{S}})+\lceil -\delta  g^* (A^*|_{S'})+\mathbf A _{S'}\rceil \geq \mu^*(j\mathbf{D}_{
\bar{S}}),$$
where for the last inequality we have used the fact that since $(S,B_{S}+\delta A^*|_{S})$ is klt,
we have that $\lceil -\delta g^*(A^*|_S)+\mathbf A_{S'}\rceil\geq 0$.
Therefore, we have that $L_{ij}|_{S'}-  \mu^*(j\mathbf{D}_{\bar{S}})\geq 0$ is an effective divisor.

Finally, note also that as $\mu ^* \mu _*G_\epsilon \geq G_\epsilon$, it follows from (2) that 
 $$  \mu^* a^*E-  G_\epsilon+\frac{j}{i}\mathbf{M}_{i,{S'}}-j\mathbf{D}_{S'}\geq 0$$  and so the required inequality follows. \end{proof}

By \eqref{l-rcu3}, we have that 
{ $$ S^0(\bar{S}, \sigma(\bar{S},B_{\bar S}^\sharp )\otimes \mathcal{O}_{\bar S}(j\mathbf{D}_{\bar{S}}))\supset H^0(S^+, \mathcal O_{S^+}( j\mathbf{D}_{S^+})).$$}
Thus we have shown that
$$S^0(S',\sigma(S',\Psi_{S'})\otimes \mathcal O_{S'}(L_{ij}|_{S'}))\supset H^0(S^+, \mathcal O_{S^+}( j\mathbf{D}_{S^+})).$$ The reverse inclusion is clear as $a_*\mu _* (L_{ij}|_{S'})=j\mathbf D _{S^+}$. Thus the lemma follows.
\end{proof} 

By construction, we have that 
$$R_{S/Z}(k(K_X+S+B))\subset R({S^+/Z};j\mathbf{D}_{S^+}).$$
Since $|L_{ij}| \subset |j k(K_Y+S'+B_{Y})|$ we have that $$S^0(Y, \sigma(Y,\Xi)\otimes \mathcal O_Y(L_{ij}))\subset H^0(Y,\mathcal O _Y(jk(K_Y+S'+B_{Y}))).$$ The above lemma together with \eqref{YtoS} imply that 
$$R_{S/Z}(jk(K_X+S+B))\supset R({S^+/Z};j\mathbf{D}_{S^+})$$ for $j>0$ sufficiently divisible and thus equality holds. Since  
$$R({S^+/Z};j\mathbf{D}_{S^+})\cong R({\bar S/Z};j\mathbf D _{\bar S})$$ 
is a finitely generated $\mathcal O _Z$-algebra (cf. \eqref{l-sab}), so is $R_{S/Z}(k(K_X+S+B))$  and the theorem holds.
\end{proof}

\section{On the minimal model program for 3-folds}\label{s-mmp}
\subsection{The results of Keel}
\begin{thm}[{\cite[0.2]{Keel99}}]\label{t-keel1} Let $L$ be a nef line bundle on a scheme $X$, projective over an algebraically closed field of characteristic $p>0$. $L$ is semi-ample if and only if $L|_{\mathbb E (L)}$ is semi-ample. 
In particular, if the basefield is the algebraic closure of a finite field and $L|_{\mathbb E (L)}$ is numerically trivial, then $L|_{\mathbb E (L)}$ is semi-ample.
\end{thm}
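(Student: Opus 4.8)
The ``only if'' direction is immediate: the restriction of a semi-ample line bundle to a closed subscheme is semi-ample. So the plan is to prove the converse, by induction on $\dim X$, following Keel \cite{Keel99}.

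First I would reduce to the geometric situation. If $L$ is not big, then $X$ is itself among the subvarieties defining $\mathbb E(L)$, so $\mathbb E(L)=X$ and there is nothing to prove; after the standard reductions (to $X_{\rm red}$, and then componentwise, with a short dévissage lifting sections across the finitely many infinitesimal steps via Serre vanishing) I may assume that $X$ is a projective variety and $L$ is nef and big. By Kodaira's lemma I then fix $N>0$, an ample divisor $A$ and an effective divisor $D$ with $L^{\otimes N}\cong\mathcal O_X(A+D)$; since $L^{\otimes N}|_{X\setminus D}\cong A|_{X\setminus D}$, the base locus of $|L^{\otimes mN}|$ is contained in $D$ for $m\gg 0$. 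Moreover every subvariety $Z\subseteq D$ with $(L|_Z)^{\dim Z}=0$ lies in $\mathbb E(L)$, so $\mathbb E(L|_D)\subseteq\mathbb E(L)$, and hence $(L|_D)|_{\mathbb E(L|_D)}$ is semi-ample, being a restriction of the semi-ample $L|_{\mathbb E(L)}$; as $\dim D<\dim X$, the inductive hypothesis gives that $L|_D$ is semi-ample.

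It remains to show that $|L^{\otimes mN}|$ has no base point on $D$ either, and this is where characteristic $p$ is used essentially. The key observation is that for the thickening $p^{e}D$ (defined by $\mathcal O_X(-p^{e}D)$) one has
$$\mathcal O_X(-p^{e}D)\otimes L^{\otimes mNp^{e}}\;\cong\;A^{\otimes p^{e}}\otimes L^{\otimes(m-1)Np^{e}},$$
whose $H^{1}$ vanishes for $e\gg 0$ by Fujita vanishing (the ample twist $A^{\otimes p^{e}}$ absorbs the nef bundle $L^{\otimes(m-1)Np^{e}}$, and the Fujita bound depends only on $\mathcal O_X$ and $A$); hence $H^{0}(X,L^{\otimes mNp^{e}})\to H^{0}(p^{e}D,L^{\otimes mNp^{e}})$ is surjective for $e\gg 0$. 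On the other hand, since $L|_D$ is semi-ample, $L^{\otimes mN}|_D$ is globally generated for $m$ sufficiently divisible, and iterating Frobenius one deduces that $L^{\otimes mNp^{e}}$ is globally generated along the thickening $p^{e}D$; combined with the surjectivity above, this produces for suitable $m,e$ enough global sections of $L^{\otimes mNp^{e}}$ to generate it at every point of $D$. Thus $L^{\otimes mNp^{e}}$ is base-point free, i.e.\ $L$ is semi-ample. I expect the delicate point to be precisely this last step, ``$L^{\otimes mNp^{e}}$ globally generated along the thickening $p^{e}D$'', which needs a careful Frobenius analysis; this is the technical heart of \cite{Keel99}.

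For the ``in particular'' assertion: when $k$ is the algebraic closure of a finite field, the projective scheme $\mathbb E(L)$ and the bundle $L|_{\mathbb E(L)}$ are defined over some finite field $\mathbb F_q$, and a numerically trivial line bundle on a scheme proper over $\mathbb F_q$ is torsion, its class lying in the group of $\mathbb F_q$-points of the finite-type group scheme ${\rm Pic}^{\tau}$, which is finite. Hence $(L|_{\mathbb E(L)})^{\otimes m}\cong\mathcal O$ for some $m>0$, so in particular $L|_{\mathbb E(L)}$ is semi-ample.
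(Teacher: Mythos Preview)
The paper does not prove this statement at all: Theorem~\ref{t-keel1} is stated with the attribution \cite[0.2]{Keel99} and used as a black box, so there is no ``paper's own proof'' to compare against. Your sketch is essentially Keel's original argument, and the outline is correct: reduce to $X$ integral and $L$ big, write $L^{\otimes N}\cong\mathcal O_X(A+D)$ with $A$ ample, use induction to make $L|_D$ semi-ample, then use the factorization of the $e$-th Frobenius $D\to D$ through the thickening $p^eD$ to produce generating sections of $L^{\otimes mNp^e}|_{p^eD}$, and lift these to $X$ via the Fujita-vanishing surjection you wrote down. You are right that the step ``$L^{\otimes mNp^e}$ globally generated along $p^eD$'' is the technical heart; the precise mechanism is that the composite $\mathcal O_{p^eD}\to F^e_*\mathcal O_{p^eD}\to F^e_*\mathcal O_D$ allows one to push sections of $L^{\otimes mN}|_D$ forward to sections of $L^{\otimes mNp^e}|_{p^eD}$ (this is \cite[1.3, 1.4]{Keel99}). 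Your argument for the ``in particular'' clause, via finiteness of ${\rm Pic}^\tau(\mathbb F_q)$, is also the standard one (cf.\ \cite[2.16]{Keel99}).
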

Recall that for a nef line bundle $L$, $\mathbb E (L)$ is the closure of the union of all of those irreducible subvarieties with $L^{\dim Z}\cdot Z=0$.
\begin{thm}[{\cite[0.5]{Keel99}}]\label{t-keel2} Let $X$ be a normal $\Q$-factorial three-fold, projective over an algebraically cosed field of positive characteristic. Let $L$ be a nef and big line bundle on $X$.
If $L-(K_X +\Delta)$ is nef and big for some boundary $\Delta$ with $\lfloor \Delta\rfloor =0$, then $L$ is EWM. If the basefield is the algebraic closure of a finite field, then $L$ is semi-ample.\end{thm}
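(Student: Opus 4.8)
The plan is to follow Keel's strategy for \eqref{t-keel1}: reduce the question to the exceptional locus $\E(L)$ and then argue by d\'evissage on it. Since $L$ is nef and big, $\E(L)\subsetneq X$ is a proper closed subset, so $\dim\E(L)\le 2$; and by Keel's results (cf.\ \eqref{t-keel1} and the analogous statement for the EWM property, both in \cite{Keel99}), $L$ is semi-ample (resp.\ EWM) precisely when $L|_{\E(L)}$ is. So it suffices to handle the restriction of $L$ to the at-most-surface scheme $\E(L)$, which I would do by induction on $\dim\E(L)$, treating the irreducible components one at a time and gluing at the end via Keel's gluing results for line bundles on reducible, possibly non-normal schemes.

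A zero-dimensional component is trivial. If $C\subseteq\E(L)$ is a one-dimensional component then $L\cdot C=0$ by definition of $\E(L)$, so $L|_C$ is numerically trivial; the constant map makes it EWM, and over $\overline{\bF}_p$ it is torsion, hence semi-ample (the second assertion of \eqref{t-keel1}). The crucial case is a surface component $S$. Here I would use $\Q$-factoriality of $X$: $S$ is a $\Q$-Cartier prime divisor, $L|_S$ is nef with $(L|_S)^2=0$, and $\bigl(L-(K_X+\Delta)\bigr)|_S$ is nef and big. Taking the normalization $\pi:S^\nu\to S$ and a suitable smooth birational model, and using adjunction along $S$ together with the characteristic-$p$ surface minimal model program (Tanaka, cf.\ \eqref{l-sab}), I would arrange a relative weak log del Pezzo surface pair on which the pullback of $L$ is nef; by \eqref{l-sab}(1) that pullback is then semi-ample, and hence $L|_S$ is semi-ample over an arbitrary ground field. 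Assembling the surface pieces (semi-ample) with the isolated-curve pieces (EWM in general, semi-ample over $\overline{\bF}_p$) and applying \eqref{t-keel1} once more shows that $L|_{\E(L)}$, and therefore $L$, is EWM, and is semi-ample when $k=\overline{\bF}_p$.

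The main obstacle is the surface case: converting the hypothesis that $L-(K_X+\Delta)$ is nef and big on $X$ into usable positivity on $S^\nu$. The difficulty is that $S$ need not be a log canonical centre of $(X,S+\Delta)$, so the different $\mathrm{Diff}_{S^\nu}(\Delta)$ may have unsuitable coefficients, forcing one to argue on a resolution $g:Y\to X$ (available by \eqref{t-res}) with a smooth model $T\subseteq Y$ of $S$, running a relative MMP down to a weak log del Pezzo while tracking that nefness of $L|_S$ and the identification of its $L$-trivial curves are preserved; this positivity input is exactly why the del Pezzo-type hypothesis cannot be dropped if one wants semi-ampleness of the surface strata over a general field. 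A further, more bookkeeping, difficulty is the gluing of the semi-ample/EWM structures across the reducible, non-normal scheme $\E(L)$. Since both of these are precisely what is carried out carefully in \cite{Keel99}, the most economical course for the paper is simply to invoke \cite[0.5]{Keel99}.
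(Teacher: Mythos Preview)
The paper does not prove this statement at all: it is quoted verbatim as \cite[0.5]{Keel99} and used as a black box. Your own conclusion---that ``the most economical course for the paper is simply to invoke \cite[0.5]{Keel99}''---is exactly what the paper does, so in that sense your proposal agrees with the paper.

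As for the sketch itself, it is broadly faithful to Keel's strategy (reduce to $\mathbb E(L)$, which has dimension $\le 2$ since $L$ is big, and analyze the components), but one point deserves a caution. You suggest that on a surface component $S\subset \mathbb E(L)$ one can arrange, via adjunction and the surface MMP, a \emph{relative weak log del Pezzo} and then invoke \eqref{l-sab}(1) to conclude that $L|_S$ is semi-ample over an arbitrary field. This is too optimistic: Keel does \emph{not} prove, and it is not true in general, that $L|_S$ is semi-ample over an arbitrary algebraically closed field of positive characteristic. What Keel shows on the surface strata is only enough to get EWM in general (essentially by contracting the $L$-trivial curves on a resolution and descending the map), and semi-ampleness only under the extra hypothesis $k=\overline{\mathbb F}_p$, where numerically trivial line bundles on the curve strata are automatically torsion. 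Your appeal to \eqref{l-sab} is also misplaced: that proposition concerns a morphism $S\to R$ with $-(K_S+B_S)$ \emph{ample} over $R$, whereas here $L|_S$ has $(L|_S)^2=0$ and $-(K_S+B_S)$ need not be ample over anything, so the hypotheses do not match. The actual argument in \cite{Keel99} for the surface components is more delicate and does not yield unconditional semi-ampleness, which is why the theorem only asserts EWM in general.
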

Recall that a nef line bundle $L$ on a scheme $X$ proper over a field is {\it Endowed With a Map (EWM)} if there is a proper map $f:X\to Y$ to a proper algebraic space which contracts exactly ${\mathbb E (L)}$.
\begin{thm}[{\cite[0.6]{Keel99}}]\label{t-keel3} 
Let $X$ be a normal $\Q$-factorial three-fold, projective over an algebraically closed field. Let $\Delta$ be a boundary on X. If $K_X +\Delta$ has nonnegative Kodaira dimension, then there is a countable collection of curves $\{ C_i \}$ such that	
\begin{enumerate}\item $\overline{NE}_1(X) = \overline{NE}_1(X) \cap (K_X + \Delta)_{\geq 0} +	\mathbb R _{\ge 0}\cdot [C_i]$. 
\item All but a finite number of the $C_i$ are rational and satisfy $ 0 < −(K_X + \Delta) \cdot C_i \leq 3$.
\item The collection of rays $\{ R \cdot [C_i]\}$ does not accumulate in the half-space $(K_X+\Delta )_{<0}$.
\end{enumerate}\end{thm}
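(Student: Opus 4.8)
This is a version of the Cone Theorem, and since the paper only invokes \cite[0.6]{Keel99} I will indicate the structure of the argument and how one would reconstruct it. The plan is to bypass the usual characteristic-$0$ route --- the rationality theorem followed by the basepoint-free theorem, both of which rest on Kawamata--Viehweg vanishing --- and instead to combine Mori's bend-and-break, which is characteristic free, with the hypothesis $\kappa(K_X+\Delta)\ge 0$ in order to reduce the problem to surfaces. First I would record that, as $X$ is $\Q$-factorial, $K_X+\Delta$ is $\Q$-Cartier, so $\kappa(K_X+\Delta)\ge 0$ yields an effective $\Q$-divisor $D\sim_{\Q}K_X+\Delta$. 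If $C$ is an irreducible curve with $(K_X+\Delta)\cdot C<0$ then $D\cdot C<0$, whence $C\subseteq\mathrm{Supp}\,D$. Thus every $(K_X+\Delta)$-negative class of $\overline{NE}_1(X)$ is represented by a curve lying on one of the finitely many surfaces $D_1,\dots,D_r$ appearing in $D$; the ``negative part'' of the cone is a surface phenomenon.

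Next, for each component $D_j$ I would normalize, $n_j\colon\bar D_j\to D_j$, form by restriction/adjunction a divisor $K_{\bar D_j}+\Gamma_j$ representing $n_j^*\big((K_X+\Delta)|_{D_j}\big)$, and pass to a resolution by \eqref{t-res} so as to work on a smooth surface carrying a boundary-type divisor $\Gamma_j$. By the minimal model program for surfaces in characteristic $p$ (\cite{KK,Tanaka12a}) and \eqref{l-sab}, together with Mori's bend-and-break applied on that surface, the $(K_{\bar D_j}+\Gamma_j)$-negative part of its cone of curves is generated by rational curves $\ell$ with $0<-(K_{\bar D_j}+\Gamma_j)\cdot\ell\le 3$ --- the value $3$ arising from lines on $\P^2$ --- and the corresponding extremal rays are locally discrete away from the critical hyperplane. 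Pushing the curves $\ell$ forward along $\bar D_j\to D_j\subseteq X$ produces rational curves $C_i$ on $X$ realizing the bound in (2); a finite number of further contributions (extremal curves with $(K_X+\Delta)\cdot C=0$, together with the handful of non-rational ones permitted by the statement) account for the phrase ``all but a finite number''.

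To assemble the global statement I would fix an ample divisor $H$ and observe that, for each $m\ge 1$, only finitely many of the rays produced above (over all $j$) can meet the half-space $(K_X+\Delta+\tfrac1m H)_{<0}$: this follows from the uniform bound on $-(K_X+\Delta)\cdot C_i$ together with boundedness of the relevant rational curves in families. Letting $m\to\infty$ then produces a countable collection $\{C_i\}$ and the decomposition in (1); and since any ray of $\overline{NE}_1(X)$ lying in the open half-space $(K_X+\Delta)_{<0}$ already lies in $(K_X+\Delta+\tfrac1m H)_{<0}$ for $m\gg 0$, the same finiteness yields the non-accumulation statement (3).

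The crucial, genuinely characteristic-sensitive point is the first reduction: without vanishing there is no rationality or basepoint-free theorem at this level of generality, and it is precisely $\kappa(K_X+\Delta)\ge 0$ that compensates, by confining the negative part of the cone to $\mathrm{Supp}\,D$, where the surface minimal model program --- and the vanishing statement \eqref{l-KV} for birational morphisms --- is available. The expected technical obstacles all live inside the second step: the $D_j$ may be non-normal, and the divisor $\Gamma_j$ on $\bar D_j$ need not be effective or klt, so care is required to put oneself in a situation where the surface results of \S\ref{S-surface} genuinely apply (e.g. by replacing $\Gamma_j$ with a suitable truncation that does not change which curves are negative); and one must check that extremality of a curve on $\bar D_j$ is matched with extremality of its image in $\overline{NE}_1(X)$, and conversely, so that (1) really does follow.
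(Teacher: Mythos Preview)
The paper does not prove this theorem at all: it is stated as a direct citation of \cite[0.6]{Keel99}, with no argument given. So there is nothing in the paper to compare your proposal against; the ``paper's own proof'' is simply the reference to Keel.

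That said, your sketch is a fair outline of how Keel's argument actually goes. The essential reduction --- using $\kappa(K_X+\Delta)\ge 0$ to produce an effective $D\sim_{\Q}K_X+\Delta$, so that every $(K_X+\Delta)$-negative curve is trapped in ${\rm Supp}(D)$, and then invoking the cone theorem on the (normalized, resolved) surface components of $D$ --- is exactly the point of Keel's approach and is what allows one to avoid the rationality/basepoint-free route. A couple of places where your sketch is loose: the invocation of bend-and-break is not really what drives the surface step (the surface cone theorem is already available in positive characteristic via the surface MMP, and that is what is used); and the passage from extremal rays on $\bar D_j$ to extremal rays on $X$ requires more than you indicate --- one must argue via the pushforward on $N_1$ that the image of the negative part of $\overline{NE}(\bar D_j)$ covers the relevant face of $\overline{NE}(X)$, which is where $\Q$-factoriality of $X$ and the adjunction formula are genuinely used. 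But since the paper treats this as a black box from \cite{Keel99}, no proof is expected here.
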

We have the following easy consequence of \eqref{t-keel2}.
\begin{thm}\label{t-cont} Let $(X,\Delta )$ be a normal $\Q$-factorial three-fold dlt pair projective over a field of positive characteristic $p>0$ such that $K_X+\Delta $ is pseudo-effective.  Let $R$ be a $(K_X+\Delta)$-negative extremal ray
\begin{enumerate}
\item Then the corresponding contraction $f:X\to Z$ exists in the category of algebraic spaces. 
\item If $k=\overline{\mathbb{F}}_p$ is the algebraic closure of a finite field, then $f:X\to Z$ is a projective morphism.
\item If  $(X,\Delta=S+B)$ is plt, $S$ is normal and $S\cdot R<0$ then $f:X\to Z$ is a projective morphism with $\rho(X/Z)=1$. 
\end{enumerate}
\end{thm}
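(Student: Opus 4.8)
The plan is to realise the contraction of $R$ as the morphism attached to a suitable nef and big divisor cutting out $R$, and then to quote Keel's results. First, since $R$ is a $(K_X+\Delta)$-negative extremal ray, the cone and rationality theorems (cf.\ \eqref{t-keel3}) produce a $\Q$-Cartier nef divisor $L$ with $L^{\perp}\cap\overline{NE}(X)=R$ and with $A:=L-(K_X+\Delta)$ ample. As $K_X+\Delta$ is pseudo-effective and $A$ is ample, $L\equiv (K_X+\Delta)+A$ lies in the interior of the pseudo-effective cone, so $L$ is nef and big; this is the divisor whose associated map will be the desired $f$.

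Next I would reduce to the case $\lfloor\Delta\rfloor=0$ demanded by \eqref{t-keel2}. Put $\Delta':=\Delta-\delta\lfloor\Delta\rfloor$ with $0<\delta\ll 1$; then $(X,\Delta')$ is klt, $\lfloor\Delta'\rfloor=0$, and $L-(K_X+\Delta')=A+\delta\lfloor\Delta\rfloor$ is again ample for $\delta$ small (openness of the ample cone), hence nef and big. Applying \eqref{t-keel2} to $L$ and $(X,\Delta')$ shows that $L$ is EWM, so (after Stein factorization) there is a proper map $f\colon X\to Z$ to an algebraic space with $f_*\mathcal O_X=\mathcal O_Z$ contracting exactly $\mathbb E(L)$. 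Since a curve $C$ has $L\cdot C=0$ precisely when $[C]\in R$, the map $f$ is the contraction of $R$; this proves (1). If $k=\overline{\bF}_p$, then \eqref{t-keel2} gives in addition that $L$ is semi-ample, so $Z={\rm Proj}\bigoplus_{m\ge 0}H^0(X,\mathcal O_X(mL))$ and $f$ is projective, proving (2).

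For (3), assume $\Delta=S+B$ is plt, $S$ normal, and $S\cdot R<0$. Any curve $C$ with $[C]\in R$ satisfies $S\cdot C<0$, hence lies in $S$; therefore every positive-dimensional fibre of $f$ is contained in $S$, and $\mathbb E(L)\subseteq S$. The induced morphism $f|_S\colon S\to f(S)$ is projective (a proper morphism from a projective surface), $(K_X+\Delta)|_S=K_S+B_S$ by adjunction with $(S,B_S)$ klt, and $-(K_S+B_S)=(A-L)|_S$ is $f|_S$-ample because it is positive on every curve contracted by $f|_S$ (such a curve lies in $R$, where $L\cdot C=0<A\cdot C$). Thus $(S,B_S)$ is a relative weak log del Pezzo surface over $f(S)$, so by \eqref{l-sab}(1) the $f|_S$-nef divisor $L|_S$ is semi-ample over $f(S)$; as $\mathbb E(L)\subseteq S$, this says $L|_{\mathbb E(L)}$ is semi-ample, and \eqref{t-keel1} then yields that $L$ is semi-ample on $X$. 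Hence $f$ is projective, and $\rho(X/Z)=1$ because $f$ contracts precisely the one-dimensional face $R$ of $\overline{NE}(X)$.

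The step I expect to require the most care is the passage from ``$f$ is EWM'' to ``$f$ is projective'' in (3): this is exactly where the positive-characteristic substitutes for vanishing enter, through Keel's descent criterion \eqref{t-keel1} combined with the semi-ampleness of nef divisors on weak log del Pezzo surfaces \eqref{l-sab}, and one must check carefully both that $\mathbb E(L)\subseteq S$ and that $f|_S$ is genuinely a contraction (with $-(K_S+B_S)$ relatively ample) to which \eqref{l-sab} applies. By comparison, constructing the supporting divisor $L$ and performing the boundary perturbation are routine, the only subtlety being that $L-(K_X+\Delta)$ must remain ample after the perturbation.
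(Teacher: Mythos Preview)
Your argument is correct and matches the paper's almost exactly: build a nef and big supporting divisor $L=K_X+\Delta+H$ for $R$, perturb to $\Delta'=\Delta-\delta\lfloor\Delta\rfloor$ so that \eqref{t-keel2} applies, and read off (1) and (2).

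For (3) the paper takes a slightly more direct path than you do. Rather than viewing $(S,B_S)\to f(S)$ as a relative weak log del Pezzo and invoking \eqref{l-sab}, the paper absorbs the ample $H$ into the boundary on $S$: one has $L|_S=K_S+{\rm Diff}_S(B+H)$ with $(S,{\rm Diff}_S(B+H))$ klt, and the surface base-point-free theorem (cf.\ \cite[2.3.5]{KK}) gives that $L|_S$ is semi-ample absolutely. This sidesteps the detour through $f(S)$, which in your approach is at first only an algebraic space from part (1); your claim that $f|_S$ is projective (``a proper morphism from a projective surface'') and the passage from relative semi-ampleness over $f(S)$ to absolute semi-ampleness on $\mathbb E(L)$ are both ultimately fine for two-dimensional targets, but they need a word of justification rather than being asserted. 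The paper's formulation simply avoids the issue.
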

\begin{proof} Suppose that $K_X+\Delta$ is not nef, then by \eqref{t-keel3}, it follows easily that there is an ample $\mathbb{Q}$-divisor $H$ such that $H+K_X+\Delta$ is nef and 
$$\overline{NE}_1(X)\cap  (H+K_X+\Delta)^{\perp}=\mathbb{R}_{\ge 0} [R] .$$ Since $K_X+\Delta $ is pseudo-effective, we have that $L:=H+K_X+\Delta$ is big. Then \eqref{t-keel2} immediately implies (1) and (2). 

(3) follows from \eqref{t-keel1}. Note that $\mathbb{E}(L)\subset S$ and $L|_{S}=K_S+{\rm Diff }_S(B+H)$ is semiample by the contraction theorem in the surface case (see e.g. \cite[2.3.5]{KK}). Using \eqref{t-keel1}, then the usual argument implies $\rho(X/Z)=1$ (see \cite[3.17]{KM98}). 
\end{proof}
\subsection{Existence of flips and minimal models}
\begin{defn} Let $(X,\Delta)$ be a dlt pair such that $X$ is $\mathbb{Q}$-factorial. We say that $f:X\to Z$ an {\it  extremal flipping contraction} if it is a projective birational morphism between normal quasi-projective varieties such that
\begin{enumerate}
\item $f$ is small (i.e. an isomorphism in codimension 1);
\item $-(K_X+\Delta)$ is ample over $Z$;
\item $\rho (X/Z)=1$.
\end{enumerate}
\end{defn}

\begin{proof}[Proof of \eqref{t-flipexists}] Replacing $\Delta$ by $\Delta -\frac 1 n\lfloor \Delta \rfloor$ for some $n\gg 0$, we may assume that $(X,\Delta )$ is klt. We use Shokurov's reduction to pl-flips and special termination as explained in \cite{Fujino07}.
We simply indicate the changes necessary to the part of the argument that is not characteristic independent.

Step 2 of \cite[4.3.7]{Fujino07} requires resolution of singularities. We use \eqref{t-res}.

Step 3 of \cite[4.3.7]{Fujino07} requires that we run a relative minimal model program. Since all divisors are relatively big in this context, by \eqref{t-keel3} the relevant negative extremal ray $R$ always exists. 
We may assume that all induced contractions are $K_Y+S+B$ negative for an appropriate plt pair $(Y,S+B)$ where $S\cdot R<0$ (if the pair $(X,S+B)$ is dlt, then we may replace $B$ by $B-\frac 1 n \lfloor B \rfloor$ for $n\gg 0$).
Thus
the corresponding contraction morphism exists and is projective and all required divisorial contractions exist. Since all flipping contractions are pl-flipping contractions the corresponding flips  exist by \eqref{t-2}.

By Step 5 of \cite[4.3.7]{Fujino07}, we obtain $f^+:(X^+,\Delta ^+)\to Z$ a small birational morphism such that $X^+$ is $\Q$-factorial, $(X^+,\Delta ^+)$ is dlt and $K_{X^+}+\Delta ^+$ is nef over $Z$.
However, it is clear that $\Delta ^+$ is the strict transform of $\Delta$ and hence $\lfloor \Delta ^+\rfloor =0$ so that $(X^+,\Delta ^+)$ is klt. Denote by $p$ the number of exceptional divisors of a common resolution $Y\to X$. Since $X$ and $X^+$ are $\mathbb{Q}$-factorial, we have 
$$\rho(X)=\rho(Y)-p=\rho(X^+),$$
which implies  $\rho (X^+/Z)=\rho (X/Z)=1$ and hence that $K_{X^+}+\Delta ^+$ is ample over $Z$. Thus $X\dasharrow X^+$ is the required flip. 
\end{proof}
\begin{thm}\label{t-quasi} Let $k$ be an algebraically closed field of characteristic $p>5$, $(X,\Delta )$ a projective $\Q$-factorial $3$-fold klt pair such that $K_X+\Delta $ is pseudo-effective and all coefficients of $ \Delta $ are in the standard set $\{ 1-\frac 1 n | n\in \mathbb N\}$. Let $R$ be a $K_X+\Delta$ negative extremal ray and  $f:X\to Z$ the corresponding proper birational contraction to a proper algebraic space (given by \eqref{t-cont}) such that a curve $C$ is contracted if and only if $[C]\in R$.
\begin{enumerate}
\item There is a small birational morphism $f^+: X^+\to Z$ such that $X^+$ is $\Q$-factorial and projective, and $K_{X^+}+\Delta^+$ is nef over $Z$ where $\Delta ^+$ denotes the strict transform of $\Delta$.
\item If moreover, $f$ is divisorial, then $X^+=Z$ and in particular $Z$ is projective.
\end{enumerate}
\end{thm}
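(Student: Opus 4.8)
The plan is to split according to whether the birational contraction $f$ is \emph{small} (a flipping contraction) or \emph{divisorial}, and in each case to reduce to the existence of flips \eqref{t-flipexists} (equivalently \eqref{t-2}) together with Keel's theorems \eqref{t-keel1} and \eqref{t-keel2}; the remaining steps are the standard bookkeeping of \cite{KM98}. Note first that, since $\rho(X/Z)=1$, the ray $R$ is the \emph{unique} $(K_X+\Delta)$-negative extremal ray of $\overline{NE}(X)$ and $-(K_X+\Delta)$ is $f$-ample: any other $(K_X+\Delta)$-negative extremal ray is $f$-negative or $f$-trivial, and in both cases coincides with $R$ because $N_1(X/Z)$ is spanned by the class of a curve $0\ne C\in R$.

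\textbf{Small case.} Here the assertion is precisely that the flip of $f$ exists. As the flip is $X^+={\rm Proj}\,R(X/Z,K_X+\Delta)$, its construction is \'etale-local on $Z$: pulling back along an \'etale cover of $Z$ by affine schemes, we reduce to the case that $Z$ is an affine variety, where $f$ is an extremal flipping contraction of a klt (hence dlt) threefold whose boundary has coefficients in the standard set and $p>5$, so \eqref{t-flipexists} applies and the flip exists; the local flips glue by \'etale descent. This yields $f^+:X^+\to Z$, small and projective over $Z$, with $K_{X^+}+\Delta^+$ being $f^+$-ample, where $\Delta^+$ is the strict transform of $\Delta$. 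Strict transform of divisors identifies $N^1(X)$ with $N^1(X^+)$ and gives $\rho(X^+/Z)=1$, so $X^+$ is $\Q$-factorial by the usual argument. It remains to see that $X^+$ is projective over $k$, which is not formal because $Z$ is only a proper algebraic space; here one applies Keel's results on $X^+$ itself, building from the strict transform of an ample divisor on $X$ together with $K_{X^+}+\Delta^+$ a nef and big divisor $L$ on $X^+$ with $L-(K_{X^+}+\Delta^+)$ ample and $\mathbb E(L)=\emptyset$, and concluding by Nakai--Moishezon that $L$ is ample.

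\textbf{Divisorial case.} Write $E={\rm Exc}(f)$, a prime divisor with $E\cdot C<0$ for $0\ne C\in R$. Fix an ample divisor $A$ on $X$ and $\mu>0$ with $(A+\mu E)\cdot C=0$; then $L_0:=A+\mu E$ is $f$-numerically trivial, so $L_0\equiv f^*\ell$ for some numerical class $\ell$ on $Z$, and $\ell\cdot\gamma=A\cdot\widetilde\gamma+\mu E\cdot\widetilde\gamma\ge A\cdot\widetilde\gamma>0$ for every curve $\gamma\subset Z$ (where $\widetilde\gamma\not\subset E$ is its strict transform), so $\ell$, hence $L_0$, is nef; and $L_0$ is big because $A$ is ample. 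For $t\gg 0$ the divisor $tL_0-(K_X+\Delta)$ is positive on $\overline{NE}(X)\setminus\{0\}$ --- positive on $R$ since $-(K_X+\Delta)$ is, and positive away from $R$ by a compactness argument --- hence ample, so \eqref{t-keel2} shows $tL_0$ is EWM with associated contraction $f$, i.e. $\mathbb E(tL_0)=E$. Next, $tL_0|_E$ is semi-ample: it is numerically trivial on every curve of $E$ contracted by $f$ (these lie in $R$), hence numerically pulled back from $f(E)$; since $E$ is swept out by the rational curves in $R$ the relevant ${\rm Pic}^0$ is trivial, so $tL_0|_E$ is either trivial (when $f(E)$ is a point) or the pull-back of an ample divisor (when $f(E)$ is a curve), and in both cases semi-ample. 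By \eqref{t-keel1}, $tL_0$ is semi-ample, so $Z={\rm Proj}\,R(X,tL_0)$ is a projective scheme, $\Q$-factorial by the standard argument from $\rho(X/Z)=1$ and the $\Q$-factoriality of $X$. This is \eqref{t-quasi}(2), and gives \eqref{t-quasi}(1) with $X^+=Z$, $\Delta^+=f_*\Delta$, the property ``$K_{X^+}+\Delta^+$ nef over $Z$'' being automatic.

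\textbf{The main obstacle} is producing the required projectivity of the output despite $Z$ being only a proper algebraic space: in the small case one must exhibit a genuine ample divisor on $X^+$, and in the divisorial case one must verify that $tL_0|_E$ satisfies Keel's semi-ampleness criterion \eqref{t-keel1}, i.e. that a numerically trivial (resp. fibre-trivial) line bundle on the surface $E$ is honestly semi-ample --- which rests on the vanishing of ${\rm Pic}^0$ of $E$ and on structural facts about $E$. These surface-level semi-ampleness statements, and the verification that $\mathbb E(L)=\emptyset$ for the divisor built on $X^+$, are the delicate points; the reduction to \eqref{t-flipexists} and the $\Q$-factoriality bookkeeping are routine.
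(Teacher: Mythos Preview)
Your approach is genuinely different from the paper's, and the divisorial case contains a real gap. You claim that ``since $E$ is swept out by the rational curves in $R$ the relevant ${\rm Pic}^0$ is trivial,'' but this is false in general: a ruled surface over a curve of positive genus is swept out by rational curves yet has nontrivial ${\rm Pic}^0$. Nothing in the hypotheses forces $E$ (or its normalization) to be rational, so neither the case $\dim f(E)=0$ (where you need a numerically trivial line bundle on $E$ to be torsion) nor the case $\dim f(E)=1$ (where you need $L_0|_E$ to honestly descend to $f(E)$) is justified. Your small case is also shaky: after \'etale-local flipping you only know $X^+$ is an algebraic space projective over the algebraic space $Z$, and your appeal to \eqref{t-keel2} or Nakai--Moishezon to produce an ample divisor on $X^+$ presupposes that $X^+$ is already a scheme, which is exactly what is at issue. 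You correctly flag these as ``the delicate points,'' but you do not resolve them.

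The paper avoids both difficulties by a single uniform construction that never leaves the category of projective schemes. Using bigness of $K_X+\Delta+(1-\epsilon)H$ one finds a prime divisor $T$ with $T\cdot R<0$; on a log resolution $\nu:Y\to X$ of $(X,\Delta+T)$ one runs the $(K_Y+\Gamma'+T'+E)$-MMP over $Z$ (with reduced boundary on $T'+E$), followed by a second MMP with scaling. Every extremal ray in these MMPs is negative on a component of the reduced boundary $T'+E$, so \eqref{t-cont}(3) applies at each step and the contraction is a projective morphism of schemes; the flips exist by \eqref{t-2}, and special termination gives the end model $X^+$, which is therefore a projective scheme by construction. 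A negativity-lemma comparison then shows $X\dasharrow X^+$ is a $(K_X+\Delta)$-minimal model over $Z$, proving (1) simultaneously in the small and divisorial cases. For (2) one does not attempt to show $Z$ is projective directly; instead, a short negativity argument (pulling back an ample divisor from $X^+$ and adjusting by the $f$-exceptional divisor) forces $X^+\to Z$ to be an isomorphism, whence $Z$ inherits projectivity and $\Q$-factoriality from $X^+$. The moral: rather than flipping $f$ itself and then repairing projectivity, pass to a resolution where every step is a pl-step and projectivity is automatic.
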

\begin{proof} (1) The extremal ray $R$ is cut out by a big and nef $\mathbb Q$-divisor of  the form $L=K_X+\Delta+H$ for some ample $\Q$-divisor $H$. By \eqref{t-keel3} $L$ is EWM. Let $f:X\to Z$ be the corresponding  birational contraction to a proper algebraic space.
Since $K_X+\Delta+(1-\epsilon)H$ is big for any rational number $0<\epsilon \ll 1$, there is a positive (sufficiently divisible) integer $m\in \mathbb{N}$ and a divisor $S\sim m(K_X+\Delta+(1-\epsilon)H)$. Thus $S\cdot R <0$. Replacing $S$ by a prime component, we find a prime divisor $T$ with $T\cdot R<0$. In the divisorial contraction case, $T$ is the contracted divisor. 

Consider a log resolution $\nu: Y\to X$ of the pair $(X,\Delta+T)$, and let $E$ be the reduced $\nu$-exceptional divisor. Write  
$$\Delta=\sum_{\Delta_i\neq T}a_i\Delta_i+bT=\Gamma+bT.$$
We run the $(K_Y+\Gamma'+E+T')$-MMP over $Z$, where  $\Gamma'=\nu ^{-1}_*\Gamma$ and $T'=\nu ^{-1}_*T$ are birational transforms of $\Gamma$ and $T$. Note that running the MMP over $Z$ means that at each step we only consider extremal curves $C^j$ such that $C^j\cdot L^j=0$, where $L^j$ is the strict transform of $\nu ^*L$. We obtain models
$$Y=Y^1\dasharrow   Y^2\dasharrow \ldots$$
We note that since $K_X+\Delta +H= L\equiv _Z 0$ it follows easily that $K_X+\Delta \equiv _Z aT$ for some $a>0$ and so 
$$K_Y+\Gamma'+T'+E\equiv_{\mathbb{Q}} f^*(K_X+\Delta)+\sum c_iE_i+(1-b)T'\equiv_{Z} \sum d_iE_i+cT',$$
where $d_i\ge c_i>0$ and $c>1-b>0$.

 For each step of this MMP, we let $C^j$ be a curve spanning the corresponding extremal ray and $\bullet^j$ is the push forward of the divisor $\bullet$ from $Y$ to $Y^j$. We have
$$ C^j\cdot (\sum d_iE^j_i+cT^j)<0,$$ and so at each step of the MMP, the curve $C_j$ intersects a component of $T^j+E^j$ negatively. By special termination, this MMP terminates after finitely many steps and we obtain a minimal model over $Z$, say $W=Y^m$.

Next, we run the $(K_{W}+\Gamma_W+E_W+bT_W)$-MMP with scaling by $T_W$ which yelds birational maps
$$W=W^1\dasharrow W^2\dasharrow\ldots $$  It is easy to see that each step is $(\sum d_iE^k_i)$-negative, and hence also of special type. By special termination, this MMP terminates after finitely many steps and we obtain a minimal model over $Z$, say $X^{+}=W^l$.

Let $p:U\to X$ and $q:U\to X^+$ be a common resolution and consider the divisor 
$p^*(K_X+\Delta )-q^*(K_{X^+}+\Gamma_{X^+}+bT_{X^+}+E_{X^+})$ which is easily seen to be anti-nef and exceptional over $X$.
By the negativity lemma, this divisor is effective and so $X\dasharrow X^+$ is a birational contraction and $E_{X^+}=0$. It follows that $X\dasharrow X^+$ is a minimal model for $K_X+\Delta$ over $Z$. 

\vspace{3mm}

\noindent (2) It remains to show that if $f:X\to Z$ is divisorial contraction, then $Z\cong X^+$.
Assume that $f^+:X^+\to Z$ is not an isomorphism. Let $C$ be a $f^+$-contracted curve, and $H^+$ an ample divisor on $X^+$, so $C\cdot H^+>0$. Denote by $H$ the birational transform of $H^+$ on $X$. Let $R$ be the exceptional ray and $E$ be an $f$-exceptional divisor.  Since, $E\cdot R<0$, there exists a number $a\in \mathbb Q$ such that $(H+aE)\cdot R=0$.

Let  $p:U\to X$ and $q:U\to X^+$  be a common resolution, then $p^*(H+aE)-q^*(H^+)\equiv _{X^+}0$ is $q$-exceptional and so $p^*(H+aE)=q^*H^+$ (by the negativity lemma). But then $q^*H^+\equiv p^*(H+aE) \equiv _Z 0$ contradicting the fact that $C\cdot H^+>0$. 
 \end{proof}
  
 In \eqref{t-quasi}, if $f$ is small then we will say that $X^+\to Z$ is the corresponding {\it generalized flip}. We say that $X=X_1\dasharrow X_2\dasharrow \ldots $ is {\it a  generalized $(K_X+\Delta)$-MMP}, if each map $X_i\dasharrow X_{i+1}$ is a  $(K_X+\Delta)$-divisorial contraction or a $(K_X+\Delta)$-generalized flip. 
 
\begin{proof}[Proof of \eqref{t-MM}] When $k=\overline{\mathbb{F}}_p$, we can run the usual $(K_X+\Delta)$-MMP by \eqref{t-cont} and \eqref{t-flipexists}; and in general we can run the generalized $(K_X+\Delta)$-MMP defined as above by \eqref{t-quasi}. The condition $N_\sigma (K_X+\Delta )\wedge \Delta=0$ guarantees that no component of $\Delta$ is contracted and hence that $(X,\Delta )$ remains canonical at each step.

So it suffices to show that a sequence of the generalized $(K_X+\Delta)$-minimal model program terminates. 
In fact this directly follows from the argument of \cite[6.17]{KM98}. We define the non-negative integer valued function $d(X,\Delta)$ as in \cite[6.20]{KM98}. We easily see $d(X,\Delta)<d(X^+,\Delta^+)$ as long as ${\rm Supp (\Delta^{+})}$ contains an exceptional curve of $X^{+}\to Z$. Thus for a sequence of flips, the birational transform of $\Delta$ eventually will not contain flipping curves. 
Then the rest of the argument is precisely the same as the one in \cite[6.17]{KM98}.
\end{proof}

\section{Applications to 3-fold singularities}
In this section, we present applications of our results to the study of singularities. In characteristic 0, all these applications are known to follow from  the minimal model program. In our context, some difficulties arise since we have restrictions on the coefficients of the boundaries.
\begin{theorem}Fix $k$ is an algebraically closed field of characteristic $p>5$, $(X,B)$ a $3$-fold  pair such that $B\in \{ 1-\frac 1 n |n\in \mathbb N \cup \infty \}$. Then \begin{enumerate}
\item If $(X,B)$ is canonical, then it has a $\mathbb Q$-factorial terminalization i.e. a proper birational morphism $\bar f:\bar X\to X$ such that $\bar X$ is  $\mathbb Q$-factorial and the exceptional divisors are the set of divisors $E$ over $X$ with $a_E(X,B)=0$.
\item  $(X,B)$ has a dlt modification i.e. a proper birational morphism $\bar f:\bar X\to X$ such that 
$\bar X$ is $\mathbb Q$-factorial, the exceptional divisors are the set of divisors $E$ over $X$ with $a_E(X,B)=-1$, if $\bar B=\bar f ^{-1}_*B+\bar E$ where $\bar E$ is the reduced exceptional divisor, then $(\bar X,\bar B)$ is dlt and $K_{\bar X}+\bar B$ is semiample over $X$, i.e., the log canonical modification exists and is given by $X^{\rm LC}={\rm Proj}R( K_{\bar X}+\bar B/X)$.\end{enumerate}
\end{theorem}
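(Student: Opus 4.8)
\emph{The plan} is to deduce both statements by running appropriate relative minimal model programs over $X$, using the machinery of Sections~\ref{S-surface}--\ref{s-mmp}: the flips that arise exist by \eqref{t-2}, the divisorial and flipping contractions exist by \eqref{t-cont} (and, when $k\neq\overline{\bF}_p$, in the weaker guise of the generalized contractions of \eqref{t-quasi}), and each program terminates by special termination, exactly as in the proof of \eqref{t-flipexists}. Throughout, every boundary that occurs has fractional part supported with coefficients in the standard set $\{1-\tfrac1n\}$, a property preserved by the steps of the MMP, so that \eqref{t-2} is applicable at each flip.

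For (1) I would first choose a log resolution $f\colon Y\to X$ (cf. \eqref{t-res}) extracting every divisor $E$ over $X$ with $a_E(X,B)=0$; these are finite in number because $(X,B)$ is canonical. Put $B_Y=f^{-1}_*B$, so that $(Y,B_Y)$ is $\Q$-factorial and log smooth (hence dlt), with $\{B_Y\}$ of standard coefficients, and $K_Y+B_Y=f^*(K_X+B)+A$ where $A=\sum_{E\ f\text{-exc}}a_E(X,B)\,E\ge 0$ is effective and $f$-exceptional. Now run the $(K_Y+B_Y)$-MMP over $X$ with scaling of a general ample divisor. Since every step is $(K_Y+B_Y)$-negative and $K_Y+B_Y\equiv_X A$, each contracted curve lies in $\mathrm{Supp}(A)\subset\mathrm{Exc}(f)$; hence the program is of special type and terminates, say at $\bar f\colon\bar X\to X$ with $K_{\bar X}+B_{\bar X}$ nef over $X$. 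Because $K_{\bar X}+B_{\bar X}\equiv_X\bar A$ with $\bar A\ge 0$ and $\bar f$-exceptional, the negativity lemma forces $\bar A=0$, i.e. $K_{\bar X}+B_{\bar X}=\bar f^*(K_X+B)$; and since a divisorial step contracts the unique prime divisor negative on its ray---necessarily a component of $A$---only the positive-discrepancy exceptional divisors are ever contracted, while the crepant ones survive. Therefore $\mathrm{Exc}(\bar f)=\{E:a_E(X,B)=0\}$, $\bar X$ is $\Q$-factorial, and $(\bar X,B_{\bar X})$ is terminal, which is the desired terminalization.

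For (2) I would take a log resolution $f\colon Y\to X$ of $(X,B)$ and set $\Gamma_Y=f^{-1}_*B+\mathrm{Exc}(f)$, the reduced $f$-exceptional divisor added with coefficient $1$; then $(Y,\Gamma_Y)$ is $\Q$-factorial, log smooth, with $\{\Gamma_Y\}$ of standard coefficients, and $K_Y+\Gamma_Y=f^*(K_X+B)+F$ with $F=\sum_{E\ f\text{-exc}}(1+a_E(X,B))\,E$. Running the $(K_Y+\Gamma_Y)$-MMP over $X$ as above, each contracted curve lies in $\mathrm{Supp}(F)\subset\mathrm{Exc}(f)$, so the program is again of special type and terminates at some $\bar f\colon\bar X\to X$ with $K_{\bar X}+\bar\Gamma$ nef over $X$. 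When $(X,B)$ is log canonical one has $F\ge 0$, hence $\bar F\ge 0$ is nef over $X$ and $\bar f$-exceptional, so $\bar F=0$ by the negativity lemma; thus $K_{\bar X}+\bar\Gamma=\bar f^*(K_X+B)$, the exceptional divisors of $\bar f$ are precisely the log canonical places of $(X,B)$ (those with $1+a_E=0$, surviving in $\lfloor\bar\Gamma\rfloor$; all others are contracted), $\bar\Gamma=\bar f^{-1}_*B+\bar E$ with $\bar E$ reduced exceptional, and $(\bar X,\bar\Gamma)$ is dlt. Finally $K_{\bar X}+\bar\Gamma=\bar f^*(K_X+B)$ is relatively free over $X$ by the projection formula, in particular semiample over $X$, so $X^{\mathrm{LC}}=\mathrm{Proj}\,R(K_{\bar X}+\bar\Gamma/X)$ exists; for a general pair one extracts instead all divisors with $a_E(X,B)\le -1$, and, $K_{\bar X}+\bar\Gamma$ being then only nef over $X$, one deduces semiampleness over $X$ from \eqref{t-keel1}, its restriction to the $\bar f$-exceptional locus being semiample by the surface case, cf. \eqref{l-sab}.

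The hard part is not the bookkeeping above but the fact that these relative minimal model programs can be run at all in characteristic $p>5$: the existence of flips is exactly \eqref{t-2}, which uses both the standard-coefficient hypothesis and $p>5$; the existence of the contractions, the projectivity of the intermediate models (or their replacement by the generalized flips of \eqref{t-quasi} when $k\neq\overline{\bF}_p$), and the reduction of each dlt step to a plt one in order to apply \eqref{t-cont}, all rest on Keel's results \eqref{t-keel1}--\eqref{t-keel3}; and termination rests on special termination in dimension three. Granting these, both parts follow as sketched.
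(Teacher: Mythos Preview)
Your argument for (1) and the construction of the dlt model in (2) are essentially the paper's. The substantive difference is the semiampleness of $K_{\bar X}+\bar B$ over $X$. When $(X,B)$ is lc your observation that $\bar F=0$ and hence $K_{\bar X}+\bar B=\bar f^*(K_X+B)$ is a pullback, so trivially relatively semiample, is correct and cleaner than anything the paper writes down. But the theorem does \emph{not} assume $(X,B)$ lc (indeed the point is to produce the log canonical modification), and your one-line treatment of the general case has a real gap. The reference to \eqref{l-sab} is inapt: that result is about klt relative weak log del Pezzo surfaces, whereas the restriction of $K_{\bar X}+\bar B$ to a divisorial exceptional component $S\subset\lfloor\bar B\rfloor$ is, by adjunction, $K_S+\mathrm{Diff}_S$ for an sdlt (generally non-normal, non-klt, and not anti-ample) surface pair; what is needed is two-dimensional abundance for sdlt pairs. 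You also ignore the one-dimensional components $W_1$ of $\mathrm{Exc}(\bar f)$. The paper handles both: writing $\mathrm{Exc}(\bar f)=W_1\cup W_2$ by dimension, it uses that $\bar F\le 0$ is nef over $X$ to deduce $\bar f^{-1}(\bar f(\mathrm{Supp}(-\bar F)))=\mathrm{Supp}(-\bar F)$ and hence $W_1\cap\mathrm{Supp}(-\bar F)=\emptyset$, so $(K_{\bar X}+\bar B)|_{W_1}=\bar f^*(K_X+B)|_{W_1}$ is trivial; on $W_2$ it invokes sdlt surface abundance; these are glued via \cite[2.12]{Keel99}; and then \eqref{t-keel1} gives semiampleness on $\bar X$.

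Two minor points. In (2), ``each contracted curve lies in $\mathrm{Supp}(F)$'' should be ``intersects'': from $K_Y+\Gamma_Y\equiv_X F$ one gets $F\cdot C<0$, which only says $C$ meets $\mathrm{Supp}(F)\subset\lfloor\Gamma_Y\rfloor$; this is still enough for special termination. And in the non-lc case $F$ is not effective, so your parenthetical that the divisor contracted at a divisorial step is ``necessarily a component of $F$'' is not justified (though it is exceptional, which is what matters).
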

\begin{proof} (1) Let $\tilde f:\tilde X\to X$ be a log resolution, write $K_{\tilde X}+\tilde B=\tilde f^*(K_X+B)+\tilde F$ where $\tilde B$ is the strict transform of $B$. Since $K_{X}+B$ is canonical, we have $\tilde F\geq 0$ and $\tilde B\wedge \tilde F=0$. We run the mmp over $Y$. Since each step is $\tilde F$-negative,  it is special MMP. Therefore, by special termination (cf. \cite[4.2]{Fujino07}), this mmp is eventually disjoint from  $\tilde F$ and thus it terminates. The outcome is a pair $(\bar X,\bar B)$ with a morphism $\bar f:\bar X\to X$ such that $K_{\bar X}+\bar B$ is nef over $X$. By the negativity lemma $\bar F\leq 0$ and hence $\bar F=0$. It is easy to see that $\tilde X \dasharrow \bar X$ 
only contracts divisors in $\tilde F$ and hence divisors with discrepancy $>0$.

(2) Let $\tilde f:\tilde X\to X$ be a log resolution, write $K_{\tilde X}+\tilde B=\tilde f^*(K_X+B)+\tilde F$ where $\tilde B$ is the strict transform of $B$ plus the reduced exceptional divisor $\tilde E$. We run the mmp for $K_{\tilde X}+\tilde B$ over $X$.  By special termination (cf. \cite[4.2]{Fujino07}), this is eventually disjoint from the exceptional divisor. Since the support of $\tilde F$ is contained in the support of $\tilde E$, this mmp terminates with $\phi :\tilde X \dasharrow \bar X$ and a morphism $\bar f:\bar X\to X$. Note that $(\bar X , \bar B)$ is dlt.

Since $K_{\bar X}+\bar B$ is nef and $\bar F$ is exceptional, by the negativity lemma we have that $\bar F\leq 0$. Note that $\bar f({\rm Supp} (- \bar F))$ is the non log canonical locus. Since $\bar{F}\le 0$ and it is nef over $X$, we know that $\bar f^{-1}(\bar f({\rm Supp}(-\bar{F})))={\rm Supp}(-\bar{F})$.

We write ${\rm Ex}(\bar{f})=W_1\cup W_2$, where $W_i$ is the union of the $i$-dimensional components of the exceptional locus with reduced structure. In particular, $\bar{f}(W_1)$ consists of isolated points and $W_1\cap {\rm Supp}(-\bar{F})=\emptyset.$

We know that $(K_{\bar{X}}+\bar{B})|_{W_1}=\bar f^*(K_X+B)|_{W_1}$ is trivial on $W_1$. By 2-dimensional abundance for sdlt pairs, we have that $(K_{\bar X}+\bar B)|_{W_2}$ is semiample over $X$. Therefore, $K_{\bar{X}}+\bar{B}$ is semi ample on $W_1\cup W_2$ (see \cite[2.12]{Keel99}). Hence by Keel's result (\cite[0.2]{Keel99}), we have that $K_{\bar X}+\bar B$ is semiample over $X$.


\end{proof}
\begin{theorem}Fix $k$ an algebraically closed field of characteristic $p>5$, $(X,S+B)$ a $3$-fold  pair such that $S$ is a prime divisor and $B\in \{ 1-\frac 1 n |n\in \mathbb N  \cup \infty \}$. Then \begin{enumerate}
\item $(X,S+B)$ is lc on a neighborhood of $S$ iff $(S^n,B_{S^n})$ is lc where $\nu :S^n\to S$ is the normalization and $K_{S^n}+B_{S^n}=\nu ^*(K_X+S+B)$,
\item  If $X$ is $\mathbb{Q}$-factorial, then $(X,S+B)$ is plt on a neighborhood of $S$ iff $(S^n,B_{S^n})$ is klt.
 \end{enumerate}\end{theorem}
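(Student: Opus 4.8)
The plan is to deduce the two ``easy'' implications from ordinary adjunction and to prove the two converses, which are the real content, by cutting $X$ down to a surface, where the full minimal model program and Kawamata--Viehweg vanishing for birational morphisms \eqref{l-KV} are available. First, if $(X,S+B)$ is lc in a neighbourhood of $S$ then by adjunction along $S$ (this uses no vanishing theorem; see \cite[\S 4]{Kollar13}) the pair $(S^n,B_{S^n})$ is lc, and if in addition $(X,S+B)$ is plt near $S$ then $(S^n,B_{S^n})$ is klt. This gives the forward direction of (1) and of (2).

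For the converse of (1), suppose $(S^n,B_{S^n})$ is lc while $(X,S+B)$ is not lc in any neighbourhood of $S$; then $\mathrm{Nlc}(X,S+B)$ is a nonempty closed set meeting $S$, and we pick a closed point $x\in\mathrm{Nlc}(X,S+B)\cap S$. Fix an ample divisor $A$ on $X$ and, for $m\gg 0$, let $T$ be a sufficiently general member of $|mA|$ through $x$; set $C=S\cap T$, a curve through $x$, so that $(S+B)|_T=C+B|_T$. Using resolution of singularities for threefolds \eqref{t-res}, one checks that for such $T$ the surface pair $(T,C+B|_T)$ is again not lc at $x$ and that $\mathrm{Diff}_{C^n}\bigl((T,B|_T)\bigr)$ is the pullback of $B_{S^n}$ under the natural morphism $C^n\to S^n$ (compatibility of the different with general hyperplane sections). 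Now $(T,C+B|_T)$ is a surface pair which is not lc at the point $x\in C$, so by inversion of adjunction in dimension two --- which holds in characteristic $p$ since it follows from the Koll\'ar--Shokurov connectedness theorem and hence from \eqref{l-KV} --- the pair $(C^n,\mathrm{Diff}_{C^n}(B|_T))$ is not lc. As this is the restriction of $(S^n,B_{S^n})$ to the general curve $C^n$, we conclude that $(S^n,B_{S^n})$ is not lc, a contradiction.

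For the converse of (2), if $(S^n,B_{S^n})$ is klt then it is lc, so by (1) the pair $(X,S+B)$ is lc near $S$, and it remains to improve this to plt. If $(X,S+B)$ were not plt near $S$ it would fail to be plt at some closed point $x\in S$; cutting by a sufficiently general $T\in|mA|$ through $x$ exactly as above (here the $\mathbb{Q}$-factoriality of $X$ is used to ensure $B|_T$ and the adjunction behave as in surface theory) produces a surface pair $(T,C+B|_T)$ that is not plt at $x\in C$. By the klt version of two-dimensional inversion of adjunction, again a consequence of \eqref{l-KV}, the pair $(C^n,\mathrm{Diff}_{C^n}(B|_T))$ is not klt, hence $(S^n,B_{S^n})$ is not klt, a contradiction. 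Therefore $(X,S+B)$ is plt near $S$.

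The main obstacle is precisely that the characteristic-$0$ proof of inversion of adjunction rests on the Koll\'ar--Shokurov connectedness theorem, which in dimension three relies on Kawamata--Viehweg vanishing for birational morphisms of threefolds and is therefore not available; the reduction above avoids it by passing to a general surface section, where connectedness does hold \eqref{l-KV}. The other point that requires care is the characteristic-$p$ Bertini statement guaranteeing that a general member of $|mA|$ with $m\gg 0$ through $x$ is general enough to preserve non-lc-ness (respectively non-plt-ness) at $x$ and to restrict the different correctly; once this is in place, the rest is bookkeeping with the identifications $C=S\cap T$ and $C^n\to S^n$.
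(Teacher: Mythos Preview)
Your forward implications are fine and match the paper. For the converses, your approach diverges from the paper's and has a genuine gap.

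The paper does not cut by a hyperplane. It applies the dlt modification $\bar f:\bar X\to X$ of $(X,S+B)$, whose existence is exactly the content of the preceding theorem in Section~6 (and hence ultimately rests on the existence of pl-flips proved in this paper). For (1), one writes $K_{\bar X}+\bar B=\bar f^*(K_X+S+B)+\bar F$ with $K_{\bar X}+\bar B$ nef over $X$ and $\bar F\le 0$; since every fibre of $\bar f$ is then either contained in or disjoint from $\mathrm{Supp}(-\bar F)$, if the non-lc locus $\bar f(\mathrm{Supp}(-\bar F))$ met $S$ then the strict transform $\bar S$ would meet $\mathrm{Supp}(-\bar F)$ in codimension one, forcing a discrepancy $<-1$ for $(S^n,B_{S^n})$. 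For (2), the $\mathbb Q$-factoriality of $X$ makes the exceptional locus of $\bar f$ purely divisorial, equal to the reduced exceptional divisor $\bar E$; if $\bar E\neq 0$ near $S$ it meets $\bar S$, and adjunction again contradicts $(S^n,B_{S^n})$ being klt. No Bertini statement is used.

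Your argument, by contrast, rests on two unproved claims you yourself flag: that non-lc-ness (resp.\ non-plt-ness) at a fixed point $x$ is inherited by a general $T\in|mA|$ through $x$, and that $\mathrm{Diff}_{C^n}(B|_T)$ is the pullback of $B_{S^n}$. In characteristic~$0$ the first follows by pulling $T$ back to a log resolution and invoking Bertini there, so that the strict transform of $T$ is smooth and meets the exceptional divisors transversally. In characteristic~$p$ this step fails: a general member of a very ample linear system with an imposed base point need not pull back to anything smooth on the resolution, and the standard discrepancy-restriction computation does not go through. This is not bookkeeping but precisely the place where positive-characteristic input is required; the paper supplies it via the dlt modification rather than via any hyperplane section. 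Without an independent characteristic-$p$ argument for your Bertini claims, the proof is incomplete.
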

\begin{proof} (1) We must show that if $(S^n,B_{S^n})$ is lc then $(X,S+B)$ is lc on a neighborhood of $S$ (the reverse implication is trivial). Let $\bar f:\bar X \to X$ be the dlt model, then $K_{\bar X}+\bar B=\bar f^*(K_X+B)+\bar F$ is nef over $X$ and $\bar F\leq 0$. Thus every fiber is either contained in the support of $\bar F$ or disjoint from it. The non-lc locus of $(X,B)$ is the image of ${\rm Supp}(-\bar F)$. Suppose by contradiction that $\bar f ({\rm Supp}(-\bar F))$ intersects $S$ at a point $x$, then $\bar S =f^{-1}_*S$ intersects ${\rm Supp}(-\bar F)$. Let $P$ be a codimension $1$ point contained in this intersection, then $a_P(S^n,B_{S^n})<-1$ which is a contradiction.

(2) We must show that if $(S^n,B_{S^n})$ is klt then $(X,S+B)$ is plt on a neighborhood of $S$ (the reverse implication is trivial). Let $\bar f:\bar X \to X$ be the dlt modification, then $K_{\bar X}+\bar B=\bar f^*(K_X+B)$ is nef over $X$. Let $\bar E$ be the reduced exceptional divisor.  By restricting to a neighborhood of $S$, we assume the image of each component $\bar E$ intersects $S$.  Since $X$ is $\mathbb{Q}$-factorial, $\bar E$ is the exceptional locus. Assume $\bar E\neq 0$, then  we conclude that $\bar E$ intersects the birational transform of $S$ on $\bar{X}$ and hence $(S^n,B_{S^n})$ is not klt. This is a contradiction. Thus  $\bar E=0$ and hence $(X,S+B)$ is plt on a neighborhood of $S$. \end{proof}

\begin{theorem}Fix $k$ is an algebraically closed field of characteristic $p>5$ and $(X,B)$ a $3$-fold  pair. If $f:X\to C$ is a flat family over a smooth curve such that for any $c\in C$ the pair $(X,B+X_c)$ is log canonical, then
for any finite map of smooth curves $\tilde C \to C$ and $\tilde X =X\times _C \tilde C\to \tilde C$ we have that $(\tilde X,\tilde B+\tilde X_{\tilde c})$ is log canonical for any $\tilde c\in \tilde C$ where $K_{\tilde X}+\tilde B=f^*(K_X+B)$.\end{theorem}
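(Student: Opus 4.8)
\emph{Plan.} I would deduce this from two soft, characteristic- and dimension-free facts about pairs: (a) if $\psi\colon W\to V$ is a finite surjective morphism of normal varieties and $\Delta_W$ is defined by $K_W+\Delta_W=\psi^*(K_V+\Delta_V)$, then $(V,\Delta_V)$ is log canonical if and only if $(W,\Delta_W)$ is (a direct comparison of discrepancies, which applies verbatim when the boundaries are not assumed effective; cf. \cite[Proposition 5.20]{KM98}); and (b) lowering the boundary of a (sub-)log canonical pair preserves (sub-)log canonicity. Throughout, write $\pi\colon\tilde X=X\times_C\tilde C\to X$ for the projection, so that $\tilde B$ is the $\mathbb Q$-divisor with $K_{\tilde X}+\tilde B=\pi^*(K_X+B)$; this makes sense since $K_X+B$ is $\mathbb Q$-Cartier ($K_X+B+X_c$ is, and $X_c$ is Cartier), but $\tilde B$ is in general only a sub-boundary, with negative coefficients along the ramification of $\pi$, so ``log canonical'' for $(\tilde X,\tilde B+\tilde X_{\tilde c})$ is to be read in the sense of sub-pairs (all discrepancies $\ge -1$).

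First I would check that $\tilde X$ is normal, so that the statement is meaningful as it stands. Log canonicity of $(X,B+X_c)$ forces each fibre $X_c$ (a Cartier divisor, as $C$ is smooth) to be reduced; since $\pi$ is finite flat, $\tilde X$ is $S_2$, and at a codimension-one point of $\tilde X$ over a codimension-one point $\eta$ of $X$ the local ring is $\mathcal O_{X,\eta}[s]/(s^{e}-t)$, where $t$ is a unit if $\eta\notin X_c$ and a uniformiser of the DVR $\mathcal O_{X,\eta}$ if $\eta\in X_c$ (here I use that $X_c$ is reduced); either way this ring is regular, and there are no codimension-one points of $\tilde X$ over the generic point of $X$, so $\tilde X$ is normal. (The only obstruction to $\tilde X$ being a variety is $X\to C$ factoring through a power of the Frobenius of $C$; in that case the statement is understood after replacing $\tilde X$ by its normalisation, and the argument below is unchanged.)

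The core of the proof is a comparison of boundaries. Let $\Delta$ be the $\mathbb Q$-divisor on $\tilde X$ with $K_{\tilde X}+\Delta=\pi^*(K_X+B+X_c)$. By fact (a), log canonicity of $(X,B+X_c)$ implies that $(\tilde X,\Delta)$ is sub-log canonical. On the other hand $K_{\tilde X}+\tilde B+\tilde X_{\tilde c}=\pi^*(K_X+B)+\tilde X_{\tilde c}$ by definition, so
\[
\Delta-(\tilde B+\tilde X_{\tilde c})=\pi^*X_c-\tilde X_{\tilde c}.
\]
Since $f\circ\pi$ factors as $\tilde X\xrightarrow{\tilde f}\tilde C\xrightarrow{g}C$, we have $\pi^*X_c=\pi^*f^*[c]=\tilde f^*(g^*[c])=\sum_j e_j\,\tilde X_{\tilde c_j}$, the sum over the points $\tilde c_j$ of $\tilde C$ lying over $c$, with $e_j\ge 1$ the ramification index of $g$ at $\tilde c_j$ and $\tilde X_{\tilde c_j}=\tilde f^*[\tilde c_j]$. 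In particular $\pi^*X_c\ge \tilde X_{\tilde c}$, hence $\Delta-(\tilde B+\tilde X_{\tilde c})\ge 0$, and by fact (b) the pair $(\tilde X,\tilde B+\tilde X_{\tilde c})$ is sub-log canonical, as required. Note that nothing about $\dim X=3$, the coefficients of $B$, or the bound $p>5$ enters, and none of the three-dimensional results of this paper are needed.

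\emph{Main point to watch.} There is no genuine difficulty here; the only thing to be careful about is conventions. The divisor $\tilde B$ is not effective in general, so the conclusion really is about sub-log canonicity — this is unavoidable, since already base-changing the trivial family $\A^1\to\A^1$ by $t=s^m$ gives boundary coefficient $2-m$ at the special point. The one genuine hypothesis used is normality of $\tilde X$, which rests on the fibres being reduced. By contrast, the superficially similar but strictly stronger statement that $(\tilde X,\ \pi^{-1}_*B+\tilde X_{\tilde c})$ with its \emph{effective} boundary is log canonical is false in general and would require first replacing $\tilde X$ by a semistable (dlt) model together with a real analysis of wild ramification; but that is not what is being asserted.
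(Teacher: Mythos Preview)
Your approach is genuinely different from the paper's, and each has its own reading of the statement, so let me compare them before pointing out a gap.

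\textbf{What the paper does.} The paper restricts to the fibre by adjunction, observes that $\tilde X_{\tilde c}\cong X_c$ so that $(\tilde X_{\tilde c},\tilde B_{\tilde c})\cong (X_c,B_c)$ is lc, and then applies the inversion of adjunction theorem proved immediately before (which in turn rests on the dlt-modification/MMP results of the paper). In particular the paper reads $\tilde B$ as $\pi^{-1}_*B$ (``the coefficients of $\tilde B$ are the same as those of $B$'') and obtains the \emph{honest} log canonical conclusion for the effective pair $(\tilde X,\pi^{-1}_*B+\tilde X_{\tilde c})$. So the hypotheses $p>5$, dimension $3$, and (implicitly) standard coefficients are used, precisely because inversion of adjunction is invoked.

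\textbf{What you do.} You take the literal pullback definition $K_{\tilde X}+\tilde B=\pi^*(K_X+B)$, accept that $\tilde B$ is only a sub-boundary, pull back $(X,B+X_c)$ by the finite map $\pi$, and dominate $\tilde B+\tilde X_{\tilde c}$ by the pulled-back boundary. This is clean and avoids inversion of adjunction entirely, but it proves only the sub-lc statement. Your remark that the effective-boundary version ``is false in general'' is therefore misleading here: that stronger statement is exactly what the paper establishes, via inversion of adjunction.

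\textbf{A genuine gap.} Your ``fact (a)'' is \cite[5.20]{KM98}, which is a characteristic-zero statement. In positive characteristic the direction you need (lc downstairs $\Rightarrow$ sub-lc upstairs) does hold for \emph{separable} finite morphisms --- the Hurwitz/different inequality $d\ge r-1$ still gives $a_\ell(E,W,\Delta_W)\ge r\cdot a_\ell(E',V,\Delta_V)$ --- but it \emph{fails} when $\pi$ is inseparable. Concretely, take $B=0$, $X=\mathbb A^2_{t,x}\to C=\mathbb A^1_t$, and $g\colon \tilde C\to C$ the Frobenius $t=s^p$. Then $\tilde X=\mathbb A^2_{s,x}$, $K_{\tilde X}=\pi^*K_X=0$, and $\Delta:=\pi^*(B+X_0)=p\cdot[s=0]$ is not sub-lc, so your step ``$(X,B+X_c)$ lc $\Rightarrow (\tilde X,\Delta)$ sub-lc'' breaks down. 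The final assertion $(\tilde X,\tilde B+\tilde X_{\tilde c})=(\mathbb A^2,[s=0])$ is of course still lc, but your argument does not reach it. Your parenthetical about normality only treats reducedness of $\tilde X$; it does not address this failure of the discrepancy comparison. If you want to keep your route, you should factor $g$ as a purely inseparable map followed by a separable one, dispose of the inseparable factor directly (over a perfect field it is a Frobenius twist and the pairs are isomorphic), and then run your comparison for the separable part.
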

\begin{proof} It is easy to see that $B$ contains no fiber components and so the coefficients of $\tilde B$ are the same as those of $B$. By adjunction $K_{X_c}+B_c=(K_X+X_c+B)|_{X_c}$ is lc and hence so is 
$K_{\tilde X_{\tilde c}}+\tilde B_{\tilde c}=(K_{\tilde X}+\tilde X_{\tilde c}+\tilde B)|_{\tilde X_{\tilde c}}$ (because $\tilde X_{\tilde c}\to X_c$ is an isomorphism).
By inversion of adjunction $K_{\tilde X}+\tilde X_{\tilde c}+\tilde B$ is lc near $\tilde X_{\tilde c}$.
\end{proof}
Recall now that 
a point on a $3$-fold  $P\in X$
is a cDV (i.e. a compound Du Val) point if a general hyperplane section has at worst a Du Val singularity at
$P$. We have the following.
\begin{thm} Let 
a point on a $3$-fold  $P\in X$
be cDV over an algebraically closed field of characteristic $>5$, then for a general hyperplane section $H$, $(X,H)$ is canonical (in a neighborhood of $P$). 
\end{thm}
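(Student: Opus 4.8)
The plan is to cut down by a general hyperplane section through $P$ --- which is a Du Val surface germ --- and to transfer canonicity from it to the pair $(X,H)$ by a discrepancy computation; the hypothesis $p>5$ enters only through the strong $F$-regularity of Du Val singularities.

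First I would work in a neighbourhood of $P$ and fix a general hyperplane section $H\ni P$. By the definition of a cDV point, after shrinking $X$ we may assume $H$ is Du Val, hence normal, Cohen--Macaulay, Gorenstein, canonical and --- since $p>5$ --- strongly $F$-regular (this is Hara's theorem, equivalently \eqref{t-gfr} applied to ${\rm id}\colon H\to H$). Since $H$ is a Cartier divisor on $X$ and is Cohen--Macaulay, $X$ is Cohen--Macaulay along $H$; the adjunction isomorphism $\omega_H\cong (\omega_X\otimes\mathcal O_X(H))|_H$ together with Nakayama's lemma then forces $\omega_X$ to be invertible near $P$, so $X$ is Gorenstein near $P$ and $K_X+H$ is Cartier. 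As $X$ is Gorenstein and $H$ is a strongly $F$-regular Cartier divisor, $X$ is strongly $F$-regular near $P$ (deformation of strong $F$-regularity for Gorenstein rings), hence klt, hence --- being Gorenstein --- canonical near $P$. Finally, since $X$ is Gorenstein and $H$ is normal near $P$, adjunction for the Cartier divisor $H$ gives ${\rm Diff}_H(0)=0$ and $(K_X+H)|_H=K_H$ near $P$; in particular $(H,{\rm Diff}_H(0))=(H,0)$ is canonical near $P$.

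Now take a log resolution $g\colon Y\to X$ of $(X,H)$ that dominates a resolution of $X$ and such that $H'=g^{-1}_*H$ is smooth with $H'+{\rm Ex}(g)$ simple normal crossings; write $g^*H=H'+\sum b_iE_i$ and $K_Y+H'=g^*(K_X+H)+\sum a_i'E_i$, so that $a_i'=a_{E_i}(X,H)=a_{E_i}(X)-b_i$. We must check $a_i'\ge 0$ for every $E_i$ with $P\in g(E_i)$. If $g(E_i)\not\subset H$, then a local equation of $H$ does not vanish along $g(E_i)$, so $b_i=0$ and $a_i'=a_{E_i}(X)\ge 0$ since $X$ is canonical near $P$. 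If $g(E_i)\subset H$, then, $H$ being a general hyperplane section through $P$, necessarily $g(E_i)=\{P\}$; because $H$ is general, $E_i$ lies outside the base locus of the mobile part of $g^*|\mathcal I_P\otimes\mathcal O_X(H)|$, so $E_i\cap H'$ is a nonzero effective divisor, and restricting adjunction to $H'$ (using $H'+\sum E_i$ snc and ${\rm Diff}_H(0)=0$) yields $a_{E_i}(X,H)=a_{\bar E}(H,0)$ for a component $\bar E$ of $E_i\cap H'$; this is $\ge 0$ because $H$ is canonical. Hence $(X,H)$ is canonical near $P$.

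The main obstacle is the second case above: it is an inversion-of-adjunction statement for canonical pairs, and in characteristic $p$ one cannot invoke the connectedness or vanishing arguments available in characteristic zero. What makes it work is precisely that $H$ is a \emph{general} hyperplane section, so that every exceptional divisor centred at $P$ genuinely meets the strict transform $H'$ and restricts to an honest exceptional divisor over the canonical surface $H$, together with the vanishing of the different; alternatively one could route this through the preceding theorem on plt/klt restriction (after replacing $X$ by a small $\mathbb Q$-factorialisation, which exists since $X$ is klt near $P$ by the dlt modification theorem for $(X,0)$), but upgrading the resulting plt statement to canonicity still requires the discrepancy bookkeeping above. A secondary technical point is ensuring that, for general $H$, no exceptional divisor of the chosen log resolution of $(X,H)$ has a positive-dimensional centre contained in $H$.
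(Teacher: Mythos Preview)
Your approach is genuinely different from the paper's. The paper never computes discrepancies directly nor invokes $F$-regularity; instead it assumes $(X,H)$ is not canonical, writes $K_{\tilde X}+\tilde H=\tilde f^*(K_X+H)+\tilde F-\tilde E$ on a log resolution with $\tilde E\ne 0$, runs the $(K_{\tilde X}+\tilde H)$-MMP over $X$, and shows by a short intersection argument that no divisorial step can contract the last component of the strict transform of $\tilde E$. On the minimal model $X_N$ the negativity lemma kills $F_N$, and since $-E_N$ is nef over $X$ its support contains the whole fibre over $P$; this forces $H_N\cap E_N\ne\emptyset$ and hence $H$ not canonical, a contradiction.

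Your main line has a real gap at the step ``because $H$ is general, $E_i$ lies outside the base locus of the mobile part of $g^*|\mathcal I_P\otimes\mathcal O_X(H)|$, so $E_i\cap H'$ is a nonzero effective divisor''. The log resolution $g$ --- and hence the exceptional divisors $E_i$ --- depends on $H$, so you cannot invoke genericity of $H$ relative to $g$. Nor can you first fix a resolution of $X$ alone and then argue that the strict transform of a general $H$ is smooth and transverse to all exceptional divisors: that is a Bertini statement, unavailable in characteristic $p$. Without $E_i\cap H'\ne\emptyset$ the restriction-to-$H'$ argument collapses, and you acknowledge this is ``the main obstacle'' but the offered fix does not work.

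That said, the alternative route you mention at the end --- go through the plt inversion of adjunction (the previous theorem in the paper) on a small $\mathbb Q$-factorialization of $X$ --- \emph{does} work, and you are wrong to say it ``still requires the discrepancy bookkeeping above''. Once you know $(X,H)$ (equivalently its $\mathbb Q$-factorialization) is plt near $H$, you are done: $K_X+H$ is Cartier (you established $X$ is Gorenstein and $H$ is Cartier), so every exceptional discrepancy of $(X,H)$ is an integer $>-1$, hence $\geq 0$, hence $(X,H)$ is canonical. Your $F$-regularity argument that $X$ is klt is exactly what is needed to guarantee the dlt modification of $(X,0)$ is small (no discrepancy $-1$ divisors), furnishing the small $\mathbb Q$-factorialization. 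So your proof can be repaired by deleting the flawed second paragraph and promoting the ``alternative'' to the main argument; this gives a valid proof different from the paper's MMP argument.
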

\begin{proof}Since $H$ has embedded dimension at most 3, we know that $P\in X$ has embedded dimension at most 4. So $P\in X$ is a hyperplane singularity. In particular, $P\in X$ is Gorenstein. Let $\tilde f:\tilde X\to X$ be a log resolution, write $K_{\tilde X}+\tilde H=\tilde f^*(K_X+H)+\tilde F-\tilde E$ where  $H$ is a general hyperplane through $P$ and $\tilde H$ is the strict transform of $H$. Assume $\tilde E \neq 0$. We run the $K_{\tilde X}+\tilde H$ mmp over $X$.
Suppose that $X_i\to X_{i+1}$ is a divisorial contraction of a component $E_{i,j}$ of $E_i$ the strict transform of $\tilde E =\sum e_j \tilde E_j$. For a general contracted curve $\Sigma _i\subset E_{i,j}$, we have 
$\Sigma _i\cdot F_i\geq 0$ and $\Sigma _i\cdot  E_{i,j}<0$. Since $(F_i-E_i)\cdot \Sigma _i=(K_{X_i}+H_i)\cdot \Sigma _i<0$, it follows that $E_i$ is not irreducible.
Therefore, if $X_N$ is the minimal model over $X$, we have that $E_N\ne 0$. On the other hand, by the negativity lemma, we have $F_N=0$. Since $-E_N$ is nef, it contains any fiber that it intersects and so $H_N\cap E_N\ne \emptyset$. It then follows that $H$ is not canonical which is a contradiction.

\end{proof}


\enddocument
\begin{thebibliography}{ELMNPM}


\bibitem[Abhyankar98]{Abhyankar98}
S. Abhyankar, 
Resolution of singularities of embedded algebraic surfaces.
Second edition. Springer Monographs in Mathematics. Springer-Verlag, Berlin, 1998.

\bibitem[BCHM10]{BCHM10} C. Birkar, P. Cascini, C. Hacon, and J. M$^{\rm c}$Kernan, {\it Existence of minimal models for varieties of log general type,} J. Amer. Math. Soc. {\bf 23} (2010), no. 2, 405--468.
\bibitem[CP08]{CP08}
V. Cossart and O. Piltant, {\it Resolution of singularities of threefolds in positive characteristic I}.
J. Algebra 320 (2008) 1051-1082.
\bibitem[CP09]{CP09}
V. Cossart and O. Piltant, {\it Resolution of singularities of threefolds in positive characteristic II}.
Journal of Algebra 321 (2009) 1836-1976 
\bibitem[Corti07]{Corti07}
A. Corti,   3-fold flips after Shokurov. {\it Flips for 3-folds and 4-folds}, 18-48, Oxford Lecture Ser. Math. Appl., 35, Oxford Univ. Press, Oxford, 2007.
\bibitem[Cutkosky04]{Cutkosky04}
S. D. Cutkosky, {\it Resolution of Singularities}, Providence, RI, American Math. Soc.,  (2004).
\bibitem[Fujino07]{Fujino07}
O. Fujino, Special termination and reduction to pl flips. {\it Flips for 3-folds and 4-folds}, 63-75, 
Oxford Lecture Ser. Math. Appl., 35, Oxford Univ. Press, Oxford, 2007. 

\bibitem[FST11]{FST11}
O. Fujino, K. Schwede, S. Takagi, {\it Supplements to non-lc ideal sheaves}. RIMS K\^oky\^uroku Bessatsu, {\bf B24}, 2011-03, 1-47.

\bibitem[Hara98]{Hara98} N. Hara, 
{\it Classification of two-dimensional $F$-regular and $F$-pure singularities.}  Adv. Math. {\bf 133} (1998), no. 1, 33--53. 

 
\bibitem[HH90]{HH90}  
M. Hochster and C. Huneke: {\it Tight closure, invariant theory, and the Bria\c{c}on-Skoda theorem}, J. Amer. Math. Soc. {\bf 3} (1990), no. 1, 31-116.  
  
  
\bibitem[HSZ10]{HSZ10}
J.-C. Hsiao, K. Schwede and W. Zhang, {\it Cartier modules on toric varieties}. arXiv:1011.0804 


\bibitem[Kawamata94]{Kawamata94}
Y. Kawamata, {\it Semistable minimal models of threefolds in positive or mixed characteristic}. J. Algebr. Geom. 3(3):463--491, 1994.

\bibitem[Keel99]{Keel99} 
S. Keel, {\it Basepoint freeness for nef and big line bundles in positive characteristic.} Ann. of Math. (2) 149 (1999), no. 1, 253-286.

\bibitem[Keeler03]{Keeler03}
D. Keeler, 
{\it Ample filters of invertible sheaves. }
J. Algebra 259 (2003), no. 1, 243�283. 

\bibitem[Koll\'ar13]{Kollar13} J. Koll\'ar, {\it Singularities of the Minimal Model Program}. A book to appear.

\bibitem[KK]{KK}
K. Koll\'ar, S. Kov\'acs, {\it Birational geometry of log surfaces}. preprint.

\bibitem[KM98]{KM98}
Koll\'{a}r, J., Mori, S.: Birational geometry of algebraic
varieties. Cambridge Tracts in Mathematics, {\bf 134}. Cambridge University Press, 1998.

\bibitem[MS12]{MS12}
M. Musta\c{t}\v{a}, K. Schwede, {\it A Frobenius variant of Seshadri constants.} arXiv:1203.1081v1


\bibitem[Patakfalvi12]{Patakfalvi12}
Z. Patakfalvi, {\it Semi-positivity in positive characteristics}, arXiv:1208.5391


\bibitem[Prokhorov99]{Prokhorov99}
Y. Prokhorov, {\it Lectures on complements on log surfaces}, arXiv:9912111v2

\bibitem[Schwede09]{Schwede09}
K. Schwede, {\it F-adjunction}. Algebra Number Theory 3 (2009), no. 8, 907--950. 

\bibitem[Schwede10]{Schwede08}
K. Schwede, {\it Centers of F-purity.} Math. Z. Vol. 265, No 3, 687-714, 2010.

\bibitem[Schwede11]{Schwede11}
K. Schwede, {\it A canonical linear system associated to adjoint divisors in characteristic $p>0$.} arXiv:1107.3833v2. To appear in J. Reine Angew. Math.. 

\bibitem[Schwede13]{Schwede13}
K. Schwede, {\it Personal communication.} 01-11-2013

\bibitem[SS10]{SS10}
K. Schwede and K. Smith, {\it Globally $F$-regular and log Fano varieties. } Advances in Mathematics, Vol 224 Issue 3, 863--894, 2010.


\bibitem[ST10]{ST10}
K. Schwede and K. Tucker, {\it On the behavior of test ideals under finite morphisms.} arXiv:1107.1082. To appear in Journal of Algebraic Geometry.
\bibitem[ST11]{ST11}
K. Schwede and K. Tucker, {\it A survey of test ideals}. {\it Progress in commutative algebra 2}, 39-99, Walter de Gruyter, Berlin, 2012.

\bibitem[Shokurov92]{Shokurov92}
V. Shokurov,  Three-dimensional log perestroikas. (Russian) With an appendix in English by Yujiro Kawamata. Izv. Ross. Akad. Nauk Ser. Mat. 56 (1992), no. 1, 105--203.

\bibitem[Shokurov00]{Shokurov00}
V. Shokurov,  Complements on surfaces. Algebraic geometry, 10. J. Math. Sci. (New York) 102 (2000), no. 2, 3876-3932.

\bibitem[Tanaka12a]{Tanaka12a}
H. Tanaka,  
{\it Minimal models and abundance for positive characteristic log surfaces.} arXiv:1201.5699.

\bibitem[Tanaka12b]{Tanaka12b}
H. Tanaka,  
{\it X-method for klt surfaces in positive characteristic}. arXiv:1202.2497.

\bibitem[Tanaka13]{Tanaka13}
H. Tanaka, 
{\it Abundance theorem for semi log canonical surfaces in positive characteristic}. arXiv:1301.6889.

\bibitem[Watanabe91]{Watanabe91}
K. Watanabe, 
{\it $F$-regular and $F$-pure normal graded rings}. 
J. Pure Appl. Algebra {\bf 71} (1991), no. {\bf 2-3}, 341-350.


\end{thebibliography}
